\newcommand{\+}{\nobreakdash-}
\renewcommand{\:}{\colon}
\renewcommand{\.}{\mskip.5\thinmuskip}
\newcommand{\rarrow}{\longrightarrow}
\newcommand{\birarrow}{\rightrightarrows}
\newcommand{\ot}{\otimes}
\newcommand{\st}{\star}
\DeclareMathOperator{\Hom}{Hom}
\DeclareMathOperator{\Ext}{Ext}
\DeclareMathOperator{\Tor}{Tor}
\DeclareMathOperator{\coker}{coker}
\DeclareMathOperator{\im}{im}
\DeclareMathOperator{\id}{id}
\DeclareMathOperator{\pd}{pd}
\newcommand{\fR}{{\mathfrak R}}
\newcommand{\boR}{{\mathbb R}}
\newcommand{\boL}{{\mathbb L}}
\newcommand{\Z}{{\mathbb Z}}
\newcommand{\Q}{{\mathbb Q}}
\newcommand{\D}{{\mathcal D}}
\renewcommand{\b}{{\mathsf{b}}}
\newcommand{\co}{{\mathsf{co}}}
\newcommand{\ctr}{{\mathsf{ctr}}}
\newcommand{\abs}{{\mathsf{abs}}}
\newcommand{\inj}{{\mathsf{inj}}}
\newcommand{\proj}{{\mathsf{proj}}}
\newcommand{\fl}{{\mathsf{fl}}}
\newcommand{\ttors}{{\mathsf{tors}}}
\newcommand{\rrcot}{{\mathsf{rcot}}}
\newcommand{\sop}{{\mathsf{op}}}
\newcommand{\Hot}{{\mathsf{Hot}}}
\newcommand{\sA}{{\mathsf A}}
\newcommand{\sC}{{\mathsf C}}
\newcommand{\sD}{{\mathsf D}}
\newcommand{\sT}{{\mathsf T}}
\newcommand{\sF}{{\mathsf F}}
\newcommand{\vect}{{\operatorname{\mathsf{--vect}}}}
\newcommand{\modl}{{\operatorname{\mathsf{--mod}}}}
\newcommand{\tors}{{\operatorname{\mathsf{-tors}}}}
\newcommand{\ctra}{{\operatorname{\mathsf{-ctra}}}}
\newcommand{\secmp}{{\operatorname{\mathsf{-secmp}}}}
\newcommand{\adj}{{\operatorname{\mathsf{-adj}}}}
\newcommand{\bu}{{\text{\smaller\smaller$\scriptstyle\bullet$}}}
\newcommand{\lrarrow}{\.\relbar\joinrel\relbar\joinrel\rightarrow\.}
\newcommand{\llarrow}{\.\leftarrow\joinrel\relbar\joinrel\relbar\.}
\newcommand{\Section}[1]{\bigskip\section{#1}\medskip}
\theoremstyle{plain}
\newtheorem{thm}{Theorem}[section]
\newtheorem{lem}[thm]{Lemma}
\newtheorem{prop}[thm]{Proposition}
\newtheorem{cor}[thm]{Corollary}
\theoremstyle{definition}
\newtheorem{rem}[thm]{Remark}
\begin{document}

\title{Triangulated Matlis equivalence}

\author{Leonid Positselski}

\address{Department of Mathematics, Faculty of Natural Sciences,
University of Haifa, Mount Carmel, Haifa 31905, Israel; and
\newline\indent Laboratory of Algebraic Geometry, National Research
University Higher School of Economics, Moscow 117312; and
\newline\indent Sector of Algebra and Number Theory, Institute for
Information Transmission Problems, Moscow 127051, Russia}

\email{posic@mccme.ru}

\begin{abstract}
 This paper is a sequel to~\cite{Pmgm} and~\cite{Pcta}.
 We extend the classical Harrison--Matlis module category
equivalences to a triangulated equivalence between the derived
categories of the abelian categories of torsion modules and
contramodules over a Matlis domain.
 This generalizes to the case of any commutative ring $R$ with a fixed
multiplicative system $S$ such that the $R$\+module $S^{-1}R$ has
projective dimension~$1$.
 The latter equivalence connects complexes of $R$\+modules with
$S$\+torsion and $S$\+contramodule cohomology modules.
 It takes a nicer form of an equivalence between the derived categories
of abelian categories when $S$ consists of nonzero-divisors or
the $S$\+torsion in $R$ is bounded.
\end{abstract}

\maketitle

\tableofcontents

\section*{Introduction}
\medskip

 The category of torsion abelian groups is abelian.
 So is the category of reduced cotorsion abelian groups~\cite{Pcta};
in fact, this observation was made, in a much greater generality,
already in~\cite[Remarks in~\S2]{Mat} (the reader should be warned
that Matlis, following Harrision~\cite{Harr}, calls such groups simply
``cotorsion'', including the ``reduced'' condition into the definition
of ``cotorsion'').
 The derived categories of these two abelian categories are
equivalent: one has
\begin{equation} \label{abelian-torsion-redcotorsion}
 \sD^\st(\Z\modl_\ttors)\simeq\sD^\st(\Z\modl_\rrcot)
\end{equation}
for any derived category symbol $\st=\b$, $+$, $-$, or~$\varnothing$.

 Furthermore, the abelian category of torsion abelian groups is
the Cartesian product of the abelian categories of $p$\+primary
torsion abelian groups, while the abelian category of reduced
cotorsion abelian groups is the Cartesian product of the abelian
categories of $p$\+contramodule abelian groups.
 The equivalence of categories~\eqref{abelian-torsion-redcotorsion}
is the Cartesian product of the equivalences
\begin{equation} \label{abelian-torsion-contra}
 \sD^\st(\Z\modl_{p\tors})\simeq\sD^\st(\Z\modl_{p\ctra}).
\end{equation}

 The latter equivalence, in turn, sits in the intersection of two
classes of triangulated equivalences.
 On the one hand, there is an equivalence between the coderived
category of discrete modules and the contraderived category of
contramodules over a pro-coherent topological ring $\fR$ with
a dualizing complex~$\D^\bu$ \cite[Section~D.2]{Pcosh}.
 In particular, given a Noetherian commutative ring $R$ with a fixed
ideal $I\subset R$ and a dualizing complex of $I$\+torsion $R$\+modules
$D^\bu$, there is an equivalence between the coderived category
of $I$\+torsion $R$\+modules modules and the contraderived category
of $I$\+contramodule $R$\+modules~\cite[Section~C.1]{Pcosh}
\begin{equation} \label{dualizing-complex-torsion-modules}
 \sD^\co(R\modl_{I\tors})\simeq\sD^\ctr(R\modl_{I\ctra}).
\end{equation}
 The equivalence is provided by the derived functors of $\Hom$ and
tensor product with the dualizing complex~$D^\bu$.

 The simplest example of
the equivalence~\eqref{dualizing-complex-torsion-modules} occurs
when $I$ is a \emph{maximal} ideal in~$R$.
 Then an injective envelope $C$ of the irreducible $R$\+module $R/I$,
viewed as a one-term complex of $R$\+modules, is a dualizing complex of
$I$\+torsion $R$\+modules (see the discussions
in~\cite[Section~5]{Yek0} and~\cite[Remark~4.10]{Pmgm}).
 Taking the $\Hom$ from $C$ and the tensor product with $C$ establishes
a (covariant) equivalence between the additive categories of injective
$I$\+torsion $R$\+modules and projective $I$\+contramodule $R$\+modules,
\begin{equation} \label{maximal-ideal-proj-inj-equivalence}
 \Hom_R(C,{-})\:\,R\modl_{I\tors}^\inj\.\simeq\. R\modl_{I\ctra}^\proj\!:\,
 C\ot_R{-}.
\end{equation}

 For any Noetherian commutative ring $R$ with an ideal $I$,
the coderived category $\sD^\co(R\modl_{I\tors})$ is equivalent to
the homotopy category of unbounded complexes of injective $I$\+torsion
$R$\+modules and the contraderived category $\sD^\ctr(R\modl_{I\ctra})$
is equivlalent to the homotopy category of unbounded complexes
of projective $I$\+contramodule $R$\+modules,
\begin{equation} \label{co-contra-derived-homotopy}
 \sD^\co(R\modl_{I\tors})\simeq\Hot(R\modl_{I\tors}^\inj)
 \ \.\text{and}\ \.
 \sD^\ctr(R\modl_{I\ctra})\simeq\Hot(R\modl_{I\ctra}^\proj).
\end{equation}
 Combinining~\eqref{co-contra-derived-homotopy}
with~\eqref{maximal-ideal-proj-inj-equivalence}, we obtain
the equivalence of exotic derived
categories~\eqref{dualizing-complex-torsion-modules} in the case
of a maximal ideal $I\subset R$.

 On the other hand, for any commutative ring $R$ with a finitely
generated ideal $I\subset R$ there is a natural equivalence between
the full subcategories of the derived category $\sD^\st(R\modl)$
formed by the complexes with $I$\+torsion and $I$\+contramodule
cohomology modules~\cite[Section~3]{Pmgm}
\begin{equation} \label{non-weakly-proregular}
 \sD^\st_{I\tors}(R\modl)\simeq\sD^\st_{I\ctra}(R\modl).
\end{equation}
 When the finitely generated ideal $I\subset R$ is \emph{weakly
proregular} (in particular, when the ring $R$ is Noetherian),
the equivalence~\eqref{non-weakly-proregular} takes a nicer form
of an equivalence between the derived categories of the abelian
categories of $I$\+torsion and $I$\+contramodule
$R$\+modules~\cite[Sections~1\+-2]{Pmgm}
\begin{equation} \label{weakly-proregular-equivalence}
 \sD^\st(R\modl_{I\tors})\simeq\sD^\st(R\modl_{I\ctra}).
\end{equation}
 Notice the difference between
the equivalences~\eqref{dualizing-complex-torsion-modules}
and~\eqref{weakly-proregular-equivalence} in that
the former is an equivalence between the coderived and
the contraderived categories, while the latter connects
the (bounded or unbounded) conventional derived categories.
 The results of~\cite[Sections~4\+-5]{Pmgm} extend
the equivalence~\eqref{weakly-proregular-equivalence} to the case of
the absolute derived categories $\sD^{\abs+}$, \,$\sD^{\abs-}$,
or~$\sD^\abs$.

 The equivalence~\eqref{non-weakly-proregular} and,
especially, \eqref{weakly-proregular-equivalence} is provided by
the functors of $\Hom$ and tensor product with a \emph{dedualizing
complex} of $R$\+modules~$B^\bu$.
 The simplest example occurs when $I$ is a \emph{principal} ideal
in~$R$.
 Then the dedualizing complex $B^\bu$ for the ideal $I$ is
\begin{equation} \label{one-element-dedualizing}
 R\lrarrow R[s^{-1}],
\end{equation}
where $s$ is a generating element of an ideal~$I$.
 More precisely, the weak proregularity condition in the case of
a principal ideal $I$ simply says that the $s$\+torsion in $R$
should be bounded.
 Assuming this, one can use a complex of $I$\+torsion $R$\+modules
quasi-isomorphic to~\eqref{one-element-dedualizing} in the role
of a dedualizing complex~$B^\bu$.

 To compare the dualizing and dedualizing complexes, one can express
them in terms of the derived functor $\boR\Gamma_I$ of the functor
$\Gamma_I(M)=\varinjlim_n\Hom_R(R/I^n,M)$ of maximal $I$\+torsion
submodule of an $R$\+module~$M$.
 Let $D_R^\bu$ be a dualizing complex of $R$\+modules.
 Then one can take
$$
 D^\bu=\boR\Gamma_I(D_R^\bu)\quad\text{and}\quad
 B^\bu=\boR\Gamma_I(R).
$$

 In the case of a regular Noetherian ring $R$ (of finite Krull
dimension), the abelian categories $R\modl_{I\tors}$ and
$R\modl_{I\ctra}$ have finite homological dimension, so there is
no difference between the conventional derived, co/contraderived,
and the absolute derived category for them.
 There is also no difference between a dualizing and a dedualizing
complex.
 So the two triangulated
equivalences~\eqref{dualizing-complex-torsion-modules}
and~\eqref{weakly-proregular-equivalence} become one and the same.
 We have explained that the equivalence of derived
categories~\eqref{abelian-torsion-contra} is a common particular case
of~\eqref{dualizing-complex-torsion-modules}
and~\eqref{weakly-proregular-equivalence}.

 Back in the 1950--60's, when the derived categories were not yet known,
a version of the equivalence~\eqref{abelian-torsion-redcotorsion} was
first observed by Harrison~\cite{Harr}.
 In fact, there are \emph{two} equivalences of additive categories
in~\cite[Section~2]{Harr}, both of which we now see as related to
the triangulated equivalence~\eqref{abelian-torsion-redcotorsion}.
 One of them is an equivalence between what we would now call
the additive categories of injective objects in $\Z\modl_\ttors$ and
projective objects in $\Z\modl_\rrcot$ \cite[Proposition~2.1]{Harr}.
 It is provided by the functors $\Hom_\Z(\Q/\Z,{-})$ and
$\Q/\Z\ot_\Z{-}$.
 The other one is an equivalence between the full subcategory of
objects having no injective direct summands in $\Z\modl_\ttors$ and
the full subcategory of objects having no projective direct summands
in $\Z\modl_\rrcot$ \cite[Proposition~2.3]{Harr} (see
also~\cite[Theorem~55.6]{Fuc}).
 This one is provided by the functors $\Ext^1_\Z(\Q/\Z,{-})$ and
$\Tor_1^\Z(\Q/\Z,{-})$.

 Harrison then remarks that one can obtain a correspondence between
arbitrary objects in $\Z\modl_\ttors$ and $\Z\modl_\rrcot$ by
taking a direct sum of the $\Hom$ and $\Ext^1$ from $\Q/\Z$ on
the one side and a direct sum of the~$\ot$ and $\Tor_1$ with
$\Q/\Z$ on the other side.
 This is no longer an equivalence of categories, of course, but only
a bijection between the isomorphism classes of objects.
 The discussion of this ``nonnatural isomorphism'' continues
in Matlis'~\cite[Remarks in~\S3]{Mat}, where he observes that such
a bijection between the torsion and reduced cotorsion modules holds
over any Dedekind domain, but not over other domains.
 One feels pained by reading today these discussions which would be
so much illuminated and clarified by an introduction of the derived
category point of view.

 In the modern homological language, we say that the equivalence of
derived categories~\eqref{abelian-torsion-redcotorsion} is provided
by the derived functors $\boR\Hom_\Z(\Q/\Z,{-})$ and
$\Q/\Z\ot_\Z^\boL{-}$, which have homological dimension~$1$.
 So, generally speaking, acting in each direction, they take a group
into a two-term complex of groups.
 Restricting the equivalence~\eqref{abelian-torsion-redcotorsion}
to those the complexes concentrated in the cohomological degree~$0$ 
on each side which are taken by this equivalence to complexes
concentrated in the cohomological degree~$0$ on the other side,
one obtains the equivalence of categories $\Z\modl_\ttors^\inj\simeq
\Z\modl_\ttors^\proj$ \cite[Proposition~2.1]{Harr}.
 Restricting the equivalence~\eqref{abelian-torsion-redcotorsion}
to those complexes of reduced cotorsion groups that are
concentrated in the cohomological degree~$0$ and are taken
to complexes of torsion groups concentrated in the cohomological
degee~$-1$ (and vice versa), one obtains the second Harrison's
equvalence~\cite[Proposition~2.3]{Harr}.

 Furthermore, the abelian categories $\Z\modl_\ttors$ and
$\Z\modl_\rrcot$ also have homological dimension~$1$, hence every
complex in these categories is noncanonically isomorphic to
the direct sum of its cohomology groups.
 Decomposing the two-term complexes of torsion groups into the direct
sums of their cohomology groups, one recovers Harrison's direct sum
decomposition of reduced cotorsion groups into the ``torsion-free''
and ``adjusted'' parts~\cite[Proposition~2.2]{Harr}.

 Matlis~\cite{Mat} extended Harrison's theory to modules over arbitrary
commutative domains.
 The two Matlis' equivalences of categories, generalizing the two
Harrison's equivalences, are~\cite[Theorems~3.4 and~3.8]{Mat}
(see also~\cite[Theorem~VIII.2.8]{FS}).
 The aim of this paper is to interpret the two Matlis equivalences of
additive categories of modules as a single triangulated equivalence
between the derived categories.

 There is one caveat: our generality level differs from Matlis'.
 On the one hand, Matlis' paper~\cite{Mat} only deals with integral
domains~$R$.
 In the language of~\cite{Mat}, an element $x$ in an $R$\+module $M$
is said to be torsion if there exists $r\in R$, \ $r\ne0$ such that
$rx=0$.
 Subsequently, in the book~\cite[Chapters~I\+-II]{Mat2}, Matlis
extends his theory to arbitrary commutative rings.
 In the context of~\cite{Mat2}, an element $x\in M$ is said to be
torsion if there exists a nonzero-divisor $r\in R$ such that
$rx=0$.

 This generalizes naturally much further: let $R$ be an arbitrary
commutative ring and $S\subset R$ be a multiplicative set.
 We say that an element $x$ in an $R$\+module $M$ is
\emph{$S$\+torsion} if there exists $s\in S$ such that $sx=0$.
 For the beginning, one can assume that all the elements of $S$ are
nonzero-divisors in~$R$ (cf.~\cite{GM} and~\cite{AHT}).
 Then this restriction can be relaxed to the condition that
the $S$\+torsion in $R$ is bounded, or even dropped altogether.
 One of the aims of this paper is to explain how to extend
the classical theory to the situation when $S$ contains some
zero-divisors.

 On the other hand, the category of what Matlis calls ``cotorsion
$R$\+modules'' (and we call \emph{$S$\+contramodule $R$\+modules})
is only well-behaved homologically (i.~e., abelian with an exact
embedding functor $R\modl_{S\ctra}\rarrow R\modl$) when the projective
dimension of the $R$\+module $S^{-1}R$ does not exceed~$1$.
 Matlis observes in~\cite[\S10]{Mat} that ``A remarkable smoothing of
the whole theory takes place under the assumption'' of the projective
dimension of the field of fractions $Q$ of his domain $R$ being
equal to~$1$.
 Still, he formulates the main results, including the equivalences of
categories in~\cite[\S3]{Mat}, for an arbitrary commutative domain.

 Commutative domains $R$ for which $\pd_RQ=1$ are now called
\emph{Matlis domains}~\cite[Section~2]{Mat1}, \cite[Section~IV.4]{FS}.
 In this paper, we pay tribute to Matlis' (and now traditional)
generality preferences by discussing the \emph{$S$\+topology} for
an arbitrary multiplicative subset $S$ in a commutative ring~$R$, but
then make the assumption $\pd_RS^{-1}R\le1$ in order to formulate and
prove our homological results.

 This assumption holds, in particular, for every countable
multiplicative subset $S\subset\nopagebreak R$.
 It also holds under certain more complicated countability conditions
(\cite[Theorem~3.2]{FS0} and~\cite[Theorem~1.1]{AHT}), and under
Noetherian Krull one-dimensionality conditions (\cite[Theorem~4.2]{Mat2}
and~\cite[Section~13]{Pcta}).
 In fact, one has $\pd_RS^{-1}R\le1$ for every multiplicative subset $S$
in a Noetherian commutative ring of Krull dimension~$1$
(\cite[Corollaire~II.3.3.2]{RG} and~\cite[Remark~13.9]{Pcta}).

 It remains to explain the connection between the \emph{MGM}
(\emph{Matlis--Greenlees--May}) \emph{duality} of the paper~\cite{Pmgm}
and the triangulated Matlis equivalence/duality of the present paper.
 To pass from the former to the latter, one first restricts generality
in the situation of a finitely generated ideal $I$ in a commutative
ring $R$, by assuming that $I$ is a principal ideal generated by
an element $s\in R$.
 Then one expands generality in a different direction, by replacing
the multiplicative set $\{s^n\mid n\in\Z_{\ge0}\}\subset R$ by
an arbitrary multiplicative subset $S\subset R$.

 Returning to the above discussion of dedualizing complexes, let us
point out that, in our present context, the two-term complex
\begin{equation} \label{matlis-dedualizing}
 R\lrarrow S^{-1}R,
\end{equation}
or a complex of $S$\+torsion $R$\+modules quasi-isomorphic to it,
plays the role of a dedualizing complex (cf.\ the more
elementary~\eqref{one-element-dedualizing}).
 When all the elements of $S$ are nonzero-divisors in $R$,
the complex~\eqref{matlis-dedualizing} is quasi-isomorphic to
the $R$\+module $(S^{-1}R)/R$.
 In this connection, it is worth mentioning that there is a long
tradition of considering the functor $\Ext^1_R(Q/R,{-})$ in
the homological algebra of commutative domains $R$, going back to
the papers of Nunke, Harrison, and Matlis~\cite{Nun,Harr,Mat1,Mat}.

 To conclude this introduction, let us explain, in the most simple
homological terms, why the equivalence of triangulated
categories~\eqref{abelian-torsion-redcotorsion} holds.
 First of all, both $\sD^\st(\Z\modl_\ttors)$ and
$\sD^\st(\Z\modl_\rrcot)$ are full triangulated subcategories
in $\sD^\st(\Z\modl)$.
 Furthermore, the derived category of $\Q$\+vector spaces
$\sD^\st(\Q\vect)$, which is also a full triangulated subcategory
in $\sD^\st(\Z\modl)$, takes part in two semiorthogonal decompositions
of the category $\sD^\st(\Z\modl)$.
 The left orthogonal complement to $\sD^\st(\Q\vect)$ in
$\sD^\st(\Z\modl)$ is $\sD^\st(\Z\modl_\ttors)$, while the right
orthogonal complement to $\sD^\st(\Q\vect)$ in the same ambient
category is $\sD^\st(\Z\modl_\rrcot)$.
 As a corollary to these two semiorthogonal decompositions, one has
$$
 \sD^\st(\Z\modl_\ttors)\simeq\sD^\st(\Z\modl)/\sD^\st(\Q\vect)
 \simeq\sD^\st(\Z\modl_\rrcot).
$$

 Similarly in the equivalence~\eqref{abelian-torsion-contra},
both $\sD^\st(\Z\modl_{p\tors})$ and $\sD^\st(\Z\modl_{p\ctra})$ are
full triangulated subcategories in $\sD^\st(\Z\modl)$.
 So is the derived category $\sD^\st(\Z[p^{-1}]\modl)$ of abelian
groups with invertible action of~$p$.
 Furthermore, the full triangulated subcategory
$\sD^\st(\Z[p^{-1}]\modl)\subset\sD^\st(\Z\modl)$ takes part in two
semiorthogonal decompositions of the ambient category
$\sD^\st(\Z\modl)$.
 The left orthogonal complement to $\sD^\st(\Z[p^{-1}]\modl)$ in
$\sD^\st(\Z\modl)$ is $\sD^\st(\Z\modl_{p\tors})$, while the right
orthogonal complement is $\sD^\st(\Z\modl_{p\ctra})$.
 In the result, one obtains~\cite[Corollary~3.5]{Pmgm}
$$
 \sD^\st(\Z\modl_{p\tors})\simeq\sD^\st(\Z\modl)/
 \sD^\st(\Z[p^{-1}]\modl)\simeq\sD^\st(\Z\modl_{p\ctra}).
$$

 As we show in this paper, these results generalize to an arbitrary
commutative ring $R$ with a multiplicative subset $S\subset R$ such
that the projective dimension of the $R$\+module $S^{-1}R$ does not
exceed~$1$.
 In this setting, the full subcategory of $S$\+contramodule
$R$\+modules $R\modl_{S\ctra}\subset R\modl$ is an abelian category
with an exact embedding functor $R\modl_{S\ctra}\rarrow R\modl$.
 The full subcategory of $S$\+torsion $R$\+modules
$R\modl_{I\tors}\subset R\modl$ is always so (in fact, $R\modl_{I\tors}$
is even a Serre subcategory, which $R\modl_{S\ctra}$ is
not~\cite[Section~1]{Pcta}).

 Let $\sD^\st_{S\tors}(R\modl)$ and $\sD^\st_{S\ctra}(R\modl)\subset
\sD^\st(R\modl)$ denote the full subcategories of complexes with
$S$\+torsion and, respectively, $S$\+contramodule cohomology modules.
 The derived category of $(S^{-1}R)$\+modules $\sD^\st((S^{-1}R)\modl)$
is a full triangulated subcategory in $\sD^\st(R\modl)$ taking part
in two semiorthogonal decompositions of $\sD^\st(R\modl)$.
 The left orthogonal complement to $\sD^\st((S^{-1}R)\modl)$ in
$\sD^\st(R\modl)$ coincides with $\sD^\st_{S\tors}(R\modl)$, while
the right orthogonal complement is $\sD^\st_{S\ctra}(R\modl)$.
 Hence the triangulated equivalences
\begin{equation} \label{torsion-contra-cohomology-modules}
 \sD^\st_{S\tors}(R\modl)\simeq\sD^\st(R\modl)/
 \sD^\st((S^{-1}R)\modl)\simeq\sD^\st_{S\ctra}(R\modl).
\end{equation}

 Furthermore, when the $S$\+torsion in the ring $R$ is bounded,
the triangulated functors $\sD^\st(R\modl_{S\tors})\rarrow
\sD^\st(R\modl)$ and $\sD^\st(R\modl_{S\ctra})\rarrow\sD^\st(R\modl)$
are fully faithful.
 Their essential images coincide with the full subcategories
$\sD^\st_{S\tors}(R\modl)$ and $\sD^\st_{S\ctra}(R\modl)\subset
\sD^\st(R\modl)$.
 So we have
\begin{equation} \label{essential-images}
 \sD^\st_{S\tors}(R\modl)\simeq\sD^\st(R\modl_{S\tors})
 \quad\text{and}\quad
 \sD^\st_{S\ctra}(R\modl)\simeq\sD^\st(R\modl_{S\ctra})
\end{equation}
 Comparing~\eqref{torsion-contra-cohomology-modules}
with~\eqref{essential-images}, we obtain a triangulated equivalence
between the derived categories of the abelian categories
$R\modl_{S\tors}$ and $R\modl_{S\ctra}$
\begin{equation} \label{equivalence-between-derived}
 \sD^\st(R\modl_{S\tors})\simeq\sD^\st(R\modl_{S\ctra}).
\end{equation}
 In the last section of this paper, we show that the triangulated
equivalences~\eqref{equivalence-between-derived} hold for the absolute
derived categories $\sD^{\abs+}$, \,$\sD^{\abs-}$, and $\sD^\abs$ of
$S$\+torsion modules and $S$\+contramodule modules over a commutative
ring $R$ with bounded $S$\+torsion and $\pd_SS^{-1}R\le1$ as well as
for the conventional derived categories
$\sD^\b$, \,$\sD^+$, \,$\sD^-$, and~$\sD$.

 It should be mentioned that a very different generalization of
the Harrison--Matlis additive category equivalences was developed
many years ago by Facchini in~\cite{Fac1,Fac2}
(see also~\cite[Example~13.4]{GT}).
 This was further generalized and formulated in terms of fully
faithful/Verdier quotient functors between triangulated categories
by Bazzoni~\cite{Baz}.
 Let us briefly point out one of the differences between our
approaches.
 In Facchini's papers, the aim was to study the additive categories
of divisible and reduced modules.
 The restriction to torsion modules was viewed as undesirable and
successfully removed.
 In the present paper, our aim is to study the abelian categories of
$S$\+torsion modules and their covariantly dual counterparts,
the $S$\+contramodules. {\uchyph=0\par}

\medskip

\textbf{Acknowledgment.}
 This paper owes its existence to Jan Trlifaj, who invited me
to Prague, told me about the $R$\+topology being a traditional topic
in the commutative algebra of non-Noetherian domains, and showed me
the book~\cite{FS}.
 I~am also grateful to the anonymous referee for several helpful
suggestions.
 The author's research is supported by the Israel Science
Foundation grant~\#\,446/15.

\Section{Preliminaries}  \label{preliminaries-secn}

 Let $R$ be an associative ring.
 We denote by $R\modl$ the abelian category of left $R$\+modules.
 A pair of full subcategories $(\sT,\sF)$ in $R\modl$ is called
a \emph{torsion theory}~\cite{Dic} if one has $\Hom_R(T,F)=0$ for all
$T\in\sT$ and $F\in\sF$, and for every left $R$\+module $M$
there exists a short exact sequence $0\rarrow T\rarrow M\rarrow F
\rarrow0$ with $T\in\sT$ and $F\in\sF$.
 In this case, such a short exact sequence is unique and functorial.
 Given an $R$\+module $M$, the $R$\+module $T$ is the maximal
submodule of $M$ belonging to $\sT$, and the $R$\+module $F$ is
the maximal quotient module of $M$ belonging to~$\sF$.

 The full subcategory $\sT\subset R\modl$ is called the \emph{torsion
class} of a torsion theory $(\sT,\sF)$, and the full subcategory
$\sF\subset R\modl$ is called the \emph{torsion-free class}.
 For any torsion theory $(\sT,\sF)$ in $R\modl$, the torsion class
$\sT$ is closed under the passages to arbitrary quotient objects,
extensions, and infinite direct sums, while the torsion-free class
$\sF$ is closed under the passages to subobjects, extensions, and
infinite products in $R\modl$.
 Since the direct sum of a family of modules is a submodule of their
product, it follows that the class $\sF$ is closed under infinite
direct sums, too.

 Conversely, any full subcategory $\sT\subset R\modl$ closed under
quotient objects, extensions, and infinite direct sums is
the torsion class of a certain torsion theory $(\sT,\sF)$, and any full
subcategory $\sF\subset R\modl$ closed under subobjects, extensions,
and infinite products is the torsion-free class of a torsion theory
$(\sT,\sF)$. 
 The complementary class can be uniquely recovered by the rules that
$\sF$ consists of all the $R$\+modules $F$ such that $\Hom_R(T,F)=0$
for all $T\in\sT$, and $\sT$ consists of all the $R$\+modules $T$
such $\Hom_R(T,F)=0$ for all $F\in\sF$.

 A torsion theory $(\sT,\sF)$ is called \emph{hereditary} if the class
$\sT\subset R\modl$ is closed under quotient objects.
 In this case, $\sT$ is a Serre subcategory in $R\modl$; so, in
particular, it is an abelian category with an exact embedding
functor $\sT\rarrow R\modl$.

\medskip

 From now on and for the rest of this paper, let $R$ be a commutative
ring and $S\subset R$ be a multiplicative subset.
 An element $x\in M$ in an $R$\+module $M$ is said to be
\emph{$S$\+torsion} if there exists $s\in S$ such that $sx=0$ in $M$.
 The submodule of all $S$\+torsion elements in $M$ is denoted by
$\Gamma_S(M)\subset M$ and the embedding morphism
$\Gamma_S(M)\rarrow M$ is denoted by~$\gamma_{S,M}$.

 We will use the notation $S^{-1}M=S^{-1}R\ot_RM$ for
the $S$\+localization of an $R$\+module~$M$.
 An $R$\+module $M$ is said to be \emph{$S$\+torsion} if
$\Gamma_S(M)=M$, or equivalently, if $S^{-1}M=0$.
 An $R$\+module $M$ is said to be \emph{$S$\+torsion-free} if
$\Gamma_S(M)=0$.
 The full subcategories of $S$\+torsion $R$\+modules and $S$\+torsion
free $R$\+modules form a hereditary torsion theory in $R\modl$;
the related canonical short exact sequence is $0\rarrow \Gamma_S(M)
\rarrow M\rarrow M/\Gamma_S(M)\rarrow0$ for any $R$\+module~$M$.
 In particular, the full subcategory $R\modl_{S\tors}$ of all
$S$\+torsion $R$\+modules is an abelian category with an exact
embedding functor $R\modl_{S\tors}\rarrow R\modl$.

 An $R$\+module $M$ is said to be \emph{$S$\+divisible} if for every
element $s\in S$ the action map $s\:M\rarrow M$ is surjective.
 An $R$\+module $M$ is said to be \emph{$S$\+reduced} if it has no
$S$\+divisible $R$\+submodules.
 The full subcategories of $S$\+divisible and $S$\+reduced
$R$\+modules form a torsion theory in $R\modl$; the $S$\+divisible
modules are the torsion class and the $S$\+reduced $R$\+modules
are the torsion-free class.
 In addition to the general closure properties of such classes,
the class of all $S$\+divisible $R$\+modules is also closed under
infinite products.
 An $R$\+module $M$ is both $S$\+torsion-free and $S$\+divisible if
and only if it is an $(S^{-1}R)$\+module.

 An $R$\+module $M$ is said to be \emph{$S$\+h-divisible} if is
a quotient $R$\+module of an $(S^{-1}R)$\+module.
 Clearly, every $S$\+h-divisible $R$\+module is $S$\+divisible.
 The class of all $S$\+h-divisible $R$\+modules is closed under
quotient objects, infinite direct sums, and infinite products
in $R\modl$, but it is not always closed under extensions.
 Every $R$\+module $M$ has a unique maximal $S$\+h-divisible submodule
$h_S(M)\subset M$, which can be constructed as the image of the natural
map $\Hom_R(S^{-1}R,M)\rarrow M$.

 An $R$\+module $M$ is said to be \emph{$S$\+h-reduced} if it has no
$S$\+h-divisible submodules, or equivalently, if $\Hom_R(S^{-1}R,M)=0$.
 An $R$\+module $M$ is $S$\+h-reduced if and only if $h_S(M)=0$, but
the quotient module $M/h_S(M)$ for an arbitrary $R$\+module $M$ is
not always $S$\+$h$-reduced.
 The class of all $S$\+h-reduced $R$\+modules is closed under
subobjects, extensions, infinite direct sums, and infinite products
in $R\modl$.
 Every $S$\+reduced $R$\+module is $S$\+h-reduced.
 Every $S$\+h-reduced $S$\+torsion-free $R$\+module is $S$\+reduced
(because every $S$\+divisible $S$\+torsion-free $R$\+module is
$S$\+h-divisible).

 So the full subcategories of $S$\+h-divisible and $S$\+h-reduced
$R$\+modules do not form a torsion theory in $R\modl$ in general.
 However, when $\pd_RS^{-1}R\le1$, the problem disappears and these
two classes \emph{do} form a torsion theory, as we will see below in
Lemmas~\ref{h-torsion-theory} and~\ref{torsion-free-and-h-divisible}.
 Furthermore, when all the elements of $S$ are nonzero-divisors in $R$
and $\pd_RS^{-1}R\le1$, the classes of $S$\+divisible and
$S$\+h-divisible $R$\+modules coincide~\cite[Theorem~2.6]{Ham},
\cite[Theorem~3.2]{FS0}, \cite[Proposition~6.4]{AHT}.
 Hence the classes of $S$\+reduced and $S$\+h-reduced $R$\+modules
also coincide.

\begin{lem}
 Let\/ $0\rarrow L\rarrow M\rarrow N\rarrow0$ be a short exact sequence
of $R$\+modules such that the $R$\+module $N$ is $S$\+h-divisible,
while the $R$\+module $L$ is $S$\+torsion-free and $S$\+h-divisible.
 Then the $R$\+module $M$ is $S$\+h-divisible.
\end{lem}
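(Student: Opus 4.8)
The plan is to deduce everything from two facts already available in the preliminaries: an $R$\+module that is both $S$\+torsion-free and $S$\+divisible is the same thing as an $(S^{-1}R)$\+module, and the class of $(S^{-1}R)$\+modules, regarded as a full subcategory of $R\modl$, is closed under extensions (the latter being an easy consequence of the five lemma applied to multiplication maps).

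First I would observe that $L$, being $S$\+h-divisible, is $S$\+divisible; since it is also $S$\+torsion-free, $L$ is an $(S^{-1}R)$\+module. Next, using that $N$ is $S$\+h-divisible, I would choose an $(S^{-1}R)$\+module $V$ together with a surjective $R$\+module map $\pi\:V\rarrow N$, and form the pullback $V'=M\times_N V$. This produces an $R$\+module $V'$ fitting into a short exact sequence $0\rarrow L\rarrow V'\rarrow V\rarrow0$ together with a surjective map $V'\rarrow M$ (pullbacks of epimorphisms are epimorphisms in an abelian category).

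The crux is then to check that $V'$ is itself an $(S^{-1}R)$\+module. For each $s\in S$, multiplication by~$s$ is an endomorphism of the short exact sequence $0\rarrow L\rarrow V'\rarrow V\rarrow0$ which is invertible on $L$ and on $V$ because those are $(S^{-1}R)$\+modules; by the five lemma it is invertible on $V'$ as well. As this holds for all $s\in S$, the module $V'$ is an $(S^{-1}R)$\+module. Since $M$ is a quotient $R$\+module of $V'$, it follows by definition that $M$ is $S$\+h-divisible, as desired.

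I do not expect a genuine obstacle in carrying this out; the only point that needs a moment's thought is why one cannot simply apply $\Hom_R(S^{-1}R,{-})$ to the sequence and chase the evaluation maps. Doing so reduces the problem to the vanishing of a connecting class in $\Ext^1_R(S^{-1}R,L)$, and making that vanishing explicit once again comes down to the extension-closedness of $(S^{-1}R)$\+modules used above, so the pullback formulation is the cleaner route. Note, incidentally, that the hypothesis $\pd_RS^{-1}R\le1$ is not needed for this particular lemma.
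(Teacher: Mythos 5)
Your proposal is correct and follows essentially the same route as the paper: identify $L$ as an $(S^{-1}R)$\+module, pull the sequence back along a surjection from an $(S^{-1}R)$\+module onto $N$, note that the resulting extension of $(S^{-1}R)$\+modules is again an $(S^{-1}R)$\+module, and conclude that $M$ is a quotient of it. Your five-lemma justification merely spells out the extension-closedness step the paper leaves implicit, and your observation that $\pd_RS^{-1}R\le1$ is not needed is consistent with the lemma's placement before that assumption is imposed.
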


\begin{proof}
 Let $N$ be the quotient $R$\+module of an $(S^{-1}R)$\+module~$D$.
 Pulling back the short exact sequence $0\rarrow L\rarrow M\rarrow
N\rarrow 0$ with respect to the morphism $D\rarrow N$, we obtain
a short exact sequence of $R$\+modules $0\rarrow L\rarrow E\rarrow D
\rarrow0$, where $D$ and $L$ are $(S^{-1}R)$\+modules.
 It follows that $E$ is an $(S^{-1}R)$\+module, too; and $M$ is
a quotient $R$\+module of~$E$.
\end{proof}

\begin{lem} \label{ext-from-s-minus-1-r-module}
 Let $C$ be an $R$\+module and $D$ be an $(S^{-1}R)$\+module.
 Assume that\/ $\Ext^i_R(S^{-1}R,C)=0$ for all\/ $0\le i\le n$, where
$n\ge0$ is a fixed integer.
 Then\/ $\Ext^i_R(D,C)=\nobreak0$ for all\/ $0\le i\le n$.
\end{lem}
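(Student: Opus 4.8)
The plan is to realize any $(S^{-1}R)$-module $D$ as a colimit of free $(S^{-1}R)$-modules in a way that lets me transfer the vanishing of $\Ext^i_R(S^{-1}R,C)$ to $\Ext^i_R(D,C)$. The cleanest approach is via a free resolution over the ring $S^{-1}R$: choose an exact sequence of $(S^{-1}R)$-modules $\cdots \to P_1 \to P_0 \to D \to 0$ in which each $P_j$ is a free $(S^{-1}R)$-module, hence a direct sum of copies of $S^{-1}R$. Since $\Ext^i_R(S^{-1}R,C)=0$ for $0\le i\le n$ and $\Ext$ in the first argument turns direct sums into products, we get $\Ext^i_R(P_j,C)=0$ for all $j$ and all $0\le i\le n$. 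Now I would run a dimension-shifting argument: break the resolution into short exact sequences $0 \to Z_j \to P_j \to Z_{j-1} \to 0$ (with $Z_{-1}=D$), and chase the long exact sequences of $\Ext_R({-},C)$ to propagate the vanishing down to $D$.

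More concretely, the key steps in order are: (1) fix the free $(S^{-1}R)$-resolution $P_\bullet \to D$ and record $\Ext^i_R(P_j,C)=0$ for $0\le i\le n$; (2) prove by descending induction on the syzygy index that $\Ext^i_R(Z_j,C)=0$ for $0\le i\le n-j-1$ or, more carefully, set up the induction so the ``window'' of vanishing degrees shifts correctly as one moves along the resolution; (3) conclude $\Ext^i_R(D,C)=0$ for $0\le i\le n$ from the short exact sequence $0\to Z_0 \to P_0 \to D \to 0$ together with the vanishing already established for $Z_0$. The long exact sequence fragment to use at each stage is
\[
 \Ext^{i-1}_R(Z_j,C) \rarrow \Ext^i_R(Z_{j-1},C) \rarrow \Ext^i_R(P_j,C),
\]
where the right-hand term vanishes in the relevant range, so $\Ext^i_R(Z_{j-1},C)$ is a quotient of $\Ext^{i-1}_R(Z_j,C)$, and one iterates down to $i=0$, where $\Ext^0_R(P_j,C)=\Hom_R(P_j,C)=0$ forces everything to vanish.

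The main obstacle I anticipate is bookkeeping the indices in the induction so that each application of the long exact sequence stays within the range where $\Ext^i_R(P_j,C)$ is known to vanish; one has to be a little careful that the ``shift'' does not run out of room before reaching $D$. A secondary point worth checking is that one really may take a free (not merely projective) resolution of $D$ over $S^{-1}R$ — this is automatic — and that a free $(S^{-1}R)$-module is genuinely a direct sum of copies of $S^{-1}R$ as an $R$-module, so that the additivity of $\Ext_R({-},C)$ in the first variable over arbitrary direct sums applies. Neither of these is deep, but both must be invoked explicitly. An alternative, essentially equivalent, route would be to filter $D$ by submodules with free $(S^{-1}R)$-module subquotients and induct on the filtration; I expect the resolution argument to be shorter to write out.
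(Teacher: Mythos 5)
Your argument is correct and is essentially the paper's proof: both rest on the facts that $\Ext^i_R(-,C)$ kills free $(S^{-1}R)$-modules in degrees $0\le i\le n$ (by additivity in the first variable) and on the long exact sequence attached to a presentation $0\to B\to F\to D\to 0$ with $F$ free over $S^{-1}R$. The only difference is organizational: the paper runs an increasing induction on~$i$ over all $(S^{-1}R)$-modules at once, using a single such short exact sequence (the kernel $B$ being again an $(S^{-1}R)$-module), whereas you unroll this into a fixed free resolution and do dimension shifting with the shrinking window of degrees --- the bookkeeping you worry about does close up correctly.
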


\begin{proof}
 Arguing by increasing induction in~$i$, pick a short exact sequence
of $(S^{-1}R)$\+modules $0\rarrow B\rarrow F\rarrow D\rarrow0$ with
a free $(S^{-1}R)$\+module~$F$.
 Then we have an exact sequence \hbadness=1100
$$
 \dotsb\lrarrow \Ext^{i-1}_R(B,C)\lrarrow\Ext_R^i(D,C)\lrarrow
 \Ext_R^i(F,C)\lrarrow\dotsb,
$$
where $\Ext_R^i(F,C)=0$ since $\Ext_R^i(S^{-1}R,C)=0$, and
$\Ext_R^{i-1}(B,C)=0$ by the induction assumption.
\end{proof}

 An $R$\+module $C$ is said to be \emph{$S$\+cotorsion} if
$\Ext^1_R(S^{-1}R,C)=0$, and \emph{strongly $S$\+cotorsion} if
$\Ext^n_R(S^{-1}R,C)=0$ for all $n\ge1$.
 An $R$\+module $C$ is called an \emph{$S$\+contramodule} if
it is $S$\+h-reduced and $S$\+cotorsion, that is
$\Ext^n_R(S^{-1}R,C)=0$ for $n=0$ and~$1$.
 An $R$\+module $C$ is called a \emph{strong $S$\+contramodule}
if it is $S$\+h-reduced and strongly $S$\+cotorsion, that is
$\Ext^n_R(S^{-1}R,C)=0$ for all $n\ge0$.

 When $R$ is a domain and $S=R\setminus\{0\}$, our (strong)
$S$\+contramodules are what are called ``(strongly) cotorsion
$R$\+modules'' in~\cite{Mat}.
 For any commutative ring $R$ with a multiplicative subset $S$,
the full subcategory of $S$\+contramodules in $R\modl$ is
the ``right perpendicular category'' to the $R$\+module $S^{-1}R$,
as defined by Geigle and Lenzing in~\cite[Section~1]{GL}
(cf.\ Theorem~\ref{contramodule-category-thm} below).

\begin{lem} \label{contramodule-short-exact}
 Let\/ $0\rarrow C'\rarrow C\rarrow C''\rarrow0$ be a short exact
sequence of $R$\+modules.  Then \par
\textup{(a)} if $C'$ and $C''$ are $S$\+contramodules, then $C$
is an $S$\+contramodule; \par
\textup{(b)} if $C$ is an $S$\+contramodule, then $C''$ is
$S$\+h-reduced if and only if $C'$ is an $S$\+contramodule; \par
\textup{(c)} if $C$ is an $S$\+contramodule and $C'$ is a strong
$S$\+contramodule, then $C''$ is an $S$\+contramodule.
\end{lem}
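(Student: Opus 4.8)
The plan is to derive all three parts from a single long exact sequence, namely the one obtained by applying the derived functors $\Ext^*_R(S^{-1}R,{-})$ to the given short exact sequence $0\rarrow C'\rarrow C\rarrow C''\rarrow0$. The key observation is that, by definition, ``$C$ is an $S$\+contramodule'' means exactly $\Ext^i_R(S^{-1}R,C)=0$ for $i=0$ and~$1$, while ``$C$ is a strong $S$\+contramodule'' means $\Ext^i_R(S^{-1}R,C)=0$ for all $i\ge0$; so each assertion reduces to reading off which $\Ext$\+groups along
\begin{multline*}
 0\rarrow\Hom_R(S^{-1}R,C')\rarrow\Hom_R(S^{-1}R,C)\rarrow\Hom_R(S^{-1}R,C'') \\
 \rarrow\Ext^1_R(S^{-1}R,C')\rarrow\Ext^1_R(S^{-1}R,C)\rarrow\Ext^1_R(S^{-1}R,C'') \\
 \rarrow\Ext^2_R(S^{-1}R,C')\rarrow\Ext^2_R(S^{-1}R,C)\rarrow\dotsb
\end{multline*}
are forced to vanish.

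For part~(a), I would note that when $C'$ and $C''$ are $S$\+contramodules, the term $\Ext^i_R(S^{-1}R,C)$ for $i=0$ or~$1$ sits in the exact sequence between $\Ext^i_R(S^{-1}R,C')=0$ and $\Ext^i_R(S^{-1}R,C'')=0$, hence vanishes; so $C$ is an $S$\+contramodule. For part~(b), assuming $C$ is an $S$\+contramodule, the vanishing of $\Hom_R(S^{-1}R,C)$ already forces $\Hom_R(S^{-1}R,C')=0$ (so $C'$ is automatically $S$\+h-reduced), and the vanishing of both $\Hom_R(S^{-1}R,C)$ and $\Ext^1_R(S^{-1}R,C)$ extracts from the sequence an isomorphism $\Hom_R(S^{-1}R,C'')\simeq\Ext^1_R(S^{-1}R,C')$. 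Reading this isomorphism in both directions gives: $C''$ is $S$\+h-reduced if and only if $\Ext^1_R(S^{-1}R,C')=0$, if and only if $C'$ is an $S$\+contramodule, where the last equivalence uses that $C'$ has already been shown to be $S$\+h-reduced.

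For part~(c), assuming $C$ is an $S$\+contramodule and $C'$ is a strong $S$\+contramodule, I would first use $\Ext^1_R(S^{-1}R,C')=0$ together with the isomorphism from~(b) to conclude $\Hom_R(S^{-1}R,C'')=0$, i.e.\ $C''$ is $S$\+h-reduced; then the three-term exact piece $\Ext^1_R(S^{-1}R,C)\rarrow\Ext^1_R(S^{-1}R,C'')\rarrow\Ext^2_R(S^{-1}R,C')$, whose two outer terms vanish — the left one because $C$ is an $S$\+contramodule, the right one because $C'$ is strong — forces $\Ext^1_R(S^{-1}R,C'')=0$, so $C''$ is an $S$\+contramodule. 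There is essentially no obstacle in this argument; the only point worth flagging is that part~(c) is precisely where one must spend the vanishing of $\Ext^2_R(S^{-1}R,C')$ — the single use of the hypothesis ``strong'' — whereas parts~(a) and~(b) never leave cohomological degrees $0$ and~$1$. I would also remark in passing that no hypothesis on $\pd_R S^{-1}R$ is needed for this lemma.
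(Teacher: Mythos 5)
Your proof is correct and is exactly the ``obvious'' long exact sequence argument that the paper has in mind (its own proof merely cites Matlis and declares the statement obvious): applying $\Ext^*_R(S^{-1}R,{-})$ to the short exact sequence and reading off the forced vanishings is the intended route. Your closing remarks --- that the \emph{strong} hypothesis is spent only on $\Ext^2_R(S^{-1}R,C')$ in part~(c), and that no assumption on $\pd_RS^{-1}R$ is needed --- are also accurate.
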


\begin{proof}
 This is~\cite[Lemma~1.1]{Mat} or~\cite[Theorem~1.5]{Mat2}.
 The proof is obvious.
\end{proof}

\begin{lem} \label{contramodules-closed-under}
 The full subcategory of $S$\+contramodule $R$\+modules is closed
under the kernels, extensions, infinite products, and projective
limits in $R\modl$.
\end{lem}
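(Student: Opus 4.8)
The plan is to treat the four closure properties one at a time, deriving the last two from the first two together with Lemma~\ref{contramodule-short-exact}.

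Closure under extensions is already at hand: it is precisely part~(a) of Lemma~\ref{contramodule-short-exact}. For infinite products, let $(C_i)_{i\in I}$ be a family of $S$\+contramodule $R$\+modules. Then $\Hom_R(S^{-1}R,\prod_i C_i)\cong\prod_i\Hom_R(S^{-1}R,C_i)=0$, and, computing $\Ext$ from a projective resolution of the $R$\+module $S^{-1}R$ and using that infinite products of exact sequences of $R$\+modules are exact, likewise $\Ext^1_R(S^{-1}R,\prod_i C_i)\cong\prod_i\Ext^1_R(S^{-1}R,C_i)=0$; hence $\prod_i C_i$ is an $S$\+contramodule.

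For kernels, I would take a morphism $f\:C\rarrow D$ of $S$\+contramodule $R$\+modules and set $K=\ker f$. The image $\im f$ is a submodule of $D$, hence $S$\+h-reduced, since the class of $S$\+h-reduced $R$\+modules is closed under subobjects. Then I apply Lemma~\ref{contramodule-short-exact}(b) to the short exact sequence $0\rarrow K\rarrow C\rarrow\im f\rarrow0$: here $C$ is an $S$\+contramodule and the quotient $\im f$ is $S$\+h-reduced, so the kernel $K$ is an $S$\+contramodule. Finally, for a diagram $(C_i)_{i\in\mathsf I}$ of $S$\+contramodule $R$\+modules indexed by a small category $\mathsf I$, the limit sits in the canonical left exact sequence
\[
 0\rarrow\varprojlim\nolimits_i C_i\rarrow\prod\nolimits_{i\in\mathsf I}C_i
 \rarrow\prod\nolimits_{(\alpha\:\,i\to j)}C_j ,
\]
which exhibits $\varprojlim_i C_i$ as the kernel of a morphism between products of $S$\+contramodule $R$\+modules; by the two cases already treated, this kernel is an $S$\+contramodule.

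I do not expect any real difficulty here. The one step that deserves a moment of care is the treatment of kernels: the crucial point is that a submodule of an $S$\+h-reduced module is again $S$\+h-reduced, which is exactly what puts us in a position to invoke Lemma~\ref{contramodule-short-exact}(b); everything else is formal.
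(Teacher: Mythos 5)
Your proof is correct and follows essentially the same route as the paper: extensions via Lemma~\ref{contramodule-short-exact}(a), products by the (obvious) compatibility of $\Hom$ and $\Ext$ with products, kernels by noting that the image is $S$\+h-reduced as a submodule of an $S$\+h-reduced module and applying Lemma~\ref{contramodule-short-exact}(b), and projective limits as kernels of morphisms between products. Nothing to add.
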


\begin{proof}
 This is essentially a particular case of the first assertion
of~\cite[Proposition~1.1]{GL}.
 Closedness with respect to infinite products is obvious, and
closedness under extensions is provided by
Lemma~\ref{contramodule-short-exact}(a).
 Now let $f\:C\rarrow D$ be a morphism of $S$\+contramodule
$R$\+modules; set $I=\im(f)$ and $E=\ker(f)$.
 Then the $R$\+module $I$ is $S$\+h-reduced as a submodule of
an $S$\+h-reduced $R$\+module~$D$.
 Applying Lemma~\ref{contramodule-short-exact}(b) to
the short exact sequence $0\rarrow E\rarrow C\rarrow I\rarrow0$,
we conclude that $E$ is an $S$\+contramodule $R$\+module.
 Finally, the projective limit of any diagram is the kernel of
a certain morphism between infinite products.
\end{proof}

 Following the traditional notation of $K=Q/R$, where $Q$ is
the field of fractions of a commutative domain $R$, we denote by
$K^\bu$ the two-term complex~\eqref{matlis-dedualizing}
$$
 R\lrarrow S^{-1}R,
$$
where the term $R$ sits in the cohomological degree~$-1$ and
the term $S^{-1}R$ in the cohomological degree~$0$.
 The cokernel $H^0(K^\bu)$ of the morphism $R\rarrow S^{-1}R$ will
be denoted simply by $S^{-1}R/R$.

 When all the elements of $S$ are nonzero-divisors in $R$, so
the morphism $R\rarrow S^{-1}R$ is injective, one can
use the quotient module $S^{-1}R/R$ in lieu of the two-term
complex~$K^\bu$.

 We will use the special notation
$$
 \Tor^R_n(K^\bu,M)=H^{-n}(K^\bu\ot_R^\boL M)=H^{-n}(K^\bu\ot_R M)
$$
and
$$
 \Ext_R^n(K^\bu,M)=\Hom_{\sD^\b(R\modl)}(K^\bu,M[n])
$$
for an $R$\+module $M$ (treating $K^\bu$ as if it were a module
rather than a complex).

\begin{lem} \label{tor-with-k}
 For every $R$\+module $M$, one has \par
\textup{(a)} $\Tor^R_n(K^\bu,M)=0=\Ext_R^n(K^\bu,M)$ for all\/ $n<0$;
\par
\textup{(b)} $\Tor^R_n(K^\bu,M)=0$ and\/ $\Ext_R^n(K^\bu,M)=
\Ext_R^n(S^{-1}R,M)$ for all\/ $n>1$; \par
\textup{(c)} $\Tor^R_0(K^\bu,M)=(S^{-1}R/R)\ot_RM$ and\/
$\Ext_R^0(K^\bu,M)=\Hom_R(S^{-1}R/R,M)$; \par
\textup{(d)} $\Tor^R_1(K^\bu,M)=\Gamma_S(M)$.
\end{lem}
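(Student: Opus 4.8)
The plan is to use the fact that $K^\bu$ is, on the nose, the cone of the localization morphism $\lambda\:R\rarrow S^{-1}R$, so that there is a distinguished triangle
\[ R\lrarrow S^{-1}R\lrarrow K^\bu\lrarrow R[1] \]
in $\sD^\b(R\modl)$. Everything will then follow by reading off cohomology from the two long exact sequences obtained by applying $({-})\ot_R^\boL M$ and $\boR\Hom_R({-},M)$ to this triangle, using that $R\ot_R^\boL M=M$ and $\boR\Hom_R(R,M)=M$ are concentrated in cohomological degree~$0$.

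First I would dispose of the $\Tor$ statements. Since both terms of $K^\bu$ are flat $R$\+modules, $K^\bu\ot_R^\boL M$ is represented by the two-term complex $K^\bu\ot_R M$, which has $M$ in degree $-1$ and $S^{-1}M=S^{-1}R\ot_RM$ in degree~$0$, with differential the localization map $\lambda_M\:M\rarrow S^{-1}M$, \ $x\mapsto x/1$ (this is exactly the identity $H^{-n}(K^\bu\ot_R^\boL M)=H^{-n}(K^\bu\ot_R M)$ recorded above). Hence $\Tor^R_n(K^\bu,M)=H^{-n}$ of this complex vanishes for $n\notin\{0,1\}$, which gives the $\Tor$ parts of~(a) and~(b); for $n=1$ it equals $\ker\lambda_M$, which is precisely $\Gamma_S(M)$ since $x/1=0$ in $S^{-1}M$ if and only if $sx=0$ for some $s\in S$, proving~(d); and for $n=0$ it equals $\coker\lambda_M$, which, by right exactness of $({-})\ot_RM$ applied to $R\rarrow S^{-1}R\rarrow S^{-1}R/R\rarrow0$, equals $(S^{-1}R/R)\ot_RM$, the $\Tor$ part of~(c).

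For the $\Ext$ statements I would apply the contravariant functor $\boR\Hom_R({-},M)$ to the triangle above and use $\boR\Hom_R(R,M)=M$ to obtain a distinguished triangle $\boR\Hom_R(K^\bu,M)\rarrow\boR\Hom_R(S^{-1}R,M)\rarrow M\rarrow\boR\Hom_R(K^\bu,M)[1]$, hence a long exact sequence
\[ \dotsb\rarrow\Ext^{n-1}_R(R,M)\rarrow\Ext^n_R(K^\bu,M)\rarrow\Ext^n_R(S^{-1}R,M)\rarrow\Ext^n_R(R,M)\rarrow\dotsb. \]
Since $\Ext^i_R(R,M)=0$ for $i\ne0$: for $n<0$ both terms flanking $\Ext^n_R(K^\bu,M)$ vanish, so $\Ext^n_R(K^\bu,M)=0$, the $\Ext$ part of~(a); for $n>1$ the terms $\Ext^{n-1}_R(R,M)$ and $\Ext^n_R(R,M)$ vanish, so $\Ext^n_R(K^\bu,M)\cong\Ext^n_R(S^{-1}R,M)$, the $\Ext$ part of~(b); and for $n=0$ the sequence reads $0\rarrow\Ext^0_R(K^\bu,M)\rarrow\Hom_R(S^{-1}R,M)\rarrow\Hom_R(R,M)$, so $\Ext^0_R(K^\bu,M)$ is the kernel of the map $\Hom_R(S^{-1}R,M)\rarrow\Hom_R(R,M)$ induced by $\lambda$, which equals $\Hom_R(S^{-1}R/R,M)$ by left exactness of $\Hom_R({-},M)$ applied to $R\rarrow S^{-1}R\rarrow S^{-1}R/R\rarrow0$; this is the $\Ext$ part of~(c).

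There is essentially no obstacle here: the computation is routine. The only point that deserves care is the bookkeeping of cohomological shifts together with the verification that the comparison maps appearing in the two long exact sequences are literally $\lambda_M$ and $\Hom_R(\lambda,M)$, so that the kernels and cokernels that arise are \emph{equal to} — not merely isomorphic to — $\Gamma_S(M)$, \ $(S^{-1}R/R)\ot_RM$, and $\Hom_R(S^{-1}R/R,M)$; this is immediate from the naturality of the cone construction applied to the morphism~$\lambda$.
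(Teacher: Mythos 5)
Your proof is correct and follows exactly the route of the paper: the paper's proof likewise derives all four parts from the (co)homology long exact sequences obtained by applying $({-})\ot_R^\boL M$ and $\boR\Hom_R({-},M)$ to the distinguished triangle $R\rarrow S^{-1}R\rarrow K^\bu\rarrow R[1]$, with the paper leaving the routine bookkeeping (which you carry out explicitly, including the flatness observation identifying $K^\bu\ot_R^\boL M$ with $K^\bu\ot_R M$) to the reader.
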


\begin{proof}
 All the assertions follow easily from the (co)homology long exact
sequences related to the distinguished triangle
\begin{equation} \label{main-distinguished-triangle}
 R\lrarrow S^{-1}R\lrarrow K^\bu\lrarrow R[1]
\end{equation}
in $\sD^\b(R\modl)$.
\end{proof}

 Warning: it may well happen that $\Tor_1^R(K^\bu,F)\ne0$ for a flat
$R$\+module~$F$. 
 Similarly, one may have $\Ext^1_R(K^\bu,J)\ne0$ for
an injective $R$\+module~$J$.

\medskip

 Following the exposition in~\cite{Mat}
(see also~\cite[Theorem~1.1]{Mat2}), we introduce special
indexing for the three short exact sequences of low-dimensional
$\Tor$ and $\Ext$ related to the distinguished
triangle~\eqref{main-distinguished-triangle}.
 Concerning the $\Tor$, for any $R$\+module $M$ we have
\begin{equation}\tag{I}
 0\lrarrow M/\Gamma_S(M)\lrarrow S^{-1}R\ot_RM\lrarrow
 \Tor^R_0(K^\bu,M)\lrarrow0.
\end{equation}
 Concerning the $\Ext$, for any $R$\+module $C$ we have an exact
sequence
\begin{multline*}
 0\lrarrow\Ext^0_R(K^\bu,C)\lrarrow\Hom_R(S^{-1}R,C)\lrarrow C \\
 \lrarrow \Ext^1_R(K^\bu,C)\lrarrow\Ext^1_R(S^{-1}R,C)\lrarrow0,
\end{multline*}
which can be rewritten in the form of two short exact sequences
\begin{gather}
\tag{II} 0\lrarrow\Ext_R^0(K^\bu,C)\lrarrow\Hom_R(S^{-1}R,C)
\lrarrow h_S(C)\lrarrow 0, \\
\tag{III} 0\lrarrow C/h_S(C)\lrarrow\Ext_R^1(K^\bu,C)\lrarrow
\Ext_R^1(S^{-1}R,C)\lrarrow0.
\end{gather}

 Let us introduce the notation $\Delta_S(C)=\Ext^1_R(K^\bu,C)$,
and denote by~$\delta_{S,C}$ the natural map $C\rarrow\Delta_S(C)$.

\begin{lem} \label{about-contramodules-lemma}
\textup{(a)} An $R$\+module $C$ is an $S$\+contramodule if and only
if the map $\delta_{S,C}\:C\rarrow\Ext^1_R(K^\bu,C)$ is
an isomorphism. \par
\textup{(b)} Any $R$\+module annihilated by the action of
an element from $S$ is a strong $S$\+contramodule.
\end{lem}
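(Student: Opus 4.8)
The plan is to prove both parts by unpacking the short exact sequences (II) and (III) together with Lemma~\ref{tor-with-k}.

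For part~(a): first I would observe that the map $\delta_{S,C}\:C\rarrow\Delta_S(C)=\Ext^1_R(K^\bu,C)$ fits precisely into the sequence (III), namely $0\rarrow C/h_S(C)\rarrow\Ext^1_R(K^\bu,C)\rarrow\Ext^1_R(S^{-1}R,C)\rarrow0$, where the composite $C\twoheadrightarrow C/h_S(C)\hookrightarrow\Ext^1_R(K^\bu,C)$ is $\delta_{S,C}$. Hence $\delta_{S,C}$ is injective if and only if $h_S(C)=0$, i.e.\ $C$ is $S$\+h-reduced; and, granting injectivity, $\delta_{S,C}$ is surjective if and only if $\Ext^1_R(S^{-1}R,C)=0$, i.e.\ $C$ is $S$\+cotorsion. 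Combining these two equivalences, $\delta_{S,C}$ is an isomorphism exactly when $C$ is both $S$\+h-reduced and $S$\+cotorsion, which is the definition of an $S$\+contramodule. One should double-check that the identification of the composite map with $\delta_{S,C}$ is the one coming from the connecting homomorphism in the long exact sequence for the triangle~\eqref{main-distinguished-triangle}, but this is a routine diagram chase using that $\Ext^0_R(K^\bu,C)\rarrow\Hom_R(S^{-1}R,C)\rarrow C$ in (II) has image $h_S(C)$.

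For part~(b): let $C$ be an $R$\+module with $sC=0$ for some $s\in S$. I need $\Ext^n_R(S^{-1}R,C)=0$ for all $n\ge0$. The key point is that $S^{-1}R$ is an $S^{-1}R$\+module on which $s$ acts invertibly, so multiplication by $s$ is an isomorphism on $S^{-1}R$ and hence on each $\Ext^n_R(S^{-1}R,C)$ (functoriality in the first argument). On the other hand, $s$ annihilates $C$, hence annihilates $\Ext^n_R(S^{-1}R,C)$ (functoriality in the second argument). An abelian group on which multiplication by $s$ is simultaneously invertible and zero must vanish. Therefore all these $\Ext$ groups are zero, so $C$ is $S$\+h-reduced and strongly $S$\+cotorsion, i.e.\ a strong $S$\+contramodule.

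The only mildly delicate point is the bookkeeping in part~(a) identifying the natural map $\delta_{S,C}$ with the relevant map in sequence~(III); everything else is formal. Part~(b) is entirely straightforward once one invokes the two-sided functoriality of $\Ext$ and the invertibility of the $S$\+action on $S^{-1}R$.
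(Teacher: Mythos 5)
Your proposal is correct and follows essentially the same route as the paper: part (a) is read off from the exact sequences (II)--(III) exactly as in the paper's proof (kernel of $\delta_{S,C}$ is $h_S(C)$, cokernel of $C/h_S(C)\hookrightarrow\Ext^1_R(K^\bu,C)$ is $\Ext^1_R(S^{-1}R,C)$), and part (b) is the paper's argument that multiplication by $s$ on $\Ext^*_R(S^{-1}R,C)$ is simultaneously invertible (via the first argument) and zero (via the second), forcing these modules to vanish.
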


\begin{proof}
 Part~(a): from the short exact sequences~(II\+-III) we see that
the equations $\Hom_R(S^{-1}R,C)=0=\Ext_R^1(S^{-1}R,C)$ imply that
the map~$\delta_{S,C}$ is an isomorphism.
 Conversely, if $\delta_{S,C}$~is an isomorphism, then
$\Ext^1_R(S^{-1}R,C)=0$ and $h_S(C)=0$.
 The latter implies that $\Hom_R(S^{-1}R,C)=0$.

 Part~(b): if $rC=0$ for some $r\in R$, then $r\Ext^*_R(M,C)=0$
for every $R$\+module~$M$.
 If $M$ is an $(S^{-1}R)$\+module, then the $R$\+modules $\Ext^*_R(M,C)$
are $(S^{-1}R)$\+modules annihilated by the action of~$r$.
 When $r\in S$, these can only be zero modules.
\end{proof}

 We denote by $\pd_RM$ the projective dimension of an $R$\+module~$M$.

 An $R$\+module $M$ is said to have \emph{bounded $S$\+torsion}
if there exists $r\in S$ such that $r\Gamma_S(M)=\nobreak0$.
 An $R$\+module $M$ is said to have \emph{no $S$\+h-divisible
$S$\+torsion} if the $R$\+module $\Gamma_S(M)$ is $S$\+h-reduced.
 Clearly, every $R$\+module with bounded $S$\+torsion has no
$S$\+divisible $S$\+torsion.
 The following lemma is our version (of the most important special
case) of \cite[Theorem~2.1]{Mat}; see also~\cite[Lemma~6.1 and
Proposition~6.3]{AHT} and~\cite[Theorem~3.5]{AS}.

\begin{lem} \label{delta-produces-contramodule}
\textup{(a)} For any $R$\+module $C$, the $R$\+module\/
$\Ext_R^0(K^\bu,C)$ is an $S$\+contra\-module. \par
\textup{(b)} If there is no $S$\+h-divisible $S$\+torsion in $C$,
then the $R$\+module\/ $\Ext_R^1(K^\bu,C)$ is an $S$\+contramodule.
\par
\textup{(c)} If\/ $\pd_RS^{-1}R\le1$, then for any $R$\+module $C$
the $R$\+module\/ $\Ext_R^1(K^\bu,C)$ is an $S$\+contramodule.
\end{lem}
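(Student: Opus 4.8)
The plan is to prove the three parts in order, reducing (b) to the behaviour of $\Ext^0$ and $\Ext^1$ of $S^{-1}R$ against suitable modules, and then deducing (c) from (b) by a dévissage of $C$ using the short exact sequence $0\rarrow\Gamma_S(C)\rarrow C\rarrow C/\Gamma_S(C)\rarrow0$. Throughout I would use the characterization of $S$\+contramodules as the modules $X$ with $\Ext^n_R(S^{-1}R,X)=0$ for $n=0,1$, together with the short exact sequences (II) and (III) and Lemma~\ref{contramodule-short-exact}.

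For part~(a): by sequence~(II), $\Ext^0_R(K^\bu,C)$ is a submodule of $\Hom_R(S^{-1}R,C)$, which is an $(S^{-1}R)$\+module; any submodule of an $(S^{-1}R)$\+module which is itself $S$\+h-reduced must vanish, so it suffices to show $\Ext^0_R(K^\bu,C)$ is $S$\+h-reduced, i.e.\ $\Hom_R(S^{-1}R,\Ext^0_R(K^\bu,C))=0$. Since $\Ext^0_R(K^\bu,C)$ is the kernel of $\Hom_R(S^{-1}R,C)\rarrow C$, a map $S^{-1}R\rarrow\Ext^0_R(K^\bu,C)$ gives an $(S^{-1}R)$\+linear map $S^{-1}R\rarrow\Hom_R(S^{-1}R,C)$ landing in the kernel; by adjunction / multiplicativity of $S$ one checks such a map is zero. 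Equivalently, and more cleanly: $\Ext^0_R(K^\bu,C)=\Hom_{\sD^\b(R\modl)}(K^\bu,C)$, and applying $\Hom_R(S^{-1}R,{-})$ together with $S^{-1}R\ot_R S^{-1}R=S^{-1}R$ shows $\Ext^0_R(K^\bu,C)$ is both $S$\+h-reduced and $S$\+cotorsion. I expect part~(a) to be the quickest.

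For part~(b): set $\Gamma=\Gamma_S(C)$, assumed $S$\+h-reduced, and $\bar C=C/\Gamma$, which is $S$\+torsion-free. From the long exact $\Ext$ sequence for $0\rarrow\Gamma\rarrow C\rarrow\bar C\rarrow0$ applied to $\Hom_R(S^{-1}R,{-})$: since $\bar C$ is $S$\+torsion-free, $\Hom_R(S^{-1}R,\bar C)$ injects into $\Hom_R(S^{-1}R, S^{-1}\bar C)$-type data and in fact one shows $\Hom_R(S^{-1}R,\bar C)=0$ because any image of $1\in S^{-1}R$ would be an infinitely $S$\+divisible element of a torsion-free module, forcing it into $h_S(\bar C)$; combined with $\Hom_R(S^{-1}R,\Gamma)=0$ (as $\Gamma$ is $S$\+h-reduced) this yields $\Hom_R(S^{-1}R,C)=0$, i.e.\ $h_S(C)=0$. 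Then sequence~(III) degenerates to $0\rarrow C\rarrow\Delta_S(C)\rarrow\Ext^1_R(S^{-1}R,C)\rarrow0$, and by Lemma~\ref{contramodule-short-exact}(b) (with $C\hookrightarrow\Delta_S(C)$ in the role of $C'\hookrightarrow C$, once we know $\Delta_S(C)$ is a contramodule) — no: instead I would directly verify $\Delta_S(C)$ is $S$\+h-reduced and $S$\+cotorsion. $S$\+cotorsion: apply $\Ext^*_R(S^{-1}R,{-})$ to (III) and use $\Ext^i_R(S^{-1}R,S^{-1}R\text{-module})$ vanishing in low degrees together with Lemma~\ref{ext-from-s-minus-1-r-module}; $S$\+h-reducedness of $\Delta_S(C)$ follows from $h_S(C)=0$ and the fact that $\Ext^1_R(S^{-1}R,C)$ is an $(S^{-1}R)$\+module together with Lemma~\ref{contramodule-short-exact}(b). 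The main obstacle is this last point: controlling $\Hom_R(S^{-1}R,\Delta_S(C))$, which requires showing the extension $0\rarrow C\rarrow\Delta_S(C)\rarrow\Ext^1_R(S^{-1}R,C)\rarrow0$ has no $S$\+h-divisible sub; this is exactly where the hypothesis ``no $S$\+h-divisible $S$\+torsion'' is used, since without torsion-freeness of the relevant subquotient one cannot rule out such a submodule.

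For part~(c): assume $\pd_RS^{-1}R\le1$. Apply the functor $\Ext^*_R(K^\bu,{-})$, equivalently the derived $\Hom$ out of $K^\bu$, to the canonical short exact sequence $0\rarrow\Gamma_S(C)\rarrow C\rarrow C/\Gamma_S(C)\rarrow0$. The module $C/\Gamma_S(C)$ is $S$\+torsion-free, hence has no $S$\+h-divisible $S$\+torsion vacuously, so $\Delta_S(C/\Gamma_S(C))$ is an $S$\+contramodule by part~(b). For $\Gamma_S(C)$: by Lemma~\ref{tor-with-k}(d) the element $\Tor$'s are controlled, but more to the point, $\pd_RS^{-1}R\le1$ forces $\Ext^{\ge2}_R(S^{-1}R,{-})=0$, so by Lemma~\ref{tor-with-k}(b) one gets $\Ext^{\ge2}_R(K^\bu,{-})=0$ as well; then the long exact sequence in $\Ext_R^*(K^\bu,{-})$ reads $\Ext^0_R(K^\bu,C/\Gamma_S(C))\rarrow\Ext^1_R(K^\bu,\Gamma_S(C))\rarrow\Ext^1_R(K^\bu,C)\rarrow\Ext^1_R(K^\bu,C/\Gamma_S(C))\rarrow0$. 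The first term is an $S$\+contramodule by part~(a), the last by part~(b); it remains to show $\Ext^1_R(K^\bu,\Gamma_S(C))$ is an $S$\+contramodule, and since $\Gamma_S(C)$ is $S$\+torsion it is in particular $S$\+h-reduced, so part~(b) applies to it directly. Finally, $\Ext^1_R(K^\bu,C)$ sits in an exact sequence with $S$\+contramodule terms on both sides, and the full subcategory of $S$\+contramodules is closed under kernels, cokernels, and extensions by Lemma~\ref{contramodules-closed-under} (together with Lemma~\ref{contramodule-short-exact}), so $\Ext^1_R(K^\bu,C)$ is an $S$\+contramodule. I expect the only delicate point here to be confirming that cokernels of maps between $S$\+contramodules are again $S$\+contramodules under the hypothesis $\pd_RS^{-1}R\le1$, which follows from Lemma~\ref{contramodule-short-exact}(c) once one notes that the relevant image is a strong $S$\+contramodule, or alternatively by invoking the abelianness of $R\modl_{S\ctra}$ recorded in the introduction.
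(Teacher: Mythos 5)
There are genuine gaps in all three parts, and the concrete claims your reductions rest on are false. In part~(a), the assertion that a submodule of an $(S^{-1}R)$\+module which is $S$\+h-reduced must vanish is wrong ($\Z\subset\Q$ over $R=\Z$, $S=\Z\setminus\{0\}$ is a counterexample), and if it were true your argument would show $\Ext^0_R(K^\bu,C)=0$ for every $C$, which is false (e.g.\ $\Hom_\Z(\Q/\Z,\Q/\Z)\ne0$); the ``cleaner'' adjunction remark does give $\Hom_R(S^{-1}R,\Ext^0_R(K^\bu,C))=0$, since $S^{-1}R\ot_R(S^{-1}R/R)=0$, but it does not by itself yield the cotorsion condition $\Ext^1_R(S^{-1}R,\Ext^0_R(K^\bu,C))=0$. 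In part~(b), the hypothesis is only that $\Gamma_S(C)$ is $S$\+h-reduced; it does \emph{not} force $h_S(C)=0$ (take $C=\Q$ over $\Z$: it has no $S$\+torsion at all, yet $h_S(C)=C$), so sequence~(III) does not degenerate as you claim, and the subsequent sketch explicitly leaves open exactly the point to be proved, namely $S$\+h-reducedness of $\Delta_S(C)$; moreover the cotorsion check via~(III) stalls at the term $\Ext^1_R(S^{-1}R,\,C/h_S(C))$, which has no reason to vanish. In part~(c), the d\'evissage collapses at the torsion piece: ``since $\Gamma_S(C)$ is $S$\+torsion it is in particular $S$\+h-reduced'' is false ($\Q/\Z$ is $S$\+torsion and $S$\+h-divisible), so part~(b) cannot be applied to $\Gamma_S(C)$, and proving that $\Delta_S$ of an arbitrary $S$\+torsion module is an $S$\+contramodule is essentially the content of~(c) itself, so this route is circular.

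For comparison, the paper proves all three parts at once with a single homological mechanism: the spectral sequence
$E_2^{pq}=\Ext^p_R(S^{-1}R,\Ext^q_R(K^\bu,C))$ converging to $\Ext^{p+q}_R(S^{-1}R\ot_RK^\bu,\,C)=0$, the abutment vanishing because the complex $S^{-1}R\ot_RK^\bu$ is contractible. Degree reasons give $E_2^{0,0}=E_2^{1,0}=0$, which is part~(a); for parts~(b) and~(c) the only possibly nonzero differentials out of $E_2^{0,1}$ and $E_2^{1,1}$ land in $E_2^{2,0}$ and $E_2^{3,0}$, and these vanish either because $\pd_RS^{-1}R\le1$ kills $E_2^{pq}$ for $p\ge2$, or because the no-$S$\+h-divisible-$S$\+torsion hypothesis gives $\Ext^0_R(K^\bu,C)=\Hom_R(S^{-1}R/R,C)=0$ and hence $E_2^{p,0}=0$ for all~$p$. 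If you want to salvage your outline, you would need an argument of this kind (or an equivalent derived-adjunction computation of $\boR\Hom_R(S^{-1}R,\boR\Hom_R(K^\bu,C))$) rather than the module-level reductions you propose.
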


\begin{proof}
 There is a spectral sequence
$$
 E_2^{pq}=\Ext^p_R(S^{-1}R,\Ext^q_R(K^\bu,C))\Longrightarrow
 E_\infty^{pq}=\mathrm{gr}^p\Ext_R^{p+q}(S^{-1}R\ot_RK^\bu,\>C)=0,
$$
where $\Ext_R^n(S^{-1}R\ot_RK^\bu,\>C)=\Hom_{\sD^\b(R\modl)}
(S^{-1}R\ot_RK^\bu,\>C[n])=0$ for all $n\in\Z$ and all
$C\in R\modl$, because the complex $S^{-1}R\ot_RK^\bu$ is
contractible.
 The differentials are $d_r^{pq}\:E_r^{pq}\rarrow E_r^{p+r,q-r+1}$,
\ $r\ge2$.
 Now all the differentials going through $E_r^{0,0}$ and
$E_r^{1,0}$ vanish for the dimension reasons, so $E_\infty^{0,0}=0=
E_\infty^{1,0}$ implies $E_2^{0,0}=0=E_2^{1,0}$.
 This proves part~(a).
 Furthermore, the only possibly nontrivial differentials going
through $E_r^{0,1}$ and $E_r^{1,1}$ are
$$
 d_2^{0,1}\:E_2^{0,1}\lrarrow E_2^{2,0}\quad\text{and}\quad
 d_2^{1,1}\:E_2^{1,1}\lrarrow E_2^{3,0}.
$$
 When $\pd_RS^{-1}R\le1$, one has $E_2^{pq}=0$ for $p\ge2$.
 When there is no $S$\+h-divisible $S$\+torsion in $C$, one has
$\Ext_R^0(K^\bu,C)=0$ by Lemma~\ref{tor-with-k}(c), because
$S^{-1}R/R$ is an $S$\+h-divisible $S$\+torsion $R$\+module.
 Hence $E_2^{p,0}=0$ for all $p\ge0$.
 In both cases, $E_\infty^{0,1}=0=E_\infty^{1,1}$ implies $E_2^{0,1}=0=
E_2^{1,1}$, proving parts~(b) and~(c).
\end{proof}

 The following lemma is our version of~\cite[Lemma~1.8 and
Theorem~1.9]{Mat2}.

\begin{lem} \label{h-torsion-theory}
\textup{(a)} If\/ $\pd_RS^{-1}R\le 1$, then the class of all
$S$\+h-divisible $R$\+modules is closed under extensions. \par
\textup{(b)} If all the elements of $S$ are nonzero-divisors in $R$
and the class of all $S$\+h-divisible $R$\+modules is closed under
extensions, then\/ $\pd_RS^{-1}R\le 1$.
\end{lem}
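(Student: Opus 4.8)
\emph{Proof plan.}
The plan is to prove the two implications separately; in each direction the heart of the matter is a vanishing statement for $\Ext^1_R(S^{-1}R,{-})$, after which a short diagram chase finishes the job.

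\emph{Part~(a).}
First I would record an auxiliary fact, valid with no hypothesis on $\pd_RS^{-1}R$: for every $(S^{-1}R)$\+module $E$ one has $\Ext^1_R(S^{-1}R,E)=0$. To prove it, take any extension $0\rarrow E\rarrow X\rarrow S^{-1}R\rarrow0$ of $R$\+modules, apply the exact functor $S^{-1}R\ot_R{-}$, and compare with the original sequence through the localization morphisms; since the canonical maps $E\rarrow S^{-1}E$ and $S^{-1}R\rarrow S^{-1}(S^{-1}R)$ are isomorphisms, the five lemma shows $X\rarrow S^{-1}X$ is an isomorphism, so $X$ is an $(S^{-1}R)$\+module and the sequence, now one of $(S^{-1}R)$\+modules, splits because $S^{-1}R$ is free over itself. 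Now assume $\pd_RS^{-1}R\le1$. If $L$ is $S$\+h-divisible, write $L=E/B$ with $E$ an $(S^{-1}R)$\+module; the $\Ext^*_R(S^{-1}R,{-})$ long exact sequence of $0\rarrow B\rarrow E\rarrow L\rarrow0$ then sandwiches $\Ext^1_R(S^{-1}R,L)$ between $\Ext^1_R(S^{-1}R,E)=0$ and $\Ext^2_R(S^{-1}R,B)=0$, so every $S$\+h-divisible module is $S$\+cotorsion. Finally, let $0\rarrow L\rarrow M\rarrow N\rarrow0$ be a short exact sequence with $L$ and $N$ both $S$\+h-divisible. Apply $\Hom_R(S^{-1}R,{-})$ and map the result down to the given sequence by the evaluation maps $\Hom_R(S^{-1}R,C)\rarrow C$, whose images are the submodules $h_S(C)$. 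This is a commutative diagram with exact rows; the top row is right exact because $\Ext^1_R(S^{-1}R,L)=0$, and the evaluation maps for $L$ and $N$ are surjective because $L$ and $N$ are $S$\+h-divisible, so the four lemma shows the evaluation map for $M$ is surjective, i.e.\ $h_S(M)=M$ and $M$ is $S$\+h-divisible.

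\emph{Part~(b).}
Assume the class of $S$\+h-divisible $R$\+modules is closed under extensions and every element of $S$ is a nonzero-divisor. I first claim $\Ext^1_R(S^{-1}R,D)=0$ for every $S$\+h-divisible $D$. Given an extension $0\rarrow D\rarrow X\rarrow S^{-1}R\rarrow0$ with right-hand map $q$, the module $S^{-1}R$ is an $(S^{-1}R)$\+module, hence $S$\+h-divisible, so $X$ is $S$\+h-divisible by the closure hypothesis; choose a surjection $p\:E\rarrow X$ from an $(S^{-1}R)$\+module $E$. The composite $qp\:E\rarrow S^{-1}R$ is automatically $(S^{-1}R)$\+linear (every $R$\+linear map between $(S^{-1}R)$\+modules is), hence admits a section $\sigma$, and then $p\sigma$ is a section of $q$; thus the extension is trivial. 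Next, since every element of $S$ is a nonzero-divisor, the map $R\rarrow S^{-1}R$ is injective, and applying $\Hom_R({-},J)$ to this injection shows every injective $R$\+module $J$ is $S$\+h-divisible. Now let $C$ be an arbitrary $R$\+module, embed it in an injective $J$, and consider $0\rarrow C\rarrow J\rarrow J/C\rarrow0$: the module $J/C$ is a quotient of the $S$\+h-divisible module $J$, hence $S$\+h-divisible, so $\Ext^1_R(S^{-1}R,J/C)=0$ by the claim, while $\Ext^i_R(S^{-1}R,J)=0$ for $i\ge1$ since $J$ is injective. The $\Ext$ long exact sequence then gives $\Ext^2_R(S^{-1}R,C)\cong\Ext^1_R(S^{-1}R,J/C)=0$, and as $C$ was arbitrary, $\pd_RS^{-1}R\le1$.

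\emph{Main obstacle.}
Neither part is difficult once it is set up correctly; the one nonroutine ingredient is the vanishing $\Ext^1_R(S^{-1}R,E)=0$ for $(S^{-1}R)$\+modules $E$ used in~(a), together with its analogue, the splitting argument in~(b) --- both rest on the idempotence of $S$\+localization, equivalently on $R\rarrow S^{-1}R$ being a flat ring epimorphism. I expect the tempting wrong turn in~(a) to be an argument by pulling back $0\rarrow L\rarrow M\rarrow N\rarrow0$ along a surjection $D\rarrow N$ from an $(S^{-1}R)$\+module, which would instead demand $\Ext^1_R(D,L)=0$ for an $(S^{-1}R)$\+module $D$ and an $S$\+h-divisible module $L$ --- not obviously available --- whereas passing through $\Hom_R(S^{-1}R,{-})$ avoids it.
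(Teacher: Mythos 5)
Your proposal is correct. Part~(b) is essentially the paper's own argument: embed an arbitrary module $C$ into an injective $J$, observe (using that $S$ consists of nonzero-divisors) that $J$, and hence $J/C$, is $S$\+h-divisible, and kill $\Ext^1_R(S^{-1}R,J/C)$ by the same splitting trick — an extension of an $S$\+h-divisible module by $S^{-1}R$ is $S$\+h-divisible by hypothesis, so it receives a surjection from an $(S^{-1}R)$\+module, and the composite onto $S^{-1}R$ is $(S^{-1}R)$\+linear and splits. Part~(a), however, takes a genuinely different route. The paper reduces closure under extensions to the statement that $M/h_S(M)$ is always $S$\+h-reduced, which it gets from the short exact sequence~(III) embedding $M/h_S(M)$ into $\Delta_S(M)=\Ext^1_R(K^\bu,M)$ together with Lemma~\ref{delta-produces-contramodule}(c) (the spectral-sequence argument showing $\Delta_S(M)$ is an $S$\+contramodule when $\pd_RS^{-1}R\le1$); this is what makes the pair ($S$\+h-divisible, $S$\+h-reduced) a torsion theory. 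You instead prove directly that $\Ext^1_R(S^{-1}R,E)=0$ for every $(S^{-1}R)$\+module $E$ (via idempotence of localization and the short five lemma — this is fine, and is the special case of what the ring-epimorphism property gives), deduce under $\pd_RS^{-1}R\le1$ that every $S$\+h-divisible module is $S$\+cotorsion, and then run a four-lemma chase on the evaluation maps $\Hom_R(S^{-1}R,C)\rarrow C$. Your version is more elementary and self-contained, avoiding the complex $K^\bu$ and the contramodule machinery entirely; it is close in spirit to the alternative proof the paper itself alludes to after Lemma~\ref{torsion-free-and-h-divisible} (where $S$\+h-divisibility is characterized by $\Ext^1_R(K^\bu,{-})=0$, whose vanishing class is extension-closed once $\Ext^2_R(K^\bu,{-})=0$). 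What the paper's route buys is that the key vanishing ($\Delta_S(M)$ being an $S$\+contramodule) is a lemma it needs anyway, and it delivers the torsion-theory statement used immediately afterwards; your route buys independence from those inputs. Your closing remark about the tempting wrong turn is also well taken: pulling back along a surjection $D\rarrow N$ would require $\Ext^1_R(D,L)=0$ for an $(S^{-1}R)$\+module $D$ and an $S$\+h-divisible $L$, which is not available (Lemma~\ref{rich-functors} only covers $S$\+contramodule targets), whereas applying $\Hom_R(S^{-1}R,{-})$ sidesteps it.
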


\begin{proof}
 Part~(a): it suffices to check that $h_S(M/h_S(M))=0$ for every
$R$\+module~$M$.
 Indeed, according to the short exact sequence~(III), the quotient
module $M/h_S(M)$ is a submodule in $\Ext_R^1(K^\bu,M)$, and
by Lemma~\ref{delta-produces-contramodule}(c), the $R$\+module
$\Ext_R^1(K^\bu,M)$ is $S$\+h-reduced.
 (For another argument, see the paragraph after
Lemma~\ref{torsion-free-and-h-divisible} below.)

 Part~(b): let $E$ be an $R$\+module; we have to prove that
$\Ext_R^2(S^{-1}R,E)=0$.
 Pick an injective $R$\+module $J$ such that $E$ is a submodule in~$J$.
 Then $\Ext_R^2(S^{-1}R,E)=\Ext_R^1(S^{-1}R,J/E)$.
 As any $R$\+module morphism $R\rarrow J$ can be extended to
an $R$\+module morphism $S^{-1}R\rarrow J$, the $R$\+module $J$
is $S$\+h-divisible.
 Hence so is the $R$\+module $L=J/E$.
 Let $0\rarrow L\rarrow M\rarrow S^{-1}R\rarrow0$ be a short exact
sequence of $R$\+modules.
 By assumption, the $R$\+module $M$ has to be $S$\+h-divisible;
so it is a quotient module of an $(S^{-1}R)$\+module~$D$.
 The composition of surjective morphisms $D\rarrow M\rarrow S^{-1}R$
is a morphism of $(S^{-1}R)$\+modules; so it is a split surjection.
 Composing a splitting $S^{-1}R\rarrow D$ with the morphism
$D\rarrow M$, we obtain a splitting of the surjection
$M\rarrow S^{-1}R$.
 Thus $\Ext_R^1(S^{-1}R,L)=0$.
\end{proof}

 In other words, when $\pd_RS^{-1}R\le 1$, the full subcategories of
$S$\+h-divisible and $S$\+h-reduced $R$\+modules form a torsion
theory in $R\modl$.
 The $S$\+h-divisible modules are the torsion class and
the $S$\+h-reduced modules are the torsion-free class.

 The next lemma is quite standard.
 We include it here for the sake of completeness of the exposition.

\begin{lem}
 If a multiplicative subset $S$ in a commutative ring $R$ is
countable, then $\pd_RS^{-1}R\le1$.
\end{lem}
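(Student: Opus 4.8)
The plan is to exhibit an explicit free resolution of the $R$\+module $S^{-1}R$ of length~$1$; then $\pd_R S^{-1}R\le1$ holds by the definition of projective dimension. The starting point is the observation that $S^{-1}R$ is the colimit of a countable direct system of \emph{free} $R$\+modules whose transition maps are multiplications by elements of~$S$.

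To set this up, I would first fix an enumeration $s_1,s_2,s_3,\dotsc$ of the elements of $S$, allowing repetitions, arranged so that \emph{every} element of $S$ occurs infinitely often in the sequence; this is possible precisely because $S$ is at most countable (if $S$ is finite, any enumeration cycling repeatedly through $S$ will do). Put $t_0=1$ and $t_n=s_1s_2\dotsm s_n\in S$, and consider the direct system of free modules
\[
 F_1\xrightarrow{\,s_1\,}F_2\xrightarrow{\,s_2\,}F_3\xrightarrow{\,s_3\,}\dotsb,\qquad F_n=R,
\]
with the transition map $F_n\rarrow F_{n+1}$ being multiplication by~$s_n$. I claim the $R$\+module maps $\phi_n\:F_n\rarrow S^{-1}R$, \ $r\mapsto r/t_{n-1}$, are compatible with the transition maps and identify $\varinjlim_n F_n$ with $S^{-1}R$. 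Compatibility is the identity $s_nr/t_n=r/t_{n-1}$. Surjectivity of the induced map $\varinjlim_n F_n\rarrow S^{-1}R$ holds because any $s\in S$ divides $t_{n-1}$ for $n$ large enough (any $n$ for which $s$ already occurs among $s_1,\dotsc,s_{n-1}$). Injectivity is where the ``infinitely often'' hypothesis is used: if $r/t_{n-1}=0$ in $S^{-1}R$, then $ur=0$ for some $u\in S$, and choosing $m>n$ so that $u$ occurs among $s_n,\dotsc,s_{m-1}$ forces the image of $r$ in $F_m$, namely $s_{m-1}\dotsm s_n\cdot r$, to be a multiple of $ur=0$, hence to vanish; so $r$ already maps to zero at a finite stage.

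Having identified $S^{-1}R$ with $\varinjlim_n F_n$, I would invoke the standard telescope (``cone of $1-\mathrm{shift}$'') short exact sequence
\[
 0\rarrow\bigoplus_{n\ge1}F_n\xrightarrow{\ \partial\ }\bigoplus_{n\ge1}F_n\rarrow\varinjlim_n F_n\rarrow0,
\]
where $\partial$ sends an element $x$ sitting in the $n$\+th summand to $x$ in the $n$\+th summand minus $s_nx$ in the $(n+1)$\+st summand; injectivity of $\partial$ and exactness in the middle term are routine. Since every $F_n$ is just $R$, the two outer terms are free $R$\+modules, so this is a free resolution of $S^{-1}R$ of length~$1$, whence $\pd_R S^{-1}R\le1$.

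The one step calling for real care is the colimit identification in the second paragraph — specifically, using an enumeration in which each element of $S$ recurs infinitely often, without which the colimit of the multiplication system would properly contain $S^{-1}R$. It is also worth emphasizing that one genuinely needs the modules $F_n$ to be \emph{free}, not merely of projective dimension~$\le1$ (which is all one gets from the cruder description of $S^{-1}R$ as a colimit of the one\+element localizations $R[t_n^{-1}]$): a short exact sequence with projective outer terms forces $\pd\le1$, whereas outer terms of projective dimension~$1$ would only yield the bound $\pd\le2$.
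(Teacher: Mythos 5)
Your proof is correct and follows essentially the same route as the paper: realize $S^{-1}R$ as a countable sequential colimit of copies of the free module $R$ and read off a two-term free resolution from the telescope short exact sequence. The only (harmless) variation is in the direct system itself — you use transition maps $s_n$ and an enumeration in which every element of $S$ recurs infinitely often, and verify the colimit identification directly, whereas the paper uses transition maps $s_1\dotsm s_n$ with each element appearing at least once and identifies the colimit with $S^{-1}R$ via its abstract properties ($S$\+torsion-free, $S$\+divisible, with $S$\+torsion kernel and cokernel of $R\rarrow M$).
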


\begin{proof}
 Let $s_1$, $s_2$, $s_3$,~\dots\ be a sequence of elements of $S$
such that every element of $S$ appears at least once in this sequence.
 Denote by $M$ the inductive limit of the sequence of $R$\+module
morphisms
$$
 R\overset{s_1}\lrarrow R\overset{s_1s_2}\lrarrow R\overset{s_1s_2s_3}
 \lrarrow R\lrarrow\dotsb\lrarrow R\overset{s_1\dotsm s_n}\lrarrow
 R\lrarrow\dotsb
$$
 The natural map $R\rarrow M$ has the property that its kernel and
cokernel are $S$\+torsion $R$\+modules.
 Furthermore, the $R$\+module $M$ is $S$\+torsion-free and
$S$\+divisible.
 Hence $M\simeq S^{-1}R$.
 Now the telescope construction of countable filtered inductive limits
provides a two-term free $R$\+module resolution of the $R$\+module
$S^{-1}R$,
$$
 0\lrarrow\bigoplus\nolimits_{n=1}^\infty R\lrarrow
 \bigoplus\nolimits_{n=1}^\infty R\lrarrow S^{-1}R\lrarrow0.
$$
\end{proof}

 The following important lemma is our version
of~\cite[Proposition~2.4]{Mat}.

\begin{lem} \label{rich-functors}
 Let $b\:A\rarrow B$ and $c\:A\rarrow C$ be two $R$\+module
morphisms such that $C$ is an $S$\+contramodule, while\/ $\ker(b)$
is an $S$\+$h$-divisible $R$\+module and\/ $\coker(b)$ is
an $(S^{-1}R)$\+module.
 Then there exists a unique morphism $f\:B\rarrow C$ such that
$c=fb$, i.~e., the triangle diagram $A\rarrow B\rarrow C$
is commutative.
\end{lem}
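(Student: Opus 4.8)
The statement to prove is Lemma~\ref{rich-functors}: given $b\:A\rarrow B$ and $c\:A\rarrow C$ with $C$ an $S$-contramodule, $\ker(b)$ $S$-h-divisible, and $\coker(b)$ an $(S^{-1}R)$-module, there is a unique $f\:B\rarrow C$ with $fb=c$.

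The plan is to reduce the existence and uniqueness of $f$ to the vanishing of $\Hom_R$ and $\Ext^1_R$ from certain $(S^{-1}R)$-modules (and $S$-h-divisible modules) into $C$, using the defining property of $S$-contramodules, namely $\Hom_R(S^{-1}R,C)=0=\Ext^1_R(S^{-1}R,C)$, promoted to all $(S^{-1}R)$-modules by Lemma~\ref{ext-from-s-minus-1-r-module}. First I would break the map $b$ into its epi-mono factorization, or more conveniently work with the two short exact sequences $0\to\ker(b)\to A\to\im(b)\to0$ and $0\to\im(b)\to B\to\coker(b)\to0$. Applying $\Hom_R(-,C)$ to the first sequence and using that $\ker(b)$ is $S$-h-divisible — hence (as a quotient of an $(S^{-1}R)$-module, or directly from $h_S(\ker b)=\ker b$) the relevant $\Hom$ and $\Ext^1$ groups from it into the $S$-contramodule $C$ vanish by Lemma~\ref{ext-from-s-minus-1-r-module} combined with the definition of $S$-contramodule — one gets that $\Hom_R(A,C)\to\Hom_R(?,C)$ behaves well; more precisely the composite $c\:A\to C$ factors uniquely through $A\twoheadrightarrow\im(b)$, because $\Hom_R(\ker(b),C)=0$ (kernel is $S$-h-divisible, $C$ is $S$-h-reduced) and $\Ext^1_R(\ker(b),C)$... actually here one only needs $\Hom_R(\ker(b),C)=0$ for the factorization $\bar c\:\im(b)\to C$ to exist and be unique on $\im(b)$. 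Then, applying $\Hom_R(-,C)$ to the second short exact sequence, the map $\bar c\:\im(b)\to C$ extends (uniquely) to $B$ provided $\Hom_R(\coker(b),C)=0$ and $\Ext^1_R(\coker(b),C)=0$; since $\coker(b)$ is an $(S^{-1}R)$-module, both vanish by Lemma~\ref{ext-from-s-minus-1-r-module} (with $n=1$) together with the fact that $C$ is an $S$-contramodule, i.e.\ $\Ext^i_R(S^{-1}R,C)=0$ for $i=0,1$. The resulting composite $f\:B\to C$ satisfies $fb=c$.

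For uniqueness: if $f,f'$ both satisfy $fb=c=f'b$, then $g=f-f'$ vanishes on $\im(b)$, hence factors through $\coker(b)$; but $\Hom_R(\coker(b),C)=0$, so $g=0$. This uses only the $n=0$ case of Lemma~\ref{ext-from-s-minus-1-r-module}, i.e.\ $\Hom_R(S^{-1}R,C)=0$, which holds because $C$ is $S$-h-reduced.

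The main obstacle is simply organizing the two short exact sequences correctly and checking the hypotheses of Lemma~\ref{ext-from-s-minus-1-r-module} apply in each instance: for $\coker(b)$ this is immediate since it is literally an $(S^{-1}R)$-module and $C$ is an $S$-contramodule; for $\ker(b)$ one needs $\Hom_R(\ker(b),C)=0$, which follows because $\ker(b)$ is $S$-h-divisible (a quotient of an $(S^{-1}R)$-module) while $C$ is $S$-h-reduced, so any morphism $\ker(b)\to C$ has $S$-h-divisible image inside an $S$-h-reduced module and must be zero. No higher $\Ext$ vanishing and no hypothesis on $\pd_RS^{-1}R$ is needed here, since only $\Hom$ and $\Ext^1$ from $(S^{-1}R)$-modules enter, exactly matching the definition of $S$-contramodule.
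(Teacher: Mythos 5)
Your proposal is correct and follows essentially the same route as the paper's proof: kill the composite $\ker(b)\rarrow A\rarrow C$ by the h\+divisible/h\+reduced orthogonality, pass to $\bar c\:A/\ker(b)\rarrow C$, and extend over $0\rarrow A/\ker(b)\rarrow B\rarrow\coker(b)\rarrow0$ using $\Ext^1_R(\coker(b),C)=0$ from Lemma~\ref{ext-from-s-minus-1-r-module} with $n=1$, with uniqueness from $\Hom_R(\coker(b),C)=0$. No gaps.
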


\begin{proof}
 Any morphism $f'\:B\rarrow C$ such that $f'b=0$ would factorize
as $B\rarrow\coker(b)\rarrow C$, and any morphism $\coker(b)
\rarrow C$ vanishes when $\coker(b)$ is $S$\+h-divisible and
$C$ is $S$\+h-reduced.
 This proves uniqueness.
 To check existence, notice that the composition $\ker(b)\rarrow A
\rarrow C$ vanishes if $\ker(b)$ is $S$\+h-divisible and $C$ is
$S$\+h-reduced.
 Hence we have a short exact sequence $0\rarrow A/\ker(b)\rarrow B
\rarrow\coker(b)\rarrow 0$ and a morpism $\bar b\:A/\ker(b)\rarrow C$.
 The obstruction to extending the morphism~$\bar b$ to
an $R$\+module morphism $B\rarrow C$ lies in the group
$\Ext^1_R(\coker(b),C)$.
 Applying Lemma~\ref{ext-from-s-minus-1-r-module} to the $R$\+modules
$C$ and $D=\coker(b)$ and the integer $n=1$, we conclude that this
$\Ext$ group vanishes.
\end{proof}

\begin{lem} \label{delta-mod-s}
 For any $R$\+module $A$ and every element $s\in S$, the map
$\bar\delta_{S,A}\:A/sA\allowbreak\rarrow\Delta_S(A)/s\Delta_S(A)$ is
an isomorphism.
\end{lem}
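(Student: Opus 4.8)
The plan is to read off the statement from the short exact sequence~(III)
$$
 0\lrarrow A/h_S(A)\lrarrow\Delta_S(A)\lrarrow\Ext^1_R(S^{-1}R,A)\lrarrow0,
$$
in which, by the discussion preceding Lemma~\ref{about-contramodules-lemma}, the map $\delta_{S,A}$ is the composition of the projection $A\twoheadrightarrow A/h_S(A)$ with the left-hand inclusion of~(III). First I would isolate a routine general fact: for a short exact sequence of $R$\+modules $0\to X\to Y\to Z\to0$ on which the element $s$ acts bijectively on $Z$ (for instance, when $Z$ is an $(S^{-1}R)$\+module), the induced map $X/sX\rarrow Y/sY$ is an isomorphism. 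Surjectivity is immediate from $Z/sZ=0$; for injectivity, if some $x\in X$ maps into $sY$, say to $sy$, then the image of $y$ in $Z$ is annihilated by~$s$ and hence vanishes, so $y$ comes from $X$ and one divides by~$s$ there.

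Applying this to~(III), using that $\Ext^1_R(S^{-1}R,A)$ is an $(S^{-1}R)$\+module (as in the proof of Lemma~\ref{about-contramodules-lemma}(b)), I conclude that the reduction $(A/h_S(A))/s(A/h_S(A))\rarrow\Delta_S(A)/s\Delta_S(A)$ of the inclusion in~(III) is an isomorphism. On the other hand, $h_S(A)$ is $S$\+h-divisible, hence $S$\+divisible, so $h_S(A)=s\.h_S(A)\subseteq sA$; therefore $s(A/h_S(A))=sA/h_S(A)$, and the third isomorphism theorem shows that the reduction modulo~$s$ of the projection $A\rarrow A/h_S(A)$ is an isomorphism $A/sA\rarrow(A/h_S(A))/s(A/h_S(A))$.

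Since reduction modulo~$s$, that is, the functor ${-}\ot_RR/sR$, is functorial, and $\delta_{S,A}$ factors as the projection $A\rarrow A/h_S(A)$ followed by the inclusion of~(III), the map $\bar\delta_{S,A}$ is the composite of the two isomorphisms obtained above, hence an isomorphism itself. I expect the only step with genuine content to be the general fact of the first paragraph — specifically its injectivity half — while everything else is bookkeeping; note also that no assumption on $\pd_RS^{-1}R$ is needed. A heavier alternative would be to run the long exact sequence of $\Ext_R(K^\bu,{-})$ attached to $0\to sA\to A\to A/sA\to0$ and invoke Lemma~\ref{about-contramodules-lemma}(b) to annihilate the intervening $\Ext^2_R(S^{-1}R,{-})$ terms, but the route above seems cleaner.
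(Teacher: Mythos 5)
Your proof is correct, but it follows a genuinely different route from the paper's. The paper argues by a universal-property (Yoneda-type) argument: both $A/sA$ and $\Delta_S(A)/s\Delta_S(A)$ are $R/sR$-modules, every $R/sR$-module is an $S$-contramodule by Lemma~\ref{about-contramodules-lemma}(b), and then Lemma~\ref{rich-functors} (applied to $b=\delta_{S,A}$, whose kernel $h_S(A)$ is $S$-h-divisible and whose cokernel $\Ext^1_R(S^{-1}R,A)$ is an $(S^{-1}R)$-module, by the sequences (II)--(III)) shows that $\bar\delta_{S,A}$ induces an isomorphism $\Hom_{R/sR}(\Delta_S(A)/s\Delta_S(A),D)\simeq\Hom_{R/sR}(A/sA,D)$ for every $R/sR$-module $D$, whence the claim. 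You instead chase the exact sequences directly: you factor $\delta_{S,A}$ through $A/h_S(A)$, observe that reduction modulo~$s$ of the inclusion in (III) is an isomorphism because $s$ acts bijectively on the $(S^{-1}R)$-module $\Ext^1_R(S^{-1}R,A)$ (your elementary snake-type fact is stated and proved correctly), and handle the projection step via $h_S(A)=s\.h_S(A)\subseteq sA$. Both arguments avoid the assumption $\pd_RS^{-1}R\le1$, as they must, since the lemma sits in the preliminaries. What each buys: the paper's proof reuses the extension Lemma~\ref{rich-functors} (the same tool behind the adjunction in Theorem~\ref{contramodule-category-thm}(b)) and so stays uniform with the surrounding machinery, while yours is more elementary and self-contained, needing only the exactness of (II)--(III), the $(S^{-1}R)$-module structure on $\Ext^1_R(S^{-1}R,A)$, and the $S$-divisibility of $h_S(A)$, with no appeal to Lemmas~\ref{ext-from-s-minus-1-r-module} or~\ref{rich-functors}.
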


\begin{proof}
 Both $A/sA$ and $\Delta_S(A)/s\Delta_S(A)$ are $R/sR$\+modules.
 Furthermore, any $R/sR$\+module $D$ is an $S$\+contramodule by
Lemma~\ref{about-contramodules-lemma}(b).
 In view of the short exact sequence~(III), the morphism
$\delta_{S,A}\:A\rarrow\Delta_S(A)$ and any morphism $A\rarrow D$
satisfy the conditions of Lemma~\ref{rich-functors}, so there exists
a unique morphism $\Delta_S(A)\rarrow D$ making the triangle
diagram $A\rarrow\Delta_S(A)\rarrow D$ commutative.
 Hence the morphism~$\bar\delta_{S,A}$ induces an isomorphism
$$
 \Hom_{R/sR}(\Delta_S(A)/s\Delta_S(A),D)\simeq\Hom_{R/sR}(A/sA,D),
$$
and it follows that $\bar\delta_{S,A}$ is an isomorphism.
\end{proof}

\Section{The $S$-Topology}  \label{s-topology-secn}

 The \emph{$R$\+topology} was introduced by Nunke
in~\cite[Section~6]{Nun} and studied by Matlis in~\cite[\S6]{Mat}
(see also~\cite[Chapter~II]{Mat2}).
 The \emph{$S$\+topology} is discussed in~\cite{GM}
and~\cite[Section~VIII.4]{FS} in the case of a countable
multiplicative subset $S$ consising of nonzero-divisors
(the discussion in~\cite[Chapter~1]{GT} partly avoids
the countability assumption; cf.\ the counterexample
in~\cite[Section~1.2]{GT}).

 As in Section~\ref{preliminaries-secn}, let $R$ be a commutative ring
and $S\subset R$ be a multiplicative subset.
 By the definition, the \emph{$S$\+topology} on an $R$\+module $A$
is the topology with a base of neighborhoods of zero formed by
the submodules $sA\subset A$, where $s\in S$ are arbitrary elements.
 The \emph{$S$\+completion} of an $R$\+module $A$ is defined as
$$
 \Lambda_S(A)=\varprojlim\nolimits_{s\in S} A/sA,
$$
where the partial (pre)order on the set $S$ is defined by the rule that
$s\le t$ for $s$, $t\in S$ if there exists $r\in R$ for which $t=rs$
(this is the reverse inclusion order on the principal ideals in $R$
generated by elements from~$S$).
 There is an obvious natural $R$\+module morphism
$$
 \lambda_{S,A}\:A\lrarrow\Lambda_S(A).
$$
 An $R$\+module $A$ is said to be \emph{$S$\+separated} if the morphism
$\lambda_{S,A}$ is injective.
 An $R$\+module $A$ is said to be \emph{$S$\+complete} if the morphism
$\lambda_{S,A}$ is surjective.
 We will denote the full subcategory of $S$\+separated and $S$\+complete
$R$\+modules by $R\modl_{S\secmp}\subset R\modl$.

\begin{lem} \label{completion-contramodule}
 For every $R$\+module $A$ \par
\textup{(a)} the $R$\+module $\Lambda_S(A)$ is an $S$\+contramodule;
\par
\textup{(b)} there exists a unique $R$\+module morphism
$\beta_{S,A}\:\Delta_S(A)\rarrow\Lambda_S(A)$ making the triangle
diagram $A\rarrow\Delta_S(A)\rarrow\Lambda_S(A)$ commutative.
 Taken together for all the $R$\+modules $A$,
the morphisms~$\beta_{S,A}$ form a morphism of functors
$\beta_S\:\Delta_S\rarrow\Lambda_S$.
\end{lem}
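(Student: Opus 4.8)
The plan is to establish~(a) from a closure property of the class of $S$-contramodules, and~(b) from an application of the ``rich functors'' Lemma~\ref{rich-functors}, with the functoriality of $\beta_S$ extracted from the uniqueness clause of that same lemma.

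For part~(a): for every $s\in S$ the $R$-module $A/sA$ is a module over the quotient ring $R/sR$, hence a strong $S$-contramodule by Lemma~\ref{about-contramodules-lemma}(b); in particular, it is an $S$-contramodule. Now $\Lambda_S(A)=\varprojlim_{s\in S}A/sA$ is a projective limit in $R\modl$ of $S$-contramodules, and by Lemma~\ref{contramodules-closed-under} the full subcategory of $S$-contramodule $R$-modules is closed under projective limits; therefore $\Lambda_S(A)$ is an $S$-contramodule.

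For part~(b): the exact sequence~(III) exhibits the natural map $\delta_{S,A}\colon A\rarrow\Delta_S(A)=\Ext^1_R(K^\bu,A)$ as the composite of the canonical surjection $A\rarrow A/h_S(A)$ with the inclusion $A/h_S(A)\hookrightarrow\Ext^1_R(K^\bu,A)$. Consequently $\ker(\delta_{S,A})=h_S(A)$, which is $S$-h-divisible by the very definition of $h_S$, and $\coker(\delta_{S,A})=\Ext^1_R(S^{-1}R,A)$, which is an $(S^{-1}R)$-module (being an $\Ext$ group from an $(S^{-1}R)$-module, cf.\ the proof of Lemma~\ref{about-contramodules-lemma}(b)). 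Together with part~(a), this is exactly the hypothesis of Lemma~\ref{rich-functors} with $b=\delta_{S,A}\colon A\rarrow\Delta_S(A)$ and $c=\lambda_{S,A}\colon A\rarrow\Lambda_S(A)$; the lemma then yields a unique morphism $\beta_{S,A}\colon\Delta_S(A)\rarrow\Lambda_S(A)$ with $\lambda_{S,A}=\beta_{S,A}\,\delta_{S,A}$, which is the assertion of~(b) for a single module.

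For the functoriality statement: given an $R$-module morphism $g\colon A\rarrow A'$, both $\beta_{S,A'}\,\Delta_S(g)$ and $\Lambda_S(g)\,\beta_{S,A}$ are $R$-module morphisms $\Delta_S(A)\rarrow\Lambda_S(A')$, and precomposing either of them with $\delta_{S,A}$ produces $\lambda_{S,A'}\,g$ — for the first using $\Delta_S(g)\,\delta_{S,A}=\delta_{S,A'}\,g$ and $\beta_{S,A'}\,\delta_{S,A'}=\lambda_{S,A'}$, for the second using $\beta_{S,A}\,\delta_{S,A}=\lambda_{S,A}$ and the naturality of~$\lambda_S$. Invoking the uniqueness part of Lemma~\ref{rich-functors} once more, now with $b=\delta_{S,A}$, \ $c=\lambda_{S,A'}\,g$, and target the $S$-contramodule $\Lambda_S(A')$, we conclude that $\beta_{S,A'}\,\Delta_S(g)=\Lambda_S(g)\,\beta_{S,A}$, i.e.\ that $\beta_S\colon\Delta_S\rarrow\Lambda_S$ is a morphism of functors. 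I expect the only nonformal point to be the identification of $\ker\delta_{S,A}$ and $\coker\delta_{S,A}$ via sequence~(III); granting that, everything reduces to Lemmas~\ref{about-contramodules-lemma}, \ref{contramodules-closed-under}, and~\ref{rich-functors}.
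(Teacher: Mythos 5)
Your proposal is correct and follows exactly the paper's own route: part~(a) via Lemmas~\ref{about-contramodules-lemma}(b) and~\ref{contramodules-closed-under}, part~(b) via Lemma~\ref{rich-functors} applied to $\delta_{S,A}$ using part~(a) and the exact sequence~(III), and functoriality from the uniqueness clause of Lemma~\ref{rich-functors}. You have merely written out the details (the identification of $\ker\delta_{S,A}=h_S(A)$ and $\coker\delta_{S,A}=\Ext^1_R(S^{-1}R,A)$) that the paper leaves implicit, and these are verified correctly.
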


\begin{proof}
 Part~(a) follows from Lemmas~\ref{about-contramodules-lemma}(b)
and~\ref{contramodules-closed-under}.
 Part~(b) is provided by Lemma~\ref{rich-functors} together with
part~(a) and the short exact sequence~(III).
 The second assertion in part~(b) (commutativity of the diagram in
the definition of a natural transformation of functors)
follows from the uniqueness claim in Lemma~\ref{rich-functors}.
\end{proof}

 The following proposition, which is a direct generalization
of~\cite[Lemma~VIII.4.1]{FS} provable by essentially the same method,
may help the reader feel more comfortable.
 We will not use its part~(b) in this paper.

\begin{prop} \label{countable-completion-complete}
\textup{(a)} The $R$\+module $\Lambda_S(A)$ is $S$\+separated for
every $R$\+module~$A$. \par
\textup{(b)} Suppose that the multiplicative set $S$ is countable.
 Then the $R$\+module $\Lambda_S(A)$ is $S$\+separated and
$S$\+complete for every $R$\+module~$A$.
 Moreover, the functor $\Lambda_S$ is left adjoint to the fully
faithful embedding functor $R\modl_{S\secmp}\rarrow R\modl$.
\end{prop}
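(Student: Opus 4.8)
The plan is to dispatch part~(a) straight from the definitions and to reduce part~(b) to the case of a countable cofinal chain, where everything except one divisibility computation is formal. For part~(a): suppose $\xi\in\bigcap_{s\in S}s\Lambda_S(A)$, and for each $s\in S$ write $\xi=s\xi'$ (the element $\xi'\in\Lambda_S(A)$ depending on~$s$). Applying the $R$\+linear projection $\pi_s\colon\Lambda_S(A)\to A/sA$ gives $\pi_s(\xi)=s\,\pi_s(\xi')\in s(A/sA)=0$. Since an element of $\Lambda_S(A)=\varprojlim_{t\in S}A/tA$ is determined by the family $(\pi_s(\xi))_{s\in S}$ and every $\pi_s(\xi)$ vanishes, we get $\xi=0$; thus $\lambda_{S,\Lambda_S(A)}$ is injective and $\Lambda_S(A)$ is $S$\+separated.

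For part~(b), I would enumerate $S=\{s_1,s_2,s_3,\dots\}$ and set $t_n=s_1s_2\dotsm s_n$. As $s_k$ divides $t_k$, the sequence $(t_n)$ is cofinal in~$S$ for the divisibility preorder, so $\Lambda_S(A)=\varprojlim_n A/t_nA$ and the submodules $t_n\Lambda_S(A)$ are cofinal among the $s\Lambda_S(A)$, $s\in S$. Since $t_n$ divides $t_{n+1}$, the transition maps $A/t_{n+1}A\to A/t_nA$ are surjective; by the standard theory of such towers the projection $\Lambda_S(A)\to A/t_nA$ is then surjective with kernel $F_n:=\ker(\Lambda_S(A)\to A/t_nA)$, and $\Lambda_S(A)=\varprojlim_n\Lambda_S(A)/F_n$ is complete and separated for the filtration $(F_n)_n$. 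One always has $t_n\Lambda_S(A)\subseteq F_n$.

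The essential point is the reverse inclusion $F_n\subseteq t_n\Lambda_S(A)$ for every~$n$; equivalently, that the natural surjection $\Lambda_S(A)/t_n\Lambda_S(A)\to A/t_nA$ is an isomorphism, i.e.\ that passing to the completion does not enlarge the quotients $A/t_nA$. Given $\xi=(a_m+t_mA)_m\in F_n$, each component $a_m$ lies in $t_nA+t_mA$, so there is $d_m\in A$ with $t_nd_m\equiv a_m\pmod{t_mA}$; the problem is to choose the $d_m$ compatibly, i.e.\ so that $d_{m+1}\equiv d_m\pmod{t_mA}$, after which $\eta=(d_m+t_mA)_m\in\Lambda_S(A)$ satisfies $t_n\eta=\xi$. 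As in~\cite[Lemma~VIII.4.1]{FS}, one constructs the $d_m$ by induction on~$m$, at each step correcting $d_{m+1}$ inside its allowed coset and invoking surjectivity of the transition maps; the countability of $S$ --- so that the index set is~$\mathbb N$ --- is exactly what makes this inductive correction converge. I expect this Mittag--Leffler-type bookkeeping to be the main obstacle, and it is genuinely special to the countable case.

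Granting the identification $F_n=t_n\Lambda_S(A)$, the remaining assertions are formal. The $S$\+topology on $\Lambda_S(A)$ then coincides up to cofinality with the filtration $(F_n)_n$, so $\Lambda_S(\Lambda_S(A))=\varprojlim_n\Lambda_S(A)/t_n\Lambda_S(A)=\varprojlim_n\Lambda_S(A)/F_n=\Lambda_S(A)$, and the resulting identification is $\lambda_{S,\Lambda_S(A)}$; hence $\Lambda_S(A)$ is $S$\+separated and $S$\+complete. Finally, for $N\in R\modl_{S\secmp}$ and an $R$\+module morphism $f\colon A\to N$, functoriality of $\Lambda_S$ together with the fact that $\lambda_{S,N}$ is an isomorphism $N\simeq\Lambda_S(N)$ gives $\hat f:=\lambda_{S,N}^{-1}\circ\Lambda_S(f)\colon\Lambda_S(A)\to N$ with $\hat f\circ\lambda_{S,A}=f$, by naturality of $\lambda_S$; uniqueness holds because $\lambda_{S,A}(A)$ is $S$\+dense in $\Lambda_S(A)$ (which follows from $F_n=t_n\Lambda_S(A)$) while $N$ is $S$\+separated. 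This exhibits $\Lambda_S$ as a left adjoint of the fully faithful embedding $R\modl_{S\secmp}\to R\modl$.
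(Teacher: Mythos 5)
Part~(a) and the formal consequences you draw from the identity $F_n=t_n\Lambda_S(A)$ (completeness, the identification $\Lambda_S(A)/t_n\Lambda_S(A)\simeq A/t_nA$, and the adjunction via naturality of~$\lambda_S$ together with $S$\+density of $\im(\lambda_{S,A})$ and $S$\+separatedness of the target) are fine, and your reduction to the cofinal chain $t_n=s_1\dotsm s_n$ is exactly how the paper proceeds. But the statement you yourself single out as ``the essential point'' --- $F_n\subseteq t_n\Lambda_S(A)$, i.e.\ $U_s=s\Lambda_S(A)$ --- is precisely what you do not prove, and the mechanism you sketch for it does not work as stated. A forward induction that at stage $m+1$ only ``corrects $d_{m+1}$ inside its allowed coset'' $d_m+t_mA$ can get stuck: take $R=\Z$, \ $S=\{2^k\}$, \ $A=\Z/4\Z$ (so $\Lambda_S(A)=A$), \ $n=1$, and $\xi=2\in F_1$ with constant components $a_m=2$. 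At $m=1$ the constraint $2d_1\equiv a_1\pmod{2A}$ is satisfied by every $d_1$, e.g.\ $d_1=0$; at $m=2$ you need $d_2\equiv d_1\pmod{2A}$ and $2d_2=2$ in $\Z/4\Z$, i.e.\ $d_2$ simultaneously even and odd. So the obstruction is not Mittag--Leffler-type surjectivity of the transition maps (those are surjective here and always); it is that the increment $a_{m+1}-t_nd_m\in t_mA$ need not lie in $t_nt_mA+t_{m+1}A$, and repairing this may force you to revise earlier choices unboundedly often --- nothing in your sketch controls that.

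The paper avoids termwise division altogether: it writes an arbitrary $c\in\Lambda_S(A)$ as a convergent series $c=\sum_{n\ge0}t_n\lambda_{S,A}(a_n)$, where $t_na_n$ is the increment $c_{t_{n+1}}-c_{t_n}\in t_nA$ (here divisibility is automatic from the definition of the projective limit), and then, for $c\in U_s$ with $s\le t_{n_0}$, observes that the tail $\sum_{n\ge n_0}t_n\lambda_{S,A}(a_n)$ equals $t_{n_0}$ times another convergent series, hence lies in $s\Lambda_S(A)$, while the head is $\lambda_{S,A}$ of an element of $sA$. Some such global device (summable increments, or a correction scheme with an actual proof of convergence) is exactly the missing content of your argument; until it is supplied, your proof of part~(b) is incomplete.
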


\begin{proof}
 By the definition, the $R$\+module $\Lambda_S(A)$ is separated and
complete in the \emph{projective limit topology}, where a base of
neighborhoods of zero is formed the kernel submodules
$U_s\subset\Lambda_S(A)$, \ $s\in S$ of the projection maps
$\Lambda_S(A)\rarrow A/sA$.
 In particular, one has $\bigcap_{s\in S}U_s=0$ in $\Lambda_S(A)$.
 Clearly, the inclusions $s\Lambda_S(A)\subset U_s$ hold for all
$s\in S$.
 Hence we have $\bigcap_{s\in S} s\Lambda_S(A)=0$, and part~(a) follows.

 To prove part~(b), let us show that under its assumption
$U_s=s\Lambda_S(A)$ for every $s\in S$.
 Let $1=s_0\le s_1\le s_2\le\dotsb$ be an increasing chain of elements
from $S$ such that for every $r\in S$ there exists $n\ge1$ for which
$r\le s_n$.
 For every $R$\+module $A$ and every sequence of elements $c_0$, $c_1$,
$c_2$~\dots\ in $\Lambda_S(A)$ define the expression
$$
 \sum\nolimits_{n=0}^\infty s_nc_n\.\in\.\Lambda_S(A)
$$
as the limit of finite partial sums in the projective limit topology
of $\Lambda_S(A)$.
 In other words, if the element $c_n\in\Lambda_S(A)$ is represented by
a family of cosets $(c_{n,r}+rA\in A/rA)_{r\in S}$, \ $c_{n,r}\in A$,
then the element $\sum_{n=0}^\infty s_nc_n$ is given by
$(\sum_{n=0}^\infty s_nc_{n,r}+rA)_{r\in S}$, where the sum is essentially
finite modulo $rA$ for every $r\in S$ due to our condition on
the sequence of elements~$s_n$.

 Now set $t_0=1$ and $t_n=s_1\dotsm s_n$ for every $n\ge1$; this
sequence of elements in $S$ also satisfies our conditions.
 Let $c\in\Lambda_S(A)$ be an element represented by a family of
cosets $(c_r+rA)_{r\in S}$.
 Then $c_{t_{n+1}}-c_{t_n}\in t_nA$ for every $n\ge0$.
 Set $a_0=c_{t_1}\in A$ and choose $a_n\in A$ such that
$c_{t_{n+1}}-c_{t_n}=t_na_n$ for every $n\ge1$.
 Then we have
$$
 c=\sum\nolimits_{n=0}^\infty t_n\lambda_{S,A}(a_n)\.\in\.\Lambda_S(A).
$$
 Assume that $c\in U_s$, and choose $n_0\ge1$ such that
$s\le t_{n_0}$ in~$S$.
 Then $\lambda_{S,A}(\sum_{n=0}^{n_0-1} t_na_n)=
\sum_{n=0}^{n_0-1} t_n\lambda_{S,A}(a_n)\in U_s$, hence
$\sum_{n=0}^{n_0-1}t_na_n\in sA$.
 At last, we have
$$
 \sum\nolimits_{n=n_0}^\infty t_n\lambda_{S,A}(a_n)=
 t_{n_0}\sum\nolimits_{n=0}^\infty s_{n_0+1}\dotsm s_{n_0+n}
 \lambda_{S,A}(a_{n_0+n})\.\in\. s\Lambda_S(A),
$$
hence $c\in s\Lambda_S(A)$ (notice that the sequence of elements
$1$, \,$s_{n_0+1}$, \,$s_{n_0+1}s_{n_0+2}$,~\dots\ $\in S$ also satisfies
our conditions, so the sum in the right-hand side converges).

 We have shown that the $S$\+topology on $\Lambda_S(A)$ coincides
with the projective limit topology; so it is separated and complete.
 Finally, we have $\Lambda_S(A)/s\Lambda_S(A)=\Lambda_S(A)/U_s=A/sA$
for every $s\in A$.
 Let $D$ be an $R$\+module belonging to $R\modl_{S\secmp}$.
 Then $D=\varprojlim_{s\in S}D/sD$.
 So, for any $R$\+module $A$, morphisms of $R$\+modules
$A\rarrow D$ correspond bijectively to compatible systems of $R$\+module
morphisms $A\rarrow D/sD$, or, which is the same, $A/sA\rarrow D/sD$.
 The isomorphism $A/sA=\Lambda_S(A)/s\Lambda_S(A)$ now implies
an isomorphism of the $\Hom$ modules
$$
 \Hom_R(A,D)\simeq\Hom_R(\Lambda_S(A),D),
$$
providing the desired adjunction (cf.\ the proof
of~\cite[Theorem~5.8]{Pcta}).
\end{proof}

 Without the countability assumption, a long list of conditions
equivalent to $S$\+completeness of the $R$\+module $\Lambda_S(A)$
for a given $R$\+module $A$, in the case of a domain $R$ and
$S=R\setminus\{0\}$, is presented in~\cite[Theorem~6.8]{Mat}.
 All of them appear to be also equivalent in our generality.
 In the next theorem, we list only some of these equivalent
conditions from~\cite{Mat}, and add a couple of our own.

 As in the proof of Proposition~\ref{countable-completion-complete},
for every element $s\in S$ we denote by $U_s\subset\Lambda_S(A)$
the kernel of the projection $\Lambda_S(A)\rarrow A/sA$.
 Following~\cite{Mat}, we denote for brevity by $\Pi_A$ the infinite
product $\prod_{s\in S}A/sA$.
 The $R$\+module $\Lambda_S(A)$ is naturally a submodule in~$\Pi_A$.
 Given a submodule $N\subset M$ in an $R$\+module $M$, we say that
$N$ is \emph{$S$\+pure} in $M$ if for every $s\in S$ one has
$N\cap sM=sN$.
 One easily notices that $\im(\lambda_{S,A})$ is always
an $S$\+pure submodule in $\Lambda_S(A)$; moreover,
$\lambda_{S,A}^{-1}(s\Lambda_S(A))=sA$ for every $s\in S$.

\begin{thm} \label{when-completion-complete}
 For any commutative ring $R$, multiplicative subset $S\subset R$,
and an $R$\+module $A$, the following conditions are equivalent:
\begin{enumerate}
\renewcommand{\theenumi}{\roman{enumi}}
\item the $R$\+module $\Lambda_S(A)$ is ($S$\+separated and)
$S$\+complete;
\item the $S$\+topology on $\Lambda_S(A)$ coincides with
the projective limit topology;
\item for every element $s\in S$, the submodule $U_s\subset\Lambda_S(A)$
coincides with $s\Lambda_S(A)$;
\item for every element $s\in S$, the map $\bar\lambda_{S,A}\:
A/sA\rarrow\Lambda_S(A)/s\Lambda_S(A)$ is an isomorphism;
\item for every module $D\in R\modl_{S\secmp}$ and every $R$\+module
morphism $A\rarrow D$ there exists a unique $R$\+module morphism
$\Lambda_S(A)\rarrow D$ making the triangle diagram
$A\rarrow\Lambda_S(A)\rarrow D$ commutative;
\item the $R$\+module\/ $\coker(\lambda_{S,A})$ is $S$\+divisible;
\item $\Lambda_S(A)$ is an $S$\+pure submodule in\/~$\Pi_A$.
\end{enumerate}
\end{thm}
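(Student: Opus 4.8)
The plan is to obtain the equivalences among (ii)--(vii) by elementary arguments, then to prove $(\mathrm{iii})\Rightarrow(\mathrm{i})$, and finally $(\mathrm{i})\Rightarrow(\mathrm{ii})$, which is the only step requiring real work. Throughout, write $B=\Lambda_S(A)$ and let $\pi_s\colon B\to A/sA$ be the $s$\+component projection. I would first record a few elementary facts. Since the composition $A\xrightarrow{\lambda_{S,A}}B\xrightarrow{\pi_s}A/sA$ is the canonical surjection, each $\pi_s$ is onto; hence $\im(\lambda_{S,A})+U_s=B$ for every $s$ (so $\im\lambda_{S,A}$ is dense in the projective limit topology of $B$), and the map $B/sB\to B/U_s=A/sA$ induced by $\pi_s$ is a retraction of $\bar\lambda_{S,A}\colon A/sA\to B/sB$. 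Thus the short exact sequences
\[
0\lrarrow U_s/sB\lrarrow B/sB\lrarrow A/sA\lrarrow 0
\]
are split by $\bar\lambda_{S,A}$, and they split compatibly with the transition maps over $s\in S$. Next, inspecting compatible families shows that an element of $B\subseteq\Pi_A$ has vanishing $s$\+component exactly when all of its components are divisible by $s$; that is, $U_s=s\Pi_A\cap B$. Finally, as $\pi_{sr}$ is onto, $\pi_{sr}(sB)=s(A/srA)=\pi_{sr}(U_s)$, whence $U_s=sB+U_{sr}$ for all $s,r\in S$; and $B$ is separated and complete in its projective limit topology, which the $S$\+topology refines because $sB\subseteq U_s$.

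Granting this, the easy equivalences run as follows. By the splitting above, $\bar\lambda_{S,A}$ is an isomorphism iff $U_s=sB$, so $(\mathrm{iii})\Leftrightarrow(\mathrm{iv})$. Since $\bar\lambda_{S,A}$ is always injective, it is an isomorphism iff it is onto iff $\im\lambda_{S,A}+sB=B$ iff $\coker\lambda_{S,A}$ is $s$\+divisible, so $(\mathrm{iv})\Leftrightarrow(\mathrm{vi})$. For $(\mathrm{iv})\Leftrightarrow(\mathrm{v})$: for $D\in R\modl_{S\secmp}$ one has $D=\varprojlim_sD/sD$ and $\Hom_R(M,D/sD)=\Hom_{R/sR}(M/sM,D/sD)$, so $\Hom_R(B,D)=\varprojlim_s\Hom_{R/sR}(B/sB,D/sD)$ and likewise with $A$ in place of $B$; under (iv) the isomorphisms $\bar\lambda_{S,A}$ identify these inverse systems, which is the universal property~(v). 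Conversely, applying (v) with $D=A/sA$ and with $D=B/sB$ (each killed by $s$, hence in $R\modl_{S\secmp}$) shows that precomposition with $\bar\lambda_{S,A}$ is bijective on $\Hom_{R/sR}({-},A/sA)$ and on $\Hom_{R/sR}({-},B/sB)$; the first yields a retraction $\rho$ of $\bar\lambda_{S,A}$, and the second, applied to the idempotent $\bar\lambda_{S,A}\rho$ of $B/sB$, forces $\bar\lambda_{S,A}\rho=\id$, so $\bar\lambda_{S,A}$ is invertible and (iv) holds. The identity $U_s=s\Pi_A\cap B$ turns condition~(vii) into condition~(iii). And $(\mathrm{iii})\Rightarrow(\mathrm{ii})$ is trivial, while for $(\mathrm{ii})\Rightarrow(\mathrm{iii})$ one uses $U_s=sB+U_{sr}$: if the two topologies agree, then some $U_t$ with $t$ a multiple $sr$ of $s$ lies in $sB$, whence $U_s=sB+U_{sr}=sB$.

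The implication $(\mathrm{iii})\Rightarrow(\mathrm{i})$ is then immediate, since under (iii) the $S$\+topology and the projective limit topology on $B$ share a base and $B$ is separated and complete in the latter. There remains $(\mathrm{i})\Rightarrow(\mathrm{ii})$, equivalently $(\mathrm{i})\Rightarrow(\mathrm{iii})$. Here the plan is to pass to inverse limits in the split short exact sequence of inverse systems recorded above, obtaining a split short exact sequence $0\to\varprojlim_s(U_s/sB)\to\Lambda_S(B)\to B\to0$ whose retraction $\Lambda_S(B)\to B$ becomes the identity after precomposition with $\lambda_{S,B}$. Thus $\lambda_{S,B}$, being a section of that retraction, is surjective --- that is, $B$ is $S$\+complete --- if and only if $\varprojlim_s(U_s/sB)=0$; so (i) amounts to the vanishing of this inverse limit, whereas (iii) asks for the vanishing of every individual term $U_s/sB$.

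The remaining task, and the step I expect to be the main obstacle, is to upgrade ``$\varprojlim_s(U_s/sB)=0$'' to ``$U_s/sB=0$ for every $s$''. The transition maps $U_{sr}/srB\to U_s/sB$ of this system are \emph{surjective}, which is exactly the identity $U_s=sB+U_{sr}$, and the multiples of a fixed $s$ are cofinal in $S$, so for countable $S$ the Mittag--Leffler principle closes the argument at once (recovering Proposition~\ref{countable-completion-complete}(b)). For an uncountable multiplicative set $S$, surjectivity of transition maps alone does not make the canonical maps out of an inverse limit surjective, so I anticipate this last step needs the concrete shape of the system --- for instance, identifying $U_s/sB\cong(\coker\lambda_{S,A})/s(\coker\lambda_{S,A})$ and exploiting the structure of $\coker\lambda_{S,A}$, along the lines of Matlis' argument for the domain case in~\cite[Theorem~6.8]{Mat}.
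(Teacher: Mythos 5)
Your reductions among (ii)--(vii), and the step (iii)$\Rightarrow$(i), are correct and essentially parallel to the paper's (your detour (iv)$\Leftrightarrow$(vi) through $\bar\lambda_{S,A}$, and your two-object trick for (v)$\Rightarrow$(iv), are harmless variants). The genuine gap is exactly where you flag it: the implication (i)$\Rightarrow$(iii) for an arbitrary, possibly uncountable, multiplicative set~$S$. Your reduction of (i) to the vanishing of $\varprojlim_{s\in S}\bigl(U_s/s\Lambda_S(A)\bigr)$ is fine, but it discards precisely the information needed to conclude termwise vanishing: for an uncountable directed index set, an inverse system with surjective transition maps can have zero limit and nonzero terms, Mittag--Leffler is unavailable, and ``exploit the structure of $\coker(\lambda_{S,A})$'' is not an argument. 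Since the whole point of the theorem (as opposed to Proposition~\ref{countable-completion-complete}(b)) is the uncountable case, the proposal does not prove the statement.

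The missing idea is an element-wise use of the hypothesis, applied to a well-chosen element of $\Lambda_S(\Lambda_S(A))$ rather than to the abstract splitting. Write $B=\Lambda_S(A)$ and let $x\in B$ have components $(x_t+tA)_{t\in S}$. Assuming (i), the surjection $\lambda_{S,B}\:B\rarrow\Lambda_S(B)$ hits the element $\Lambda_S(\lambda_{S,A})(x)$, so there is $y\in B$ with $y-\lambda_{S,A}(x_t)\in tB$ for \emph{all} $t\in S$ simultaneously. Applying the projections $\pi_t\:B\rarrow A/tA$ and using $\pi_t(tB)=0$ gives $y_t-x_t\in tA$ for all~$t$, i.e.\ $y=x$; hence $x-\lambda_{S,A}(x_s)\in sB$ for every $s\in S$. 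If now $x\in U_s$, then $x_s\in sA$, so $\lambda_{S,A}(x_s)\in sB$ and therefore $x\in sB$, proving $U_s=sB$, which is~(iii) (and (iv)). This single compatible approximant $y=x$ is what the vanishing of the inverse limit alone cannot supply; it requires no countability and is the content of the paper's argument (the comparison of $\lambda_{S,\Lambda_S(A)}$ with $\Lambda_S(\lambda_{S,A})$, i.e.\ the idempotency observation of Yekutieli and Matlis that your split exact sequence of limits reproves but does not exploit).
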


\begin{proof}
 The condition~(ii) implies~(i), because a projective limit is
always complete in its projective limit topology.
 The implication (iii)~$\Longrightarrow$~(ii) is clear (see
the discussion in the proof of
Proposition~\ref{countable-completion-complete}(a)).

 To compare~(iii) with~(iv), notice that we have in fact two
natural maps
\begin{equation} \label{yekutieli-direct-sum-quotients}
 A/sA\lrarrow\Lambda_S(A)/s\Lambda_S(A)\lrarrow A/sA
\end{equation}
with the composition equal to the identity map.
 The condition~(iv) says that the leftmost arrow is an isomorphism
(or surjective), while (iii)~means that the rightmost arrow is
an isomorphism (or injective).
 These are clearly two equivalent conditions.

 To see that~(i) implies (iii) and~(iv), we notice that
\eqref{yekutieli-direct-sum-quotients} implies that $\Lambda_S(A)$
is naturally a direct summand in $\Lambda_S(\Lambda_S(A))$ (this
observation comes from~\cite[Theorem~1.5 and Corollary~1.7]{Yek1}).
 When the complementary direct summand vanishes, one easily
concludes that the maps~\eqref{yekutieli-direct-sum-quotients}
are isomorphisms.
 To make this argument more explicit, one can follow~\cite{Mat} in
considering the two morphisms
$$
 \lambda_{S,\Lambda_S(A)}\ \text{and}\ \Lambda_S(\lambda_{S,A})\:
 \Lambda_S(A)\lrarrow\Lambda_S(\Lambda_S(A)).
$$
 Assume that $\Lambda_S(A)$ is $S$\+complete, that is the map
$\lambda_{S,\Lambda_S(A)}$ is surjective.
 Let $x$ be an element in $\Lambda_S(A)$; then there exists
$y\in\Lambda_S(A)$ such that $\lambda_{S,\Lambda_S(A)}(y)=
\Lambda_S(\lambda_{S,A})(x)$.
 Let the components of~$x$ be $(x_t+tA)_{t\in S}$ and the components
of~$y$ be $(y_t+tA)_{t\in S}$.
 Then we have
$$
 \lambda_{S,\Lambda_S(A)}(y)=(y+t\Lambda_S(A))_{t\in S}
 \quad\text{and}\quad
 \Lambda_S(\lambda_{S,A})(x)=
 (\lambda_{S,A}(x_t)+t\Lambda_S(A))_{t\in S}.
$$
 Hence $y-\lambda_{S,A}(x_t)\in t\Lambda_S(A)$.
 Comparing the $t$\+components, we obtain $y_t-x_t\in tA$, hence
$y=x$ in $\Lambda_S(A)$.
 We have shown that
$$
 x-\lambda_{S,A}(x_s)\in s\Lambda_S(A) \quad
 \text{for all $s\in S$},
$$
and it follows that $U_s=s\Lambda_S(A)$.

 Applying~(v) in the case of an $R$\+module $D$ annihilated by
the action of~$s$, one can see that (v) implies~(iv).
 Computing $\Hom_R(A,D)$ as $\varprojlim_{s\in S}\Hom_R(A,D/sD)$
and similarly for $\Hom_R(\Lambda_S(A),D)$, one concludes that
(iv) implies~(v) (see the last paragraph of the proof of
Proposition~\ref{countable-completion-complete}(b)).

 To prove that (vi) implies~(iii), suppose
that $x$~is an element of~$U_s$.
 Using~(vi), we can present it in the form $x=\lambda_{S,A}(a)+sy$
with $a\in A$ and $y\in\Lambda_S(A)$.
 Now the $s$\+component of $\lambda_{S,A}(a)$ vanishes in
$A/sA$, because the $s$\+components of~$x$ and~$sy$ vanish.
 Hence $a\in sA$ and $x\in s\Lambda_S(A)$.
 Conversely, let $x$ be an element in $\Lambda_S(A)$ with
the components $(x_t+tA)_{t\in S}$.
 Set $y=x-\lambda_{S,A}(x_s)$.
 Then $y\in U_s$, and by~(iii) we can conclude that
$y\in s\Lambda_S(A)$.
 So $x\in\lambda_{S,A}(x_s)+s\Lambda_S(A)$, providing~(vi).

 To check that (vii) is equivalent to~(iii), let us show that
$U_s=\Lambda_S(A)\cap s\Pi_A\subset\Pi_A$ for every $R$\+module
$A$ and every $s\in S$.
 Indeed, the inclusion $\Lambda_S(A)\cap s\Pi_A\subset U_s$
is clear.
 Conversely, let $x$ be an element of $U_s$ with the components
$(x_t+tA)_{t\in S}$.
 Then $x_s\in sA$.
 By the definition of the projective limit, we have
$x_{st}-x_t\in tA$ and $x_{st}-x_s\in sA$.
 Replacing $x_t$ by $x'_t=x_{st}$ fot every $t\in S$, we get
$x=(x'_t+tA)_{t\in S}$ and $x'_t\in sA$ for every $t\in S$.
 Thus $x\in s\Pi_A$.
\end{proof}

 The following lemma is our (weak) version of~\cite[Lemma~6.9]{Mat}.
(See also~\cite[Lemmas~1.6\+-1.7]{GT}.)

\begin{lem} \label{torsion-free-completion-complete}
 For every $S$\+torsion-free $R$\+module $A$, the equivalent
conditions of Theorem~\ref{when-completion-complete} hold.
\end{lem}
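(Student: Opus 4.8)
The plan is to verify condition~(vi) of Theorem~\ref{when-completion-complete}, namely that $\coker(\lambda_{S,A})$ is $S$\+divisible, for every $S$\+torsion-free $R$\+module~$A$. The advantage of aiming at~(vi) is that $S$\+torsion-freeness makes the maps $s\:A\rarrow A$ injective, so the short exact sequences $0\rarrow A\overset{s}\rarrow A\rarrow A/sA\rarrow0$ behave well, and the projective limit $\Lambda_S(A)=\varprojlim_{s\in S}A/sA$ can be analyzed term by term. Concretely, given $s\in S$ and an element $c\in\Lambda_S(A)$ represented by a compatible family of cosets $(c_t+tA)_{t\in S}$, I want to produce $c'\in\Lambda_S(A)$ with $c-sc'\in\im(\lambda_{S,A})$.

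First I would observe that for $S$\+torsion-free $A$ the transition maps in the projective system are surjective (indeed $A/stA\twoheadrightarrow A/tA$ always, and for $t\le st$ this is the relevant map), and more importantly that multiplication by~$s$ induces an isomorphism $A/tA\overset{\sim}\rarrow sA/stA$ and hence, via the inclusion $sA/stA\subset A/stA$, lets one compare the system $(A/tA)$ with a shifted copy of itself. The key computational step is: given the family $(c_t)$, define $c'_t\in A$ by choosing, compatibly in~$t$, elements with $s c'_t \equiv c_{st} - c_s \pmod{t A}$ — this is possible precisely because $c_{st}-c_s \in sA$ (as the images of $c_{st}$ and $c_s$ in $A/sA$ agree, both being the $s$\+component of~$c$) and $A$ is $S$\+torsion-free, so division by~$s$ modulo $tA$ is unambiguous. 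One checks the $(c'_t + tA)$ form a compatible family, hence define $c'\in\Lambda_S(A)$, and then $sc' + \lambda_{S,A}(c_s)$ has $t$\+component $s c'_t + c_s \equiv c_{st} \equiv c_t \pmod{tA}$, i.e.\ equals~$c$. This exhibits $c - \lambda_{S,A}(c_s) \in s\Lambda_S(A)$, which is exactly $S$\+divisibility of $\coker(\lambda_{S,A})$ — in fact it reproves the sharper statement that $U_s = s\Lambda_S(A)$, condition~(iii).

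The main obstacle is the compatibility check for the family $(c'_t)$: one must show that the chosen lifts can be made coherent across the index set~$S$, i.e.\ that if $t\le t'$ then $c'_{t'}\equiv c'_t\pmod{tA}$. This follows from the $S$\+torsion-freeness (which makes the "divide by~$s$" operation a well-defined map $A/tA \to A/tA$ on the relevant submodule, functorial in~$t$) together with the compatibility of the original family $(c_t)$, but it requires being careful that the choices of preimages under multiplication by~$s$ are taken as the image of a single well-defined element rather than made arbitrarily. Once phrased as "$s\:A/stA\to A/stA$ is injective with image containing $c_{st}-c_s$", the unique preimage in $A/stA$ maps down correctly, and coherence is automatic. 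Everything else is routine bookkeeping with cosets in the projective limit topology, and invoking Theorem~\ref{when-completion-complete} then yields all the listed equivalent conditions at once.
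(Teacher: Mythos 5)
Your proposal is correct, and it is essentially the paper's own argument in a slightly different guise: the paper verifies condition~(vii) of Theorem~\ref{when-completion-complete} (that $\Lambda_S(A)$ is $S$-pure in $\Pi_A$) by writing $x_t=sz_t$ and passing to the re-indexed family $y_t=z_{st}$, whereas you verify~(vi)/(iii) by subtracting $\lambda_{S,A}(c_s)$ and then dividing by~$s$; in both cases the engine is the same, namely that in an $S$-torsion-free module division by~$s$ is unique, the index shift $t\mapsto st$ produces the candidate family, and uniqueness makes the coherence check automatic. One phrasing slip is worth fixing: multiplication by~$s$ on $A/stA$ is \emph{not} injective (its kernel is $tA/stA$, even for torsion-free $A$); what you actually need --- and stated correctly earlier in your sketch --- is that $s$ induces an isomorphism $A/tA\simeq sA/stA$, or, simplest of all, that $c_{st}-c_s\in sA$ has a \emph{unique} preimage $c'_t\in A$ under multiplication by~$s$, chosen in $A$ itself rather than modulo anything. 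With that choice the compatibility $c'_{t'}-c'_t\in tA$ for $t\le t'$ follows in one line from $s(c'_{t'}-c'_t)=c_{st'}-c_{st}\in stA$ and torsion-freeness, and the identity $c=\lambda_{S,A}(c_s)+sc'$ yields both~(vi) and~(iii), hence all the equivalent conditions.
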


\begin{proof}
 Let us check the condition~(vii).
 Let $x$ be an element in $\Lambda_S(A)\cap s\Pi_A$.
 Then $x=(x_t+tA)_{t\in S}$ with $x_t=sz_t$ for every $t\in S$.
 Set $y_t=z_{st}$.
 Then for every $r\in S$ with $r\ge t$ we have $s(y_r-y_t)=
sz_{sr}-sz_{st}=x_{sr}-x_{st}\in stA$, because $sr\ge st$.
 Since there is no $s$\+torsion in $A$, we can conclude that
$y_r-y_t\in tA$.
 So there is an element $y\in\Lambda_S(A)$ with the components
$(y_t+tA)_{t\in S}$.
 Finally, $sy_t=sz_{st}=x_{st}\in x_t+tA$ for every $t\in S$,
hence $x=sy$ in $\Lambda_S(A)$.
\end{proof}

 The following theorem is our version of~\cite[Theorem~6.10]{Mat}.

\begin{thm} \label{delta-lambda-thm}
 Let $B$ be an $R$\+module with bounded $S$\+torsion.
 Then\par
\textup{(a)} the $R$\+module $\Lambda_S(B)$ is $S$\+complete; \par
\textup{(b)} the kernel and cokernel of the morphism
$\lambda_{S,B}\:B\rarrow\Lambda_S(B)$ are $(S^{-1}R)$\+mod\-ules; \par
\textup{(c)} the natural morphism $\beta_{S,B}\:\Delta_S(B)\rarrow
\Lambda_S(B)$ is an isomorphism; \par
\textup{(d)} the morphism $\lambda_{S,B}$ induces an isomorphism\/
$\Gamma_S(B)\simeq\Gamma_S(\Lambda_S(B))$; \par
\textup{(e)} there is a short exact sequence
$$
 0\lrarrow\Gamma_S(B)\lrarrow\Lambda_S(B)\lrarrow
 \Lambda_S(B/\Gamma_S(B))\lrarrow0.
$$
\end{thm}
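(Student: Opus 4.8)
The strategy is to reduce the general case to the $S$\+torsion and $S$\+torsion-free cases by means of the canonical short exact sequence $0\rarrow\Gamma_S(B)\rarrow B\rarrow B/\Gamma_S(B)\rarrow0$. Put $T=\Gamma_S(B)$ and $\bar B=B/\Gamma_S(B)$; by hypothesis there is $r\in S$ with $rT=0$, and $\bar B$ is $S$\+torsion-free. The first thing to record is the behavior of the functors on~$T$: one has $sT=0$ whenever $s$ is a multiple of~$r$, and such elements are cofinal in~$S$, so the transition maps $T/tT\rarrow T/sT$ are the identity of~$T$ for $s$, $t$ multiples of~$r$; hence $\Lambda_S(T)=T$, \ $\varprojlim^1_{s\in S}T/sT=0$, and $\lambda_{S,T}$ is the identity map. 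Moreover $T$ is a strong $S$\+contramodule by Lemma~\ref{about-contramodules-lemma}(b), hence $S$\+h-reduced, so $B$ has no $S$\+h-divisible $S$\+torsion, and by Lemma~\ref{delta-produces-contramodule}(b) the $R$\+module $\Delta_S(B)=\Ext^1_R(K^\bu,B)$ is an $S$\+contramodule.

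I would prove part~(e) first. For every $s\in S$ one has $T\cap sB=sT$ (if $sb$ is $S$\+torsion, then so is~$b$), hence tensoring $0\rarrow T\rarrow B\rarrow\bar B\rarrow0$ with $R/sR$ yields a short exact sequence $0\rarrow T/sT\rarrow B/sB\rarrow\bar B/s\bar B\rarrow0$. Passing to $\varprojlim_{s\in S}$ and using $\varprojlim^1_{s\in S}T/sT=0$ produces the exact sequence $0\rarrow T\rarrow\Lambda_S(B)\rarrow\Lambda_S(\bar B)\rarrow0$ of~(e), in which $T\rarrow\Lambda_S(B)$ is the restriction of $\lambda_{S,B}$. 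Next, part~(b): the snake lemma applied to the morphism from $0\rarrow T\rarrow B\rarrow\bar B\rarrow0$ to the sequence~(e), with vertical maps $\id_T$, \ $\lambda_{S,B}$, \ $\lambda_{S,\bar B}$, gives $\ker\lambda_{S,B}\simeq\ker\lambda_{S,\bar B}$ and $\coker\lambda_{S,B}\simeq\coker\lambda_{S,\bar B}$, so it suffices to treat the $S$\+torsion-free module~$\bar B$. There $\ker\lambda_{S,\bar B}=\bigcap_{s\in S}s\bar B$ is $S$\+torsion-free as a submodule of~$\bar B$ and is $S$\+divisible (given $x$ in the intersection and $t\in S$, write $x=ta$; torsion-freeness forces $a\in\bigcap_s s\bar B$), hence an $(S^{-1}R)$\+module. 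For the cokernel, a short computation with compatible families $(c_s+s\bar B)_{s\in S}$ shows that $\Lambda_S(\bar B)$ is $S$\+torsion-free; since $\im\lambda_{S,\bar B}$ is $S$\+pure in $\Lambda_S(\bar B)$, the quotient $\coker\lambda_{S,\bar B}$ is then $S$\+torsion-free, and it is $S$\+divisible by condition~(vi) of Theorem~\ref{when-completion-complete} (which holds for $\bar B$ by Lemma~\ref{torsion-free-completion-complete}). Thus $\coker\lambda_{S,\bar B}$, and hence $\coker\lambda_{S,B}$, is an $(S^{-1}R)$\+module, proving~(b); and~(a) follows at once, since an $(S^{-1}R)$\+module is $S$\+divisible, so $\Lambda_S(B)$ is $S$\+complete by the implication (vi)$\Rightarrow$(i) of Theorem~\ref{when-completion-complete}.

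Part~(c) is then a formal consequence of the ``rich functor'' Lemma~\ref{rich-functors}. By~(b), the morphism $\lambda_{S,B}\:B\rarrow\Lambda_S(B)$ has kernel an $(S^{-1}R)$\+module (in particular $S$\+h-divisible) and cokernel an $(S^{-1}R)$\+module; by the short exact sequences~(II)\+-(III), the morphism $\delta_{S,B}\:B\rarrow\Delta_S(B)$ has kernel $h_S(B)$ (which is $S$\+h-divisible) and cokernel $\Ext^1_R(S^{-1}R,B)$ (which is an $(S^{-1}R)$\+module, $S$ acting invertibly on it). Since $\Lambda_S(B)$ and $\Delta_S(B)$ are both $S$\+contramodules, Lemma~\ref{rich-functors} applies with either of these two morphisms in the role of the map being factored through. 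It provides a unique $\gamma\:\Lambda_S(B)\rarrow\Delta_S(B)$ with $\gamma\lambda_{S,B}=\delta_{S,B}$, while $\beta_{S,B}$ is by its very construction the unique morphism with $\beta_{S,B}\delta_{S,B}=\lambda_{S,B}$. Then $\gamma\beta_{S,B}$ and $\id_{\Delta_S(B)}$ both factor $\delta_{S,B}$ through $\delta_{S,B}$, and $\beta_{S,B}\gamma$ and $\id_{\Lambda_S(B)}$ both factor $\lambda_{S,B}$ through $\lambda_{S,B}$, so the uniqueness clause of Lemma~\ref{rich-functors} forces $\gamma\beta_{S,B}=\id$ and $\beta_{S,B}\gamma=\id$; hence $\beta_{S,B}$ is an isomorphism. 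Finally, part~(d) follows from~(e): in that sequence the submodule $T=\Gamma_S(B)$ is $S$\+torsion while the quotient $\Lambda_S(\bar B)$ is $S$\+torsion-free, so $\Gamma_S(\Lambda_S(B))=T$, and the isomorphism $\Gamma_S(B)\simeq\Gamma_S(\Lambda_S(B))$ is the one induced by $\lambda_{S,B}$.

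The step I expect to need the most care is part~(b) in the $S$\+torsion-free case --- concretely, the assertion that $\coker\lambda_{S,\bar B}$ is an $(S^{-1}R)$\+module, which rests on combining $S$\+purity of $\im\lambda_{S,\bar B}$ with $S$\+torsion-freeness of $\Lambda_S(\bar B)$ --- together with the technical point that $\varprojlim^1_{s\in S}T/sT=0$, used in~(e): for a possibly uncountable multiplicative set~$S$ this cannot be deduced from the Mittag\+-Leffler condition alone and must be handled by restricting to the cofinal subset of multiples of~$r$, on which the pro-system $T/sT$ is constant. Everything else is a matter of bookkeeping with the exact sequences~(I)\+-(III) and the functors $\Gamma_S$, $\Lambda_S$, $\Delta_S$ already at our disposal.
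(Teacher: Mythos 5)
Your argument is correct, and it shares the paper's skeleton (reduction along $0\to\Gamma_S(B)\to B\to B/\Gamma_S(B)\to0$, Lemma~\ref{torsion-free-completion-complete} and Theorem~\ref{when-completion-complete} for the torsion-free case, Lemma~\ref{rich-functors} for part~(c)), but the mechanism for the general case is genuinely different. The paper never touches derived inverse limits: it applies $\Delta_S$ and $\Lambda_S$ to the torsion sequence, notes that the $\Delta_S$-row is short exact (since $\Ext^0_R(K^\bu,B/\Gamma_S(B))=0$ and $\Ext^2_R(K^\bu,\Gamma_S(B))=0$), and transports surjectivity across $\beta_S$ using the isomorphism $\Delta_S\simeq\Lambda_S$ already proved on the torsion-free quotient and on $\Gamma_S(B)$, so that (c) and (e) come out of one diagram. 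You instead prove (e) first on the $\Lambda_S$-side via $S$-purity of $\Gamma_S(B)$ and vanishing of $\varprojlim^1$, and then get (c) for $B$ itself by applying Lemma~\ref{rich-functors} in both directions, which is legitimate: your (b) supplies the hypotheses on $\lambda_{S,B}$, and $\Delta_S(B)$ is an $S$-contramodule by Lemma~\ref{delta-produces-contramodule}(b) since bounded torsion rules out $S$-h-divisible $S$-torsion (correctly avoiding the assumption $\pd_RS^{-1}R\le1$, not yet in force here). The one step to write out carefully is the $\varprojlim^1$ claim: as you say, it is not a Mittag-Leffler statement, but all limits involved may be computed over the cofinal directed subset of multiples of~$r$ in~$S$, where the subsystem is the constant system $\Gamma_S(B)$ with identity transition maps, and a constant system over a directed poset has vanishing $\varprojlim^1$ (an elementary cocycle/telescoping check, using $\Gamma_S(B)\cap sB=s\Gamma_S(B)=0$ for such~$s$). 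The paper's $\Delta_S$-comparison is exactly a device for sidestepping this point; your route buys a one-stroke proof of~(c) for general~$B$ at the price of that inverse-limit care, and your variant in the torsion-free case (proving $\Lambda_S$ torsion-free first and deducing torsion-freeness of the cokernel from $S$-purity of the image) is an equivalent reshuffling of the paper's computation.
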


\begin{proof}
 Set $A=B/\Gamma_S(B)$; and let us first prove the assertions of
the theorem for an $S$\+torsion-free $R$\+module~$A$
(``Case~I'' in~\cite{Mat}).
 For the $R$\+module $A$ in place of $B$, the assertion~(a) holds by
Lemma~\ref{torsion-free-completion-complete}; and from
Theorem~\ref{when-completion-complete}(vi) we also see that
$\coker(\lambda_{S,A})$ is an $S$\+divisible $R$\+module.
 Furthermore, $\ker(\lambda_{S,A})=\bigcap_{s\in R}sA$ is
$S$\+torsion-free (as a submodule in $A$) and $S$\+divisible,
because $a\in\ker(\lambda_{S,A})$ implies that for every $s$, $t\in S$
there exist $b$, $c\in A$ with $a=sb=stc$.
 Since $A$ is $s$\+torsion-free, one has $b=tc$, that is
$b\in\ker(\lambda_{S,A})$.

 To check that $\coker(\lambda_{S,A})$ is $S$\+torsion-free,
consider an element $x=(x_t+tA)\in \Lambda_S(A)$ such that
$sx=\lambda_{S,A}(a)$ for some $a\in A$.
 Then $a\in sA$, so there is $b\in A$ such that $a=sb$.
 For every $t\in S$ we have $sx_t-sb\in tA$, hence
$sx_t\equiv sx_{st}\equiv sb\bmod stA$.
 Since $A$ has no $s$\+torsion, it follows that $x_t\equiv b
\bmod tA$ and $x=\lambda_{S,A}(b)$.
 This proves part~(b) for the $R$\+module $A$, as we recall that
an $R$\+module is $S$\+torsion-free and $S$\+divisible if and only
if it is an $(S^{-1}R)$\+module.

 In addition, the $R$\+module $A/\ker(\lambda_{S,A})$ is
$S$\+torsion-free, because $sa\in\ker(\lambda_{S,A})$ for $s\in S$
and $a\in A$ implies that there is $b\in\ker(\lambda_{S,A})$
such that $sb=sa$, according to the above.
 Hence $b=a$ and $a\in\ker(\lambda_{S,A})$.
 The $R$\+modules $\im(\lambda_{S,A})$ and $\coker(\lambda_{S,A})$
being $S$\+torsion-free, it follows that the $R$\+module
$\Lambda_S(A)$ is $S$\+torsion-free, too.
 So part~(d) holds for $A$; and part~(e) is trivial in this case.

 To prove part~(c), one applies Lemma~\ref{rich-functors}.
 In view of part~(b) and Lemma~\ref{delta-produces-contramodule}(b),
there exists a unique $R$\+module morphism $\Lambda_S(A)\rarrow
\Delta_S(A)$ making the triangle diagram $A\rarrow\Lambda_S(A)
\rarrow\Delta_S(A)$ commutative.
 Using the uniqueness assertion of Lemma~\ref{rich-functors},
one shows that the compositions $\Delta_S(A)\rarrow\Lambda_S(A)
\rarrow\Delta_S(A)$ and $\Lambda_S(A)\rarrow\Delta_S(A)\rarrow
\Lambda_S(A)$ are the identity maps.
 This finishes the proof of the theorem in the case of
an $S$\+torsion-free $R$\+module~$A$.

 Returning to the general case of an $R$\+module $B$ with bounded
$S$\+torsion (``Case~II'' in~\cite{Mat}), consider the short exact
sequence of $R$\+modules
 $$
 0\lrarrow \Gamma_S(B)\lrarrow B\lrarrow A\lrarrow0.
$$
 Applying the functors $\Delta_S$ and $\Lambda_S$ and the natural
transformation~$\beta_S$, we obtain a commutative diagram
$$
\begin{diagram}
\node{0}\arrow{e}\node{\Delta_S(\Gamma_S(B))}\arrow{s}\arrow{e}
\node{\Delta_S(B)}\arrow{s}\arrow{e}\node{\Delta_S(A)}
\arrow{s}\arrow{e}\node{0} \\
\node{0}\arrow{e}\node{\Lambda_S(\Gamma_S(B))}\arrow{e}
\node{\Lambda_S(B)}\arrow{e}\node{\Lambda_S(A)}
\end{diagram}
$$
 The upper line is exact, since $\Ext_R^0(K^\bu,A)=0$
(by Lemma~\ref{tor-with-k}(c), as $S^{-1}R/R$ is $S$\+torsion and
$A$ is $S$\+torsion-free) and $\Ext_R^2(K^\bu,\Gamma_S(B))=0$ (by
Lemma~\ref{about-contramodules-lemma}(b)).
 The lower line is exact (at the leftmost and the middle terms)
because $\Gamma_S(B)$ is an $S$\+pure submodule in $B$ and
the functor of projective limit is left exact.

 Furthermore, the morphism $\Delta_S(A)\rarrow\Lambda_S(A)$ is
an isomorphism, as we have already proven in part~(c) of
``Case~I'' above.
 It follows that the morphism $\Lambda_S(B)\rarrow\Lambda_S(A)$
is surjective and the lower line of the diagram is a short exact
sequence, too.
 Finally, the morphism $\Delta_S(\Gamma_S(B))\rarrow
\Lambda_S(\Gamma_S(B))$ is an isomorphism, because the morphism
$\Gamma_S(B)\rarrow\Delta_S(\Gamma_S(B))$ is an isomorphism (by
Lemma~\ref{about-contramodules-lemma}(a\+b)) and
the morphism $\Gamma_S(B)\rarrow\Lambda_S(\Gamma_S(B))$ is
(as it is clear from the construction of the $S$\+completion
functor~$\Lambda_S$).
 We have proved parts~(c) and~(e) for the $R$\+module~$B$.
 As the $R$\+module $\Lambda_S(B/\Gamma_S(B))=\Lambda_S(A)$ is
$S$\+torsion-free, part~(d) follows from~(e).

 Very little remains to be done.
 Since $\lambda_{S,\.\Gamma_S(B)}$ is an isomorphism, the kernel and
cokernel of the morphism $\lambda_{S,B}$ are isomorphic to,
respectively, the kernel and cokernel of the morphism
$\lambda_{S,A}$.
 This proves part~(b).
 Now we also know that $\coker(\lambda_{S,B})\simeq
\coker(\lambda_{S,A})$ is an $S$\+divisible $R$\+module.
 Applying Theorem~\ref{when-completion-complete}\,(vi)%
$\Longrightarrow$(i), we deduce the assertion~(a) for
the $R$\+module~$B$.
\end{proof}

 From our point of view, the functor $\Delta_S=\Ext^1_R(K^\bu,{-})$
is of primary importance, while the significance of the $S$\+completion
functor $\Lambda_S$ lies in the fact that, according to
Theorem~\ref{delta-lambda-thm}(c), it sometimes allows to compute
the functor~$\Delta_S$.

\begin{cor} \label{bounded-torsion-sepcomplete-contramodule}
 An $R$\+module $B$ with bounded $S$\+torsion is an $S$\+contramodule
if and only if it is $S$\+separated and $S$\+complete.
\end{cor}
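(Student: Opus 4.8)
The plan is to factor the completion morphism $\lambda_{S,B}$ through the functor $\Delta_S$ and then read off the statement from the comparison isomorphism supplied by Theorem~\ref{delta-lambda-thm}(c). Recall from Lemma~\ref{completion-contramodule}(b) that for every $R$\+module $B$ the triangle
$$
 B\lrarrow\Delta_S(B)\lrarrow\Lambda_S(B),
$$
with arrows $\delta_{S,B}$ and $\beta_{S,B}$, commutes; that is, $\lambda_{S,B}=\beta_{S,B}\circ\delta_{S,B}$.

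First I would prove the ``only if'' direction. Assume $B$ has bounded $S$\+torsion and is an $S$\+contramodule. By Lemma~\ref{about-contramodules-lemma}(a) the morphism $\delta_{S,B}\:B\rarrow\Delta_S(B)$ is an isomorphism, and by Theorem~\ref{delta-lambda-thm}(c) --- which is precisely where the bounded $S$\+torsion hypothesis enters --- the morphism $\beta_{S,B}\:\Delta_S(B)\rarrow\Lambda_S(B)$ is an isomorphism. Hence their composition $\lambda_{S,B}$ is an isomorphism, so in particular it is injective and surjective, which is to say that $B$ is $S$\+separated and $S$\+complete.

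Conversely, if $B$ is $S$\+separated and $S$\+complete, then $\lambda_{S,B}$ is both injective and surjective, hence an isomorphism $B\simeq\Lambda_S(B)$; by Lemma~\ref{completion-contramodule}(a) the $R$\+module $\Lambda_S(B)$ is an $S$\+contramodule, and therefore so is $B$. (Note that this implication uses no assumption on the $S$\+torsion of $B$.) The argument has no genuine obstacle: the substance lies entirely in Theorem~\ref{delta-lambda-thm}(c), and the only bookkeeping needed is to recall that ``$S$\+separated'' and ``$S$\+complete'' together say precisely that $\lambda_{S,B}$ is an isomorphism, and that the commutative triangle of Lemma~\ref{completion-contramodule}(b) identifies $\lambda_{S,B}$ with $\beta_{S,B}\circ\delta_{S,B}$.
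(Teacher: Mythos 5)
Your proposal is correct and follows essentially the same route as the paper: the converse uses Lemma~\ref{completion-contramodule}(a) via the isomorphism $B\simeq\Lambda_S(B)$, and the forward direction composes the isomorphisms $\delta_{S,B}$ (Lemma~\ref{about-contramodules-lemma}(a)) and $\beta_{S,B}$ (Theorem~\ref{delta-lambda-thm}(c)) to conclude that $\lambda_{S,B}$ is an isomorphism. The paper additionally cites Proposition~\ref{countable-completion-complete}(a) and Theorem~\ref{delta-lambda-thm}(a) to note that $\Lambda_S(B)$ is $S$\+separated and $S$\+complete, but your direct reading of the definitions from the bijectivity of $\lambda_{S,B}$ is equally valid.
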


\begin{proof}
 Any $S$\+separated $S$\+complete $R$\+module is an $S$\+contramodule
by Lemma~\ref{completion-contramodule}(a).
 Conversely, for an $R$\+module $B$ with bounded $S$\+torsion
the natural map $\beta_{S,B}\:\Delta_S(B)\allowbreak
\rarrow\Lambda_S(B)$ is an isomorphism by
Theorem~\ref{delta-lambda-thm}(c).
 Suppose that $B$ is an $S$\+contra\-module; then, by
Lemma~\ref{about-contramodules-lemma}(a), the map
$\delta_{S,B}\:B\rarrow\Delta_S(B)$ is an isomorphism.
 Hence the map $\lambda_{S,B}\:B\rarrow\Lambda_S(B)$ is
an isomorphism.
 By Proposition~\ref{countable-completion-complete}(a) and
Theorem~\ref{delta-lambda-thm}(a), the $R$\+module $\Lambda_S(B)$
is $S$\+separated and $S$\+complete.
 Thus $B$ is $S$\+separated and $S$\+complete. \hfuzz=3.5pt
\end{proof}

 The following corollary is to be compared with~\cite[Lemma~2.5]{Pmgm}.

\begin{cor} \label{flat-bounded-torsion}
 Assume that the $R$\+module $R$ has bounded $S$\+torsion.
 Then for every flat $R$\+module $F$ the conclusions of
Theorem~\ref{delta-lambda-thm} hold.
 In particular, the morphism $\beta_{S,F}\:\Delta_S(F)\rarrow
\Lambda_S(F)$ is an isomorphism.
\end{cor}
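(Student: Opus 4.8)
The plan is to deduce the corollary directly from Theorem~\ref{delta-lambda-thm} by showing that, whenever the $R$\+module $R$ has bounded $S$\+torsion, every flat $R$\+module $F$ also has bounded $S$\+torsion, and in fact with the same bounding element. Granting this, all the conclusions of Theorem~\ref{delta-lambda-thm} apply to the $R$\+module $B=F$; in particular, the assertion that $\beta_{S,F}\:\Delta_S(F)\rarrow\Lambda_S(F)$ is an isomorphism is simply part~(c) of that theorem.

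To obtain the claimed bound, I would fix an element $r\in S$ with $r\Gamma_S(R)=0$. For each $s\in S$, the kernel of the multiplication map $s\:R\rarrow R$ is an $S$\+torsion submodule of $R$, hence contained in $\Gamma_S(R)$, and so is annihilated by~$r$. Since $F$ is flat, tensoring the exact sequence $0\rarrow\ker(s\:R\rarrow R)\rarrow R\overset{s}\rarrow R$ with $F$ produces the exact sequence $0\rarrow\ker(s\:R\rarrow R)\ot_RF\rarrow F\overset{s}\rarrow F$, which identifies $\ker(s\:F\rarrow F)$ with $\ker(s\:R\rarrow R)\ot_RF$. The latter module is annihilated by~$r$, because $r$ already annihilates $\ker(s\:R\rarrow R)$; hence $r\ker(s\:F\rarrow F)=0$. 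As $\Gamma_S(F)=\bigcup_{s\in S}\ker(s\:F\rarrow F)$, it follows that $r\Gamma_S(F)=0$, so the flat $R$\+module $F$ has bounded $S$\+torsion.

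It then remains only to invoke Theorem~\ref{delta-lambda-thm} with $B=F$. I do not expect any genuine obstacle here: the single point that requires an argument is the transfer of the $S$\+torsion bound from $R$ to $F$ along flatness in the previous paragraph, and that in turn comes down to the elementary fact that tensoring a left exact sequence of $R$\+modules with a flat $R$\+module preserves left exactness.
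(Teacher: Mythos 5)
Your argument is correct, and it proves exactly what the paper's proof proves, namely that bounded $S$\+torsion passes from $R$ to every flat $R$\+module $F$ with the same bounding element~$r$, after which Theorem~\ref{delta-lambda-thm} applied to $B=F$ gives everything, the ``in particular'' clause being part~(c) of that theorem. The only difference lies in how the transfer of the bound is established. The paper invokes the Govorov--Lazard theorem: writing $F$ as a filtered inductive limit of finitely generated projective (or free) modules, one sees that any $S$\+torsion element of $F$ comes from an $S$\+torsion element of some $R^n$, which is killed by~$r$. You instead observe that for each $s\in S$ the kernel $K_s=\ker(s\:R\to R)$ lies in $\Gamma_S(R)$, hence $rK_s=0$, and that flatness of $F$ identifies $\ker(s\:F\to F)$ with $K_s\ot_RF$, which is then also killed by~$r$; taking the union over $s\in S$ gives $r\Gamma_S(F)=0$. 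Both routes are short; yours is self-contained, using only the definition of flatness (exactness of $-\ot_RF$ applied to $0\to K_s\to R\overset{s}\to R$), whereas the paper's leans on a standard structural description of flat modules and is essentially a one-line appeal to it. There is no gap in your argument.
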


\begin{proof}
 Using the Govorov--Lazard description of flat $R$\+modules as
filtered inductive limits of (finitely generated) projective
$R$\+modules, one easily shows that if $r\Gamma_S(R)=\nobreak0$ for
some $r\in S$ then $r\Gamma_S(F)=0$ for every flat $R$\+module~$F$.
\end{proof}

\Section{Projective $S$-contramodule $R$-modules}

 Let $S$ be a multiplicative subset in a commutative ring~$R$.
 For every injective $R$\+module $J$, the $S$\+torsion $R$\+module
$E=\Gamma_S(J)$ is an injective object of the abelian category of
$S$\+torsion $R$\+modules $R\modl_{S\tors}$.
 There are enough injective objects of this form in $R\modl_{S\tors}$,
so an $S$\+torsion $R$\+module is an injective object in
$R\modl_{S\tors}$ if and only if it is a direct summand of
an $R$\+module of the form $\Gamma_S(J)$, where $J$ is an injective
$R$\+module.

 Our aim in this section is to describe the projective objects in
the category of $S$\+contramodule $R$\+modules, under suitable
assumptions.
 We also discuss complexes of $R$\+modules with $S$\+torsion or
$S$\+contramodule cohomology modules generally.
 The following proposition, which is our version
of~\cite[Proposition~5.1]{Mat} and~\cite[Proposition~4.1]{FS0},
holds for any multiplicative subset $S$ in a commutative ring~$R$.

\begin{prop} \label{endomorphism-ring-commutative}
 The ring\/ $\fR=\Hom_{\sD^\b(R\modl)}(K^\bu,K^\bu)$ is commutative.
\end{prop}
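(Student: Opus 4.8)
The plan is to deduce the commutativity of $\fR$ from the fact that the complex $K^\bu$ is idempotent up to a shift under the tensor product.

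First I would establish that $K^\bu\otimes_R^\boL K^\bu\cong K^\bu[1]$. Since $K^\bu$ is a bounded complex of flat $R$\+modules, the functor $K^\bu\otimes_R({-})$ descends to a triangulated endofunctor of $\sD^\b(R\modl)$ computing $K^\bu\otimes_R^\boL({-})$. Applying it to the distinguished triangle~\eqref{main-distinguished-triangle} $R\rarrow S^{-1}R\rarrow K^\bu\rarrow R[1]$, whose third morphism I denote by $h\:K^\bu\rarrow R[1]$, we obtain a distinguished triangle $K^\bu\rarrow K^\bu\otimes_R S^{-1}R\rarrow K^\bu\otimes_R K^\bu\rarrow K^\bu[1]$ whose third morphism is $\id_{K^\bu}\otimes h$. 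Its middle term $K^\bu\otimes_R S^{-1}R$ is contractible — it is the complex $S^{-1}R\otimes_R K^\bu$ already used in the proof of Lemma~\ref{delta-produces-contramodule} — hence $\mu:=\id_{K^\bu}\otimes h\:K^\bu\otimes_R K^\bu\rarrow K^\bu[1]$ is an isomorphism.

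Next, recall that shifting induces a ring isomorphism $\fR=\Hom_{\sD^\b(R\modl)}(K^\bu,K^\bu)\cong\Hom_{\sD^\b(R\modl)}(K^\bu[1],K^\bu[1])$, $\phi\mapsto\phi[1]$. Tensoring a morphism $\phi\:K^\bu\rarrow K^\bu$ into the left, respectively the right, tensor factor of $K^\bu\otimes_R K^\bu$ and conjugating by~$\mu$ produces two ring homomorphisms $\ell,r\:\fR\rarrow\fR$, namely $\ell(\phi)=\mu\circ(\phi\otimes\id_{K^\bu})\circ\mu^{-1}$ and $r(\phi)=\mu\circ(\id_{K^\bu}\otimes\phi)\circ\mu^{-1}$. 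I would record three observations. (i)~Since $\mu=\id_{K^\bu}\otimes h$, one has $\mu\circ(\phi\otimes\id_{K^\bu})=(\phi\otimes\id_{R[1]})\circ\mu$, so $\ell(\phi)=\phi\otimes\id_{R[1]}=\phi[1]$, i.e.\ $\ell=\id_\fR$. (ii)~For the symmetry isomorphism $\tau$ of $K^\bu\otimes_R K^\bu$, naturality of $\tau$ gives $\tau\circ(\phi\otimes\id_{K^\bu})\circ\tau^{-1}=\id_{K^\bu}\otimes\phi$, hence $r(\phi)=\bar\tau\,\ell(\phi)\,\bar\tau^{-1}=\bar\tau\phi\bar\tau^{-1}$ where $\bar\tau:=\mu\tau\mu^{-1}\in\fR^\times$; in particular $r$ is an automorphism of the ring~$\fR$. (iii)~The interchange law $(\phi\otimes\id_{K^\bu})\circ(\id_{K^\bu}\otimes\psi)=\phi\otimes\psi=(\id_{K^\bu}\otimes\psi)\circ(\phi\otimes\id_{K^\bu})$ yields $\ell(\phi)\,r(\psi)=r(\psi)\,\ell(\phi)$, that is $\phi\,r(\psi)=r(\psi)\,\phi$, for all $\phi,\psi\in\fR$.

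By~(iii) every element of the form $r(\psi)$ is central in~$\fR$; by~(ii) the map $r$ is surjective; therefore $\fR$ coincides with its centre, i.e.\ $\fR$ is commutative. I expect the crux to be the observation in the first step — recognizing that one should tensor~\eqref{main-distinguished-triangle} with $K^\bu$ to obtain $K^\bu\otimes^\boL_R K^\bu\cong K^\bu[1]$; everything after that is formal. The one point needing attention is~(i): tracking the canonical identification $K^\bu\otimes_R R[1]\cong K^\bu[1]$ and checking that the third morphism of the tensored triangle really is $\id_{K^\bu}\otimes h$, a routine compatibility of $\otimes$ with distinguished triangles (with no sign subtleties, $h$ being a morphism of degree~$0$). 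Incidentally, the same contractibility gives $\boR\Hom_R(K^\bu,S^{-1}R)=0$: localizing an $R$\+projective resolution of $S^{-1}R$ yields an $(S^{-1}R)$\+projective resolution, so $\boR\Hom_R(S^{-1}R,S^{-1}R)\cong S^{-1}R$ with the restriction map $\boR\Hom_R(S^{-1}R,S^{-1}R)\rarrow\boR\Hom_R(R,S^{-1}R)=S^{-1}R$ an isomorphism; applying $\boR\Hom_R(K^\bu,{-})$ to~\eqref{main-distinguished-triangle} then identifies $\fR$ (as an abelian group) with $\Ext^1_R(K^\bu,R)=\Delta_S(R)$, which is how the classical statement is phrased — though this is not needed for the proof above.
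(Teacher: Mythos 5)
Your proposal is correct and follows essentially the same route as the paper: the paper's proof observes that $K^\bu[-1]$ is a unit object of the tensor ideal $\sD_{S\tors}(R\modl)\subset\sD(R\modl)$, i.e.\ uses exactly the isomorphism $K^\bu[-1]\ot_R K^\bu[-1]\simeq K^\bu[-1]$ (your $\mu$) together with the interchange law, and concludes by the Eckmann--Hilton chain of equalities $fg=(f\ot\id)(\id\ot g)=f\ot g=\dotsb=gf$. The only difference is bookkeeping: where the paper asserts that both $f\ot\id$ and $\id\ot f$ are identified with~$f$ under this isomorphism, you verify the left-hand identification directly and handle the right-hand one via the symmetry isomorphism and surjectivity of the inner automorphism~$r$, which is a slightly more careful treatment of the same point.
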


\begin{proof}
 The point is that the complex $K^\bu[-1]$ is a unit object in a certain
tensor (monoidal) triangulated category structure.
 One can consider the unbounded derived category of $R$\+modules
$\sD(R\modl)$ with the tensor product functor $\ot_R^\boL$ defined in
terms of homotopy flat $R$\+module resolutions; or one can restrict
oneself to the bounded above derived category $\sD^-(R\modl)$, where
the tensor product can be defined in terms of the conventional
resolutions by complexes of flat $R$\+modules.
 One can even consider the derived category $\sD^\b(R\modl^\fl)\subset
\sD^\b(R\modl)$ of bounded complexes of flat $R$\+modules, with
the obvious tensor product structure on it.

 In any event, the full subcategory of complexes with $S$\+torsion
cohomology modules $\sD_{S\tors}(R\modl)\subset\sD(R\modl)$ is a tensor
ideal in $\sD(R\modl)$, and similarly in the bounded situations.
 Indeed, a complex of $R$\+modules $M^\bu\in\sD(R\modl)$ belongs
to $\sD_{S\tors}(R\modl)$ if and only if $S^{-1}R\ot_RM^\bu$ is
a zero object in $\sD(R\modl)$; and $S^{-1}R\ot_R(M^\bu\ot_R^\boL N^\bu)
\simeq (S^{-1}R\ot_RM^\bu)\ot_R^\boL N^\bu$ for every
$M^\bu$, $N^\bu\in\sD(R\modl)$.
 Furthermore, the tensor ideal $\sD_{S\tors}(R\modl)$ is itself
a tensor category, and as such it has its own unit object, which
is $K^\bu[-1]\in\sD_{S\tors}(R\modl)$.

 Now, the endomorphism semigroup of the unit object of any monoidal
category is commutative.
 Specifically in our case, it is important that there is an isomorphism
\begin{equation} \label{unit-object}
 K^\bu[-1]\ot_R^\boL K^\bu[-1]=K^\bu[-1]\ot_R K^\bu[-1]\simeq K^\bu[-1]
\end{equation}
in $\sD(R\modl)$, and this isomorphism is compatible with
the endomorphisms of $K^\bu[-1]$, that is for every endomorphism
$f\:K^\bu[-1]\rarrow K^\bu[-1]$ in $\sD(R\modl)$ the endomorphisms
$\id\ot f$ and $f\ot\id$ of the object $K^\bu[-1]\ot_R K^\bu[-1]$ are
identified with~$f$ by the natural isomorphism~\eqref{unit-object}.
 Now we have
\begin{multline*}
 fg=fg\ot\id=(f\ot\id)(g\ot\id)=(f\ot\id)(\id\ot g)=f\ot g \\
 =(\id\ot g)(f\ot\id)=(g\ot\id)(f\ot\id)=gf\ot\id=gf
\end{multline*}
for any pair of endomorphisms $f$, $g\:K^\bu[-1]\rarrow K^\bu[-1]$
in $\sD(R\modl)$.
 In fact, this argument proves that the whole graded ring of
endomorphisms
\begin{equation} \label{graded-ring-of-endomorphisms}
 \Hom_{\sD^\b(R\modl)}(K^\bu,K^\bu[*])\simeq
 \Hom_{\sD^\b(R\modl)}(K^\bu[-1],\>K^\bu[-1+*])
\end{equation}
of the object $K^\bu$ or $K^\bu[-1]$ in the triangulated category
$\sD^\b(R\modl)$ is supercommutative.
\end{proof}

 Clearly, there is a natural ring homomorphism $R\rarrow\fR$,
making $\fR$ a commutative $R$\+algebra.
 Notice also the natural $R$\+module isomorphism
$$
 \fR=\Hom_{\sD^\b(R\modl)}(K^\bu,K^\bu)\simeq\Hom_{\sD^\b(R\modl)}(K^\bu,R[1])
 =\Delta_S(R),
$$
which holds because $\Ext^*_R(K^\bu,S^{-1}R)=0$.
 On the other hand, the $S$\+completion $\Lambda_S(R)$ of
the ring $R$ is also a commutative ring and an $R$\+algebra,
since it is constructed as the projective limit of a projective
system of rings $R/sR$, \ $s\in S$ and ring homomorphisms between them.

\begin{prop}
\textup{(a)} The natural map $\beta_{S,R}\:\fR\rarrow\Lambda_S(R)$ is
a ring homomorphism. \par
\textup{(b)} Denoting by $I\subset\fR$ the kernel ideal of
the ring homomorphism $\beta_{S,R}$, one has $I^2=0$ in\/~$\fR$. \par
\textup{(c)} When the $S$\+torsion in the ring $R$ is bounded,
the map $\beta_{S,R}$ is an isomorphism.
\end{prop}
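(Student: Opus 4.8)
The plan is to establish~(a), then~(c), then~(b), since~(c) is immediate and the computation carried out for~(a) is precisely what makes~(b) tractable.

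For part~(a): the ring $\Lambda_S(R)=\varprojlim_{s\in S}R/sR$ is a projective limit of the quotient rings $R/sR$ along the reduction homomorphisms (recall that $s\le t$ means $s\mid t$), so it suffices to check that for each $s\in S$ the composite $\fR\xrightarrow{\beta_{S,R}}\Lambda_S(R)\xrightarrow{\pi_s}R/sR$ with the projection $\pi_s$ is a ring homomorphism. Since $R/sR$ is an $R/sR$\+module, $\pi_s\beta_{S,R}$ annihilates $s\fR$ and factors as $\fR\twoheadrightarrow\fR/s\fR\xrightarrow{\phi_s}R/sR$. Now $\delta_{S,R}\:R\rarrow\fR$ is the structure homomorphism of $\fR$ as a commutative $R$\+algebra: under the isomorphism $\fR=\Delta_S(R)$ it carries $1$ to $\id_{K^\bu}$, as one reads off from the definition of the natural transformation~$\delta_S$ and the identification $\Hom_{\sD^\b(R\modl)}(K^\bu,K^\bu)\simeq\Hom_{\sD^\b(R\modl)}(K^\bu,R[1])$. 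Hence the induced map $\bar\delta_{S,R}\:R/sR\rarrow\fR/s\fR$ is a ring homomorphism, and it is an isomorphism by Lemma~\ref{delta-mod-s} applied with $A=R$. Finally $\beta_{S,R}\delta_{S,R}=\lambda_{S,R}$ by the construction of~$\beta_{S,R}$ in Lemma~\ref{completion-contramodule}(b), while $\pi_s\lambda_{S,R}$ is the canonical surjection $R\rarrow R/sR$; therefore $\phi_s\bar\delta_{S,R}=\id_{R/sR}$, so $\phi_s=(\bar\delta_{S,R})^{-1}$ is a ring homomorphism, and hence so is $\pi_s\beta_{S,R}$ for every~$s$, and so is $\beta_{S,R}$.

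Part~(c) is just the case $B=R$ of Theorem~\ref{delta-lambda-thm}(c): when $R$ has bounded $S$\+torsion, $\beta_{S,B}\:\Delta_S(B)\rarrow\Lambda_S(B)$ is an isomorphism, and here $\Delta_S(R)=\fR$.

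For part~(b): the argument for~(a) shows moreover that $\ker(\pi_s\beta_{S,R})=s\fR$ (since $\phi_s$ is injective), whence $I=\ker\beta_{S,R}=\bigcap_{s\in S}s\fR$. The core computation is a division trick: given $x$, $y\in I$ and $s\in S$, choose $a\in\fR$ with $x=sa$; since $y\in t\fR$ for every $t\in S$, also $ay\in\bigcap_{t\in S}t\fR=I$, and therefore $xy=s(ay)\in sI$. As this holds for all $s\in S$, we obtain $I^2\subseteq\bigcap_{s\in S}sI=\ker(\lambda_{S,I})$, so it remains to prove that this last submodule vanishes, i.e., that $I$ is $S$\+separated. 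Two ingredients should be combined here. First, $I$ is itself an $S$\+contramodule, being the kernel of a morphism between the $S$\+contramodules $\fR=\Delta_S(R)$ (Lemma~\ref{delta-produces-contramodule}(a)) and $\Lambda_S(R)$ (Lemma~\ref{completion-contramodule}(a)), and the class of $S$\+contramodules is closed under kernels (Lemma~\ref{contramodules-closed-under}). Second, by naturality of~$\beta_S$ along the surjection $R\rarrow R/\Gamma_S(R)$ together with the fact that $\beta_{S,\,R/\Gamma_S(R)}$ is an isomorphism (``Case~I'' of the proof of Theorem~\ref{delta-lambda-thm}, valid for every $S$\+torsion-free module) and the vanishing $\Ext_R^0(K^\bu,R/\Gamma_S(R))=\Hom_R(S^{-1}R/R,R/\Gamma_S(R))=0$, one gets $I\subseteq\Delta_S(\Gamma_S(R))$, the image of $\Ext_R^1(K^\bu,\Gamma_S(R))$ in~$\fR$; this localizes the whole question at the $S$\+torsion $R$\+module $\Gamma_S(R)$, where it should follow from the separatedness results of Section~\ref{s-topology-secn} (Proposition~\ref{countable-completion-complete}(a) and Theorem~\ref{when-completion-complete}) together with the bounded-$S$\+torsion case already treated in~(c). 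I expect the real difficulty of the proposition to be concentrated in exactly this last step — verifying $\bigcap_{s\in S}sI=0$ — since it is the point at which the gap between $S$\+contramodules and $S$\+separated-and-complete $R$\+modules must be brought under control.
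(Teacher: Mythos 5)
Parts (a) and (c) of your proposal are correct: (c) is exactly the paper's argument, and your proof of (a) — factoring $\pi_s\beta_{S,R}$ through $\fR/s\fR$ and identifying the factored map as the inverse of the ring isomorphism $\bar\delta_{S,R}\:R/sR\rarrow\fR/s\fR$ of Lemma~\ref{delta-mod-s} — is a valid and rather clean variant of the paper's proof, which instead invokes the reduction-mod-$sR$ functor between derived categories. But part (b) has a genuine gap. Your reduction is fine as far as it goes: indeed $I=\bigcap_{s\in S}s\fR$, and the division trick gives $I^2\subseteq\bigcap_{s\in S}sI$. The statement you then need, that $I$ is $S$\+separated, is precisely what you do not prove, and none of the results you point to supplies it: Proposition~\ref{countable-completion-complete}(a) and Theorem~\ref{when-completion-complete} concern $\Lambda_S(A)$, not $\Delta_S$ or its submodules; the bounded-torsion case is vacuous here, since $I\ne0$ forces the $S$\+torsion in $R$ to be unbounded; and your (correct) inclusion $I\subseteq\Delta_S(\Gamma_S(R))$ lands you at $\Delta_S$ of an unbounded-torsion module, exactly where Theorem~\ref{delta-lambda-thm} gives no control and where $S$\+contramodules are in general complete but not separated — which is the very gap you acknowledge. (A minor additional slip: you cite Lemma~\ref{delta-produces-contramodule}(a) for $\Delta_S(R)=\Ext^1_R(K^\bu,R)$ being an $S$\+contramodule, but part (a) is about $\Ext^0$; parts (b)/(c) need hypotheses — no $S$\+h-divisible $S$\+torsion in $R$, or $\pd_RS^{-1}R\le1$ — not in force at this point of the paper.)

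The paper's proof of (b) is entirely different and needs no separatedness claim. Using Lemma~\ref{delta-mod-s} and writing an arbitrary $S$\+torsion module as the directed union of its bounded-torsion submodules, one shows that the natural $\fR$\+action on every $S$\+torsion $R$\+module factors through $\beta_{S,R}$ and the evident $\Lambda_S(R)$\+action; hence every $f\in I$ induces the zero endomorphism of $H^0(L^\bu)$ and $H^1(L^\bu)$, where $L^\bu=K^\bu[-1]$. Since $L^\bu$ has cohomology only in degrees $0$ and~$1$, such an~$f$ factors in $\sD^\b(R\modl)$ as $L^\bu\rarrow H^1(L^\bu)[-1]\rarrow H^0(L^\bu)\rarrow L^\bu$; composing two such factorizations, the inner composition $H^0(L^\bu)\rarrow L^\bu\rarrow H^1(L^\bu)[-1]$ vanishes, so $fg=0$ for all $f,g\in I$. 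If you wish to keep your route, you would have to prove $\bigcap_{s\in S}sI=0$ outright, which is not implied by $I$ being an $S$\+contramodule and looks at least as delicate as the proposition itself; switching to the factorization argument is the way the difficulty is actually circumvented.
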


\begin{proof}
 Part~(a): by construction, $\Lambda_S(R)$ is a subring in the ring
$\Pi_R=\prod_{s\in S}R/sR$; so it suffices to check that the maps
$\fR\rarrow R/sR$ are ring homomorphisms.
 The latter maps are uniquely defined by the condition of commutativity
of the triangle diagrams $R\rarrow\fR\rarrow R/sR$ (see
Lemmas~\ref{delta-mod-s} and~\ref{completion-contramodule}(b)).
 Finally, there is a functor of reduction modulo the ideal
$sR\subset R$, acting, e.~g., between the bounded derived categories
of flat modules $\sD^\b(R\modl^\fl)\rarrow\sD^\b((R/sR)\modl^\fl)$,
or even between the unbounded derived categories $\sD(R\modl)\rarrow
\sD((R/sR)\modl)$, etc., and taking the complex $K^\bu[-1]$ to $R/sR$.
 This functor induces a map $\fR\rarrow R/sR=\Hom_{\sD^\b((R/sR)\modl)}
(R/sR,R/sR)$, which is a ring homomorphism forming a commutative
triangle diagram with the maps $R\rarrow\fR$ and $R\rarrow R/sR$.

 Part~(b): for every $S$\+torsion $R$\+module $M$, the isomorphism
$M\simeq K^\bu[-1]\ot_R M$ in the derived category $\sD^\b(R\modl)$
(as discussed in the proof of
Proposition~\ref{endomorphism-ring-commutative}) endows $M$ with
a natural $\fR$\+module structure.
 Specifically, an endomorphism $f\:K^\bu[-1]\rarrow K^\bu[-1]$ induces
the endomorphism $f\ot\id$ of the object $K^\bu[-1]\ot_R M$ in
$\sD^\b(R\modl)$.
 Representing $M$ as the inductive limit of its bounded torsion
submodules and using Lemma~\ref{delta-mod-s}, one can see that this
$\fR$\+module structure on $M$ comes from the (obvious)
$\Lambda_S(R)$\+module structure on $M$ via the ring
homomorphism~$\beta_{S,R}$.

 Furthermore, the isomorphism $M^\bu\simeq K^\bu[-1]\ot_R M^\bu$
in $\sD(R\modl)$ holds for every complex of $R$\+modules $M^\bu$
with $S$\+torsion cohomology modules.
 This isomorphism provides an action of the ring $\fR$, and in
fact even of the graded ring~\eqref{graded-ring-of-endomorphisms},
by (graded) endomorphisms of the object $M^\bu\in\sD_{S\tors}(R\modl)$.
 This action already does \emph{not} necessarily factorize through
an action of the ring~$\Lambda_S(R)$.
 In fact, the action of the ring $\fR$ by endomorphisms of
the object $K^\bu[-1]$ constructed in this way, i.~e., in terms of
the isomorphism $K^\bu[-1]\simeq K^\bu[-1]\ot_R K^\bu[-1]$ in
$\sD(R\modl)$, coincides with the action of $\fR$ by endomorphisms
of $K^\bu[-1]$ coming from the definion of $\fR$ as the endomorphism
ring of the object $K^\bu[-1]$, because we have $f\ot\id=f$ for
every morphism $f\:K^\bu[-1]\rarrow K^\bu[-1]$ in $\sD^\b(R\modl)$.

 By the definition, the natural action of $\fR$ by endomorphisms of
objects of the category $\sD_{S\tors}(R\modl)$ commutes with all
the morphisms $M^\bu\rarrow N^\bu$ between the objects of
$\sD_{S\tors}(R\modl)$.
 Set $L^\bu=K^\bu[-1]$.
 We have shown that for every morphism $f\:L^\bu\rarrow L^\bu$ in
$\sD^\b(R\modl)$ belonging to the ideal $I\subset\fR$ the induced
cohomology module endomorphisms $H^*(f)\:H^*(L^\bu)\rarrow H^*(L^\bu)$
vanish.

 Now the point is that $L^\bu$ is a two-term complex whose only
possibly nontrivial cohomology modules are $H^0(L^\bu)$ and
$H^1(L^\bu)$.
 Considering the distingushed triangle
$$
 H^0(L^\bu)\lrarrow L^\bu\lrarrow H^1(L^\bu)[-1]
 \lrarrow H^0(L^\bu)[1]
$$
in $\sD^\b(R\modl)$, one easily comes to the conclusion that
the morphism~$f$ is the composition of the morphism $L^\bu\rarrow
H^1(L^\bu)[-1]$ on one side, the morphism $H^0(L^\bu)\rarrow L^\bu$
on the other side, and an $\Ext^1_R$\+extension class
$$
 H^1(L^\bu)[-1]\lrarrow H^0(L^\bu)
$$
in the middle.
 The composition $H^0(L^\bu)\rarrow L^\bu\rarrow H^1(L^\bu)[-1]$ being
a zero morphism in $\sD^\b(R\modl)$, it follows immediately that
$f^2=0$ in~$\fR$.

 Part~(c) is a particular case of Theorem~\ref{delta-lambda-thm}(c)
or Corollary~\ref{flat-bounded-torsion}.
\end{proof}

 The following lemma is our version of~\cite[first assertion of
Theorem~2.1]{Mat} or~\cite[Theorem~1.6]{Mat2}.

\begin{lem} \label{hom-contramodule}
 The $R$\+module $\Hom_R(M,C)$ is an $S$\+contramodule whenever
either $M$ is an $S$\+torsion $R$\+module or $C$ is
an $S$\+contramodule $R$\+module.
\end{lem}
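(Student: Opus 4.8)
The plan is to handle the two alternatives of the hypothesis by two different arguments, in each case deducing the conclusion from the closure properties of the class of $S$\+contramodules recorded in Lemma~\ref{contramodules-closed-under} and from Lemma~\ref{about-contramodules-lemma}(b), rather than computing the groups $\Ext_R^n(S^{-1}R,\Hom_R(M,C))$ by hand. Note that the assumption $\pd_RS^{-1}R\le1$ will not be needed anywhere.

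First I would treat the case when $C$ is an $S$\+contramodule and $M$ is arbitrary. Choose a presentation $F_1\rarrow F_0\rarrow M\rarrow0$ of the $R$\+module $M$ by free $R$\+modules $F_0$, $F_1$ (of arbitrary rank) and apply the functor $\Hom_R({-},C)$. Left exactness of $\Hom$ identifies $\Hom_R(M,C)$ with the kernel of the induced morphism $\Hom_R(F_0,C)\rarrow\Hom_R(F_1,C)$. Each $\Hom_R(F_i,C)$ is a product of copies of the $S$\+contramodule $C$, hence is itself an $S$\+contramodule by Lemma~\ref{contramodules-closed-under}; and the kernel of a morphism between two $S$\+contramodules is again an $S$\+contramodule by the same lemma. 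This finishes this case.

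Next I would treat the case when $M$ is an $S$\+torsion $R$\+module and $C$ is arbitrary. Write $M$ as the filtered union of its finitely generated submodules~$M_\alpha$. A finitely generated $S$\+torsion module is annihilated by a single element of~$S$ (take the product of elements of $S$ killing a finite generating set), so there is $s_\alpha\in S$ with $s_\alpha M_\alpha=0$; then $s_\alpha$ annihilates $\Hom_R(M_\alpha,C)$ as well, so by Lemma~\ref{about-contramodules-lemma}(b) the $R$\+module $\Hom_R(M_\alpha,C)$ is a (strong) $S$\+contramodule. Since $\Hom_R(M,C)\simeq\varprojlim_\alpha\Hom_R(M_\alpha,C)$, it is a projective limit of $S$\+contramodules, hence an $S$\+contramodule by Lemma~\ref{contramodules-closed-under}.

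The point requiring care is that these two cases genuinely need different treatments: the class of $S$\+contramodules is not closed under submodules, so in the $S$\+torsion case one cannot merely realize $\Hom_R(M,C)$ as a submodule of a product of $S$\+contramodules, and it is the reduction to submodules $M_\alpha$ with $s_\alpha M_\alpha=0$ followed by the passage to the projective limit that makes the argument go through; conversely, in the first case one has no control over $\Hom_R(M_\alpha,C)$ when $C$ is only assumed to be an $S$\+contramodule, so there the presentation-and-kernel argument is the natural one. In both cases the conclusion that $\Hom_R(M,C)$ is an $S$\+contramodule is exactly the assertion that it is $S$\+h-reduced and $S$\+cotorsion, so nothing further has to be checked.
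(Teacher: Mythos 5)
Your proof is correct, but it follows a genuinely different route from the paper's. The paper handles both alternatives with the spectral sequence $E_2^{pq}=\Ext^p_R(S^{-1}R,\Ext^q_R(M,C))\Longrightarrow\Ext^{p+q}_R(S^{-1}R\ot_RM,C)$: when $M$ is $S$\+torsion the abutment vanishes identically because $S^{-1}R\ot_RM=0$ (this is the argument of Lemma~\ref{delta-produces-contramodule}(a) with the complex $K^\bu$ replaced by the module $M$), and when $C$ is an $S$\+contramodule the abutment vanishes in total degrees $\le1$ by Lemma~\ref{ext-from-s-minus-1-r-module}, since $S^{-1}M$ is an $(S^{-1}R)$\+module; in either case, since no differentials pass through $E_r^{0,0}$ or $E_r^{1,0}$, one gets $\Hom_R(S^{-1}R,\Hom_R(M,C))=0=\Ext^1_R(S^{-1}R,\Hom_R(M,C))$, which is the contramodule condition in its defining form. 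You instead use only the closure properties of Lemma~\ref{contramodules-closed-under} together with Lemma~\ref{about-contramodules-lemma}(b): a free presentation, left exactness of $\Hom$, and closedness under products and kernels when $C$ is an $S$\+contramodule; and the exhaustion of an $S$\+torsion module by its finitely generated (hence bounded, since $S$ is multiplicatively closed) submodules, followed by closedness under projective limits, when $M$ is $S$\+torsion. Both routes correctly avoid the assumption $\pd_RS^{-1}R\le1$, and the paper indeed places the lemma before that standing assumption is imposed. Your argument is the more elementary one --- no spectral sequences --- and is close in spirit to the alternative the paper attributes to Matlis; the paper's argument buys uniformity with the proof of Lemma~\ref{delta-produces-contramodule} and produces the required $\Ext$\+vanishing directly. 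Your closing remark about why the two cases need different treatments (non-closedness of $S$\+contramodules under submodules) is accurate and well placed.
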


\begin{proof}
 The case when $M$ is an $S$\+torsion $R$\+module can be dealt with
similarly to the proof of Lemma~\ref{delta-produces-contramodule}(a),
with the complex $K^\bu$ replaced by a module~$M$.
 When $C$ is an $S$\+contramodule, one can consider the same spectral
sequence 
$$
 E_2^{pq}=\Ext^p_R(S^{-1}R,\Ext^q_R(M,C))\Longrightarrow
 E_\infty^{pq}=\mathrm{gr}^p\Ext_R^{p+q}(S^{-1}R\ot_RM,\>C),
$$
where $\Ext_R^n(S^{-1}M,C)=0$ for $n\le1$ by
Lemma~\ref{ext-from-s-minus-1-r-module}, hence
$E_\infty^{pq}=0$ for $p+q\le1$.
 Now $E_\infty^{0,0}=0=E_\infty^{1,0}$ implies $E_2^{0,0}=0=E_2^{1,0}$,
since no nontrivial differentials go through $E_r^{0,0}$ or $E_r^{1,0}$.
 Alternatively, the argument from~\cite[Theorem~2.1]{Mat} also works.
\end{proof}

 \emph{From this point on and for the rest of this paper we assume
that\/ $\pd_RS^{-1}R\le1$.}
 In this assumption, the classes of $S$\+cotorsion $R$\+modules
and strongly $S$\+cotorsion $R$\+modules coincide, as do the classes
of $S$\+contramodule $R$\+modules and strong $S$\+contramodule
$R$\+modules.
 We denote the full subcategory of $S$\+contramodule $R$\+modules
by $R\modl_{S\ctra}\subset R\modl$.

\begin{thm} \label{contramodule-category-thm}
\textup{(a)} The full subcategory $R\modl_{S\ctra}$ is closed under
the kernels, cokernels, extensions, and infinite products in $R\modl$.
 Therefore, the category $R\modl_{S\ctra}$ is abelian and its
embedding functor $R\modl_{S\ctra}\rarrow R\modl$ is exact. \par
\textup{(b)} For every $R$\+module $C$, the $R$\+module
$\Delta_S(C)=\Ext^1_R(K^\bu,C)$ is an $S$\+contra\-module.
 The functor $\Delta_S\:R\modl\rarrow R\modl_{S\ctra}$ is left adjoint
to the fully faithful embedding functor $R\modl_{S\ctra}\rarrow R\modl$.
\end{thm}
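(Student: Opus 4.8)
The plan is to deduce everything from the lemmas already established, the only genuinely new point being that $R\modl_{S\ctra}$ is closed under cokernels in part~(a).

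\emph{Part~(a).}
Closedness under kernels, extensions, and infinite products is recorded in Lemma~\ref{contramodules-closed-under}, so it remains to treat cokernels. Given a morphism $f\colon C\rarrow D$ of $S$\+contramodule $R$\+modules, set $E=\ker(f)$ and $I=\im(f)$, so that we have short exact sequences $0\rarrow E\rarrow C\rarrow I\rarrow0$ and $0\rarrow I\rarrow D\rarrow\coker(f)\rarrow0$ in $R\modl$. As in the proof of Lemma~\ref{contramodules-closed-under}, the $R$\+module $E$ is an $S$\+contramodule and the $R$\+module $I$ is $S$\+h-reduced, being a submodule of~$D$. To see that $I$ is $S$\+cotorsion, I would apply $\Hom_R(S^{-1}R,{-})$ to the first short exact sequence and inspect the fragment $\Ext^1_R(S^{-1}R,C)\rarrow\Ext^1_R(S^{-1}R,I)\rarrow\Ext^2_R(S^{-1}R,E)$, whose outer terms vanish --- the left one because $C$ is an $S$\+contramodule, the right one because $\pd_RS^{-1}R\le1$. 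Hence $I$ is an $S$\+contramodule; and since the standing assumption $\pd_RS^{-1}R\le1$ makes every $S$\+contramodule a strong $S$\+contramodule, Lemma~\ref{contramodule-short-exact}(c) applied to the second short exact sequence shows that $\coker(f)$ is an $S$\+contramodule. Consequently kernels and cokernels of morphisms in $R\modl_{S\ctra}$ coincide with those computed in $R\modl$, and since $R\modl$ is abelian, the canonical map from the coimage to the image of any morphism is an isomorphism already in $R\modl$, hence also in $R\modl_{S\ctra}$. Thus $R\modl_{S\ctra}$ is abelian (it is clearly additive, being closed under finite products) and its embedding functor into $R\modl$ is exact; fully faithfulness of the embedding is automatic for a full subcategory.

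\emph{Part~(b).}
That $\Delta_S(C)=\Ext^1_R(K^\bu,C)$ is an $S$\+contramodule for every $R$\+module $C$ is exactly Lemma~\ref{delta-produces-contramodule}(c) under the standing hypothesis $\pd_RS^{-1}R\le1$, so $\Delta_S$ is a well-defined functor $R\modl\rarrow R\modl_{S\ctra}$ and the maps $\delta_{S,C}$ assemble into a morphism $\id_{R\modl}\rarrow\Delta_S$ (followed by the embedding). For the adjunction I would check that, for every $R$\+module $C$ and every $S$\+contramodule $D$, the map
$$
 \Hom_R(\Delta_S(C),D)\lrarrow\Hom_R(C,D),\qquad g\longmapsto g\circ\delta_{S,C},
$$
is a bijection natural in $C$ and~$D$. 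The key observation is that, reading off the long exact sequence behind the short exact sequences~(II\+-III), the morphism $\delta_{S,C}\colon C\rarrow\Delta_S(C)$ has kernel $h_S(C)$ --- which is $S$\+h-divisible --- and cokernel $\Ext^1_R(S^{-1}R,C)$ --- which is an $(S^{-1}R)$\+module. Therefore $b=\delta_{S,C}$ together with any $c\colon C\rarrow D$ satisfies the hypotheses of Lemma~\ref{rich-functors}, which yields a unique $g\colon\Delta_S(C)\rarrow D$ with $c=g\circ\delta_{S,C}$: surjectivity of the displayed map is the existence assertion, and injectivity is the uniqueness assertion (applied to $c=0$). Naturality in both variables is routine.

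The computations throughout are short, and the one place where the standing assumption $\pd_RS^{-1}R\le1$ is genuinely needed --- and hence the crux of the matter --- is the vanishing $\Ext^2_R(S^{-1}R,E)=0$, which is what upgrades $I$ from merely $S$\+h-reduced to an honest $S$\+contramodule in part~(a) (equivalently, it is what makes Lemma~\ref{contramodule-short-exact}(c) applicable, since it forces $S$\+contramodules to be strong). Everything else is bookkeeping with the exact sequences (I\+-III), the distinguished triangle~\eqref{main-distinguished-triangle}, and Lemmas~\ref{contramodule-short-exact}, \ref{contramodules-closed-under}, \ref{rich-functors}, and~\ref{delta-produces-contramodule}.
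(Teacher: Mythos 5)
Your proof is correct and follows essentially the same route as the paper: part~(b) is verbatim the paper's argument (Lemma~\ref{delta-produces-contramodule}(c) for the first assertion, Lemma~\ref{rich-functors} together with the exact sequence~(III) applied to $\delta_{S,C}$ for the adjunction). For part~(a) the paper simply cites \cite[Proposition~1.1]{GL} or \cite[Theorem~1.2(a)]{Pcta}, and your explicit cokernel argument (image $S$\+h-reduced as a submodule, $\Ext^1_R(S^{-1}R,I)=0$ from $\Ext^1_R(S^{-1}R,C)=0$ and $\Ext^2_R(S^{-1}R,E)=0$, then Lemma~\ref{contramodule-short-exact}(c)) is exactly the standard perpendicular-category proof those references contain, so nothing is missing.
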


\begin{proof}
 Part~(a) is~\cite[Proposition~1.1]{GL} or~\cite[Theorem~1.2(a)]{Pcta}
applied to the $R$\+module $S^{-1}R$.
 The first assertion of part~(b) is
Lemma~\ref{delta-produces-contramodule}(c).
 The second assertion of part~(b) means that for every $R$\+module $C$,
every $S$\+contramodule $R$\+module $D$, and an $R$\+module morphism
$C\rarrow D$ there exists a unique $R$\+module morphism
$\Delta_S(C)\rarrow D$ making the triangle diagram
$C\rarrow\Delta_S(C)\rarrow D$ commutative.
 This follows from Lemma~\ref{rich-functors} together with the short
exact sequence~(III).
\end{proof}

 Using homotopy projective and homotopy injective $R$\+module
resolutions, one can endow the monoidal triangulated category
$\sD(R\modl)$ with a closed monoidal structure provided by
the functor
$$
 \boR\Hom_R\:\sD(R\modl)^\sop\times\sD(R\modl)\lrarrow\sD(R\modl).
$$
 Restricting oneself to bounded above complexes $L^\bu$ in
the first argument, one construct the complex $\boR\Hom_R(L^\bu,M^\bu)$
using a conventional resolution of $L^\bu$ by a (bounded above)
complex of projective $R$\+modules.
 One can even ask $L^\bu$ to belong to the homotopy category of bounded
complexes of projective $R$\+modules $\Hot^\b(R\modl^\proj)\subset
\sD^\b(R\modl)$.
 Using any of these points of view, one notices the natural isomorphism
$\boR\Hom_R(K^\bu[-1],C)\simeq C$ in $\sD^\b(R\modl)$ for every
$S$\+contramodule $R$\+module~$C$.
 This isomorphism endows $C$ with a natural structure of $\fR$\+module
(cf.~\cite[Theorem~2.7]{Mat2}, where it is also explained
how to show that such an $\fR$\+module structure on $C$ is unique).

 Similarly one can construct an action of the ring $\fR$, and even
of the graded ring~\eqref{graded-ring-of-endomorphisms}, in every
object of the full subcategory $\sD_{S\ctra}(R\modl)\subset\sD(R\modl)$
of complexes with $S$\+contramodule cohomology modules in $\sD(R\modl)$.
 The following proposition provides some details.

\begin{prop}
\textup{(a)} A complex of $R$\+modules $A^\bu\in\sD(R\modl)$ has
$S$\+contramodule cohomology modules if and only if\/
$\boR\Hom_R(S^{-1}R,A^\bu)=0$.
 Hence one has $A^\bu\simeq\boR\Hom_R(K^\bu[-1],A^\bu)$ in\/
$\sD(R\modl)$ for every $A^\bu\in\sD_{S\ctra}(R\modl)$. \par
\textup{(b)} The complex of $R$\+modules\/ $\boR\Hom_R(M^\bu,A^\bu)$
has $S$\+contramodule cohomology modules whenever either a complex
$M^\bu$ has $S$\+torsion cohomology modules, or a complex $A^\bu$
has $S$\+contramodule cohomology modules.
\end{prop}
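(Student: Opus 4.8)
The plan is to reduce both parts to the module-level fact that, under the standing hypothesis $\pd_RS^{-1}R\le1$, an $R$\+module $C$ is an $S$\+contramodule if and only if $\boR\Hom_R(S^{-1}R,C)=0$ (equivalently $\Ext^n_R(S^{-1}R,C)=0$ for all $n\ge0$, which under $\pd_RS^{-1}R\le1$ reduces to $n=0,1$). I would prove part~(a) by a hyper\--$\Ext$ spectral sequence, and then obtain part~(b) from part~(a) purely formally by the derived tensor\--hom adjunction. Throughout, $\boR\Hom_R$ is formed by homotopy injective resolutions in the second argument (or homotopy projective resolutions in the first), exactly as set up in the paragraph preceding the proposition, so all the manipulations below are legitimate for unbounded complexes.

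For part~(a), I would start from the spectral sequence
$$
 E_2^{pq}=\Ext^p_R(S^{-1}R,\>H^q(A^\bu))\Longrightarrow
 H^{p+q}(\boR\Hom_R(S^{-1}R,A^\bu)).
$$
Since $\pd_RS^{-1}R\le1$, we have $E_2^{pq}=0$ for $p\notin\{0,1\}$; hence all higher differentials vanish, the spectral sequence degenerates at $E_2$ and is concentrated in two columns, so for every $n\in\Z$ one obtains a short exact sequence
$$
 0\lrarrow\Ext^1_R(S^{-1}R,\>H^{n-1}(A^\bu))\lrarrow
 H^n(\boR\Hom_R(S^{-1}R,A^\bu))\lrarrow
 \Hom_R(S^{-1}R,\>H^n(A^\bu))\lrarrow0
$$
(there is no convergence issue, as only the two columns $p=0,1$ contribute in each total degree). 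If every $H^q(A^\bu)$ is an $S$\+contramodule, both outer terms vanish for all $n$, so $\boR\Hom_R(S^{-1}R,A^\bu)=0$; conversely, if $\boR\Hom_R(S^{-1}R,A^\bu)=0$, then each middle term vanishes, forcing $\Hom_R(S^{-1}R,H^q(A^\bu))=0=\Ext^1_R(S^{-1}R,H^q(A^\bu))$ for all $q$, i.~e., every cohomology module of $A^\bu$ is an $S$\+contramodule. The final assertion of part~(a) then drops out: applying the contravariant exact functor $\boR\Hom_R({-},A^\bu)$ to the distinguished triangle $K^\bu[-1]\rarrow R\rarrow S^{-1}R\rarrow K^\bu$ (a rotation of~\eqref{main-distinguished-triangle}) yields the triangle
$$
 \boR\Hom_R(S^{-1}R,A^\bu)\lrarrow A^\bu\lrarrow
 \boR\Hom_R(K^\bu[-1],A^\bu)\lrarrow\boR\Hom_R(S^{-1}R,A^\bu)[1],
$$
whose leftmost term vanishes when $A^\bu\in\sD_{S\ctra}(R\modl)$, so $A^\bu\simeq\boR\Hom_R(K^\bu[-1],A^\bu)$.

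For part~(b), I would invoke part~(a) in both cases. By part~(a) applied to the complex $\boR\Hom_R(M^\bu,A^\bu)$, this complex has $S$\+contramodule cohomology modules if and only if $\boR\Hom_R(S^{-1}R,\>\boR\Hom_R(M^\bu,A^\bu))=0$; by the derived tensor\--hom adjunction the latter complex is isomorphic to $\boR\Hom_R(S^{-1}R\ot_R^\boL M^\bu,\>A^\bu)$, and, since $R$ is commutative, also to $\boR\Hom_R(M^\bu,\>\boR\Hom_R(S^{-1}R,A^\bu))$. If $M^\bu$ has $S$\+torsion cohomology modules, then $S^{-1}R\ot_R^\boL M^\bu=S^{-1}R\ot_RM^\bu$ (flatness of $S^{-1}R$) has cohomology $S^{-1}R\ot_RH^*(M^\bu)=0$, so $\boR\Hom_R(S^{-1}R\ot_R^\boL M^\bu,\>A^\bu)=\boR\Hom_R(0,A^\bu)=0$. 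If instead $A^\bu$ has $S$\+contramodule cohomology modules, then $\boR\Hom_R(S^{-1}R,A^\bu)=0$ by part~(a), so $\boR\Hom_R(M^\bu,\>\boR\Hom_R(S^{-1}R,A^\bu))=0$.

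The underlying computations are routine; the only real content sits in part~(a), and there the crucial point — and the place I expect to need the most care — is verifying that the derived\--category criterion ``$S$\+contramodule cohomology $\Leftrightarrow\boR\Hom_R(S^{-1}R,{-})=0$'' genuinely extends the module\--level one. This is exactly where the hypothesis $\pd_RS^{-1}R\le1$ is indispensable: it is what makes the spectral sequence collapse into the clean two\--term filtration and what makes vanishing of the two low $\Ext$ groups equivalent to vanishing of all of them. Once that is in place, the triangle manipulation finishing part~(a) and the whole of part~(b) are formal.
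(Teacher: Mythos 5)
Your proposal is correct and takes essentially the same route as the paper: the paper's proof of part~(a) rests on exactly the two-term short exact sequences $0\to\Ext^1_R(S^{-1}R,H^{n-1}(A^\bu))\to H^n(\boR\Hom_R(S^{-1}R,A^\bu))\to\Hom_R(S^{-1}R,H^n(A^\bu))\to0$ that you extract from the collapsed two-column spectral sequence (using $\pd_RS^{-1}R\le1$), and part~(b) is deduced from the same pair of adjunction isomorphisms $\boR\Hom_R(S^{-1}R,\boR\Hom_R(M^\bu,A^\bu))\simeq\boR\Hom_R(S^{-1}R\ot_RM^\bu,A^\bu)\simeq\boR\Hom_R(M^\bu,\boR\Hom_R(S^{-1}R,A^\bu))$. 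The extra details you supply (the explicit rotation of the triangle $R\to S^{-1}R\to K^\bu\to R[1]$ for the last claim of~(a) and the remark that the two-column situation avoids convergence issues for unbounded complexes) only make explicit what the paper leaves implicit.
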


\begin{proof}
 For every complex of $R$\+modules $A^\bu$, there are natural short
exact sequences
\begin{multline*}
 0\lrarrow\Ext^1_R(S^{-1}R,H^{n-1}(A^\bu))\lrarrow
 H^n(\boR\Hom_R(S^{-1}R,A^\bu))\\ \lrarrow\Hom_R(S^{-1}R,H^n(A^\bu))
 \lrarrow0
\end{multline*}
for all $n\in\Z$.
 This proves part~(a); and part~(b) follows from the isomorphisms
\begin{multline*}
 \boR\Hom_R(S^{-1}R,\.\boR\Hom_R(M^\bu,A^\bu))\simeq
 \boR\Hom_R(S^{-1}R\ot_RM^\bu,\>A^\bu) \\ \simeq
 \boR\Hom_R(M^\bu,\.\boR\Hom_R(S^{-1}R,A^\bu)).
\end{multline*}
\end{proof}

 For any set $X$ and an $R$\+module $M$, we denote by $M[X]$
the direct sum of $X$ copies of the $R$\+module~$M$.
 We set $\fR[[X]]=\Delta_S(R[X])$.
 The $R$\+module $\fR[[X]]$ is called the \emph{free $S$\+contramodule
$R$\+module} generated by the set~$X$.
 According to Theorem~\ref{contramodule-category-thm}(b), we have
$$
 \Hom_R(\fR[[X]],D)\simeq\Hom_R(R[X],D)\simeq D^X
$$
for every $S$\+contramodule $R$\+module~$D$.
 It follows that free $S$\+contramodule $R$\+modules are projective
objects of the category $R\modl_{S\ctra}$.
 Furthermore, there are enough of them, as every $S$\+contramodule
$R$\+module $D$ is a quotient module of the free $S$\+contramodule
$R$\+module $\fR[[X]]$ with the set of generators $X=D$.
 It follows that an $S$\+contramodule $R$\+module is a projective
object of $R\modl_{S\ctra}$ if and only if it is a direct summand
of a free $S$\+contramodule $R$\+module.

 According to Corollary~\ref{flat-bounded-torsion}, when
the $S$\+torsion in the ring $R$ is bounded, one has
$$
 \fR[[X]]=\Delta_S(R[X])\simeq\Lambda_S(R[X])=
 \varprojlim\nolimits_{s\in S} R/sR[X].
$$

\Section{The Triangulated Equivalence} \label{triangulated-equivalence}

 Let $S$ be a multiplicative subset in a commutative ring~$R$.
 Since the middle of the previous section, we keep assuming that
the projective dimension of the $R$\+module $S^{-1}R$ does not
exceed~$1$.

 Following the notation in~\cite[Section~1]{Pmgm}
and~\cite[Section~7]{Pcta}, for every $R$\+module
$M$ we denote by $\check C^\bu(M)\sptilde$ the two-term complex
$M\rarrow S^{-1}M$, with the term $M$ sitting in cohomological
degree~$0$ and the term $S^{-1}M$ sitting in degree~$1$.
 In other words, $\check C^\bu(M)\sptilde=K^\bu[-1]\ot_RM$.
 Given a complex of $R$\+modules $M^\bu$, the notation
$\check C^\bu(M^\bu)\sptilde$ stands for the total complex of
the corresponding bicomplex with two rows.

 The next two lemmas are almost obvious.

\begin{lem} \label{complex-check-c-tilde}
\textup{(a)} For every $R$\+module $M$, the cohomology modules
$H^*\check C^\bu(M)\sptilde$ of the complex $\check C^\bu(M)\sptilde$
are $S$\+torsion $R$\+modules. \par
\textup{(b)} For every $R$\+module $M$, one has an isomorphism
$H^0\check C^\bu(M)\sptilde\simeq\Gamma_S(M)$. \par
\textup{(c)} For every $S$\+torsion $R$\+module $M$, the natural
projection $\check C^\bu(M)\sptilde\rarrow M$ is an isomorphism
of complexes.
\end{lem}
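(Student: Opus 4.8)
The plan is to read everything off directly from the two-term description $\check C^\bu(M)\sptilde = (M\rarrow S^{-1}M)$, where the arrow is the localization map $m\mapsto m/1$ and the two terms sit in cohomological degrees~$0$ and~$1$. Since the complex has no terms outside these two degrees, its cohomology vanishes in all other degrees, so for part~(a) it suffices to understand $H^0$ and $H^1$.

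For part~(b), I would observe that $H^0\check C^\bu(M)\sptilde$ is the kernel of the localization morphism $M\rarrow S^{-1}M$: an element $m\in M$ maps to zero in $S^{-1}M$ if and only if $sm=0$ for some $s\in S$, i.e.\ if and only if $m\in\Gamma_S(M)$ by the very definition of $\Gamma_S$; this is the asserted isomorphism, and $\Gamma_S(M)$ is $S$\+torsion by construction. For the remaining part of~(a), $H^1\check C^\bu(M)\sptilde = S^{-1}M/\im(M\rarrow S^{-1}M)$; since every element of $S^{-1}M$ has the form $m/s$ with $m\in M$ and $s\in S$, and $s\cdot(m/s)=m/1$ lies in the image of $M$, the class of $m/s$ in $H^1$ is annihilated by $s\in S$, so $H^1$ is $S$\+torsion. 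Alternatively, because $R$ and $S^{-1}R$ are flat $R$\+modules, the complex $\check C^\bu(M)\sptilde = K^\bu[-1]\ot_R M$ computes $K^\bu[-1]\ot_R^\boL M$, whence $H^0\simeq\Tor^R_1(K^\bu,M)$ and $H^1\simeq\Tor^R_0(K^\bu,M)$, and the conclusions of~(a) and~(b) follow from Lemma~\ref{tor-with-k}(c\+d) together with the fact that $S^{-1}R/R$ is an $S$\+torsion $R$\+module.

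For part~(c), if $M$ is $S$\+torsion then $S^{-1}M=0$, so $\check C^\bu(M)\sptilde$ consists of $M$ placed in cohomological degree~$0$ and nothing else; the natural projection onto the degree-$0$ term is then the identity map, hence an isomorphism of complexes. There is no genuine obstacle anywhere in this argument — the lemma is a matter of unwinding definitions; the only points deserving a word are the bookkeeping of cohomological degrees and the (trivial) verification in part~(c) that the projection really is a morphism of complexes, which holds because the target complex vanishes in degree~$1$.
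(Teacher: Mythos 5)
Your proposal is correct and follows essentially the same route as the paper: part~(c) is verbatim the paper's argument ($S^{-1}M=0$ for $S$\+torsion $M$), your identification of $H^0\check C^\bu(M)\sptilde$ with the kernel of $M\rarrow S^{-1}M$ reproves what the paper quotes from Lemma~\ref{tor-with-k}(d), and your element-level check that each class $m/s$ in $H^1$ is killed by~$s$ (or, in your alternative, the Tor-description via Lemma~\ref{tor-with-k}(c)) is just a hands-on version of the paper's remark that $S^{-1}R\ot_R\check C^\bu(M)\sptilde$ is contractible, so the cohomology is annihilated by $S$\+localization. No gaps; the degree bookkeeping and the triviality of the projection in~(c) are handled correctly.
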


\begin{proof}
 Part~(a) holds, because the complex $S^{-1}R\ot_R\check C^\bu(M)
\sptilde$ is contractible.
 Part~(b) is Lemma~\ref{tor-with-k}(d).
 Part~(c) holds, because $S^{-1}M=0$ when $M$ is $S$\+torsion.
\end{proof}

 Following the notation in~\cite[Section~2]{Pmgm}
and~\cite[Section~7]{Pcta}, we choose a left projective resolution
$\overline T^\bu=(\overline T^0\to \overline T^1)$ of
the $R$\+module $S^{-1}R$ (notice the unusual indexing/cohomological
grading: it is presumed that $H^0(\overline T^\bu)=0$ and
$H^1(\overline T^\bu)=S^{-1}R$).
 Furthermore, we lift the morphism $R\rarrow S^{-1}R$ to an $R$\+module
morphism $R\rarrow T^1$ and denote by $T^\bu=(T^0\to T^1)$
the two-term complex $(R\oplus\overline T^0\to\overline T^1)$
quasi-isomorphic to $K^\bu[-1]$.
 The aim of the grading shift is to have a natural (projection)
morphism $T^\bu\rarrow R$.
 Given an $R$\+module $C$, we will sometimes view the complex
$\Hom_R(T^\bu,C)$ as a \emph{homological} complex sitting in
the homological degrees~$0$ and $1$, which are denoted by
the lower indices.

\begin{lem} \label{complex-t}
\textup{(a)} For every $R$\+module $C$, the homology modules
$H_*\Hom_R(T^\bu,C)$ of the complex\/ $\Hom_R(T^\bu,C)$ are
$S$\+contramodule $R$\+modules. \par
\textup{(b)} For every $R$\+module $C$, one has
$H_0\Hom_R(T^\bu,C)\simeq\Delta_S(C)$. \par
\textup{(c)} For every $S$\+contramodule $R$\+module $C$,
the morphism of complexes $T^\bu\rarrow R$ induces a quasi-isomorphism
of complexes of $R$\+modules $C\rarrow\Hom_R(T^\bu,C)$.
\end{lem}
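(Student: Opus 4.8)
The plan is to identify the two homology modules of $\Hom_R(T^\bu,C)$ with the functors $\Ext_R^*(K^\bu,{-})$ studied in Section~\ref{preliminaries-secn}, and then simply quote the relevant earlier results. The key point is that, by its very construction, $T^\bu$ is a bounded complex of projective $R$\+modules equipped with a quasi-isomorphism $T^\bu\rarrow K^\bu[-1]$, while the projection $T^\bu\rarrow R$ represents, in $\sD^\b(R\modl)$, the canonical morphism $K^\bu[-1]\rarrow R$ occurring (after rotation) in the distinguished triangle~\eqref{main-distinguished-triangle}; indeed the composite of the chosen quasi-isomorphism $T^\bu\rarrow K^\bu[-1]$ with the projection $K^\bu[-1]\rarrow R$ is literally the projection $T^\bu\rarrow R$. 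A bounded complex of projective modules being homotopy projective, the complex $\Hom_R(T^\bu,C)$ computes $\boR\Hom_R(K^\bu[-1],C)$, so that in the homological indexing
$$
 H_n\Hom_R(T^\bu,C)=\Hom_{\sD^\b(R\modl)}(K^\bu[-1],C[-n])=\Ext_R^{1-n}(K^\bu,C).
$$
Since $T^\bu$ is a two-term complex, only $H_0\Hom_R(T^\bu,C)=\Ext_R^1(K^\bu,C)=\Delta_S(C)$ and $H_1\Hom_R(T^\bu,C)=\Ext_R^0(K^\bu,C)$ can be nonzero, which is precisely part~(b). (Should one prefer to avoid the derived category, the same two identifications drop out of comparing the homology long exact sequence of the term-wise split short exact sequence of complexes $0\rarrow\Hom_R(R,C)\rarrow\Hom_R(T^\bu,C)\rarrow\Hom_R(\overline T^\bu,C)\rarrow0$, induced by $0\rarrow\overline T^\bu\rarrow T^\bu\rarrow R\rarrow0$, with the exact sequence displayed just before~(II\+-III).)

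Part~(a) then follows at once: $\Delta_S(C)=\Ext_R^1(K^\bu,C)$ is an $S$\+contramodule by Lemma~\ref{delta-produces-contramodule}(c) --- this is the one place where the standing assumption $\pd_RS^{-1}R\le1$ is used --- while $\Ext_R^0(K^\bu,C)=\Hom_R(S^{-1}R/R,C)$ (by Lemma~\ref{tor-with-k}(c)) is an $S$\+contramodule by Lemma~\ref{delta-produces-contramodule}(a), or alternatively by Lemma~\ref{hom-contramodule}, since $S^{-1}R/R$ is an $S$\+torsion $R$\+module.

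For part~(c), the map $C=\Hom_R(R,C)\rarrow H_0\Hom_R(T^\bu,C)=\Delta_S(C)$ induced by the projection $T^\bu\rarrow R$ is, under the identification above, exactly the natural map $\delta_{S,C}$ (namely, the map $C\rarrow\Ext_R^1(K^\bu,C)$ appearing in the exact sequence preceding~(II\+-III), induced by $K^\bu[-1]\rarrow R$). When $C$ is an $S$\+contramodule this is an isomorphism by Lemma~\ref{about-contramodules-lemma}(a), so $C\rarrow\Hom_R(T^\bu,C)$ is an isomorphism on $H_0$; and $H_1\Hom_R(T^\bu,C)=\Ext_R^0(K^\bu,C)$ vanishes because the short exact sequence~(II) exhibits it as a submodule of $\Hom_R(S^{-1}R,C)$, which is zero, an $S$\+contramodule being by definition $S$\+h-reduced. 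Hence $C\rarrow\Hom_R(T^\bu,C)$ is a quasi-isomorphism. I expect the only genuine bookkeeping to be the verification that the projection $T^\bu\rarrow R$ really does represent the morphism $K^\bu[-1]\rarrow R$ from~\eqref{main-distinguished-triangle}, and hence induces $\delta_{S,C}$ on $H_0$; this is a matter of unwinding the construction of $T^\bu$ and carries no conceptual difficulty.
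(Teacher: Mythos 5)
Your proof is correct and follows essentially the paper's own route: parts~(a) and~(b) are exactly the paper's identifications $H_1\Hom_R(T^\bu,C)\simeq\Ext_R^0(K^\bu,C)$ and $H_0\Hom_R(T^\bu,C)\simeq\Ext_R^1(K^\bu,C)=\Delta_S(C)$ combined with Lemma~\ref{delta-produces-contramodule}(a,c). For part~(c) the paper is marginally more direct---the termwise split exact sequence $0\to C\to\Hom_R(T^\bu,C)\to\Hom_R(\overline T^\bu,C)\to0$ shows that (c) is equivalent to acyclicity of $\Hom_R(\overline T^\bu,C)$, which is literally the defining condition $\Hom_R(S^{-1}R,C)=0=\Ext_R^1(S^{-1}R,C)$ of an $S$\+contramodule---whereas your detour through $\delta_{S,C}$, Lemma~\ref{about-contramodules-lemma}(a) and the short exact sequence~(II) unpacks the same two vanishing conditions and is equally valid.
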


\begin{proof}
 Part~(a) is Lemma~\ref{delta-produces-contramodule}(a,c).
 Part~(b) is the definition of $\Delta_S=\Ext^1_R(K^\bu,{-})$.
 Part~(c) means that the complex $\Hom_R(\overline T^\bu,C)$ is acyclic,
which is the definition of an $S$\+contramodule $R$\+module.
\end{proof}

 The following proposition is essentially proved
in~\cite[Proposition~3.3]{Pmgm}.
 In the paper~\cite{Pmgm}, it was being applied to the case of
the \v Cech DG\+algebra of a finite sequence of elements
$\mathbf s$ in a commutative ring $R$ (which was denoted by
$C_{\mathbf s}^\bu(R)$ in~\cite{Pmgm} and would be denoted by
$\check C_{\mathbf s}^\bu(R)$ in the present paper's notation system).
 In this paper, we will apply this result to the $R$\+algebra
$\check C=S^{-1}R$ (viewed as a DG\+algebra concentrated in
the cohomological degree~$0$).

 Let $\check C^\bu$ be a finite complex of $R$\+modules whose
terms $\check C^n$ are flat $R$\+modules of finite projective
dimension.
 Suppose that $\check C^\bu$ is endowed with the structure of
an (associative and unital, not necessarily commutative)
DG\+algebra over the ring $R$, and that the following
condition is satisfied: the three morphisms of complexes
\begin{equation} \label{three-quasi-isomorphisms}
 \check C^\bu\.\birarrow\.\check C^\bu\ot_R\check C^\bu\rarrow
 \check C^\bu
\end{equation}
provided by the unit and multiplication in the DG\+algebra
$\check C^\bu$ are quasi-isomorphisms of complexes of
$R$\+modules.

 Let $\st$ be one of the derived category symbols~$\b$, $+$, $-$,
or~$\varnothing$.
 By the definition, the derived category $\sD^\st(\check C^\bu\modl)$
is constructed by inverting the class of quasi-isomorphisms in
the homotopy category of $\st$\+bounded left DG\+modules
over the DG\+ring~$\check C^\bu$.
 Denote by
$$
 k_*\:\sD^\st(\check C^\bu\modl)\lrarrow\sD^\st(R\modl)
$$
the functor of restriction of scalars with respect to the morphism
of DG\+rings $k\:R\rarrow\check C^\bu$.

\begin{prop}  \label{semiorthogonal-mgm}
\textup{(a)} The triangulated functor~$k_*$ has a left adjoint
functor~$k^*$ and a right adjoint functor~$\boR k^!$,
$$
k^*,\ \boR k^!\.\:\sD^\st(R\modl)\lrarrow
\sD^\st(\check C^\bu\modl). 
$$ \par
\textup{(b)} The compositions $k^*\circ k_*$ and\/ $\boR k^!\circ k_*$
are isomorphic to the identity functors on the category\/
$\sD^\st(\check C^\bu\modl)$, the functor~$k_*$ is fully faithful, and
the functors~$k^*$ and\/~$\boR k^!$ are Verdier quotient functors. \par
\textup{(c)} The passage to the quotient category by the image of
the functor~$k_*$ establishes an equivalence between the kernels of
the functors~$k^*$ and~$\boR k^!$,
$$
 \ker(k^*)\simeq\sD^\st(R\modl)/\im k_*\simeq\ker(\boR k^!).
$$
\end{prop}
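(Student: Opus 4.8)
The plan is to follow the proof of~\cite[Proposition~3.3]{Pmgm}, which applies with the same argument in the present (slightly more general) setting. For part~(a), I would take $k^*=\check C^\bu\ot_R^\boL(-)$ and $\boR k^!=\boR\Hom_R(\check C^\bu,-)$, with $\check C^\bu$\+module structures coming from the left (respectively, the outer) action of $\check C^\bu$ on itself, and with the adjunctions $k^*\dashv k_*\dashv\boR k^!$ being the derived forms of the standard tensor--hom adjunctions attached to the DG\+ring homomorphism $k\:R\rarrow\check C^\bu$. The hypotheses ensure that these functors respect each of the boundedness conditions~$\st$: since $\check C^\bu$ is a finite complex of flat $R$\+modules, $k^*$ carries $\sD^\st(R\modl)$ into $\sD^\st(\check C^\bu\modl)$, and since moreover $\check C^\bu$ has finite projective dimension, the same holds for $\boR k^!$. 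Also, because $\check C^\bu$ is degreewise flat, the ordinary tensor product $\check C^\bu\ot_R\check C^\bu$ already computes $\check C^\bu\ot_R^\boL\check C^\bu$, so the first half of~\eqref{three-quasi-isomorphisms} says exactly that the multiplication map $m\:\check C^\bu\ot_R^\boL\check C^\bu\rarrow\check C^\bu$ is an isomorphism in $\sD(R\modl)$, hence in $\sD(\check C^\bu\modl)$.

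This last point is the heart of part~(b). Evaluated at the rank-one free DG\+module $\check C^\bu$, the counit $k^*k_*\rarrow\id$ of the adjunction $k^*\dashv k_*$ is precisely the map~$m$, hence an isomorphism. Now the full subcategory of objects $N^\bu\in\sD(\check C^\bu\modl)$ at which this counit is an isomorphism is a localizing subcategory --- indeed $k^*k_*$ and $\id$ are triangulated functors commuting with coproducts, so the counit-isomorphism locus is closed under cones, shifts, and coproducts --- and it contains~$\check C^\bu$; since $\check C^\bu$ generates $\sD(\check C^\bu\modl)$ as a localizing subcategory, the counit $k^*k_*\rarrow\id$ is an isomorphism on all of $\sD(\check C^\bu\modl)$, and therefore, by restriction, on each $\sD^\st(\check C^\bu\modl)$. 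Equivalently, $k_*$ is fully faithful, and then it follows formally that the unit $\id\rarrow\boR k^!k_*$ is an isomorphism as well. Finally, $k^*$ is a triangulated functor possessing a fully faithful right adjoint $k_*$ with invertible counit; hence it is essentially surjective and factors through an equivalence $\sD^\st(R\modl)/\ker(k^*)\xrightarrow{\sim}\sD^\st(\check C^\bu\modl)$, i.e.\ it is a Verdier quotient functor. Dually, $\boR k^!$ is a Verdier quotient functor by way of its fully faithful left adjoint~$k_*$.

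For part~(c), I would observe that the essential image $\mathsf{B}=\im(k_*)\subseteq\sD^\st(R\modl)$ is a strictly full triangulated subcategory whose inclusion admits both a left adjoint, namely $k_*k^*$, and a right adjoint, namely $k_*\boR k^!$. Thus $\mathsf{B}$ is an admissible subcategory; it sits in two semiorthogonal decompositions of $\sD^\st(R\modl)$, one with factors ${}^{\perp}\mathsf{B}$ and $\mathsf{B}$, the other with factors $\mathsf{B}$ and $\mathsf{B}^{\perp}$. The adjunctions immediately identify ${}^{\perp}\mathsf{B}=\ker(k^*)$ and $\mathsf{B}^{\perp}=\ker(\boR k^!)$. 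By the standard description of the Verdier quotient by an admissible subcategory, both composites ${}^{\perp}\mathsf{B}\hookrightarrow\sD^\st(R\modl)\twoheadrightarrow\sD^\st(R\modl)/\mathsf{B}$ and $\mathsf{B}^{\perp}\hookrightarrow\sD^\st(R\modl)\twoheadrightarrow\sD^\st(R\modl)/\mathsf{B}$ are triangulated equivalences, which gives the asserted $\ker(k^*)\simeq\sD^\st(R\modl)/\im(k_*)\simeq\ker(\boR k^!)$.

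The one genuinely substantive step is the full faithfulness of $k_*$, equivalently the invertibility of the counit $k^*k_*\rarrow\id$: this is where condition~\eqref{three-quasi-isomorphisms} is actually used, and the only care required is to reduce the statement from an arbitrary, possibly unbounded, DG\+module to the single generating object $\check C^\bu$, for which the counit is literally the map~$m$. Everything else --- constructing the adjoints, checking their compatibility with the boundedness conditions, and extracting the Verdier quotient and semiorthogonality statements --- is routine adjoint-functor and semiorthogonal-decomposition formalism.
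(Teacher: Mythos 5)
Your proposal is correct and follows the same overall architecture as the paper, which simply defers to the proof of~\cite[Proposition~3.3]{Pmgm}: construct $k^*$ and $\boR k^!$ as derived induction and coinduction, use the condition~\eqref{three-quasi-isomorphisms} to prove that the counit $k^*k_*\rarrow\id$ is invertible (equivalently, $k_*$ is fully faithful), and then extract (b) and (c) by the standard Bousfield/semiorthogonal formalism. The one place where you genuinely diverge is the key step. The cited argument checks the counit directly on an arbitrary $\st$\+bounded DG\+module $N^\bu$: since the maps in~\eqref{three-quasi-isomorphisms} are quasi-isomorphisms between bounded complexes of flat $R$\+modules (hence K\+flat), they stay quasi-isomorphisms after tensoring with any complex, and a short diagram chase with the two unit maps shows that the action map $\check C^\bu\ot_RN^\bu\rarrow N^\bu$ is a quasi-isomorphism; this works uniformly in $\st$ without ever leaving $\sD^\st$. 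Your route instead uses that $\check C^\bu$ compactly generates the unbounded category $\sD(\check C^\bu\modl)$ and then restricts; this is valid, but it is forced on you only because coproducts are unavailable in $\sD^\st$ for $\st\ne\varnothing$, and it silently uses two small facts you should state: that the counits of the $\sD^\st$\+level and $\sD$\+level adjunctions are intertwined by the comparison functor (clear, since both are represented by the same chain-level action map, no resolution being needed on the $k^*$ side), and that $\sD^\st(\check C^\bu\modl)\rarrow\sD(\check C^\bu\modl)$ is conservative. One further point is stated too quickly in your part~(a): for $\st=\b$ or~$-$ you cannot compute $\boR k^!$ by replacing $\check C^\bu$ with a projective resolution of complexes of $R$\+modules, since that destroys the $\check C^\bu$\+module structure on the $\Hom$; one resolves the second argument instead and uses the finite projective dimension of the terms $\check C^n$ only to control the cohomological boundedness of $\Hom_R(\check C^\bu,J^\bu)$ --- which is exactly the point of the paper's remark that this hypothesis matters only for $\st=\b,-$ and only for~$\boR k^!$.
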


 In other words, there are two semiorthogonal decompositions in
the triangulated category\/ $\sD^\st(R\modl)$, one of them formed by
the two full subcategories\/ $\im(k_*)$ and\/ $\ker(k^*)$, and
the other one by the two full subcategories\/ $\ker(\boR k^!)$ and\/
$\im(k_*)$.
 In particular, $\im(k_*)$ is a thick subcategory in $\sD^\st(R\modl)$.

\begin{proof}
 Part~(a) does not depend on the assumption that
the maps~\eqref{three-quasi-isomorphisms} are quasi-isomorphisms;
parts~(b) and~(c) do.
 The assumption of finite projective dimension of the $R$\+modules
$\check C^n$ is irrelevant in the case of the derived category
symbol $\st=+$ or~$\varnothing$, and relevant only for $\star=\b$
or~$-$ (and insofar as the functor $\boR k^!$ is concerned).
 All the assertions in part~(b) are equivalent to each other for
purely formal reasons applicable to triangulated functors generally,
and part~(c) is a purely formal restatement of part~(b).
 We refer to~\cite[proof of Proposition~3.3]{Pmgm} for the details
of the argument.
\end{proof}

 In other words, Proposition~\ref{semiorthogonal-mgm} says that
the DG\+algebra morphism $R\rarrow\check C$ gives rise to
a ``recollement'' of triangulated categories for every symbol
$\star=\b$, $+$, $-$, or~$\varnothing$.
 In the case of $\check C=S^{-1}R$, the ring homomorphism $k\:R\rarrow
S^{-1}R$ is a \emph{homological ring epimorphism} in the sense
of~\cite[Section~4]{GL}, which is also a sufficient condition for
the conclusions of the proposition to hold (for $\star=\varnothing$).
 For a generalization to arbitrary morphisms of associative
DG\+rings, see~\cite[Theorem~3.9]{Pa}.

 Generalizing our previous notation from the case $\check C=S^{-1}R$
to a DG\+algebra $\check C^\bu$ as above, denote by
$\check C^\bu{}\sptilde$ the cocone (that is, the cone shifted
by~$[-1]$) of the morphism of complexes $R\rarrow\check C^\bu$.
 Choose a finite complex of projective $R$\+modules $\overline T^\bu$
mapping quasi-isomorphically onto $\check C^\bu$, lift the morphism
$R\rarrow \check C^\bu$ to a morphism of complexes
$R\rarrow \overline T^\bu$, and denote by $T^\bu$ the cocone of
the latter morphism of complexes.
 The complexes $\check C^\bu{}\sptilde$ and $T^\bu$ are, by
construction, quasi-isomorphic to each other and endowed with
morphisms of complexes $\check C^\bu{}\sptilde\rarrow R$ and
$T^\bu\rarrow R$.

 The following proposition is essentially proved in~\cite[first
half of the proof of Theorem~3.4]{Pmgm}.

\begin{prop} \label{kernel-categories-mgm}
\textup{(a)} The functor\/ $\sD^\st(R\modl)\rarrow\ker(k^*)$ right
adjoint to the embedding\/ $\ker(k^*)\rarrow\sD^\st(R\modl)$ takes
a complex of $R$\+modules $M^\bu$ to the complex
$\check C^\bu{}\sptilde\ot_R M^\bu$. 
 The full subcategory\/ $\ker(k^*)\subset\sD^\st(R\modl)$
consists precisely of all the complexes of $R$\+modules $M^\bu$ for
which the morphism of complexes $\check C^\bu{}\sptilde\ot_RM^\bu
\rarrow M^\bu$ induced by the morphism $\check C^\bu{}\sptilde
\rarrow R$ is a quasi-isomorphism. \par
\textup{(a)} The functor\/ $\sD^\st(R\modl)\rarrow\ker(\boR k^!)$
left adjoint to the embedding\/ $\ker(\boR k^!)\allowbreak\rarrow
\sD^\st(R\modl)$ takes a complex of $R$\+modules $A^\bu$ to
the complex\/ $\Hom_R(T^\bu,A^\bu)$.
 The full subcategory\/ $\ker(\boR k^!)\subset
\sD^\st(R\modl)$ consists precisely of all the complexes of
$R$\+modules $A^\bu$ for which the morphism of complexes
$A^\bu\rarrow\Hom_R(T^\bu,A^\bu)$ induced by the morphism
$T^\bu\rarrow R$ is a quasi-isomorphism. \qed
\end{prop}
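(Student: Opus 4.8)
The plan is to derive both parts formally from the recollement of Proposition~\ref{semiorthogonal-mgm} for the DG\+algebra morphism $k\:R\rarrow\check C^\bu$, together with the explicit description of $k^*$ and $\boR k^!$ as extension and coextension of scalars along~$k$.

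First I would record the orthogonality descriptions. Since $k^*$ is left adjoint to $k_*$ and $k_*$ is fully faithful (Proposition~\ref{semiorthogonal-mgm}(b)), for $M^\bu\in\sD^\st(R\modl)$ one has $k^*M^\bu=0$ if and only if $\Hom_{\sD^\st(R\modl)}(M^\bu,k_*N^\bu)=0$ for all $N^\bu$; in other words $\ker(k^*)$ is the left orthogonal complement ${}^\perp\im(k_*)$, and dually $\ker(\boR k^!)=\im(k_*)^\perp$. The adjoints to the inclusions of these two subcategories are then produced by the (co)units of the adjunction: the cocone $C^\bu$ of the unit morphism $M^\bu\rarrow k_*k^*M^\bu$ satisfies $k^*C^\bu=0$ (apply the triangulated functor $k^*$ and use $k^*k_*\simeq\id$), and applying $\Hom_{\sD^\st(R\modl)}(X^\bu,{-})$ to the defining triangle with $X^\bu\in\ker(k^*)$, the two outer terms vanish because $X^\bu\in{}^\perp\im(k_*)$, whence $\Hom(X^\bu,C^\bu)\simeq\Hom(X^\bu,M^\bu)$; thus $M^\bu\mapsto C^\bu$ is right adjoint to $\ker(k^*)\hookrightarrow\sD^\st(R\modl)$. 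Dually, the cone of the counit $k_*\boR k^!A^\bu\rarrow A^\bu$ lies in $\ker(\boR k^!)$, and this cone, as a functor of $A^\bu$, is left adjoint to $\ker(\boR k^!)\hookrightarrow\sD^\st(R\modl)$.

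Next I would make the (co)units explicit. Extension of scalars gives $k_*k^*M^\bu\simeq\check C^\bu\ot_RM^\bu$ (the tensor product is already derived, the $\check C^n$ being flat), with the unit induced by $R\rarrow\check C^\bu$; hence its cocone is, by the triangle $\check C^\bu{}\sptilde\rarrow R\rarrow\check C^\bu$ defining $\check C^\bu{}\sptilde$, precisely $\check C^\bu{}\sptilde\ot_RM^\bu$, mapping to $M^\bu$ through $\check C^\bu{}\sptilde\rarrow R$. On the other side, $k_*\boR k^!A^\bu\simeq\boR\Hom_R(\check C^\bu,A^\bu)$, which is computed as $\Hom_R(\overline T^\bu,A^\bu)$ via the projective resolution $\overline T^\bu\rarrow\check C^\bu$; the cone of the counit $\Hom_R(\overline T^\bu,A^\bu)\rarrow A^\bu$ is then, by the triangle $T^\bu\rarrow R\rarrow\overline T^\bu$, the complex $\Hom_R(T^\bu,A^\bu)$, receiving from $A^\bu=\Hom_R(R,A^\bu)$ the map induced by $T^\bu\rarrow R$. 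One must check along the way that these representatives stay inside $\sD^\st(R\modl)$ for every symbol $\st$: tensoring with the bounded complex of flat modules $\check C^\bu{}\sptilde$ and applying $\Hom_R$ out of the bounded complexes of projectives $\overline T^\bu$, $T^\bu$ preserve all four boundedness conditions, and here the finiteness and finite projective dimension of $\check C^\bu$ --- which make $\overline T^\bu$ and $T^\bu$ bounded complexes of projectives --- is what is used for $\st=\b$ or~$-$.

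Finally, the membership characterizations follow from the elementary fact that in a distinguished triangle $X^\bu\rarrow Y^\bu\rarrow Z^\bu\rarrow X^\bu[1]$ the first morphism is an isomorphism in the derived category if and only if $Z^\bu=0$. Applied to $\check C^\bu{}\sptilde\ot_RM^\bu\rarrow M^\bu\rarrow\check C^\bu\ot_RM^\bu$ this says $M^\bu\in\ker(k^*)$, i.e.\ $k^*M^\bu\simeq\check C^\bu\ot_RM^\bu=0$, if and only if $\check C^\bu{}\sptilde\ot_RM^\bu\rarrow M^\bu$ is a quasi-isomorphism; applied to $\Hom_R(\overline T^\bu,A^\bu)\rarrow A^\bu\rarrow\Hom_R(T^\bu,A^\bu)$ it gives the dual statement for $\ker(\boR k^!)$. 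I do not expect a genuine obstacle: all the content sits in Proposition~\ref{semiorthogonal-mgm}, and the only points needing care are the purely formal identification of the coreflector and reflector with the (co)cone of the (co)unit, and the bookkeeping of boundedness for the explicit complexes; compare the first half of the proof of~\cite[Theorem~3.4]{Pmgm}.
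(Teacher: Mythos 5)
Your argument is correct and is essentially the proof the paper delegates to~\cite[first half of the proof of Theorem~3.4]{Pmgm}: the proposition is deduced formally from the recollement of Proposition~\ref{semiorthogonal-mgm}, identifying the coreflector onto $\ker(k^*)$ and the reflector onto $\ker(\boR k^!)$ with the cocone/cone of the (co)unit, computed explicitly via the termwise-split triangles $\check C^\bu{}\sptilde\rarrow R\rarrow\check C^\bu$ and $T^\bu\rarrow R\rarrow\overline T^\bu$. The paper records no further argument of its own (the statement is given with a reference in place of a proof), and your care about flatness/projectivity of the terms and about the boundedness symbols matches the remarks made in the proof of Proposition~\ref{semiorthogonal-mgm}.
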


 Now we return to the situation with the $R$\+algebra
$\check C=S^{-1}R$.
 As in the previous section, we denote by $\sD_{S\tors}^\st(R\modl)
\subset\sD^\st(R\modl)$ the full triangulated subcategory in
the derived category $\sD^\st(R\modl)$ formed by all the complexes
of $R$\+modules $M^\bu$ whose cohomology modules $H^*(M^\bu)$ are
$S$\+torsion $R$\+modules.
 Similarly, $\sD_{S\ctra}^\st(R\modl)\subset\sD^\st(R\modl)$ denotes
the full triangulated subcategory in $\sD^\st(R\modl)$ formed by all
the complexes of $R$\+modules $A^\bu$ whose cohomology modules
$H^*(A^\bu)$ are $S$\+contramodule $R$\+modules.

\begin{lem} \label{kernel-categories-computed}
 Let $k$ be the $R$\+algebra homomorphism $R\rarrow S^{-1}R$.  Then \par
\textup{(a)} the full subcategory\/ $\ker(k^*)\subset\sD^\st(R\modl)$
coincides with\/ $\sD^\st_{S\tors}(R\modl)\subset\sD^\st(R\modl)$; \par
\textup{(b)} the full subcategory\/ $\ker(\boR k^!)\subset
\sD^\st(R\modl)$ coincides with\/ $\sD^\st_{S\ctra}(R\modl)\subset
\sD^\st(R\modl)$. \par
\end{lem}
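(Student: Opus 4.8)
The plan is to feed the explicit descriptions of $\ker(k^*)$ and $\ker(\boR k^!)$ supplied by Proposition~\ref{kernel-categories-mgm}, specialized to the $R$\+algebra $\check C=S^{-1}R$, into the (co)homology long exact sequences attached to the distinguished triangle~\eqref{main-distinguished-triangle}. Both parts then become short formal manipulations with distinguished triangles; the module-level assertions of Lemmas~\ref{complex-check-c-tilde} and~\ref{complex-t} are their shadows, and the characterization of complexes with $S$\+contramodule cohomology proved after Theorem~\ref{contramodule-category-thm} is what makes part~(b) close up.

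For part~(a), I would first note that for $\check C=S^{-1}R$ (placed in cohomological degree~$0$) the associated complex $\check C^\bu{}\sptilde$, the cocone of $R\rarrow S^{-1}R$, is precisely $K^\bu[-1]$. By Proposition~\ref{kernel-categories-mgm}(a), the subcategory $\ker(k^*)$ then consists of exactly those complexes of $R$\+modules $M^\bu$ for which the morphism $K^\bu[-1]\ot_RM^\bu\rarrow M^\bu$ induced by $K^\bu[-1]\rarrow R$ is a quasi-isomorphism. Since $K^\bu$ is a two-term complex of flat $R$\+modules, hence homotopy flat, $K^\bu[-1]\ot_RM^\bu$ computes $K^\bu[-1]\ot_R^\boL M^\bu$; applying $-\ot_R^\boL M^\bu$ to the rotation $K^\bu[-1]\rarrow R\rarrow S^{-1}R\rarrow K^\bu$ of~\eqref{main-distinguished-triangle} puts the morphism in question into the distinguished triangle $K^\bu[-1]\ot_RM^\bu\rarrow M^\bu\rarrow S^{-1}R\ot_RM^\bu\rarrow K^\bu\ot_RM^\bu$. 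Hence it is a quasi-isomorphism if and only if $S^{-1}R\ot_RM^\bu$ is acyclic; and since localization is exact, $H^n(S^{-1}R\ot_RM^\bu)=S^{-1}H^n(M^\bu)$, which vanishes for all $n$ exactly when every cohomology module $H^n(M^\bu)$ is $S$\+torsion. This gives $\ker(k^*)=\sD^\st_{S\tors}(R\modl)$.

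For part~(b), Proposition~\ref{kernel-categories-mgm}(b) describes $\ker(\boR k^!)$ as the class of complexes $A^\bu$ for which the morphism $A^\bu\rarrow\Hom_R(T^\bu,A^\bu)$ induced by $T^\bu\rarrow R$ is a quasi-isomorphism. As $T^\bu$ is a bounded complex of projective $R$\+modules quasi-isomorphic to $K^\bu[-1]$, the complex $\Hom_R(T^\bu,A^\bu)$ computes $\boR\Hom_R(K^\bu[-1],A^\bu)$, and the morphism above is the one induced by $K^\bu[-1]\rarrow R$. Applying the contravariant functor $\boR\Hom_R({-},A^\bu)$ to the same rotation of~\eqref{main-distinguished-triangle} puts this morphism into the distinguished triangle $\boR\Hom_R(S^{-1}R,A^\bu)\rarrow A^\bu\rarrow\boR\Hom_R(K^\bu[-1],A^\bu)\rarrow\boR\Hom_R(S^{-1}R,A^\bu)[1]$, so it is a quasi-isomorphism exactly when $\boR\Hom_R(S^{-1}R,A^\bu)$ is acyclic. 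By the proposition stated just after Theorem~\ref{contramodule-category-thm} — whose proof uses the standard short exact sequences expressing the cohomology of $\boR\Hom_R(S^{-1}R,A^\bu)$ through $\Hom_R(S^{-1}R,H^n(A^\bu))$ and $\Ext^1_R(S^{-1}R,H^{n-1}(A^\bu))$, together with the running hypothesis $\pd_RS^{-1}R\le1$ — this vanishing holds if and only if all cohomology modules $H^n(A^\bu)$ are $S$\+contramodule $R$\+modules. Hence $\ker(\boR k^!)=\sD^\st_{S\ctra}(R\modl)$.

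I do not expect a genuine obstacle; the whole argument is a routine game with distinguished triangles. The one point deserving care is the passage between the derived functors and the underived complexes appearing in Proposition~\ref{kernel-categories-mgm}: one must note that $K^\bu$ having flat terms lets $K^\bu[-1]\ot_R{-}$ compute $\ot_R^\boL$, that $T^\bu$ being a bounded complex of projectives lets $\Hom_R(T^\bu,{-})$ compute $\boR\Hom_R$, and that the morphisms $\check C^\bu{}\sptilde\ot_RM^\bu\rarrow M^\bu$ and $A^\bu\rarrow\Hom_R(T^\bu,A^\bu)$ produced by Proposition~\ref{kernel-categories-mgm} are precisely the ones fitting inside the rotation of~\eqref{main-distinguished-triangle}. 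Once that is pinned down, parts~(a) and~(b) fall out at once; combined with Proposition~\ref{semiorthogonal-mgm} they then yield the two semiorthogonal decompositions of $\sD^\st(R\modl)$ and the equivalences~\eqref{torsion-contra-cohomology-modules} announced in the introduction.
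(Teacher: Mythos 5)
Your argument is correct. Part~(a) is essentially the paper's own proof repackaged: the paper checks that the cohomology of $\check C^\bu(M^\bu)\sptilde$ is always $S$\+torsion (because $S^{-1}R\ot_R\check C^\bu(M^\bu)\sptilde$ is contractible) and that $S^{-1}R\ot_RM^\bu$ is acyclic when the cohomology of $M^\bu$ is $S$\+torsion; your rotation of the triangle~\eqref{main-distinguished-triangle} says exactly the same thing in one step, using flatness of the terms of $K^\bu$ and exactness of localization. Part~(b) is where you genuinely diverge. The paper stays at the level of the underived complexes $\Hom_R(T^\bu,A^\bu)$: it observes that each cohomology module of $\Hom_R(T^\bu,A^\bu)$ depends on only finitely many terms of $A^\bu$, reduces to finite and then one-term complexes (invoking Hartshorne's way-out lemma for the inclusion $\sD^\st_{S\ctra}(R\modl)\subset\ker(\boR k^!)$), and quotes the module-level Lemma~\ref{complex-t}(a,c). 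You instead apply $\boR\Hom_R({-},A^\bu)$ to the rotated triangle and quote the unnumbered proposition following Theorem~\ref{contramodule-category-thm}, which characterizes $\sD_{S\ctra}(R\modl)$ by the vanishing of $\boR\Hom_R(S^{-1}R,A^\bu)$ via the hyper-$\Ext$ short exact sequences (valid under the running hypothesis $\pd_RS^{-1}R\le1$, as you note). Your route is shorter and avoids the bounded/way-out reductions altogether, at the price of leaning on that derived-category characterization (whose proof carries the same content as the short exact sequences~(II)--(III) in complex form); the paper's route is more elementary in that it only needs the one-module statements of Lemmas~\ref{complex-check-c-tilde} and~\ref{complex-t}. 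The one technical point you rightly flag --- that $K^\bu[-1]\ot_R{-}$ and $\Hom_R(T^\bu,{-})$ (equivalently $\Hom_R(\overline T^\bu,{-})$) compute the derived functors because $K^\bu$ has flat terms and $T^\bu$, $\overline T^\bu$ are finite complexes of projectives, uniformly in the boundedness symbol~$\st$ --- is exactly what makes the identification with the morphisms of Proposition~\ref{kernel-categories-mgm} legitimate, so no gap remains.
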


\begin{proof}
 This is similar to~\cite[second half of the proof of
Theorem~3.4]{Pmgm}.
 The argument is based on Proposition~\ref{kernel-categories-mgm}
and Lemmas~\ref{complex-check-c-tilde}\+-\ref{complex-t}.

 To prove that $\ker(k^*)\subset\sD_{S\tors}^\st(R\modl)$, we have to
show that the cohomology modules of a complex of $R$\+modules $M^\bu$
are $S$\+torsion $R$\+modules whenever the morphism of complexes
$\check C^\bu(M^\bu)\sptilde\rarrow M^\bu$ is a quasi-isomorphism.
 It suffices to check that the cohomology modules of the complex
$\check C^\bu(M^\bu)\sptilde$ are $S$\+torsion $R$\+modules
for every complex of $R$\+modules~$M^\bu$.
 This is so because the complex $S^{-1}R\ot_R\check C^\bu(M^\bu)\sptilde$
is contractible (cf.\ Lemma~\ref{complex-check-c-tilde}(a)).

 To prove that $\sD_{S\tors}^\st(R\modl)\subset\ker(k^*)$, we have to
check that the morphism $\check C^\bu(M^\bu)\sptilde\rarrow M^\bu$
is a quasi-isomorphism for every complex of $R$\+modules $M^\bu$
with $S$\+torsion cohomology modules.
 This is so because the complex $S^{-1}R\ot_RM^\bu$ is acyclic
(cf.\ Lemma~\ref{complex-check-c-tilde}(c)).

 To prove that $\ker(\boR k^!)\subset\sD_{S\ctra}^\st(R\modl)$, it
suffices to check that the cohomology modules of the complex
$\Hom_R(T^\bu,A^\bu)$ are $S$\+contramodule $R$\+modules for every
complex of $R$\+modules~$A^\bu$.
 Every cohomology module of the complex $\Hom_R(T^\bu,A^\bu)$ only
depends on a finite number of terms of the complex~$A^\bu$.
 This reduces the question to the case of a finite complex $A^\bu$.
 Since the full subcategory $\sD_{S\ctra}^\st(R\modl)$ is closed under
shifts and cones in $\sD^\st(R\modl)$, the question further reduces
to the case of a one-term complex $A^\bu=A$.
 Here it remains to apply Lemma~\ref{complex-t}(a).

 To prove that $\sD_{S\ctra}^\st(R\modl)\subset\ker(\boR k^!)$, we have
to check that the morphism $A^\bu\rarrow\Hom_R(T^\bu,A^\bu)$ is
a quasi-isomorphism for every complex of $R$\+modules $A^\bu$ with
$S$\+contramodule cohomology modules.
 It suffices to consider the case when the complex $A^\bu$ is finite,
and the question reduces further to the case of a one-term complex
$A^\bu=A$ (see Hartshorne's lemma on way-out
functors~\cite[Proposition~I.7.1]{Hart}).
 It remains to use Lemma~\ref{complex-t}(c).
\end{proof}

 The following theorem is the most general version of 
a triangulated Matlis equivalence that we are able to obtain.

\begin{thm} \label{triangulated-matlis}
 For any commutative ring $R$ and a multiplicative subset $S\subset R$
such that\/ $\pd_RS^{-1}R\le 1$, and any conventional derived category
symbol\/ $\st=\b$, $+$, $-$, or\/~$\varnothing$, the functor
$k_*\:\sD^\st((S^{-1}R)\modl)\rarrow\sD^\st(R\modl)$ is fully faithful
and its image is a thick subcategory in\/ $\sD^\st(R\modl)$.
 Furthermore, there are natural equivalences of
triangulated categories~\eqref{torsion-contra-cohomology-modules}
$$
 \sD^\st_{S\tors}(R\modl)\simeq\sD^\st(R\modl)/
 \sD^\st((S^{-1}R)\modl)\simeq\sD^\st_{S\ctra}(R\modl).
$$
 The resulting triangulated Matlis equivalence
$$
 \sD^\st_{S\tors}(R\modl)\simeq\sD^\st_{S\ctra}(R\modl)
$$
is provided by the functors taking a complex
$M^\bu\in\sD^\st_{S\tors}(R\modl)$ to the complex
$\Hom_R(T^\bu,M^\bu)\in\sD^\st_{S\ctra}(R\modl)$ and a complex
$A^\bu\in\sD^\st_{S\ctra}(R\modl)$ to the complex
$\check C^\bu(A^\bu)\sptilde\in\sD^\st_{S\tors}(R\modl)$.
\end{thm}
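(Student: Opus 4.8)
The plan is to specialize the general machinery of Propositions~\ref{semiorthogonal-mgm} and~\ref{kernel-categories-mgm} and of Lemma~\ref{kernel-categories-computed} to the $R$\+algebra $\check C=S^{-1}R$, viewed as a DG\+algebra concentrated in cohomological degree~$0$. First I would verify the hypotheses preceding Proposition~\ref{semiorthogonal-mgm}: $\check C^\bu=S^{-1}R$ is a one-term complex of flat $R$\+modules, and its single term $S^{-1}R$ has projective dimension~$\le1$ by our standing assumption; moreover, since $S$\+localization is idempotent, the multiplication map $S^{-1}R\ot_RS^{-1}R\rarrow S^{-1}R$ and the two unit maps $S^{-1}R\birarrow S^{-1}R\ot_RS^{-1}R$ are all isomorphisms, so in particular the three morphisms~\eqref{three-quasi-isomorphisms} are quasi-isomorphisms. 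Since $\check C^\bu$ is concentrated in degree~$0$, left DG\+modules over it are just complexes of $S^{-1}R$\+modules, so $\sD^\st(\check C^\bu\modl)=\sD^\st((S^{-1}R)\modl)$ and $k_*$ is the ordinary restriction of scalars along $R\rarrow S^{-1}R$.

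Granting the hypotheses, Proposition~\ref{semiorthogonal-mgm}(b) gives at once that $k_*\:\sD^\st((S^{-1}R)\modl)\rarrow\sD^\st(R\modl)$ is fully faithful and that $\im(k_*)$ is a thick subcategory of $\sD^\st(R\modl)$; part~(c) and the remark following it give the two semiorthogonal decompositions with middle term $\im(k_*)$, hence the equivalences $\ker(k^*)\simeq\sD^\st(R\modl)/\im(k_*)\simeq\ker(\boR k^!)$. Because $k_*$ is fully faithful, $\im(k_*)$ is the essential image of $\sD^\st((S^{-1}R)\modl)$, so the Verdier quotient in the middle is $\sD^\st(R\modl)/\sD^\st((S^{-1}R)\modl)$. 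Finally, Lemma~\ref{kernel-categories-computed} identifies $\ker(k^*)=\sD^\st_{S\tors}(R\modl)$ and $\ker(\boR k^!)=\sD^\st_{S\ctra}(R\modl)$; combining these facts yields precisely the chain of equivalences~\eqref{torsion-contra-cohomology-modules}.

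It then remains to put the composite equivalence $\sD^\st_{S\tors}(R\modl)\simeq\sD^\st_{S\ctra}(R\modl)$ into the stated explicit form, and for this I would trace through the construction underlying Proposition~\ref{semiorthogonal-mgm}(c). Write $\lambda$ for the reflector $\sD^\st(R\modl)\rarrow\ker(\boR k^!)$ left adjoint to the embedding and $\rho$ for the coreflector $\sD^\st(R\modl)\rarrow\ker(k^*)$ right adjoint to the embedding; by Proposition~\ref{kernel-categories-mgm} these are $A^\bu\longmapsto\Hom_R(T^\bu,A^\bu)$ and $M^\bu\longmapsto\check C^\bu{}\sptilde\ot_RM^\bu=\check C^\bu(M^\bu)\sptilde$, respectively. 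Since $\ker(k^*)={}^\perp\im(k_*)$ and $\ker(\boR k^!)=\im(k_*)^\perp$, each of $\lambda$ and $\rho$ annihilates $\im(k_*)$ (for $E\in\im(k_*)$ the object $\lambda(E)$ corepresents the zero functor on $\ker(\boR k^!)$, hence vanishes, and dually for $\rho$), so each factors through the Verdier quotient functor $Q\:\sD^\st(R\modl)\rarrow\sD^\st(R\modl)/\im(k_*)$; the induced functors out of $\sD^\st(R\modl)/\im(k_*)$ are inverse to the respective restrictions of $Q$ to $\ker(\boR k^!)$ and $\ker(k^*)$, so the equivalences in~\eqref{torsion-contra-cohomology-modules} are realized by $\lambda$ and~$\rho$. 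Restricting, $\sD^\st_{S\tors}(R\modl)\rarrow\sD^\st_{S\ctra}(R\modl)$ is $M^\bu\longmapsto\Hom_R(T^\bu,M^\bu)$ and its inverse $\sD^\st_{S\ctra}(R\modl)\rarrow\sD^\st_{S\tors}(R\modl)$ is $A^\bu\longmapsto\check C^\bu(A^\bu)\sptilde$, as claimed; I would also add the easy remark that these formulas do define triangulated functors on the $\st$\+bounded derived categories, because $T^\bu$ is a bounded complex of projective $R$\+modules and $\check C^\bu{}\sptilde=K^\bu[-1]$ is a two-term complex of flat $R$\+modules.

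I do not expect a serious obstacle here: all the substance is contained in the three cited results, and the only real work is checking that $\check C=S^{-1}R$ meets the hypotheses of Proposition~\ref{semiorthogonal-mgm} and carefully bookkeeping which adjoint realizes which half of which semiorthogonal decomposition, so that the two explicit functors come out mutually inverse in the correct directions.
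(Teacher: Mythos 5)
Your proposal is correct and follows essentially the same route as the paper: the paper's proof likewise derives everything by specializing Proposition~\ref{semiorthogonal-mgm} and Lemma~\ref{kernel-categories-computed} to $\check C=S^{-1}R$, with the explicit form of the mutually inverse functors coming from Proposition~\ref{kernel-categories-mgm}. Your verification of the hypotheses (flatness and $\pd_RS^{-1}R\le1$ of the single term, the maps~\eqref{three-quasi-isomorphisms} being isomorphisms because $R\rarrow S^{-1}R$ is a flat ring epimorphism) and your bookkeeping of which adjoint realizes which half of the two semiorthogonal decompositions just fill in details the paper leaves implicit.
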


 Notice that, of course, the complex $\check C^\bu(A^\bu)\sptilde$ is
isomorphic to $T^\bu\ot_R A^\bu$ as an object of
$\sD^\st_{S\tors}(R\modl)$ for every complex $A^\bu\in\sD^\st(R\modl)$.

\begin{proof}
 Follows from Proposition~\ref{semiorthogonal-mgm} and
Lemma~\ref{kernel-categories-computed}.
 The first assertion of the theorem is explainable by saying that
$k\:R\rarrow S^{-1}R$ is a homological ring
epimorphism~\cite[Theorem~4.4 and Corollary~4.7(2)]{GL},
\cite[Theorem~3.7]{Pa}.
 Both this and the leftmost one of the two triangulated equivalences do
not depend on the projective dimension assumption $\pd_RS^{-1}R\le1$
(as the construction and the properties of the functor~$k^*$ in
Proposition~\ref{semiorthogonal-mgm} do not require it, and neither
does Lemma~\ref{kernel-categories-computed}(a)).
 The rightmost triangulated equivalence needs $\pd_RS^{-1}R\le1$.
\end{proof}

\begin{rem}
 The following observations, the most part of which the author learned
from the anonymous referee, point out a connection between our
exposition and the infinitely generated tilting/silting theory.
 When all the elements of $S$ are nonzero-divisors in $R$, the ring
homomorphism $k\:R\rarrow S^{-1}R$ is an injective homological ring
epimorphism.
 Hence, according to~\cite[Definition~4.46 and Theorem~14.59]{GT}
or~\cite[Theorem~3.5]{AS}, the direct sum $S^{-1}R\oplus S^{-1}R/R$ is
a $1$\+tilting $R$\+module.
 More generally, according to~\cite[Example~6.5]{MS}, the direct sum
$S^{-1}R\oplus (S^{-1}R)/k(R)$ is a silting $R$\+module, and a $2$\+term
projective resolution of the complex $S^{-1}R\oplus K^\bu$ is
a $2$\+silting complex of $R$\+modules in the sense
of~\cite[Remark~2.7, Proposition~4.2, and Theorem~4.9]{AMT}.
 Equivalently, the complex $S^{-1}R\oplus K^\bu$ is a bounded silting
object of the derived category $\sD(R\modl)$ in the sense
of~\cite[Propositions~4.13 and~4.17]{PV}.
 When there is no $S$\+h-divisible $S$\+torsion in the $R$\+module $R$,
the complex $S^{-1}R\oplus K^\bu$ is even a tilting object in
$\sD(R\modl)$ in the sense of~\cite[Definition~1.1]{PV}.
\end{rem}

\Section{Two Exact Category Equivalences}

 In this section we deduce from Theorem~\ref{triangulated-matlis} our
versions of the Matlis additive category equivalences
of~\cite[\S3]{Mat}.
 In fact, we will even obtain equivalences of exact categories
(in Quillen's sense).
 As in Section~\ref{triangulated-equivalence}, our setting is that
of a commutative ring $R$ with a multiplicative subset $S\subset R$
such that the projective dimension of the $R$\+module $S^{-1}R$
does not exceed~$1$.

\begin{lem} \label{torsion-free-and-h-divisible}
\textup{(a)} An $R$\+module $N$ is $S$\+torsion-free if and only if\/
$\Tor^R_1(K^\bu,N)=0$. \par
\textup{(b)} An $R$\+module $C$ is $S$\+h-divisible if and only if\/
$\Ext_R^1(K^\bu,C)=0$.
\end{lem}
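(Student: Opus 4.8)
Part~(a) is immediate and needs nothing new: by Lemma~\ref{tor-with-k}(d) one has $\Tor^R_1(K^\bu,N)\simeq\Gamma_S(N)$, and by definition an $R$\+module $N$ is $S$\+torsion-free exactly when $\Gamma_S(N)=0$.

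For part~(b), I would work throughout with the short exact sequence~(III), namely $0\rarrow C/h_S(C)\rarrow\Ext_R^1(K^\bu,C)\rarrow\Ext_R^1(S^{-1}R,C)\rarrow0$. The ``only if'' direction is then immediate: if $\Ext_R^1(K^\bu,C)=0$, then $C/h_S(C)=0$, i.e.\ $C=h_S(C)$, and since $h_S(C)$ is an $S$\+h-divisible submodule of $C$ by its very construction, $C$ is $S$\+h-divisible. (Neither this step nor part~(a) uses the running hypothesis $\pd_RS^{-1}R\le1$.)

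For the ``if'' direction, suppose $C$ is $S$\+h-divisible, so $h_S(C)=C$ and hence $C/h_S(C)=0$; by~(III) it remains only to show $\Ext_R^1(S^{-1}R,C)=0$. Here I would reproduce the argument from the proof of Lemma~\ref{h-torsion-theory}(b): given an arbitrary short exact sequence $0\rarrow C\rarrow M\rarrow S^{-1}R\rarrow0$, the module $M$ is $S$\+h-divisible, because $C$ and $S^{-1}R$ are $S$\+h-divisible and the class of $S$\+h-divisible $R$\+modules is closed under extensions by Lemma~\ref{h-torsion-theory}(a); writing $M$ as a quotient of an $(S^{-1}R)$\+module $D$, the composite surjection $D\rarrow M\rarrow S^{-1}R$ is a morphism of $(S^{-1}R)$\+modules and so splits, and composing a splitting $S^{-1}R\rarrow D$ with $D\rarrow M$ splits $M\rarrow S^{-1}R$. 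As the extension was arbitrary, $\Ext_R^1(S^{-1}R,C)=0$, whence $\Ext_R^1(K^\bu,C)=0$ by~(III).

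I do not anticipate any real obstacle. The one spot requiring care is the passage invoking Lemma~\ref{h-torsion-theory}(a): that an extension of $S^{-1}R$ by an $S$\+h-divisible module is again $S$\+h-divisible is precisely where the hypothesis $\pd_RS^{-1}R\le1$ enters, and it is the only place it is used in this lemma --- everything else being a formal manipulation of the sequence~(III) and Lemma~\ref{tor-with-k}(d).
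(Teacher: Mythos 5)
Your proof is correct, and for the nontrivial direction it takes a genuinely different route from the paper. Part~(a) and the implication $\Ext_R^1(K^\bu,C)=0\Rightarrow C$ is $S$\+h-divisible are handled exactly as in the paper (via Lemma~\ref{tor-with-k}(d) and the sequence~(III)); note only that you have swapped the labels ``if'' and ``only if'' relative to the statement. For the remaining implication, the paper does not argue via splittings: it observes that when $C$ is $S$\+h-divisible, the sequence~(III) makes the map $\Ext_R^1(K^\bu,C)\rarrow\Ext_R^1(S^{-1}R,C)$ an isomorphism, and then notes that $\Delta_S(C)=\Ext_R^1(K^\bu,C)$ is an $S$\+contramodule by Lemma~\ref{delta-produces-contramodule}(c) (this is where $\pd_RS^{-1}R\le1$ enters), while $\Ext_R^1(S^{-1}R,C)$ is an $(S^{-1}R)$\+module; an $R$\+module that is simultaneously $S$\+h-reduced and $S$\+h-divisible vanishes, so both groups are zero. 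You instead prove $\Ext^1_R(S^{-1}R,C)=0$ directly by the extension-splitting argument of Lemma~\ref{h-torsion-theory}(b), importing the hypothesis $\pd_RS^{-1}R\le1$ through Lemma~\ref{h-torsion-theory}(a). This is not circular, since Lemma~\ref{h-torsion-theory}(a) is established earlier; but it is also not really more elementary, because the paper's proof of Lemma~\ref{h-torsion-theory}(a) itself rests on Lemma~\ref{delta-produces-contramodule}(c), so both routes trace back to the same contramodule property of $\Delta_S$ --- yours just passes through an extra intermediary and is slightly longer. One small cost of your route: the paper remarks immediately after this lemma that part~(b) furnishes another proof of Lemma~\ref{h-torsion-theory}(a); with your argument that remark would become circular, whereas the paper's proof keeps the two statements logically independent.
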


\begin{proof}
 Part~(a) is Lemma~\ref{tor-with-k}(d).
 In part~(b), the ``if'' claim follows from the short
exact sequence~(III).
 To prove the ``only if'', assume that $C$ is $S$\+h-divisible.
 Then from the sequence~(III) we see that the map
$\Ext_R^1(K^\bu,C)\rarrow\Ext_R^1(S^{-1}R,C)$ is an isomorphism.
 Since $\Delta_S(C)=\Ext_R^1(K^\bu,C)$ is an $S$\+contramodule
by Lemma~\ref{delta-produces-contramodule}(c) and
$\Ext_R^1(S^{-1}R,C)$ is an $(S^{-1}R)$\+module, these two $R$\+modules
can only be isomorphic when both of them vanish.
\end{proof}

 It follows from Lemma~\ref{torsion-free-and-h-divisible}(b) that
the full subcategory of $S$\+h-divisible $R$\+modules is closed under
extensions in $R\modl$.
 This provides another proof of Lemma~\ref{h-torsion-theory}(a).

 In particular, the full subcategory of $S$\+h-divisible $S$\+torsion
$R$\+modules in $R\modl$ is closed under extensions, quotients
and infinite direct sums.
 So it inherits an exact category structure from the abelian
category $R\modl$ or $R\modl_{S\tors}$.

 The full subcategory of $S$\+torsion-free $S$\+contramodule
$R$\+modules is closed under extensions, kernels, and infinite
products in $R\modl$; it is also closed under extensions and
subobjects in the abelian category $R\modl_{S\ctra}$.
 So this full subcategory inherits an exact category structure
from the abelian category $R\modl$ or $R\modl_{S\ctra}$.

 The following corollary is our version of~\cite[Theorem~3.4]{Mat}
and~\cite[Corollary~2.4]{Mat2}.

\begin{cor} \label{first-matlis}
 The functors $M\longmapsto\Ext^0_R(K^\bu,M)$ and
$A\longmapsto\Tor_0^R(K^\bu,A)$ establish an equivalence between
the exact categories of $S$\+h-divisible $S$\+torsion $R$\+modules
$M$ and $S$\+torsion-free $S$\+contramodule $R$\+modules~$A$.
\end{cor}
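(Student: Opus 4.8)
The plan is to obtain this corollary by restricting the triangulated Matlis equivalence of Theorem~\ref{triangulated-matlis} to one-term complexes. Write $\Phi$ for the equivalence $\sD^\b_{S\tors}(R\modl)\rarrow\sD^\b_{S\ctra}(R\modl)$ given by $M^\bu\longmapsto\Hom_R(T^\bu,M^\bu)$, and $\Psi$ for its quasi-inverse $A^\bu\longmapsto\check C^\bu(A^\bu)\sptilde$. Since $T^\bu$ is a bounded complex of projective $R$\+modules quasi-isomorphic to $K^\bu[-1]$, for a single $R$\+module $M$ the complex $\Phi(M)$ has cohomology $H^m\Phi(M)=\Ext^{m+1}_R(K^\bu,M)$, so the only possibly nonzero cohomology is $\Delta_S(M)=\Ext^1_R(K^\bu,M)$ in degree~$0$ and $\Ext^0_R(K^\bu,M)$ in degree~$-1$. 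Dually, $H^m\Psi(A)=\Tor^R_{1-m}(K^\bu,A)$, so the only possibly nonzero cohomology of $\Psi(A)$ is $\Gamma_S(A)=\Tor^R_1(K^\bu,A)$ in degree~$0$ and $\Tor^R_0(K^\bu,A)$ in degree~$1$ (cf.\ Lemmas~\ref{complex-t} and~\ref{complex-check-c-tilde}). The key observation is that on the two subcategories in the statement these complexes are concentrated in a single degree, so the general equivalence ``unshifts'' to a module equivalence.

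First I would check that $\Ext^0_R(K^\bu,{-})$ sends $S$\+h-divisible $S$\+torsion modules to $S$\+torsion-free $S$\+contramodules. If $M$ is $S$\+h-divisible, then $\Delta_S(M)=0$ by Lemma~\ref{torsion-free-and-h-divisible}(b), so $\Phi(M)$ is isomorphic in $\sD^\b_{S\ctra}(R\modl)$ to $\Ext^0_R(K^\bu,M)$ placed in cohomological degree~$-1$; in particular $\Ext^0_R(K^\bu,M)$ is an $S$\+contramodule (as also follows directly from Lemma~\ref{delta-produces-contramodule}(a)). Applying $\Psi$ and using the natural isomorphism $\Psi\Phi\simeq\mathrm{Id}$, the complex $\Psi(\Phi(M))$, which is a shift of $\check C^\bu(\Ext^0_R(K^\bu,M))\sptilde$, is isomorphic to $M$ concentrated in cohomological degree~$0$; comparing cohomology in degrees $-1$ and $0$ yields $\Gamma_S(\Ext^0_R(K^\bu,M))=0$, i.e.\ $\Ext^0_R(K^\bu,M)$ is $S$\+torsion-free, together with a natural isomorphism $\Tor^R_0(K^\bu,\Ext^0_R(K^\bu,M))\simeq M$.

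Symmetrically, if $A$ is an $S$\+torsion-free $S$\+contramodule, then $\Gamma_S(A)=\Tor^R_1(K^\bu,A)=0$ by Lemma~\ref{torsion-free-and-h-divisible}(a), so $\Psi(A)$ is isomorphic in $\sD^\b_{S\tors}(R\modl)$ to $\Tor^R_0(K^\bu,A)$ placed in cohomological degree~$1$; moreover $\Tor^R_0(K^\bu,A)=(S^{-1}R/R)\ot_RA$ is $S$\+torsion and, being a quotient $R$\+module of $S^{-1}A$, is $S$\+h-divisible. Applying $\Phi$ and using $\Phi\Psi\simeq\mathrm{Id}$ together with $\Ext^{-1}_R(K^\bu,{-})=0$ and $\Ext^2_R(K^\bu,{-})=\Ext^2_R(S^{-1}R,{-})=0$ (Lemma~\ref{tor-with-k}(a,b) and the hypothesis $\pd_RS^{-1}R\le1$), one reads off $\Ext^1_R(K^\bu,\Tor^R_0(K^\bu,A))=0$ and a natural isomorphism $\Ext^0_R(K^\bu,\Tor^R_0(K^\bu,A))\simeq A$. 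Consequently $\Ext^0_R(K^\bu,{-})$ and $\Tor^R_0(K^\bu,{-})$ restrict to mutually quasi-inverse functors between the additive categories of $S$\+h-divisible $S$\+torsion $R$\+modules and $S$\+torsion-free $S$\+contramodule $R$\+modules, which are therefore equivalent.

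It remains to promote this to an equivalence of exact categories. By definition, conflations on both sides are the short exact sequences in $R\modl$ all of whose terms lie in the subcategory. Given such a short exact sequence of $S$\+h-divisible $S$\+torsion modules, I would regard it as a distinguished triangle in $\sD^\b_{S\tors}(R\modl)$ and apply the triangulated functor $\Phi$; since the three images are concentrated in cohomological degree~$-1$, the cohomology long exact sequence of the resulting triangle reduces to a short exact sequence, which is the image of the original one under $\Ext^0_R(K^\bu,{-})$ and has $S$\+torsion-free $S$\+contramodule terms. Running the same argument with $\Psi$ in place of $\Phi$ gives the converse, so $\Ext^0_R(K^\bu,{-})$ and $\Tor^R_0(K^\bu,{-})$ preserve and reflect conflations, and the additive equivalence is an equivalence of exact categories. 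The only real obstacle I anticipate is the bookkeeping of cohomological shifts and the degree in which each complex is concentrated; no homological input beyond Theorem~\ref{triangulated-matlis} and the elementary Lemmas~\ref{tor-with-k} and~\ref{torsion-free-and-h-divisible} should be needed.
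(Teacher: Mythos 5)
Your proposal is correct and follows essentially the same route as the paper's own proof: restricting the triangulated equivalence of Theorem~\ref{triangulated-matlis} to one-term complexes and identifying the two relevant subcategories via the vanishing of $\Ext^1_R(K^\bu,{-})$ and $\Tor^R_1(K^\bu,{-})$, i.e.\ Lemma~\ref{torsion-free-and-h-divisible}. You merely make explicit the degree bookkeeping and the exact-structure verification that the paper dismisses as ``clear''.
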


\begin{proof} \hbadness=2850
 Clearly, the equivalence of triangulated categories
$\sD^\b_{S\tors}(R\modl)\simeq\sD^\b_{S\ctra}(R\modl)$ from
Theorem~\ref{triangulated-matlis} restricts to an equivalence
between the exact categories of
\begin{itemize}
\item those complexes in $\sD^\b_{S\tors}(R\modl)$ whose cohomology
are concentrated in the cohomological degree~$0$ and whose images in
$\sD^\b_{S\ctra}(R\modl)$ have cohomology concentrated in degree~$1$,
and
\item those complexes in $\sD^\b_{S\ctra}(R\modl)$ whose cohomology
are concentrated in the cohomological degree~$1$ and whose images
in $\sD^\b_{S\tors}(R\modl)$ have cohomology concentrated in
degree~$0$.
\end{itemize}
 Since the complexes $\check C^\bu(R)\sptilde$ and $T^\bu$ are
quasi-isomorphic to $K^\bu[-1]$, the former category consists of those
$S$\+torsion $R$\+modules $M$ for which $\Ext^1_R(K^\bu,M)=0$, while
the latter category consists of those $S$\+contramodule $R$\+modules
$A$ for which $\Tor_1^R(K^\bu,M)=0$.
 It remains to recall Lemma~\ref{torsion-free-and-h-divisible}.
\end{proof}

 Notice that by
Corollary~\ref{bounded-torsion-sepcomplete-contramodule}
an $S$\+torsion-free $R$\+module $A$ is an $S$\+contramodule if and
only if it is $S$\+separated and $S$\+complete.
 Thus Corollary~\ref{first-matlis} can be formulated as
an equivalence between the additive categories of $S$\+h-divisible
$S$\+torsion $R$\+modules and $S$\+torsion-free $S$\+separated
$S$\+complete $R$\+modules (cf.~\cite[Theorem~VIII.2.8]{FS}).

\medskip

 Our last corollary in this section will involve a class of
$R$\+modules that were called ``adjusted co-torsion'' in~\cite{Harr}
and ``special cotorsion'' in~\cite{Mat}.
 Following the terminology in~\cite{Mat}, we say that an $R$\+module
$N$ is \emph{$S$\+special} if the quotient module $N/\Gamma_S(N)$
is $S$\+divisible.
 If this is the case, $N/\Gamma_S(N)$, being an $S$\+torsion-free
$S$\+divisible $R$\+module, is an $(S^{-1}R)$\+module.

 The next lemma is our version of~\cite[Lemma~3.6]{Mat}
and~\cite[Corollary~1.3]{Mat2}.

\begin{lem} \label{special-and-h-reduced-torsion}
\textup{(a)} An $R$\+module $N$ is $S$\+special if and only if\/
$\Tor^R_0(K^\bu,N)=0$. \par
\textup{(b)} An $R$\+module $C$ has no $S$\+h-divisible $S$\+torsion
if and only if\/ $\Ext_R^0(K^\bu,C)=0$.
\end{lem}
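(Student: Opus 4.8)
The plan is to deduce both parts from Lemma~\ref{tor-with-k}(c) together with the short exact sequence~(I), in close parallel with Lemma~\ref{torsion-free-and-h-divisible}: part~(a) is the $\Tor^R_0$/$S$-divisibility counterpart of the $\Tor^R_1$/$S$-torsion-freeness criterion, and part~(b) is the $\Ext^0_R$/$S$-h-divisibility counterpart of the $\Ext^1_R$/$S$-h-divisibility criterion.

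For part~(a), I would start from the short exact sequence~(I)
$$
 0\lrarrow N/\Gamma_S(N)\lrarrow S^{-1}R\ot_RN\lrarrow\Tor^R_0(K^\bu,N)\lrarrow0,
$$
noting that $S^{-1}R\ot_RN=S^{-1}(N/\Gamma_S(N))$ (because $S^{-1}\Gamma_S(N)=0$) and that the leftmost arrow is the localization morphism of the $S$-torsion-free module $N/\Gamma_S(N)$, hence injective. Thus $\Tor^R_0(K^\bu,N)=0$ if and only if this localization morphism is surjective, i.e., an isomorphism. For an $S$-torsion-free $R$-module this last condition is equivalent to being $S$-divisible, equivalently to being an $(S^{-1}R)$-module (as recalled in Section~\ref{preliminaries-secn}), which by definition means exactly that $N$ is $S$-special. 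This step is purely formal.

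For part~(b), by Lemma~\ref{tor-with-k}(c) one has $\Ext^0_R(K^\bu,C)=\Hom_R(S^{-1}R/R,C)$, so the assertion amounts to: $\Hom_R(S^{-1}R/R,C)=0$ if and only if $\Gamma_S(C)$ is $S$-h-reduced. The $R$-module $S^{-1}R/R$ is $S$-torsion and $S$-h-divisible (it is a quotient of the $(S^{-1}R)$-module $S^{-1}R$); hence the image of any nonzero homomorphism $S^{-1}R/R\rarrow C$ is a nonzero $S$-h-divisible $S$-torsion submodule of $C$, that is, a nonzero $S$-h-divisible submodule of $\Gamma_S(C)$ — this settles one direction. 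For the converse I would take any nonzero $S$-h-divisible $S$-torsion submodule $D\subseteq C$ and construct a nonzero map $S^{-1}R/R\rarrow D$ by hand: since $D=h_S(D)$ is the image of the evaluation map $\Hom_R(S^{-1}R,D)\rarrow D$, choose $\phi\:S^{-1}R\rarrow D$ with $\phi(1)=d$ for a fixed $d\ne0$, pick $s_0\in S$ with $s_0d=0$, and set $\phi'(\xi)=\phi(s_0\xi)$. Then $\phi'(1)=\phi(s_0)=s_0d=0$, so $\phi'$ annihilates the image of $R$ in $S^{-1}R$ and therefore descends to $\bar\phi'\:S^{-1}R/R\rarrow D\subseteq C$; and $\bar\phi'$ is nonzero because it sends the class of $1/s_0$ to $\phi(1)=d\ne0$. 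Combining the two directions, $\Ext^0_R(K^\bu,C)=0$ precisely when $C$ has no nonzero $S$-h-divisible $S$-torsion submodule, which is exactly the condition that $\Gamma_S(C)$ be $S$-h-reduced, i.e., that $C$ have no $S$-h-divisible $S$-torsion.

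The one nontrivial point is the converse half of~(b): one genuinely has to produce a homomorphism out of $S^{-1}R/R$, and the naive idea of computing $\Hom_R(S^{-1}R/R,D)$ as an inverse limit of the modules $\{d\in D:sd=0\}$, $s\in S$, runs into the usual non-exactness/cofinality difficulties of inverse limits over the directed set $S$. The ``multiply by $s_0$'' device above sidesteps this completely and, incidentally, does not even use the standing hypothesis $\pd_RS^{-1}R\le1$; alternatively one could first pass to $\Gamma_S(C)$ and then to $h_S(\Gamma_S(C))$ using the $S$-h-torsion theory of Lemma~\ref{h-torsion-theory}, but the same explicit construction would still be needed at the end. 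Everything else — exactness of~(I), the identification of its first arrow with a localization morphism, and the equivalence for $S$-torsion-free modules of $S$-divisibility with being an $(S^{-1}R)$-module — is already available from Section~\ref{preliminaries-secn} and the earlier part of this section.
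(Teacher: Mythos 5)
Your proposal is correct and follows essentially the same route as the paper: part~(a) is read off from the short exact sequence~(I) (the paper phrases the forward direction via the extension $0\to\Gamma_S(N)\to N\to N/\Gamma_S(N)\to0$, but this is the same computation), and part~(b) rests on the identification $\Ext^0_R(K^\bu,C)=\Hom_R(S^{-1}R/R,C)$ together with $S^{-1}R/R$ being $S$\+torsion and $S$\+h-divisible. Your ``multiply by~$s_0$'' construction of a nonzero map $S^{-1}R/R\to C$ is just the contrapositive form of the paper's argument, which takes $f\:S^{-1}R\to\Gamma_S(C)$, factors $sf$ through $S^{-1}R/R$, and uses $S$\+divisibility of $S^{-1}R$ to conclude $f=0$; and, as you note, neither argument uses $\pd_RS^{-1}R\le1$, in agreement with the paper's remark.
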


\begin{proof}
 Part~(a): for an $S$\+torsion $R$\+module $M$, one has
$S^{-1}R\ot_RM=0$, hence $\Tor^R_0(K^\bu,M)=0$ in view of
the short exact sequence~(I).
 For an $(S^{-1}R)$\+module $D$ one has $\Tor^R_0(K^\bu,D)=
H^0(K^\bu\ot_R D)=0$.
 Hence one has $\Tor^R_0(K^\bu,N)=0$ for every $S$\+special
$R$\+module~$N$.
 Conversely, if $\Tor^R_0(K^\bu,N)=0$, then $N/\Gamma_R(N)$
is an $(S^{-1}R)$\+module according to the exact sequence~(I).

 Part~(b): the $R$\+module $S^{-1}R/R$ is $S$\+torsion and
$S$\+h-divisible, hence by Lemma~\ref{tor-with-k}(c) one has
$\Ext^0_R(K^\bu,C)=0$ for every $R$\+module $C$ without
$S$\+h-divisible $S$\+torsion.
 Conversely, if $M=\Gamma_S(C)$ then for every morphism
$f\:S^{-1}R\rarrow M$ there exists an element $s\in S$ such
that $sf(1)=0$.
 So $\Hom_R(S^{-1}R/R,C)=0$ implies $sf=0$ and, since $S^{-1}R$
is $S$\+divisible, $f=0$.
 Thus $\Gamma_S(C)$ is $S$\+h-reduced whenever
$\Ext^R_0(K^\bu,C)=0$.
 Unlike the proof of Lemma~\ref{torsion-free-and-h-divisible}(b),
the proof of the present lemma does not depend
on the assumption $\pd_RS^{-1}R\le1$.
\end{proof}

 It follows from Lemma~\ref{special-and-h-reduced-torsion}(a) that
the full subcategory of $S$\+special $R$\+modules is closed under
extensions, quotients, and infinite direct sums in $R\modl$.
 Hence the full subcategory of $S$\+special $S$\+contramodule
$R$\+modules is closed under extensions in $R\modl$; it is
also closed under extensions and quotients in $R\modl_{S\ctra}$.
 So this full subcategory inherits an exact category structure
from the abelian category $R\modl$ or $R\modl_{S\ctra}$.

 The full subcategory of $S$\+h-reduced $S$\+torsion $R$\+modules
is closed under extensions, subobjects, and infinite direct sums
in $R\modl_{S\tors}$ and $R\modl$.
 So it inherits an exact category structure.

 The following corollary is our version of~\cite[Theorem~3.8]{Mat}.

\begin{cor} \label{second-matlis}
 The functors $M\longmapsto\Ext_R^1(K^\bu,M)$ and $A\longmapsto
\Tor^R_1(K^\bu,A)$ establish an equivalence between the exact
categories of $S$\+h-reduced $S$\+torsion $R$\+modules $M$
and $S$\+special $S$\+contramodule $R$\+modules~$A$.
\end{cor}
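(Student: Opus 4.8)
The plan is to follow the proof of Corollary~\ref{first-matlis}, only with a different placement of cohomological degrees. Recall from Theorem~\ref{triangulated-matlis} that the triangulated Matlis equivalence $\sD^\b_{S\tors}(R\modl)\simeq\sD^\b_{S\ctra}(R\modl)$ is given by the mutually inverse functors $M^\bu\longmapsto\Hom_R(T^\bu,M^\bu)$ and $A^\bu\longmapsto\check C^\bu(A^\bu)\sptilde$. First I would observe that, being the restriction of an equivalence to a pair of full subcategories matched by it, this equivalence restricts to an equivalence between the full subcategory $\mathsf A\subset\sD^\b_{S\tors}(R\modl)$ of those complexes whose cohomology is concentrated in degree~$0$ and whose image in $\sD^\b_{S\ctra}(R\modl)$ also has cohomology concentrated in degree~$0$, and the analogously defined full subcategory $\mathsf B\subset\sD^\b_{S\ctra}(R\modl)$. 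Since the objects of $\mathsf A$ and $\mathsf B$ are one-term complexes, $\mathsf A$ is equivalent to the full subcategory of $R\modl_{S\tors}$ on the modules occurring in it, and similarly $\mathsf B$ to a full subcategory of $R\modl_{S\ctra}$ (because $\Hom$ in $\sD^\b(R\modl)$ between modules placed in degree~$0$ is the module $\Hom$).

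Next I would identify these modules. By construction $T^\bu$ is quasi-isomorphic to $K^\bu[-1]$, and $\check C^\bu(A)\sptilde=K^\bu[-1]\ot_RA$ by definition, so the cohomology of these complexes is computed by $\Ext^*_R(K^\bu,{-})$ and $\Tor^R_*(K^\bu,{-})$; explicitly, using Lemmas~\ref{complex-t}(b), \ref{complex-check-c-tilde}(b), and~\ref{tor-with-k}, one has $H_0\Hom_R(T^\bu,M)=\Delta_S(M)=\Ext^1_R(K^\bu,M)$, \ $H_1\Hom_R(T^\bu,M)=\Ext^0_R(K^\bu,M)$, \ $H^0\check C^\bu(A)\sptilde=\Tor^R_1(K^\bu,A)$, and $H^1\check C^\bu(A)\sptilde=\Tor^R_0(K^\bu,A)$ for $R$\+modules $M$ and $A$. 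Hence $\mathsf A$ consists of the $S$\+torsion $R$\+modules $M$ with $\Ext^0_R(K^\bu,M)=0$; since $\Gamma_S(M)=M$, by Lemma~\ref{special-and-h-reduced-torsion}(b) these are exactly the $S$\+h-reduced $S$\+torsion $R$\+modules. Dually, $\mathsf B$ consists of the $S$\+contramodule $R$\+modules $A$ with $\Tor^R_0(K^\bu,A)=0$, which by Lemma~\ref{special-and-h-reduced-torsion}(a) are exactly the $S$\+special $S$\+contramodule $R$\+modules. Along the equivalence $\mathsf A\simeq\mathsf B$ the two functors become $M\longmapsto\Ext^1_R(K^\bu,M)$ and $A\longmapsto\Tor^R_1(K^\bu,A)$, as claimed.

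It remains to promote the equivalence $\mathsf A\simeq\mathsf B$ to an equivalence of exact categories. The full subcategory of $S$\+h-reduced $S$\+torsion $R$\+modules carries the exact structure inherited from $R\modl_{S\tors}$, and that of $S$\+special $S$\+contramodule $R$\+modules the one inherited from $R\modl_{S\ctra}$, as recorded in the two paragraphs preceding the corollary. A conflation in either exact category is a short exact sequence of $R$\+modules all of whose terms lie in the relevant subcategory; applying the triangulated equivalence turns it into a distinguished triangle whose three vertices are one-term complexes in degree~$0$, that is, a short exact sequence, which is a conflation because the target subcategory is extension-closed. (Equivalently, one verifies exactness directly from the long exact sequences: for $M'\hookrightarrow M\twoheadrightarrow M''$ in the first category the sequence $0\rarrow\Ext^1_R(K^\bu,M')\rarrow\Ext^1_R(K^\bu,M)\rarrow\Ext^1_R(K^\bu,M'')\rarrow0$ is exact because $\Ext^0_R(K^\bu,M'')=0$ and $\Ext^2_R(K^\bu,M')=\Ext^2_R(S^{-1}R,M')=0$ by Lemma~\ref{tor-with-k}(b) and $\pd_RS^{-1}R\le1$; and similarly for $\Tor^R_1(K^\bu,{-})$ in the second category, using $\Tor^R_0(K^\bu,A')=0$ and $\Tor^R_2(K^\bu,A'')=0$.) Hence the two functors are mutually inverse exact equivalences.

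The one point requiring care is the degree bookkeeping: unlike in Corollary~\ref{first-matlis}, here one restricts to complexes concentrated in degree~$0$ on \emph{both} sides of the triangulated equivalence, so that the surviving cohomology is $\Ext^1_R(K^\bu,{-})$ (rather than $\Ext^0_R(K^\bu,{-})$) on the torsion side and $\Tor^R_1(K^\bu,{-})$ (rather than $\Tor^R_0(K^\bu,{-})$) on the contramodule side. Beyond that, the proof only uses Lemma~\ref{special-and-h-reduced-torsion}, the closure-under-extensions statements quoted above, and the standing hypothesis $\pd_RS^{-1}R\le1$.
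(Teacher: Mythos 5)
Your proposal is correct and follows essentially the same route as the paper: restrict the triangulated equivalence of Theorem~\ref{triangulated-matlis} to complexes with cohomology concentrated in degree~$0$ on both sides, identify the resulting subcategories via Lemma~\ref{special-and-h-reduced-torsion} as the $S$\+h-reduced $S$\+torsion modules and the $S$\+special $S$\+contramodules, and read off the functors as $\Ext^1_R(K^\bu,{-})$ and $\Tor^R_1(K^\bu,{-})$. Your extra verification of exactness (via the long exact sequences, using $\pd_RS^{-1}R\le1$) only makes explicit what the paper leaves implicit in the preceding discussion of the inherited exact structures.
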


 We recall that, according to Lemma~\ref{tor-with-k}(d), one has
$\Tor^R_1(K^\bu,A)\simeq\Gamma_S(A)$ and, by the definition,
$\Ext_R^1(K^\bu,M)=\Delta_S(M)$.

\begin{proof}
 The equivalence of triangulated categories $\sD^\b_{S\tors}(R\modl)
\simeq\sD^\b_{S\ctra}(R\modl)$ from Theorem~\ref{triangulated-matlis}
restricts to an equivalence between the exact categories of
\begin{itemize}
\item those complexes in $\sD^\b_{S\tors}(R\modl)$ whose cohomology
are concentrated in the cohomological degree~$0$ and whose images
in $\sD^\b_{S\ctra}(R\modl)$ have cohomology concentrated in
degree~$0$, and
\item those complexes in $\sD^\b_{S\ctra}(R\modl)$ whose cohomology
are concentrated in the cohomological degree~$0$ and whose images
in $\sD^\b_{S\tors}$ have cohomology concentrated in degree~$0$.
\end{itemize}
 Due to the quasi-isomorphisms $\check C^\bu(R)\sptilde=
K^\bu[-1]\simeq T^\bu$, the former category consists of those
$S$\+torsion $R$\+modules $M$ for which $\Ext_R^0(K^\bu,M)=0$,
while the latter category consists of those $S$\+contramodule
$R$\+modules $A$ for which $\Tor^R_0(K^\bu,A)=0$.
 It remains to take account of
Lemma~\ref{special-and-h-reduced-torsion}.
\end{proof}

 In any $R$\+module $M$, there is a unique maximal $S$\+h-divisible
$S$\+torsion submodule $\theta_S(M)\subset M$, which can be constructed
as the sum of the images of all the $R$\+module morphisms
$S^{-1}R/R\rarrow M$.
 The quotient module $M/\theta_S(M)$ is the (unique) maximal quotient
$R$\+module of $M$ having no $S$\+h-divisible $S$\+torsion.
 When $M$ is an $S$\+torsion $R$\+module, its submodule $\theta_S(M)$
belongs to the exact subcategory appearing in
Corollary~\ref{first-matlis} and the quotient module $M/\theta_S(M)$
belongs to the exact subcategory appearing in
Corollary~\ref{second-matlis}.

 In any $R$\+module $C$, there is a unique maximal $S$\+special
$R$\+submodule $\sigma_S(C)\subset C$, which can be constructed as
the sum of all the $S$\+special $R$\+submodules in~$C$.
 The quotient module $C/\sigma_S(C)$ is the (unique) maximal
$S$\+h-reduced (or $S$\+reduced) $S$\+torsion-free quotient $R$\+module
of~$C$.
 When $C$ is an $S$\+contramodule, the $R$\+module $\sigma_S(C)$ is
an $S$\+contramodule by Lemma~\ref{contramodule-short-exact}(b),
and the $R$\+module $C/\sigma_S(C)$ is an $S$\+contramodule by
Lemma~\ref{contramodule-short-exact}(c)
(this is our version of~\cite[Theorem~3.7]{Mat}).
 So the submodule $\sigma_S(C)\subset C$ belongs to the subcategory
appearing in Corollary~\ref{second-matlis} and the quotient module
$C/\sigma_S(C)$ belongs to the subcategory appearing in
Corollary~\ref{first-matlis}.

\begin{rem}
 Let us emphasize that our results in this section are both more
and less general than in Matlis'~\cite[\S3]{Mat} (also
in~\cite[Section~VIII.2]{FS}).
 On the one hand, in place of a commutative domain $R$ with
the multiplicative subset $R\setminus\{0\}$, we have a rather
arbitrary  commutative ring $R$ and a multiplicative subset
$S\subset R$.
 The ease with which replacing the quotient module $K=Q/R$ by
a two-term complex~$K^\bu$ allows to work with zero-divisors in
this theory is remarkable.
 On the other hand, our triangulated equivalence seems to be unable
to avoid the assumption $\pd_RS^{-1}R\le 1$, which was not made
in the classical approach.
\end{rem}

\Section{Two Fully Faithful Triangulated Functors}

 Let $R$ be a commutative ring and $S\subset R$ be a multiplicative
subset.
 In addition to our running assumption that $\pd_SS^{-1}R\le 1$, in
this section we will also assume that the $S$\+torsion in $R$ is
bounded, that is there exists $r\in S$ such that $r\Gamma_S(R)=0$.
 Our aim is to rewrite the triangulated equivalence of
Theorem~\ref{triangulated-matlis} as an equivalence between
the derived categories of the abelian categories $R\modl_{S\tors}$
and $R\modl_{S\ctra}$ (cf.~\cite[Section~1 and~2]{Pmgm}).

\begin{lem} \label{bounded-torsion-injective-flat}
\textup{(a)} For every injective $R$\+module $J$ one has\/
$\Tor^R_0(K^\bu,J)=0$. \par
\textup{(b)} For every flat $R$\+module $F$ one has\/
$\Ext_R^0(K^\bu,F)=0$.
\end{lem}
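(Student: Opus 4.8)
The plan is to derive both parts from a single observation: an injective (respectively, flat) $R$\+module is ``almost $S$\+divisible'', and the running hypothesis that the $S$\+torsion in $R$ be bounded upgrades this to an honest boundedness statement which makes the module invisible to $\Ext_R^0(K^\bu,{-})$ or, after Pontryagin duality, to $\Tor^R_0(K^\bu,{-})$. Fix $r\in S$ with $r\Gamma_S(R)=0$; since $(0:_R s)\subseteq\Gamma_S(R)$ for every $s\in S$, we have $r\cdot(0:_R s)=0$.

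For part~(b) the argument is short. By Lemma~\ref{tor-with-k}(c), $\Ext_R^0(K^\bu,F)=\Hom_R(S^{-1}R/R,F)$, so it suffices to show that every $R$\+module morphism $\phi\:S^{-1}R/R\to F$ is zero. Its image is $S$\+torsion (since $S^{-1}R/R$ is $S$\+torsion) and $S$\+divisible (being a quotient of the $S$\+divisible module $S^{-1}R/R$). By Corollary~\ref{flat-bounded-torsion} the flat module $F$ has bounded $S$\+torsion; in fact $r\Gamma_S(F)=0$. Hence $\im(\phi)\subseteq\Gamma_S(F)$ is annihilated by $r\in S$, and an $S$\+divisible module annihilated by an element of $S$ vanishes. (Equivalently, $\Gamma_S(F)$ is a strong $S$\+contramodule by Lemma~\ref{about-contramodules-lemma}(b), hence $S$\+h-reduced, and one invokes Lemma~\ref{special-and-h-reduced-torsion}(b).)

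For part~(a) I would argue in three steps. \emph{Step 1:} for every injective $R$\+module $J$ and every $s\in S$ one has $rJ\subseteq sJ$. Given $y\in J$, the assignment $sa\mapsto ray$ gives a well-defined $R$\+linear map $sR\to J$ (well-definedness is where $r\cdot(0:_R s)=0$ enters); by injectivity of $J$ it extends along the inclusion $sR\subseteq R$ to a morphism $R\to J$, whose value at $1$ is an element $z\in J$ with $sz=ry$, so $ry\in sJ$. \emph{Step 2:} the character module $J^{+}=\Hom_\Z(J,\Q/\Z)$ has bounded $S$\+torsion. Indeed, inside $J^{+}$ one has $\Gamma_S(J^{+})=\bigcup_{s\in S}(J/sJ)^{+}$, and by Step~1 a functional vanishing on $sJ$ also vanishes on $rJ$; hence $r\Gamma_S(J^{+})=0$. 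By Lemma~\ref{about-contramodules-lemma}(b), $\Gamma_S(J^{+})$ is then a strong $S$\+contramodule, so $S$\+h-reduced, and Lemma~\ref{special-and-h-reduced-torsion}(b) gives $\Ext_R^0(K^\bu,J^{+})=0$. \emph{Step 3:} the (derived) adjunction $\Hom_\Z({-}\ot_R J,\Q/\Z)\cong\Hom_R({-},J^{+})$ yields a natural isomorphism $(K^\bu\ot_R^\boL J)^{+}\simeq\boR\Hom_R(K^\bu,J^{+})$ in $\sD(R\modl)$; since $({-})^{+}=\Hom_\Z({-},\Q/\Z)$ is exact, passing to cohomology gives $\Tor^R_n(K^\bu,J)^{+}\cong\Ext_R^n(K^\bu,J^{+})$ for every $n$. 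Taking $n=0$, combined with Step~2 and the fact that $\Q/\Z$ is a cogenerator of $\Z\modl$, this forces $\Tor^R_0(K^\bu,J)=0$.

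The main obstacle is the point where this setting departs from the classical one: once $S$ may contain zero-divisors, an injective $R$\+module need not be $S$\+divisible, so one cannot simply assert that $J/\Gamma_S(J)$ is $S$\+divisible. The relation $rJ\subseteq sJ$ of Step~1 is the correct substitute, but it is useless when read on $J$ itself (the $S$\+torsion of an injective module is typically unbounded, already for $\Q/\Z$ over $\Z$), and it becomes decisive only after the contravariant passage to $J^{+}$, where it says exactly that $\Gamma_S(J^{+})$ is bounded. Arranging this passage correctly (in particular, the identification $\Tor^R_n(K^\bu,{-})^{+}\cong\Ext_R^n(K^\bu,({-})^{+})$, for which it helps that $K^\bu$ is quasi-isomorphic to a bounded complex of projective $R$\+modules) is where I expect the care to be needed.
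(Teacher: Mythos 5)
Your argument is correct. Part~(b) is essentially the paper's own proof (your parenthetical variant is literally it): $r\Gamma_S(F)=0$ as in Corollary~\ref{flat-bounded-torsion}, hence $\Gamma_S(F)$ is $S$\+h-reduced by Lemma~\ref{about-contramodules-lemma}(b), and Lemma~\ref{special-and-h-reduced-torsion}(b) gives the vanishing. For part~(a) your key computation is the same as the paper's --- injectivity of $J$ together with $r\Gamma_S(R)=0$ lets one extend $sa\mapsto ray$ from $sR$ to $R$, giving $rJ\subseteq sJ$; applied with $sr$ in place of~$s$ this is exactly the paper's identity $srJ=rJ$ --- but the way you conclude is genuinely different. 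The paper never leaves $J$: from $srJ=rJ$ it checks directly that $J\to S^{-1}J$ is surjective (given $j$ and $s$, write $rj=rsj'$, so $j/s=j'$ in $S^{-1}J$), and then the short exact sequence~(I) yields $\Tor_0^R(K^\bu,J)=0$. You instead dualize: $rJ\subseteq sJ$ says precisely that $r\Gamma_S(J^+)=0$, so $J^+$ falls under the part~(b) mechanism, and the character duality $\Tor^R_n(K^\bu,J)^+\cong\Ext_R^n(K^\bu,J^+)$ transfers the vanishing back, $\Q/\Z$ being a cogenerator. That duality step is legitimate as you set it up (a bounded projective replacement of $K^\bu$ exists since $\pd_RS^{-1}R\le1$), but note that for $n=0$ it is available with no derived-category bookkeeping at all: by Lemma~\ref{tor-with-k}(c) and the tensor--Hom adjunction, $\Ext_R^0(K^\bu,J^+)=\Hom_R(S^{-1}R/R,\Hom_\Z(J,\Q/\Z))\cong\bigl((S^{-1}R/R)\otimes_RJ\bigr)^+=\Tor_0^R(K^\bu,J)^+$. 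Comparing the two routes: the paper's is more elementary (no character modules, no resolution of $K^\bu$, and in fact no use of the standing hypothesis $\pd_RS^{-1}R\le1$ anywhere in this lemma), while yours buys uniformity, reducing both halves to the single principle that bounded $S$\+torsion forces the torsion to be $S$\+h-reduced and hence $\Ext_R^0(K^\bu,{-})$ to vanish.
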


\begin{proof}
 We assume that $r\Gamma_S(R)=0$, where $r\in S$.
 Part~(a): let us first show that for every $s\in S$ one has $srJ=rJ$.
 Indeed, for every $t\in R$ elements of the submodule $tJ\subset J$
correspond to those $R$\+module morphisms $R\rarrow J$ that factorize
through the surjection $t\:R\rarrow tR$ (since every morphism $R\supset
tR\rarrow J$ can be extended to a morphism $R\rarrow J$).
 By assumption, every element annihilated by $sr$ in $R$ is also
annihilated by~$r$.
 Therefore, the restriction $s\:rR\rarrow R$ of the map $s\:R\rarrow R$
to the submodule $rR\subset R$ is injective, and the map
$s\:rR\rarrow srR$ is an isomorphism.
 It follows that every morphism $R\rarrow J$ that factorizes through
the surjection $r\:R\rarrow rR$ also factorizes through the surjection
$sr\:R\rarrow srR$, that is $srJ=rJ$.

 Now for every $j\in J$ there exists $j'\in J$ such that $rj=rsj'$,
hence $j/s=rj/rs=rsj'/rs=j'$ in $S^{-1}J$.
 We have shown that the map $J\rarrow S^{-1}J$ is surjective, and
the vanishing of $\Tor^R_0(K^\bu,J)$ follows by means of
the exact sequence~(I).

 Part~(b): as it was explained in the proof of
Corollary~\ref{flat-bounded-torsion}, one has $r\Gamma_S(F)=0$.
 Hence the $R$\+module $\Gamma_S(F)$ is $S$\+h-reduced, and it
remains to use Lemma~\ref{special-and-h-reduced-torsion}(b).
\end{proof}

 The next two lemmas follow straightforwardly from
Lemmas~\ref{complex-check-c-tilde}(b)\+-\ref{complex-t}(b)
and Lemma~\ref{bounded-torsion-injective-flat}.

\begin{lem} \label{gamma-derived}
\textup{(a)} The complex $\check C^\bu(M)\sptilde$ assigned to
an $R$\+module $M$ computes the right derived functor\/
$\boR^*\Gamma_S(M)$ of the left exact functor\/ $\Gamma_S\:
R\modl\rarrow R\modl_{S\tors}$ viewed as taking values in
the ambient category $R\modl$. \par
\textup{(b)} The right derived functor\/ $\boR^*\Gamma_S(M)$ of
the left exact functor\/ $\Gamma_S\:R\modl\rarrow R\modl_{S\tors}$
has homological dimension~$\le1$.
\end{lem}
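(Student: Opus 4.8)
The plan is to compare the two-term complex $\check C^\bu(M)\sptilde=K^\bu[-1]\ot_RM$, that is the totalization of $M\rarrow S^{-1}M$, with the functor $\Gamma_S$ applied to an injective resolution of~$M$. Recall that the terms $R$ and $S^{-1}R$ of $K^\bu[-1]$ are flat $R$\+modules, so $M\longmapsto\check C^\bu(M)\sptilde$ is an exact functor from $R\modl$ to complexes of $R$\+modules which sends quasi-isomorphisms to quasi-isomorphisms (tensoring a quasi-isomorphism with a bounded complex of flat modules is a quasi-isomorphism); by Lemma~\ref{complex-check-c-tilde}(b) there is a natural isomorphism $H^0\check C^\bu(M)\sptilde\simeq\Gamma_S(M)$.

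First I would fix an injective resolution $M\rarrow J^\bu$ with $J^\bu$ bounded below, so that $\boR\Gamma_S(M)\simeq\Gamma_S(J^\bu)$ by definition. Applying the exact, quasi-isomorphism-preserving functor $K^\bu[-1]\ot_R{-}$ to the quasi-isomorphism $M\rarrow J^\bu$ yields a quasi-isomorphism $\check C^\bu(M)\sptilde\rarrow\check C^\bu(J^\bu)\sptilde$, so it remains to identify $\check C^\bu(J^\bu)\sptilde$ with $\Gamma_S(J^\bu)$ in the derived category. The crucial input is Lemma~\ref{bounded-torsion-injective-flat}(a): for an injective $R$\+module $J$ one has $\Tor_0^R(K^\bu,J)=0$, which by the short exact sequence~(I) says exactly that the map $J\rarrow S^{-1}J$ is surjective. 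Hence the two-term complex $\check C^\bu(J)\sptilde=(J\to S^{-1}J)$ is quasi-isomorphic to $\ker(J\to S^{-1}J)=\Gamma_S(J)$ concentrated in degree~$0$. These quasi-isomorphisms $\Gamma_S(J^n)\rarrow\check C^\bu(J^n)\sptilde$ are natural in~$n$, so they assemble into a morphism of total complexes $\Gamma_S(J^\bu)\rarrow\check C^\bu(J^\bu)\sptilde$, and this is a quasi-isomorphism because a morphism of bicomplexes that is a quasi-isomorphism in each column, with the bicomplexes bounded below in the $J^\bu$\+direction and two-term in the other direction, induces a quasi-isomorphism of totalizations (a routine spectral sequence or filtration argument). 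Composing, $\check C^\bu(M)\sptilde$ computes $\boR\Gamma_S(M)$, which proves~(a).

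Part~(b) then follows at once: since $\check C^\bu(M)\sptilde$ is concentrated in cohomological degrees~$0$ and~$1$ only, part~(a) gives $\boR^i\Gamma_S(M)=H^i\check C^\bu(M)\sptilde=0$ for all $i\ge2$ and every $R$\+module~$M$ (and by Lemma~\ref{complex-check-c-tilde}(a) the single possibly nonvanishing higher term $\boR^1\Gamma_S(M)$ is an $S$\+torsion $R$\+module). The only point requiring a little care is the bicomplex totalization step, and the sole place where the standing hypothesis that the $S$\+torsion in $R$ is bounded is used is Lemma~\ref{bounded-torsion-injective-flat}(a): without the surjectivity of $J\rarrow S^{-1}J$ for injective~$J$, the complex $\check C^\bu(J)\sptilde$ would carry a spurious $H^1$ and would fail to compute $\boR\Gamma_S$.
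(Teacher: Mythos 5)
Your argument is correct and is essentially the paper's own: the paper writes out the dual statement (Lemma~\ref{delta-derived}, via a projective resolution and the bicomplex $\Hom_R(T^\bu,F_\bu)$, using Lemma~\ref{bounded-torsion-injective-flat}(b)) and leaves the $\Gamma_S$ case to the analogous argument, which is exactly what you carry out with an injective resolution, the bicomplex $\check C^\bu(J^\bu)\sptilde$, and Lemma~\ref{bounded-torsion-injective-flat}(a). Your deduction of part~(b) from the two-term shape of $\check C^\bu(M)\sptilde$ likewise matches the paper's (modulo the same remark that the exact, fully faithful embedding $R\modl_{S\tors}\rarrow R\modl$ reflects vanishing).
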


\begin{lem} \label{delta-derived}
\textup{(a)} The complex\/ $\Hom_R(T^\bu,C)$ assigned to an $R$\+module
$C$ computes the left derived functor\/ $\boL_*\Delta_S(C)$ of
the right exact functor\/ $\Delta_S\:R\modl\rarrow R\modl_{S\ctra}$
viewed as taking values in the ambient category $R\modl$. \par
\textup{(b)} The left derived functor\/ $\boL_*\Delta_S(C)$ of
the right exact functor\/ $\Delta_S\:R\modl\rarrow R\modl_{S\ctra}$
has homological dimension~$\le1$.
\end{lem}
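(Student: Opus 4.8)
The plan is to identify the complex $\Hom_R(T^\bu,C)$, as an object of the derived category, with the object $\boL\Delta_S(C)$ obtained by applying $\Delta_S$ to a projective resolution of $C$ in $R\modl$. First I would record that $\Delta_S\colon R\modl\rarrow R\modl_{S\ctra}$ is right exact, being a left adjoint (Theorem~\ref{contramodule-category-thm}(b)); since the embedding $R\modl_{S\ctra}\rarrow R\modl$ is exact (Theorem~\ref{contramodule-category-thm}(a)), the composite $\Delta_S\colon R\modl\rarrow R\modl$ is right exact, so its left derived functor exists and $\boL_0\Delta_S=\Delta_S$. By Lemma~\ref{complex-t}(a,b) the homology of $\Hom_R(T^\bu,C)$ consists of $S$\+contramodules and $H_0\Hom_R(T^\bu,C)\simeq\Delta_S(C)$, consistent with this. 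It then remains to realize the whole complex $\Hom_R(T^\bu,C)$ as $\boL\Delta_S(C)$ in the derived category.

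The crucial point is the behaviour of $\Hom_R(T^\bu,-)$ on projective modules. For a flat $R$\+module $F$, and in particular for a projective one, Lemma~\ref{bounded-torsion-injective-flat}(b) gives $\Ext^0_R(K^\bu,F)=0$, which is precisely $H_1\Hom_R(T^\bu,F)$; since also $H_0\Hom_R(T^\bu,F)\simeq\Delta_S(F)$ and $\Hom_R(T^\bu,F)$ is a two-term complex, it follows that $\Hom_R(T^\bu,P)$ is quasi-isomorphic to $\Delta_S(P)$ placed in degree~$0$, for every projective $P$. Now pick a projective resolution $P_\bu\rarrow C$ and form the bicomplex $\Hom_R(T^\bu,P_\bu)$, which has only two rows, so every diagonal of its total complex is a finite direct sum and there are no convergence issues. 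Running the spectral sequence that first takes homology in the $T^\bu$ direction, the observation above collapses it onto the complex $\Delta_S(P_\bu)$, whose homology is $\boL_*\Delta_S(C)$ by the definition of the left derived functor. Running it in the other order, and using that $T^0$ and $T^1$ are projective $R$\+modules, so that $\Hom_R(T^i,-)$ is exact and $\Hom_R(T^i,P_\bu)$ is a resolution of $\Hom_R(T^i,C)$, the total complex is quasi-isomorphic to $\Hom_R(T^\bu,C)$. Comparing the two, with naturality in $C$ being routine, proves part~(a); part~(b) is then immediate, since $\Hom_R(T^\bu,C)$ is concentrated in homological degrees $0$ and $1$, so $\boL_n\Delta_S(C)=H_n\Hom_R(T^\bu,C)=0$ for $n\ge2$.

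I do not anticipate a real obstacle: beyond the cited lemmas the argument is bookkeeping with derived functors and a bicomplex with two rows. The one genuinely non-formal input is the vanishing $\Ext^0_R(K^\bu,P)=0$ for projective (indeed flat) $P$, and this is exactly where the standing hypothesis of this section, that the $S$\+torsion in $R$ be bounded, enters, through Lemma~\ref{bounded-torsion-injective-flat}(b); without it, $\Hom_R(T^\bu,P)$ would carry extra homology in degree~$1$ and would fail to compute $\Delta_S$ on projectives. The preceding Lemma~\ref{gamma-derived} is proved by the formally dual argument: one uses Lemma~\ref{complex-check-c-tilde}(b) together with Lemma~\ref{bounded-torsion-injective-flat}(a) to see that $\check C^\bu(J)\sptilde\rarrow\Gamma_S(J)$ is a quasi-isomorphism for every injective $R$\+module $J$, and then compares the total complex of $\check C^\bu(I^\bu)\sptilde$ for an injective resolution $I^\bu$ of $M$ with both $\check C^\bu(M)\sptilde$ and $\Gamma_S(I^\bu)$.
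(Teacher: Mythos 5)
Your proof is correct and follows essentially the same route as the paper: take a projective resolution, form the two-row bicomplex $\Hom_R(T^\bu,P_\bu)$, and compare its total complex with $\Delta_S(P_\bu)$ on one side (using Lemma~\ref{bounded-torsion-injective-flat}(b) to kill $H_1\Hom_R(T^\bu,P)=\Ext^0_R(K^\bu,P)$ for projective $P$) and with $\Hom_R(T^\bu,C)$ on the other side (using that $T^\bu$ is a finite complex of projectives). The paper just states the two comparison maps directly as quasi-isomorphisms rather than invoking the spectral sequences, and derives part~(b) from part~(a) exactly as you do.
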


\begin{proof}
 The proofs of Lemmas~\ref{gamma-derived} and~\ref{delta-derived}
are very similar.
 Let us explain Lemma~\ref{delta-derived}.
 First of all, the functor $\Delta_S=\Ext^1_R(K^\bu,{-})$ is right
exact, because $\Ext^2_R(K^\bu,{-})=0$ by
Lemma~\ref{tor-with-k}(b) (or because $\Delta_S\:R\modl\rarrow
R\modl_{S\ctra}$ is a left adjoint functor to the exact embedding
functor $R\modl_{S\ctra}\rarrow R\modl$).
 Notice that $\Hom_R(T^\bu,C)$ is a complex in $R\modl$ and not in
$R\modl_{S\ctra}$; hence the caveat about it computing the derived
functor $\boL_*\Delta(C)$ viewed as taking values in
$R\modl$ rather than $R\modl_{S\ctra}$.
 The embedding functor $R\modl_{S\ctra}\rarrow R\modl$ being, however,
fully faithful and, in particular, taking nonzero objects to nonzero
objects, part~(b) follows from part~(a).

 To prove part~(a), consider a left projective $R$\+module
resolution $F_\bu$ of the $R$\+module~$C$.
 We have to show that the complexes $\Delta_S(F_\bu)$ and
$\Hom_R(T^\bu,C)$ are connected by a chain of quasi-isomorphisms
of complexes of $R$\+modules.
 Indeed, by Lemma~\ref{complex-t}(b) we have a natural isomorphism
$H_0\Hom_R(T^\bu,C)=\Delta_S(C)$.
 Consider the total complex of the bicomplex
$\Hom_R(T^\bu,F_\bu)$.
 Then there are natural morphisms of complexes of $R$\+modules
$$
 \Delta_S(F_\bu)\llarrow\Hom_R(T^\bu,F_\bu)\lrarrow
 \Hom_R(T^\bu,C).
$$
 Now $T^\bu$ is a finite complex of projective $R$\+modules, so
the quasi-isomorphism $F_\bu\rarrow C$ induces a quasi-isomorphism
$\Hom_R(T^\bu,F_\bu)\rarrow\Hom_R(T^\bu,C)$.
 On the other hand, for any $R$\+module $C$ one has
$H_1\Hom_R(T^\bu,C)=\Ext^0_R(K^\bu,C)$, hence the morphism
$\Hom_R(T^\bu,F_\bu)\rarrow\Delta_S(F_\bu)$ is a quasi-isomorphism by
Lemma~\ref{bounded-torsion-injective-flat}(b).
\end{proof}

 The following theorem is essentially proved in~\cite[Theorem~1.3
or Theorem~2.9]{Pmgm}.
 Let $\sA$ be an abelian category with enough projective objects
and $\sC\subset\sA$ be a full subcategory closed under the kernels,
cokernels, and extensions; so $\sC$ is an abelian category and
the embedding functor $\sC\rarrow\sA$ is exact.
 Let $\Delta\:\sA\rarrow\sC$ be a functor left adjoint to the fully
faithful embedding functor $\sC\rarrow\sA$.

\begin{thm} \label{triangulated-fully-faithful}
 Assume that the left derived functor\/ $\boL_*\Delta$ of
the right exact functor\/ $\Delta$ has finite homological dimension,
that is there exists $d\ge0$ such that\/ $\boL_n\Delta(A)=0$ for
all $A\in\sA$ and all\/ $n>d$.
 Assume further that\/ $\boL_n\Delta(C)=0$ for every object
$C\in\sC\subset\sA$ and all\/ $n>0$.
 Then for every derived category symbol\/ $\st=\b$, $+$, $-$,
$\varnothing$, $\abs+$, $\abs-$, or\/~$\abs$, the exact embedding
functor\/ $\sC\rarrow\sA$ induces a fully faithful triangulated
functor
\begin{equation} \label{induced-triangulated}
 \sD^\st(\sC)\lrarrow\sD^\st(\sA).
\end{equation}
\end{thm}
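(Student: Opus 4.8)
The plan is to exhibit a left adjoint to the triangulated functor~\eqref{induced-triangulated} and then to check that the composition of this left adjoint with~\eqref{induced-triangulated} is isomorphic to the identity functor; since a right adjoint whose adjunction counit is invertible is fully faithful, this will prove the theorem. Write $\iota\:\sC\rarrow\sA$ for the given exact, fully faithful embedding and $\iota_*\:\sD^\st(\sC)\rarrow\sD^\st(\sA)$ for the functor~\eqref{induced-triangulated} that it induces, which is well defined and triangulated for every symbol $\st$ because $\iota$ is exact. The candidate left adjoint is the left derived functor $\boL\Delta$.

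First I would record the properties of $\Delta$ at the abelian level. As a left adjoint of the exact embedding $\iota$, the functor $\Delta$ is right exact, it carries projective objects of $\sA$ to projective objects of $\sC$, and the composition $\Delta\iota$ is isomorphic to $\id_\sC$. In particular $\sC$ has enough projective objects: for $C\in\sC$ choose an epimorphism $P\rarrow\iota C$ in $\sA$ with $P$ projective and apply the right exact functor $\Delta$, getting an epimorphism onto $\Delta\iota C\simeq C$ from the projective object $\Delta(P)$ of $\sC$. By the second hypothesis, every object of $\sC$ is adjusted to $\Delta$ when regarded in $\sA$, so the hyperhomology spectral sequence of the stupid filtration (with $E_1$ entries $\boL_q\Delta(C^p)$ vanishing for $q>0$) shows that a bounded-above complex over $\sC$ computes $\boL\Delta$; thus there is a natural isomorphism $\boL\Delta(\iota_*C^\bu)\simeq\Delta(\iota_*C^\bu)\simeq C^\bu$ for every such $C^\bu$, the last isomorphism coming termwise from $\Delta\iota\simeq\id_\sC$.

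Next I would construct the triangulated functor $\boL\Delta\:\sD^\st(\sA)\rarrow\sD^\st(\sC)$ for each of the symbols $\st$. For $\st=-$ this is the classical construction by bounded-above projective resolutions, and for $\st=\b$ one uses in addition that $\boL_*\Delta$ is bounded in order to return to $\sD^\b$. The full strength of the first hypothesis, namely that $\boL_n\Delta$ vanishes for $n>d$, is what allows one to pass to $\st=+$, $\varnothing$ and to the absolute derived categories $\sD^{\abs+}$, $\sD^{\abs-}$, $\sD^\abs$: one checks that $\Delta$ then preserves acyclicity, and absolute acyclicity, of complexes of projective objects (every cycle object of an acyclic complex of projectives is again adjusted to $\Delta$), so $\boL\Delta$ descends to each of these quotient categories and is still computed by complexes of projectives; and by the same truncation argument the naive identification $\boL\Delta(\iota_*C^\bu)\simeq\Delta(\iota_*C^\bu)\simeq C^\bu$ persists for unbounded and exotic $C^\bu$ over $\sC$. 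This is precisely the mechanism set up in~\cite[Theorems~1.3 and~2.9]{Pmgm}, which I would cite rather than reprove; I expect this step to be the main obstacle and the technical heart of the matter, the absolute derived categories being the most delicate.

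Finally I would verify the derived adjunction $\boL\Delta\dashv\iota_*$ and conclude. For a complex $A^\bu$ over $\sA$ with a projective resolution $P^\bu\rarrow A^\bu$, one has $\boL\Delta(A^\bu)\simeq\Delta(P^\bu)$, a complex of projective objects of $\sC$; morphisms out of such a complex in $\sD^\st(\sC)$ are already computed in the homotopy category, and the chain-level adjunction $\Delta\dashv\iota$ then identifies the morphisms in $\sD^\st(\sC)$ from $\boL\Delta(A^\bu)$ to $C^\bu$ with the morphisms in $\sD^\st(\sA)$ from $A^\bu$ to $\iota_*C^\bu$, functorially (again leaning on~\cite{Pmgm} in the $\varnothing$ and absolute cases, where boundedness has to be handled with care). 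Since $\iota$ is exact, $\iota_*$ is faithful and full at the abelian level, and by the second paragraph $\boL\Delta\circ\iota_*\simeq\id_{\sD^\st(\sC)}$, this isomorphism being the adjunction counit since it is induced termwise by the counit of $\Delta\dashv\iota$. A right adjoint whose counit is invertible is fully faithful, so the functor~\eqref{induced-triangulated} is fully faithful, as asserted.
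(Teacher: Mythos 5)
Your overall strategy is the same as the paper's: produce a left adjoint $\boL\Delta$ to the induced functor $\sD^\st(\sC)\rarrow\sD^\st(\sA)$, observe that the counit $\boL\Delta\circ\iota_*\rarrow\id$ is an isomorphism because $\Delta\iota\simeq\id_\sC$ and objects of $\sC$ are $\Delta$\+adjusted, and conclude full faithfulness. The gap is in how you construct $\boL\Delta$ for the symbols other than $\b$ and~$-$. You propose to compute it by complexes of projective objects of $\sA$ throughout, but this mechanism cannot work for $\st=+$, $\abs+$, $\abs-$, $\abs$ (and is not automatic for~$\varnothing$): a bounded below complex admits no bounded below projective resolution at all, so nothing is "computed by complexes of projectives" in $\sD^+$ or $\sD^{\abs+}$; in the absolute derived categories the projective resolution morphism $P^\bu\rarrow A^\bu$ is not inverted, since its cone is merely acyclic and an acyclic (even bounded above) complex need not be absolutely acyclic -- this distinction is the whole point of the symbols $\abs\pm$, $\abs$; and in a general abelian category with enough projectives the existence of resolutions of unbounded complexes by complexes of projectives that actually compute a left derived functor is not guaranteed. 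Your parenthetical argument that cycle objects of acyclic complexes of projectives are $\Delta$\+adjusted (by dimension shifting with the bound~$d$) is fine as far as it goes, but it does not address the real obstruction, which is the replacement of arbitrary complexes, in the given boundedness/absolute sense, by complexes on which $\Delta$ may be applied termwise.

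The missing idea -- and the actual content of the paper's sketch and of \cite[Theorems~1.3 and~2.9]{Pmgm}, which you cite but describe incorrectly -- is to resolve not by projectives but by the exact subcategory $\sA_{\Delta\adj}\subset\sA$ of all objects $A$ with $\boL_n\Delta(A)=0$ for $n>0$. This subcategory is closed under extensions and kernels of epimorphisms, and the finite homological dimension hypothesis guarantees that every object of $\sA$ has a \emph{finite} left resolution of length~$\le d$ by objects of $\sA_{\Delta\adj}$; since such resolutions are finite, they respect every boundedness condition, and \cite[Proposition~A.5.6]{Pcosh} then gives an equivalence $\sD^\st(\sA_{\Delta\adj})\simeq\sD^\st(\sA)$ for all seven symbols, including the absolute ones. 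On $\sA_{\Delta\adj}$ the functor $\Delta$ is exact, so applying it termwise to complexes over $\sA_{\Delta\adj}$ yields the triangulated functor $\boL\Delta\:\sD^\st(\sA)\rarrow\sD^\st(\sC)$ left adjoint to $\iota_*$, and the rest of your argument (the counit being an isomorphism since $\sC\subset\sA_{\Delta\adj}$ and $\Delta\iota\simeq\id_\sC$) goes through verbatim. Without this substitution of the adjusted class for the projectives, your construction does not reach the one-sided bounded, unbounded, or absolute derived categories, which is exactly where the hypothesis $\boL_{>d}\Delta=0$ is needed.
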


\begin{proof}[Sketch of proof]
 Denote by $\sA_{\Delta\adj}\subset\sA$ the full subcategory of all
objects $A\in\sA$ such that $\boL_n\Delta(A)=0$ for all $n>0$.
 Then the full subcategory $\sA_{\Delta\adj}$ is closed under extensions
and the kernels of epimorphisms in $\sA$; in particular,
$\sA_{\Delta\adj}$ inherits an exact category structure from
the abelian category~$\sA$.
 Furthermore, every object of $\sA$ admits a finite left resolution
of length~$d$ by objects of $\sA_{\Delta\adj}$.
 It follows that the functor $\sD^\st(\sA_{\Delta\adj})\rarrow
\sD^\st(\sA)$ induced by the exact embedding
$\sA_{\Delta\adj}\rarrow\sA$ is an equivalence of triangulated
categories~\cite[Proposition~A.5.6]{Pcosh}.
 The functor $\Delta$ restricted to the exact subcategory
$\sA_{\Delta\adj}\subset\sA$ is exact.
 Applying $\Delta$ to complexes of objects from $\sA_{\Delta\adj}$, one
constructs a triangulated functor
$$
 \boL\Delta\:\sD^\st(\sA)\lrarrow\sD^\st(\sC),
$$
which is left adjoint to the functor~\eqref{induced-triangulated}.
 Now the composition $\sD^\st(\sC)\rarrow\sD^\st(\sA)\lrarrow
\sD^\st(\sC)$ is the identity functor, since $\sC\subset
\sA_{\Delta\adj}$ by an assumption of the theorem and the composition
$\sC\rarrow\sA\rarrow\sC$ is the identity (as the functor
$\sC\rarrow\sA$ is fully faithful).
 It follows that the functor~\eqref{induced-triangulated} is fully
faithful.
\end{proof}

 Furthermore, the following proposition is essentially proved
in~\cite[proof of Corollary~1.4 or Corollary~2.10]{Pmgm}.

\begin{prop} \label{essential-image-identified}
 In the assumptions of Theorem~\ref{triangulated-fully-faithful},
for every conventional derived category symbol\/ $\st=\b$, $+$, $-$,
or\/~$\varnothing$, the essential image of the triangulated
functor~\eqref{induced-triangulated} coincides with the full
triangulated subcategory
$$
 \sD^\st_\sC(\sA)\.\subset\.\sD^\st(\sA)
$$
of all complexes in\/ $\sD^\st(\sA)$ with the cohomology objects
belonging to\/~$\sC$. \qed
\end{prop}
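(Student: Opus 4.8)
The plan is to deduce the statement from the construction underlying the proof of Theorem~\ref{triangulated-fully-faithful} (this is essentially \cite[proof of Corollary~1.4 or~2.10]{Pmgm}). Write $\iota\:\sC\rarrow\sA$ for the embedding, $\iota_*\:\sD^\st(\sC)\rarrow\sD^\st(\sA)$ for the induced fully faithful triangulated functor, and $\boL\Delta\:\sD^\st(\sA)\rarrow\sD^\st(\sC)$ for its left adjoint, which that proof produces by applying $\Delta$ termwise to complexes of objects of $\sA_{\Delta\adj}$. Let $\eta\:\id\rarrow\iota_*\boL\Delta$ be the adjunction unit. Since $\iota_*$ is fully faithful, the counit $\boL\Delta\circ\iota_*\rarrow\id$ is an isomorphism, and then the triangle identities show that an object $A^\bu\in\sD^\st(\sA)$ lies in the essential image of $\iota_*$ if and only if $\eta_{A^\bu}\:A^\bu\rarrow\iota_*\boL\Delta(A^\bu)$ is an isomorphism. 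Thus the whole proposition amounts to the claim that, for a conventional derived category symbol $\st$, the morphism $\eta_{A^\bu}$ is an isomorphism precisely when all the cohomology objects $H^n(A^\bu)$ belong to~$\sC$.

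First I would record the easy inclusion $\im(\iota_*)\subseteq\sD^\st_\sC(\sA)$: a complex of objects of $\sC$ has all of its cohomology objects in $\sC$, because $\sC$ is closed under kernels and cokernels in $\sA$ and the embedding $\iota$ is exact. The same closure properties, now also under extensions, show via the cohomology long exact sequence that $\sD^\st_\sC(\sA)$ is indeed a triangulated subcategory of $\sD^\st(\sA)$, closed under shifts and cones. It remains to prove the reverse inclusion, i.e.\ that $H^n(A^\bu)\in\sC$ for all $n$ forces $\eta_{A^\bu}$ to be an isomorphism.

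For the bounded symbol $\st=\b$ I would argue by d\'evissage on the number of nonvanishing cohomology objects of $A^\bu\in\sD^\b_\sC(\sA)$. If there is only one, then $A^\bu$ is isomorphic in $\sD^\b(\sA)$ to a shift of an object $C\in\sC$, and $\eta_C$ is an isomorphism: indeed $C\in\sC\subseteq\sA_{\Delta\adj}$ gives $\boL\Delta(C)=\Delta(C)$, the identity $\Delta\iota=\id_\sC$ (valid because $\iota$ is fully faithful) gives $\Delta(C)\simeq C$, and a triangle identity identifies $\eta_C$ with the resulting isomorphism. For the induction step, with $m$ the largest degree in which $H^m(A^\bu)\ne0$, I would apply the natural transformation $\eta$ between the triangulated endofunctors $\id$ and $\iota_*\boL\Delta$ of $\sD^\b(\sA)$ to the truncation triangle
$$
 \tau_{\le m-1}A^\bu\lrarrow A^\bu\lrarrow H^m(A^\bu)[-m]\lrarrow\tau_{\le m-1}A^\bu[1],
$$
whose two outer terms again lie in $\sD^\b_\sC(\sA)$ and have strictly fewer nonvanishing cohomology objects, and then invoke the five lemma for morphisms of distinguished triangles.

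The one genuinely delicate step, and the one I expect to be the main obstacle, is the passage from $\st=\b$ to an arbitrary symbol $\st$; this is exactly where the hypothesis that $\boL_*\Delta$ has finite homological dimension $\le d$ is used. The key observation is that $\boL\Delta$ carries a complex with cohomology concentrated in degrees $\le N$ to a complex with cohomology in degrees $\le N$, and a complex with cohomology concentrated in degrees $\ge N$ to a complex with cohomology in degrees $\ge N-d$. Hence, for a fixed integer $n$ and any $A^\bu$, the cohomology long exact sequences of the standard truncation triangles connecting $A^\bu$, $\tau_{\le n+d}A^\bu$ and $\tau_{\ge n}\tau_{\le n+d}A^\bu$ show that the canonical zigzag
$$
 A^\bu\llarrow\tau_{\le n+d}A^\bu\lrarrow\tau_{\ge n}\tau_{\le n+d}A^\bu
$$
induces isomorphisms on $H^n(-)$ and on $H^n(\iota_*\boL\Delta(-))$. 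When $A^\bu\in\sD^\st_\sC(\sA)$, the rightmost term is a bounded complex with cohomology objects in $\sC$, so by the already-proven case $\st=\b$ the morphism $\eta$ is an isomorphism on it; chasing the naturality squares of $\eta$ along the zigzag then yields that $H^n(\eta_{A^\bu})$ is an isomorphism for every $n$, hence that $\eta_{A^\bu}$ itself is an isomorphism and $A^\bu$ lies in the essential image of $\iota_*$. Apart from this truncation-window reduction, everything is formal, and I would not expect to need any computation beyond the truncation long exact sequences.
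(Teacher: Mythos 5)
Your proof is correct and follows essentially the same route as the argument the paper relies on (the proposition is stated with a reference to the proof of Corollary~1.4 or~2.10 in~\cite{Pmgm}, which likewise identifies the essential image via the adjunction unit $A^\bu\rarrow\iota_*\boL\Delta(A^\bu)$ being an isomorphism and reduces to single objects of $\sC$ by truncation/way-out arguments exploiting the finite homological dimension of $\boL\Delta$). The only step you leave implicit --- the cohomological amplitude bounds for $\boL\Delta$ on half-bounded complexes, in particular that cohomology concentrated in degrees $\ge N$ goes to cohomology in degrees $\ge N-d$ --- is true and standard here, since the finite-length resolutions by objects of $\sA_{\Delta\adj}$ used to construct $\boL\Delta$ in the proof of Theorem~\ref{triangulated-fully-faithful} can be chosen within a bounded degree window (or one invokes Hartshorne's way-out lemma, as the paper itself does elsewhere).
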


\begin{thm} \label{bounded-torsion-essential-images}
 Let $R$ be a commutative ring and $S\subset R$ be a multiplicative
subset such that\/ $\pd_RS^{-1}R\le 1$ and the $S$\+torsion in $R$
is bounded.  Then for every conventional derived category symbol\/
$\st=\b$, $+$, $-$, or\/~$\varnothing$ \par
\textup{(a)} the triangulated functor between the derived categories
$$
 \sD^\st(R\modl_{S\tors})\lrarrow\sD^\st(R\modl)
$$
induced by the exact embedding functor $R\modl_{S\tors}\rarrow
R\modl$ is fully faithful, and its essential image coincides with
the full subcategory
$$
 \sD^\st_{S\tors}(R\modl)\.\subset\.\sD^\st(R\modl),
$$
providing an equivalence of triangulated
categories~\eqref{essential-images}
$$
 \sD^\st(R\modl_{S\tors})\.\simeq\.\sD^\st_{S\tors}(R\modl);
$$ \par
\textup{(b)} the triangulated functor between the derived categories
$$
 \sD^\st(R\modl_{S\ctra})\lrarrow\sD^\st(R\modl)
$$
induced by the exact embedding functor $R\modl_{S\ctra}\rarrow
R\modl$ is fully faithful, and its essential image coincides with
the full subcategory
$$
 \sD^\st_{S\ctra}(R\modl)\.\subset\.\sD^\st(R\modl),
$$
providing an equivalence of triangulated
categories~\eqref{essential-images}
$$
 \sD^\st(R\modl_{S\ctra})\.\simeq\.\sD^\st_{S\ctra}(R\modl).
$$
\end{thm}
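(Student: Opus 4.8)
\medskip

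\noindent\textbf{Outline of the argument.}
The plan is to deduce both parts from the two abstract results already
at our disposal, Theorem~\ref{triangulated-fully-faithful} and
Proposition~\ref{essential-image-identified}: in part~(b) we apply them
to the reflective subcategory $R\modl_{S\ctra}\subset R\modl$, and in
part~(a) to the coreflective subcategory $R\modl_{S\tors}\subset R\modl$,
i.e., to the reflective subcategory $(R\modl_{S\tors})^\sop\subset
(R\modl)^\sop$. All the real input — finite homological dimension of the
pertinent derived functors and their acyclicity in positive degrees on
the respective subcategories — has already been isolated in
Lemmas~\ref{gamma-derived} and~\ref{delta-derived}, where the hypotheses
$\pd_RS^{-1}R\le1$ and boundedness of the $S$\+torsion in $R$ are used;
what remains is formal.

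For part~(b) I take $\sA=R\modl$ (which has enough projective objects),
$\sC=R\modl_{S\ctra}$, and $\Delta=\Delta_S$.
By Theorem~\ref{contramodule-category-thm}(a) the subcategory $\sC$ is
closed under kernels, cokernels, and extensions in $\sA$, hence abelian
with exact embedding, and by Theorem~\ref{contramodule-category-thm}(b)
the functor $\Delta_S$ is left adjoint to that embedding.
Lemma~\ref{delta-derived}(b) gives that $\boL_*\Delta_S$ has homological
dimension $\le1$, and Lemma~\ref{delta-derived}(a) together with
Lemma~\ref{complex-t}(c) gives $\boL_n\Delta_S(C)=0$ for every
$S$\+contramodule $C$ and all $n>0$: the complex $\Hom_R(T^\bu,C)$
computes $\boL_*\Delta_S(C)$, and it is quasi-isomorphic to $C$ when $C$
is an $S$\+contramodule. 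Thus the hypotheses of
Theorem~\ref{triangulated-fully-faithful} hold, so
$\sD^\st(R\modl_{S\ctra})\rarrow\sD^\st(R\modl)$ is fully faithful for
$\st=\b$, $+$, $-$, $\varnothing$, and
Proposition~\ref{essential-image-identified} identifies its essential
image with $\sD^\st_{S\ctra}(R\modl)$. This is part~(b).

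For part~(a) I run the dual argument in the opposite category.
Since $R\modl$ has enough injective objects, $\sA=(R\modl)^\sop$ has
enough projective objects; since $R\modl_{S\tors}$ is a Serre subcategory
of $R\modl$, its opposite $\sC=(R\modl_{S\tors})^\sop$ is closed under
kernels, cokernels, and extensions in $\sA$; and the coreflector
$\Gamma_S\:R\modl\rarrow R\modl_{S\tors}$ induces a functor
$\Delta=(\Gamma_S)^\sop$ left adjoint to the embedding $\sC\rarrow\sA$.
The left derived functor of $\Delta$ in $\sA$ is the dual of
$\boR^*\Gamma_S$; it has homological dimension $\le1$ by
Lemma~\ref{gamma-derived}(b), and it vanishes in positive degrees on
objects of $\sC$ because, by Lemma~\ref{gamma-derived}(a) and
Lemma~\ref{complex-check-c-tilde}(c), the complex $\check C^\bu(M)\sptilde$
computing $\boR^*\Gamma_S(M)$ is quasi-isomorphic to $M$ whenever $M$ is
an $S$\+torsion $R$\+module. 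Applying
Theorem~\ref{triangulated-fully-faithful} and
Proposition~\ref{essential-image-identified} to $\sA$ and $\sC$ and then
translating back to $R\modl$ through the standard identification of the
derived category of a category with the opposite of the derived category
of its opposite — under which the symbols $+$ and $-$ are interchanged
while $\b$ and $\varnothing$ are fixed, so that the class
$\{\b,+,-,\varnothing\}$ is stable — one gets that
$\sD^\st(R\modl_{S\tors})\rarrow\sD^\st(R\modl)$ is fully faithful with
essential image $\sD^\st_{S\tors}(R\modl)$. This is part~(a).

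The substantive work is all in the preparatory lemmas; at the level of
this theorem the one thing needing care is the opposite-category
bookkeeping in part~(a) — one must verify that the fully faithful functor
so produced is indeed the one induced by the exact embedding
$R\modl_{S\tors}\rarrow R\modl$ (its derived right adjoint being
$\boR\Gamma_S$) and that all four conventional derived category symbols
are covered. Finally, feeding parts~(a) and~(b) into
Theorem~\ref{triangulated-matlis} yields the triangulated Matlis
equivalence $\sD^\st(R\modl_{S\tors})\simeq\sD^\st(R\modl_{S\ctra})$
of~\eqref{equivalence-between-derived}.
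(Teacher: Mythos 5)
Your proposal is correct and follows essentially the same route as the paper: part~(b) by applying Theorem~\ref{triangulated-fully-faithful} and Proposition~\ref{essential-image-identified} to $\sC=R\modl_{S\ctra}\subset\sA=R\modl$ with $\Delta=\Delta_S$, verifying the hypotheses via Lemma~\ref{delta-derived} and Lemma~\ref{complex-t}(c), and part~(a) by the opposite-category versions of the same results, verified via Lemma~\ref{gamma-derived} and Lemma~\ref{complex-check-c-tilde}(c). Your extra care about the reflective/coreflective bookkeeping and the stability of the set of conventional derived category symbols under passage to opposite categories is a harmless elaboration of what the paper leaves implicit.
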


\begin{proof}
 To prove part~(b), one applies
Theorem~\ref{triangulated-fully-faithful}
and Proposition~\ref{essential-image-identified}.
 To prove part~(a), one applies the opposite versions of
Theorem~\ref{triangulated-fully-faithful}
and Proposition~\ref{essential-image-identified}.
 In both cases, it remains to check that the assumptions of
Theorem~\ref{triangulated-fully-faithful} (or the opposite
assumptions) are satisfied.

 The condition of finite homological dimension of
the derived functor $\boL_*\Delta_S$ (in the case of part~(b))
or of the derived functor $\boR^*\Gamma_S$ (in the case of
part~(a)) holds by
Lemmas~\ref{gamma-derived}(b)\+-\ref{delta-derived}(b).
 To show that $\boL_n\Delta_S(C)=0$ for every $S$\+contramodule
$R$\+module $C$ and all $n>0$ (which means, actually, $n=1$),
one compares Lemma~\ref{delta-derived}(a)
with Lemma~\ref{complex-t}(c).
 Similarly, to show that $\boR^n\Gamma_S(M)=0$ for every
$S$\+torsion $R$\+module $M$ and all $n>0$ (i.~e., $n=1$)
one compares Lemma~\ref{gamma-derived}(a) with
Lemma~\ref{complex-check-c-tilde}(c).
\end{proof}

 The next corollary is the main result of this section.

\begin{cor} \label{equivalence-of-derived-categories}
 For every commutative ring $R$ with a multiplicative subset
$S\subset R$ such that\/ $\pd_RS^{-1}R\le 1$ and the $S$\+torsion in $R$
is bounded, and for every conventional derived category symbol\/
$\st=\b$, $+$, $-$, or\/~$\varnothing$, there is a natural triangulated
equivalence~\eqref{equivalence-between-derived} between the derived
categories of the abelian categories $R\modl_{S\tors}$ and
$R\modl_{S\ctra}$ of $S$\+torsion and $S$\+contramodule $R$\+modules
$$
 \sD^\st(R\modl_{S\tors})\simeq\sD^\st(R\modl_{S\ctra}). 
$$
\end{cor}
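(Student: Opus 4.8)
The plan is to obtain the equivalence by concatenating two triangulated equivalences that are already at our disposal. First I would invoke Theorem~\ref{bounded-torsion-essential-images}, whose hypotheses coincide exactly with those of the present corollary ($\pd_RS^{-1}R\le1$ together with boundedness of the $S$\+torsion in~$R$): part~(a) gives a triangulated equivalence $\sD^\st(R\modl_{S\tors})\simeq\sD^\st_{S\tors}(R\modl)$ induced by the exact embedding $R\modl_{S\tors}\rarrow R\modl$, and part~(b) gives a triangulated equivalence $\sD^\st(R\modl_{S\ctra})\simeq\sD^\st_{S\ctra}(R\modl)$ induced by the exact embedding $R\modl_{S\ctra}\rarrow R\modl$. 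Secondly I would invoke Theorem~\ref{triangulated-matlis}, which supplies the triangulated Matlis equivalence $\sD^\st_{S\tors}(R\modl)\simeq\sD^\st_{S\ctra}(R\modl)$ of~\eqref{torsion-contra-cohomology-modules}, both sides being identified there with the Verdier quotient $\sD^\st(R\modl)/\sD^\st((S^{-1}R)\modl)$. Composing the three, one reads off
$$
 \sD^\st(R\modl_{S\tors})\,\simeq\,\sD^\st_{S\tors}(R\modl)\,\simeq\,
 \sD^\st_{S\ctra}(R\modl)\,\simeq\,\sD^\st(R\modl_{S\ctra}),
$$
which is the equivalence~\eqref{equivalence-between-derived} claimed in the corollary.

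I would then make the composite functor explicit by tracking an object through the chain. A complex $M^\bu$ of $S$\+torsion $R$\+modules, regarded as an object of $\sD^\st(R\modl)$, lies in $\sD^\st_{S\tors}(R\modl)$; Theorem~\ref{triangulated-matlis} carries it to $\Hom_R(T^\bu,M^\bu)$, a complex with $S$\+contramodule cohomology, which by Theorem~\ref{bounded-torsion-essential-images}(b) lies in the essential image of $\sD^\st(R\modl_{S\ctra})$ and thus represents an object of that category (concretely, $\boL_*\Delta_S$ evaluated on a left resolution of $M^\bu$ by $\Delta_S$\+adjusted $R$\+modules, in the notation of the proof of Theorem~\ref{triangulated-fully-faithful}). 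In the reverse direction, a complex $A^\bu$ of $S$\+contramodule $R$\+modules is sent to $\check C^\bu(A^\bu)\sptilde\simeq T^\bu\ot_RA^\bu$, a complex with $S$\+torsion cohomology, which by Theorem~\ref{bounded-torsion-essential-images}(a) represents an object of $\sD^\st(R\modl_{S\tors})$. Since every functor entering this description is canonical, the resulting equivalence is natural, as asserted.

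I do not anticipate any genuine obstacle here: the corollary is a purely formal consequence of the preceding results, and all the substantive work has already been carried out — the semiorthogonal decompositions of Proposition~\ref{semiorthogonal-mgm}, the identification of the kernel subcategories with $\sD^\st_{S\tors}(R\modl)$ and $\sD^\st_{S\ctra}(R\modl)$ in Lemma~\ref{kernel-categories-computed}, the finiteness of the homological dimensions of $\boR^*\Gamma_S$ and $\boL_*\Delta_S$ in Lemmas~\ref{gamma-derived}(b) and~\ref{delta-derived}(b), and the facts that every $S$\+torsion $R$\+module is $\Gamma_S$\+acyclic and every $S$\+contramodule $R$\+module is $\Delta_S$\+acyclic (compare Lemmas~\ref{complex-check-c-tilde}(c) and~\ref{complex-t}(c)). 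The only thing I would verify carefully is the bookkeeping involved in stringing the three equivalences together and the compatibility of their connecting functors, which is routine.
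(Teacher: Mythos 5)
Your proposal is correct and is exactly the paper's argument: the paper proves this corollary by the one-line instruction ``Compare Theorem~\ref{triangulated-matlis} with Theorem~\ref{bounded-torsion-essential-images},'' i.e.\ by composing the equivalences $\sD^\st(R\modl_{S\tors})\simeq\sD^\st_{S\tors}(R\modl)\simeq\sD^\st_{S\ctra}(R\modl)\simeq\sD^\st(R\modl_{S\ctra})$ just as you do. Your additional tracking of objects through the chain is consistent with the paper's description of the functors and adds nothing that needs correction.
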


\begin{proof}
 Compare Theorem~\ref{triangulated-matlis} with
Theorem~\ref{bounded-torsion-essential-images}.
\end{proof}

\begin{rem}
 Both parts~(a) and~(b) of
Theorem~\ref{bounded-torsion-essential-images}
remain true under somewhat weaker assumptions.
 Namely, the assumption that $\pd_RS^{-1}R\le 1$ was not used in
the proof of Theorem~\ref{bounded-torsion-essential-images}(a), so
it is not relevant for its validity.
 On the other hand, the full strength of the assumption that
the $S$\+torsion in $R$ is bounded is not necessary for the proof
of Theorem~\ref{bounded-torsion-essential-images}(b), as it suffices
to require that there be no $S$\+h-divisible $S$\+torsion in
the $R$\+module~$R$ for the purposes of part~(b).
 Indeed, the only place where the restriction on the $S$\+torsion
in $R$ was used in the proof of
Theorem~\ref{bounded-torsion-essential-images}(b) was in
Lemma~\ref{bounded-torsion-injective-flat}(b); and one easily observes
that $\Ext_R^0(K^\bu,F)=0$ for all projective $R$\+modules $F$
whenever there is no $S$\+h-divisible $S$\+torsion in~$R$.

 The assumption that there be no $S$\+h-divisible $S$\+torsion in $R$
is not sufficient for the validity of
Theorem~\ref{bounded-torsion-essential-images}(a), though, as one can
see using the following argument.
 Let $R$ be a commutative ring and $S\subset R$ be a multiplicative
subset.
 Then, for any injective $R$\+module $J$, the functor right adjoint to
the triangulated functor $\sD^\b(R\modl_{S\tors})\rarrow\sD^\b(R\modl)$
is defined on the object $J\in\sD^\b(R\modl)$ and takes it to the object
$\Gamma_S(J)\in\sD^\b(R\modl_{S\tors})$.
 On the other hand, the functor right adjoint to the embedding functor
$\sD^\b_{S\tors}(R\modl)\rarrow\sD^\b(R\modl)$ takes $J$ to the object
$\check C^\bu(M)\sptilde\in\sD^\b_{S\tors}(R\modl)$.
 Now if the functor $\sD^\b(R\modl_{S\tors})\rarrow\sD^\b(R\modl)$ is
fully faithful, then it is clear that its essential image coincides
with the full subcategory $\sD^\b_{S\tors}(R\modl)\subset\sD^\b(R\modl)$,
and it follows that the objects $\Gamma_S(J)$ and
$\check C^\bu(M)\sptilde$ must be isomorphic in
$\sD^\b_{S\tors}(R\modl)$, that is the assertion of
Lemma~\ref{bounded-torsion-injective-flat}(a) has to hold.
 When the multiplicative subset $S\subset R$ is generated by one
element $s\in R$, this condition is equivalent to boundedness of
$S$\+torsion in~$R$.
\end{rem}

\Section{The Dedualizing Complex}

 The proof of Corollary~\ref{equivalence-of-derived-categories} still
leaves something to be desired.
 Given a complex of $S$\+torsion $R$\+modules $M^\bu$, in order to
obtain the related complex of $S$\+contramodule $R$\+modules
$C^\bu$ following this proof, one would have to consider the complex
of $R$\+modules
$$
 A^\bu=\Hom_R(T^\bu,M^\bu)\in\sD^\st_{S\ctra}(R\modl)
$$
with $S$\+contramodule cohomology $R$\+modules, and then pass from it
to a complex of $S$\+contramodule $R$\+modules $C^\bu$ by applying
the functor $\boL\Delta_S$, that is
$$
 C^\bu=\boL\Delta_S(A^\bu).
$$
 The terms of the complex $A^\bu$ themselves are \emph{not}
$S$\+contramodules.
 One can avoid the preliminary step of this two-step procedure by
setting directly
$$
 C^\bu=\boL\Delta_S(M^\bu),
$$
but in order to apply the derived functor $\boL\Delta_S$ to
an $S$\+torsion $R$\+module or a complex of $S$\+torsion $R$\+modules
one would still have to use resolutions in the category of arbitrary
$R$\+modules $R\modl$.

 Similarly, given a complex of $S$\+contramodule $R$\+modules $C^\bu$,
in order to obtain the related complex of $S$\+torsion $R$\+modules
$M^\bu$ one would have to consider the complex of $R$\+modules
$$
 N^\bu=\check C^\bu(C^\bu)\sptilde\ \text{or}\ T^\bu\ot_R C^\bu
 \.\in\.\sD^\st_{S\tors}(R\modl)
$$
with $S$\+torsion cohomology $R$\+modules, and then pass from it to
a complex of $S$\+torsion $R$\+modules $M^\bu$ by applying
the functor $\boR\Gamma_S$, that is
$$
 M^\bu=\boR\Gamma_S(N^\bu).
$$
 The terms of the complex $N^\bu$ themselves are \emph{not}
$S$\+torsion $R$\+modules.
 One can avoid the preliminary step of this procedure by setting
$$
 M^\bu=\boR\Gamma_S(C^\bu),
$$
but in order to apply the derived functor $\boR\Gamma_S$ to
an $S$\+contramodule $R$\+module or a complex of $S$\+contramodule
$R$\+modules one would still have to use resolutions in the category
of arbitrary $R$\+modules.

 We would like to have direct constructions of the mutually inverse
triangulated functors
$$
 \sD^\st(R\modl_{S\tors})\lrarrow\sD^\st(R\modl_{S\ctra})
 \!\!\quad\text{and}\quad\!\!
 \sD^\st(R\modl_{S\ctra})\lrarrow\sD^\st(R\modl_{S\tors})
$$
staying entirely inside the two covariantly dual worlds of
$S$\+torsion and $S$\+contra\-module $R$\+modules and never involving
$R$\+modules of more general nature.
 The test assertions or constructions we want to be able to
demonstrate with such a technique are the equivalences of the absolute
derived categories of $S$\+torsion and $S$\+contra\-modules
$R$\+modules~\eqref{equivalence-between-derived} with
$\st=\abs+$, \,$\abs-$, or~$\abs$.

 In this section, a partial solution for this problem is obtained:
we indeed construct mutually inverse functors acting directly
between the derived categories of $S$\+torsion and $S$\+contramodule
$R$\+modules without going through complexes of arbitrary
$S$\+modules, and we indeed obtain
the equivalences~\eqref{equivalence-between-derived} for absolute
derived categories.
 However, the \emph{proof} of the assertion that our functors are
mutually inverse equivalences involves $R$\+modules not belonging
to $R\modl_{S\tors}$ or $R\modl_{S\ctra}$.

\medskip

 Let $S$ be a multiplicative subset in a commutative ring~$R$.
 We keep assuming that $\pd_RS^{-1}R\le 1$ and the $S$\+torsion in
$R$ is bounded.
 Lemma~\ref{t-s-nakayama} and Proposition~\ref{compact-generators}
will not depend on these assumptions yet.

 For every element $s\in S$, we denote by $T_s^\bu$ the two-term
complex $R\overset s\rarrow R$, with the two terms $R$ sitting in
the cohomological degrees~$0$ and~$1$.
 Notice that the complex $T_s^\bu$ is almost self-dual: one has
$\Hom_R(T_s^\bu,R)\simeq T_s^\bu[1]$.
 Hence for every complex of $R$\+modules $A^\bu$ there is
an isomorphism $\Hom_R(T_s^\bu,A^\bu)\simeq T_s^\bu[1]\ot_R A^\bu$.

\begin{lem} \label{t-s-nakayama}
 Let $A^\bu$ be a complex of $R$\+modules such that either \par
\textup{(a)} the cohomology modules $H^*(A^\bu)$ are $S$\+torsion
$R$\+modules, or \par
\textup{(b)} the cohomology modules $H^*(A^\bu)$ are $S$\+contramodules.
\par
 Suppose that the complex $T_s^\bu\ot_R A^\bu$ is acyclic for every
$s\in S$.
 Then the complex $A^\bu$ is acyclic.
\end{lem}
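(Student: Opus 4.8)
The plan is to compute the cohomology of $T_s^\bu\ot_RA^\bu$ for each $s\in S$ in terms of the action of $s$ on the cohomology of $A^\bu$, deduce from the hypothesis that every $H^n(A^\bu)$ is an $(S^{-1}R)$\+module, and then invoke the elementary fact that an $(S^{-1}R)$\+module which is $S$\+torsion, or is an $S$\+contramodule, must vanish.

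First I would note that $T_s^\bu=(R\overset s\rarrow R)$ is a bounded complex of free $R$\+modules, so the complex $T_s^\bu\ot_RA^\bu$ represents the derived tensor product $T_s^\bu\ot_R^\boL A^\bu$ for every complex of $R$\+modules $A^\bu$; moreover, up to a cohomological shift $T_s^\bu$ is the cone of the multiplication map $s\:R\rarrow R$, so for every $A^\bu$ there is a distinguished triangle
$$
 T_s^\bu\ot_RA^\bu\rarrow A^\bu\overset{s}\rarrow A^\bu
 \rarrow(T_s^\bu\ot_RA^\bu)[1]
$$
in $\sD(R\modl)$. The associated long exact sequence of cohomology modules shows that $T_s^\bu\ot_RA^\bu$ is acyclic if and only if the action of $s$ on $H^n(A^\bu)$ is an isomorphism for every $n\in\Z$. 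Imposing this for all $s\in S$, the hypothesis of the lemma becomes the assertion that every cohomology module $H^n(A^\bu)$ is an $R$\+module on which all elements of $S$ act invertibly, i.e., an $(S^{-1}R)$\+module (equivalently, an $S$\+torsion-free $S$\+divisible $R$\+module, as recalled in Section~\ref{preliminaries-secn}).

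It then remains to rule out the two alternatives. In case~(a), the $R$\+module $H^n(A^\bu)$ is $S$\+torsion, so $S^{-1}H^n(A^\bu)=0$, while being an $(S^{-1}R)$\+module it satisfies $S^{-1}H^n(A^\bu)=H^n(A^\bu)$; hence $H^n(A^\bu)=0$. In case~(b), $H^n(A^\bu)$ is an $S$\+contramodule, hence $S$\+h-reduced, so $h_S(H^n(A^\bu))=0$; on the other hand, an $(S^{-1}R)$\+module is $S$\+h-divisible, being a quotient of an $(S^{-1}R)$\+module (namely itself), so $h_S(H^n(A^\bu))=H^n(A^\bu)$; hence again $H^n(A^\bu)=0$. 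Thus $A^\bu$ is acyclic in both cases. I do not anticipate a real obstacle here: this is a soft Nakayama-type statement, and the only points that call for a little care are the identification of $T_s^\bu\ot_RA^\bu$ with the shifted cone of multiplication by $s$ (and the resulting long exact sequence), and the observation that in case~(b) it is the $\Hom$\+vanishing half of the $S$\+contramodule condition---namely $S$\+h-reducedness---rather than the $\Ext^1$\+vanishing half, that does the work.
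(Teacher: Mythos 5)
Your proof is correct and follows essentially the same route as the paper: identify acyclicity of $T_s^\bu\ot_RA^\bu$ with invertibility of~$s$ on $H^*(A^\bu)$, conclude that the cohomology modules are $(S^{-1}R)$\+modules, and observe that a nonzero $(S^{-1}R)$\+module can be neither $S$\+torsion nor an $S$\+contramodule (in the latter case because it is $S$\+h-divisible while contramodules are $S$\+h-reduced). The only cosmetic difference is that you extract this from the distinguished triangle $T_s^\bu\ot_RA^\bu\rarrow A^\bu\overset{s}\rarrow A^\bu$ and its long exact cohomology sequence, whereas the paper writes down the equivalent short exact sequences $0\rarrow H^{n-1}(A^\bu)/sH^{n-1}(A^\bu)\rarrow H^n(T_s^\bu\ot_RA^\bu)\rarrow{}_sH^n(A^\bu)\rarrow0$ directly.
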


\begin{proof}
 For every complex of $R$\+modules $A^\bu$, there are natural short
exact sequences
$$
 0\lrarrow H^{n-1}(A^\bu)/sH^{n-1}(A^\bu)\lrarrow
 H^n(T^\bu\ot_RA^\bu)\lrarrow{}_sH^n(A^\bu)\lrarrow0,
$$
where we denote by ${}_sM\subset M$ the submodule of elements
annihilated by $s$ in an $R$\+module~$M$.
 So $H^*(T^\bu\ot_RA^\bu)=0$ means that $s$~acts invertibly in
$H^*(A^\bu)$.
 If this holds for every $s\in S$ then the cohomology modules
$H^*(A^\bu)$ are $(S^{-1}R)$\+modules.
 Now any $S$\+torsion $R$\+module that is similtaneously
an $(S^{-1}R)$\+module vanishes, and so does any $S$\+contramodule
$R$\+module that is simultaneously an $(S^{-1}R)$\+module.
\end{proof}

 The exposition in this section follows the lines
of~\cite[Section~5]{Pmgm}.
 In order to make the analogy more explicit, we formulate
the following version of~\cite[Proposition~5.1]{Pmgm}.

\begin{prop} \label{compact-generators}
 The collection of all complexes $T_s^\bu$, \,$s\in S$ is a set of
compact generators of the triangulated category\/
$\sD_{S\tors}(R\modl)$.
 A complex of $R$\+modules with $S$\+torsion cohomology modules is
a compact object of\/ $\sD_{S\tors}(R\modl)$ if and only if it is
a compact object of\/ $\sD(R\modl)$.
\end{prop}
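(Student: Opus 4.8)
The plan is to follow the argument of~\cite[Proposition~5.1]{Pmgm}. First I would observe that for every $s\in S$ the complex $T_s^\bu$ lies in $\sD_{S\tors}(R\modl)$: its only possibly nonzero cohomology modules are $H^0(T_s^\bu)={}_sR$ and $H^1(T_s^\bu)=R/sR$, both annihilated by~$s$ and hence $S$\+torsion. Moreover $T_s^\bu$ is a bounded complex of finitely generated free $R$\+modules, so it is a perfect complex, i.e.\ a compact object of $\sD(R\modl)$.

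Next I would prove the second assertion of the proposition. One implication is formal: the class of $S$\+torsion $R$\+modules is closed under coproducts and cohomology commutes with coproducts in $\sD(R\modl)$, so $\sD_{S\tors}(R\modl)$ is closed under coproducts in $\sD(R\modl)$; hence coproducts computed in the subcategory agree with those computed in $\sD(R\modl)$, and a complex lying in $\sD_{S\tors}(R\modl)$ and compact in $\sD(R\modl)$ is automatically compact in $\sD_{S\tors}(R\modl)$. For the converse, I would use that $\sD_{S\tors}(R\modl)=\ker(k^*)$ for the ring homomorphism $k\:R\rarrow S^{-1}R$ (Lemma~\ref{kernel-categories-computed}(a)) and that the inclusion $\sD_{S\tors}(R\modl)\rarrow\sD(R\modl)$ has a right adjoint~$\rho$, given by Proposition~\ref{kernel-categories-mgm} (its part concerning $\ker(k^*)$, applied to $\check C=S^{-1}R$) as $\rho(M^\bu)=\check C^\bu(M^\bu)\sptilde=K^\bu[-1]\ot_RM^\bu$. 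Tensoring with the bounded complex $K^\bu[-1]$ of flat $R$\+modules preserves coproducts, so $\rho$ does, and then for $X$ compact in $\sD_{S\tors}(R\modl)$ and any family $(Y_\alpha)$ of objects of $\sD(R\modl)$ the chain of isomorphisms
\begin{multline*}
 \Hom_{\sD(R\modl)}\Bigl(X,\textstyle\bigoplus_\alpha Y_\alpha\Bigr)
 \simeq\Hom_{\sD_{S\tors}(R\modl)}\Bigl(X,\textstyle\bigoplus_\alpha\rho(Y_\alpha)\Bigr) \\
 \simeq\textstyle\bigoplus_\alpha\Hom_{\sD_{S\tors}(R\modl)}(X,\rho(Y_\alpha))
 \simeq\textstyle\bigoplus_\alpha\Hom_{\sD(R\modl)}(X,Y_\alpha)
\end{multline*}
(adjunction; coproduct-preservation of $\rho$ together with compactness of $X$; adjunction again) shows that $X$ is compact in $\sD(R\modl)$. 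In particular, each $T_s^\bu$ is a compact object of $\sD_{S\tors}(R\modl)$.

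Finally I would check that $\{T_s^\bu\mid s\in S\}$ generates $\sD_{S\tors}(R\modl)$, i.e.\ that the right orthogonal complement of this set in $\sD_{S\tors}(R\modl)$ is zero. Using the isomorphism $\Hom_R(T_s^\bu,A^\bu)\simeq T_s^\bu[1]\ot_RA^\bu$ recorded earlier and the fact that $T_s^\bu$ is a bounded complex of projectives, for any complex $A^\bu$ of $R$\+modules one has
$$
 \Hom_{\sD(R\modl)}(T_s^\bu,A^\bu[n])\simeq H^n\Hom_R(T_s^\bu,A^\bu)\simeq H^{n+1}(T_s^\bu\ot_RA^\bu).
$$
Hence, if $A^\bu\in\sD_{S\tors}(R\modl)$ satisfies $\Hom_{\sD(R\modl)}(T_s^\bu,A^\bu[n])=0$ for all $s\in S$ and all $n\in\Z$, then $T_s^\bu\ot_RA^\bu$ is acyclic for every $s\in S$; since the cohomology modules of $A^\bu$ are $S$\+torsion, Lemma~\ref{t-s-nakayama}(a) then forces $A^\bu$ to be acyclic. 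Combined with the previous paragraph, this proves that $\{T_s^\bu\mid s\in S\}$ is a set of compact generators of $\sD_{S\tors}(R\modl)$.

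I do not expect any step to present a genuine obstacle: the content is the standard compact-generation package for the smashing localization $\sD(R\modl)\rarrow\sD((S^{-1}R)\modl)$ together with Lemma~\ref{t-s-nakayama}(a). The one thing worth a word of care is that none of it uses the running assumptions $\pd_RS^{-1}R\le1$ and boundedness of the $S$\+torsion in $R$: the parts of Propositions~\ref{semiorthogonal-mgm} and~\ref{kernel-categories-mgm} dealing with $k^*$ and with $\ker(k^*)=\sD_{S\tors}(R\modl)$, as well as Lemma~\ref{t-s-nakayama}(a), are valid for an arbitrary multiplicative subset $S$ of an arbitrary commutative ring~$R$.
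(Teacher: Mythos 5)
Your proposal is correct and follows essentially the same route as the paper: the paper's (very terse) proof cites exactly the three ingredients you use, namely Lemma~\ref{t-s-nakayama}(a), the membership $T_s^\bu\in\sD_{S\tors}(R\modl)$, and the compactness (perfectness) of $T_s^\bu$ in $\sD(R\modl)$. The additional details you supply --- closure of $\sD_{S\tors}(R\modl)$ under coproducts, the coproduct-preserving right adjoint $K^\bu[-1]\ot_R{-}$ from Proposition~\ref{kernel-categories-mgm}, the identification $\Hom_{\sD(R\modl)}(T_s^\bu,A^\bu[n])\simeq H^{n+1}(T_s^\bu\ot_RA^\bu)$, and the observation that none of this needs $\pd_RS^{-1}R\le1$ or bounded $S$\+torsion --- are just the standard formal packaging the paper leaves implicit, and they are all accurate.
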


\begin{proof}
 Both assertions follows from Lemma~\ref{t-s-nakayama}(a) together
with the observations that the complexes $T_s^\bu$ belong to
$\sD_{S\tors}(R\modl)$ and are compact in $\sD(R\modl)$.
\end{proof}

 The following theorem is our version of~\cite[Remark~5.6]{Pmgm}.
 The assumption of boundedness of $S$\+torsion in $R$ is essential
for its validity.

\begin{thm} \label{ext-tor-bounded-torsion-thm}
\textup{(a)} Let $M$ be an $S$\+torsion $R$\+module and $E$ be
an injective object of $R\modl_{S\tors}$.
 Then\/ $\Ext^n_R(M,E)=0$ for all\/ $n>0$. \par
\textup{(b)} Let $C$ be an $S$\+contramodule $R$\+module and $P$
be a projective object of $R\modl_{S\ctra}$.
 Then\/ $\Ext^n_R(P,C)=0$ for all\/ $n>0$. \par
\textup{(c)} Let $M$ be an $S$\+torsion $R$\+module and $P$ be
a projective object of $R\modl_{S\ctra}$.
 Then\/ $\Tor^R_n(M,P)=0$ for all\/ $n>0$.
\end{thm}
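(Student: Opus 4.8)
The plan is to prove all three parts by reducing, via the known descriptions of the injective objects of $R\modl_{S\tors}$ and the projective objects of $R\modl_{S\ctra}$, to the vanishing of $\Ext$ and $\Tor$ groups built out of $(S^{-1}R)$\+modules. In each part, additivity of $\Ext$ and $\Tor$ in the relevant variable reduces the assertion for the direct summand $E$ (resp.\ $P$) to the same assertion for an $R$\+module of the form $\Gamma_S(J)$, with $J$ an injective $R$\+module (resp.\ for a free $S$\+contramodule $R$\+module $\fR[[X]]=\Delta_S(R[X])$).

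For part~(a), I would start from the short exact sequence $0\rarrow\Gamma_S(J)\rarrow J\rarrow J/\Gamma_S(J)\rarrow0$. By the exact sequence~(I) together with Lemma~\ref{bounded-torsion-injective-flat}(a) --- the first of the two points where boundedness of the $S$\+torsion in $R$ is used --- the natural map $J/\Gamma_S(J)\rarrow S^{-1}J$ is an isomorphism, so $D:=J/\Gamma_S(J)$ is an $(S^{-1}R)$\+module. It then suffices to note that $\Ext^n_R(M,D)=0$ for all $n$ whenever $M$ is an $S$\+torsion $R$\+module and $D$ is an $(S^{-1}R)$\+module: for a projective resolution $P_\bu\rarrow M$ over $R$ one has $\Hom_R(P_\bu,D)\simeq\Hom_{S^{-1}R}(S^{-1}P_\bu,D)$, and $S^{-1}P_\bu$ is a bounded-above acyclic complex of projective $(S^{-1}R)$\+modules (since $S^{-1}M=0$), hence contractible. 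Substituting into the long exact sequence of $\Ext_R(M,{-})$ attached to the above short exact sequence and using $\Ext^n_R(M,J)=0$ for $n\ge1$ yields $\Ext^n_R(M,\Gamma_S(J))=0$ for $n\ge1$, which is part~(a).

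Parts~(b) and~(c) share the following setup. By Corollary~\ref{flat-bounded-torsion} --- the second point where bounded $S$\+torsion enters --- one has $\fR[[X]]=\Delta_S(R[X])\simeq\Lambda_S(R[X])$, and Theorem~\ref{delta-lambda-thm}(b) applied to the flat $R$\+module $R[X]$ shows that the kernel $D_1$ and the cokernel $D_2$ of the completion map $\lambda_{S,R[X]}\:R[X]\rarrow\fR[[X]]$ are $(S^{-1}R)$\+modules. Writing $W$ for the image of $\lambda_{S,R[X]}$, we obtain short exact sequences $0\rarrow D_1\rarrow R[X]\rarrow W\rarrow0$ and $0\rarrow W\rarrow\fR[[X]]\rarrow D_2\rarrow0$. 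For part~(b): since $C$ is an $S$\+contramodule and $\pd_RS^{-1}R\le1$, we have $\Ext^i_R(S^{-1}R,C)=0$ for all $i\ge0$, whence $\Ext^i_R(D_1,C)=0=\Ext^i_R(D_2,C)$ for all $i\ge0$ by Lemma~\ref{ext-from-s-minus-1-r-module}; combining the two long exact sequences of $\Ext_R({-},C)$ with the vanishing $\Ext^n_R(R[X],C)=0$ for $n\ge1$ gives $\Ext^n_R(\fR[[X]],C)=0$ for $n\ge1$. For part~(c): for an $S$\+torsion $R$\+module $M$ and an $(S^{-1}R)$\+module $D$ one has $\Tor^R_n(M,D)=0$ for all $n$, since $\Tor^R_n(M,D)$ is at the same time an $(S^{-1}R)$\+module (the elements of $S$ act invertibly on it through $D$) and an $S$\+torsion $R$\+module (it is the filtered colimit of the modules $\Tor^R_n(M_i,D)$ over the finitely generated, hence $S$\+annihilated, submodules $M_i\subseteq M$); combining the two long exact sequences of $\Tor^R_*(M,{-})$ with $\Tor^R_n(M,R[X])=0$ for $n\ge1$ gives $\Tor^R_n(M,\fR[[X]])=0$ for $n\ge1$.

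I do not anticipate a genuine obstacle: the only delicate inputs are the two facts that depend on boundedness of the $S$\+torsion in $R$ --- the isomorphism $J/\Gamma_S(J)\simeq S^{-1}J$ and the $(S^{-1}R)$\+module structure of the kernel and cokernel of $\lambda_{S,R[X]}$ --- which is precisely why, as remarked above, that hypothesis cannot be dropped; everything else is the localization and homological-dimension bookkeeping indicated here.
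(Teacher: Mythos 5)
Your proposal is correct, but it takes a genuinely different route from the paper's own proof. The paper obtains parts~(a) and~(b) as immediate special cases of the full faithfulness of the triangulated functors $\sD^\b(R\modl_{S\tors})\rarrow\sD^\b(R\modl)$ and $\sD^\b(R\modl_{S\ctra})\rarrow\sD^\b(R\modl)$ established in Theorem~\ref{bounded-torsion-essential-images}, and then deduces~(c) from~(b) by a duality trick: for an injective $R$\+module $J$ one has $\Hom_R(\Tor^R_n(M,P),J)\simeq\Ext^n_R(P,\Hom_R(M,J))$, and $\Hom_R(M,J)$ is an $S$\+contramodule by Lemma~\ref{hom-contramodule}. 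You instead argue directly: reduce to the standard injectives $\Gamma_S(J)$ and the free $S$\+contramodules $\Delta_S(R[X])$, compare them with $J$ and $R[X]$, observe that the discrepancies (the quotient $J/\Gamma_S(J)$, and the kernel and cokernel of $R[X]\rarrow\Delta_S(R[X])$) are $(S^{-1}R)$\+modules by Lemma~\ref{bounded-torsion-injective-flat}(a) and by Corollary~\ref{flat-bounded-torsion} together with Theorem~\ref{delta-lambda-thm}(b), and finish with the vanishing of $\Ext$ and $\Tor$ against $(S^{-1}R)$\+modules --- on the contramodule side this is Lemma~\ref{ext-from-s-minus-1-r-module} plus the fact that under $\pd_RS^{-1}R\le1$ every $S$\+contramodule is a strong $S$\+contramodule, and on the torsion side it is the flat ring epimorphism computation you spell out. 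All the homological bookkeeping in your long exact sequences checks out (including the degree-zero terms $\Hom_R(M,J/\Gamma_S(J))=0$ and $\Hom_R(D_1,C)=0$ that the dimension shifts require). What each approach buys: yours is more elementary and self-contained, avoiding the derived-category machinery of the preceding section and treating the $\Tor$ statement in~(c) on its own rather than by dualizing into an injective; the paper's proof is much shorter given that Theorem~\ref{bounded-torsion-essential-images} is already in place, and, reassuringly, your two key inputs (injective $R$\+modules have surjective localization maps, flat $R$\+modules have bounded, hence no $S$\+h-divisible, $S$\+torsion) are exactly the ones the paper itself exploits in Lemmas~\ref{projective-tensor-lemma}(a) and~\ref{injective-hom-lemma}(a), so both hypotheses of the theorem enter your argument in the same places they enter the paper's.
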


\begin{proof}
 Part~(a) is a particular case of the assertion that the functor
$\sD^\b(R\modl_{S\tors})\allowbreak\rarrow\sD^\b(R\modl)$ is fully
faithful (see Theorem~\ref{bounded-torsion-essential-images}(a)).
 Part~(b) is a particular case of the assertion that the functor
$\sD^\b(R\modl_{S\ctra})\rarrow\sD^\b(R\modl)$ is fully faithful
(see Theorem~\ref{bounded-torsion-essential-images}(b)).
 To deduce part~(c) from part~(b), choose an injective $R$\+module~$J$.
 Then $\Hom_R(M,J)$ is an $S$\+contramodule $R$\+module by
Lemma~\ref{hom-contramodule}, hence $\Hom_R(\Tor^R_n(M,P),J)\simeq
\Ext_R^n(P,\Hom_R(M,J))=0$ for $n>0$.
\end{proof}

 One can define \emph{a dedualizing complex of $S$\+torsion
$R$\+modules} by the list of three conditions very similar to
the conditions~(i\+iii) of~\cite[Section~5]{Pmgm}.
 In this section, we follow a simpler path.
 The complex $K^\bu[-1]$ has $S$\+torsion cohomology modules, hence,
according to Theorem~\ref{bounded-torsion-essential-images}(a),
there exists a finite complex of $S$\+torsion $R$\+modules $B^\bu$
quasi-isomorphic to~$K^\bu[-1]$.
 Obviously, one can assume $B^\bu$ to be a two-term complex concentrated
in the cohomological degrees~$0$ and~$1$.
 We choose such a complex of $S$\+torsion $R$\+modules $B^\bu$, and
call it \emph{the dedualizing complex} for the ring $R$ and
the multiplicative subset $S\subset R$.

 Furthermore, we set $B_s^\bu=T_s^\bu\ot_RB^\bu$.
 Clearly, $B^\bu_s$ is a finite complex of $S$\+torsion $R$\+modules
quasi-isomorphic to the complex of $R$\+modules $T_s^\bu\ot_RK^\bu[-1]$,
which is quasi-isomorphic to~$T_s^\bu$.
 So we can assume to have chosen a quasi-isomorphism of complexes
of $R$\+modules $b_s\:T_s^\bu\rarrow B_s^\bu$.

 The following lemma is our version of~\cite[Lemma~5.4(b\+c)]{Pmgm}.

\begin{lem} \label{projective-tensor-lemma}
\textup{(a)} For every projective $R$\+module $F$ and every element
$s\in S$, the morphism of complexes of $R$\+modules $T_s^\bu\ot_R F
\rarrow T_s^\bu\ot_R\Delta_S(F)$ induced by the morphism of
$R$\+modules\/ $\delta_{S,F}\:F\rarrow\Delta_S(F)$ is
a quasi-isomorphism. \par
\textup{(b)} For every projective object $P\in R\modl_{S\ctra}$ and
every element $s\in S$, the morphism of complexes of $R$\+modules
$T_s^\bu\ot_RP\rarrow B_s^\bu\ot_RP$ induced by the morphism 
$b_s\:T^\bu\rarrow B_s^\bu$ is a quasi-isomorphism.
\end{lem}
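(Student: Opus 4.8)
The plan for both parts is to complete the relevant morphism to a distinguished triangle in $\sD^\b(R\modl)$ and to check that the third vertex becomes acyclic after applying $T_s^\bu\ot_R{-}$. Two preliminary observations will be used repeatedly: since $T_s^\bu$ is a bounded complex of free $R$\+modules, $T_s^\bu\ot_R{-}$ computes $T_s^\bu\ot_R^\boL{-}$; and for any complex $Z^\bu$ all of whose cohomology modules are $(S^{-1}R)$\+modules the complex $T_s^\bu\ot_RZ^\bu$ is acyclic, because $s$ then acts invertibly on $H^*(Z^\bu)$ (cf.\ the short exact sequences in the proof of Lemma~\ref{t-s-nakayama}; equivalently, $T_s^\bu\ot_RS^{-1}R$ is a contractible complex of $R$\+modules and $Z^\bu\simeq S^{-1}R\ot_RZ^\bu$).

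For part~(a), the complex $T_s^\bu\ot_RA$ is $A\overset{s}\rarrow A$ with $H^0={}_sA$ and $H^1=A/sA$, so the morphism in question realizes on $H^1$ the map $\bar\delta_{S,F}\:F/sF\rarrow\Delta_S(F)/s\Delta_S(F)$, which is an isomorphism by Lemma~\ref{delta-mod-s}. For $H^0$ I would not compute directly, but instead complete $\delta_{S,F}\:F\rarrow\Delta_S(F)$ to a distinguished triangle $F\rarrow\Delta_S(F)\rarrow Z^\bu\rarrow F[1]$, so that $H^*(Z^\bu)$ is made up of $\ker\delta_{S,F}$ and $\coker\delta_{S,F}$. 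The short exact sequence~(III) identifies $\coker\delta_{S,F}$ with $\Ext^1_R(S^{-1}R,F)$, which is always an $(S^{-1}R)$\+module; and, since the $S$\+torsion in $R$ is bounded, Corollary~\ref{flat-bounded-torsion} together with Theorem~\ref{delta-lambda-thm}(b),(c) (here $\ker\delta_{S,F}=\ker\lambda_{S,F}$ and $\coker\delta_{S,F}\simeq\coker\lambda_{S,F}$ via the isomorphism $\beta_{S,F}$) shows $\ker\delta_{S,F}$ is an $(S^{-1}R)$\+module too. Tensoring the triangle with $T_s^\bu$ then kills the third term, so $T_s^\bu\ot_RF\rarrow T_s^\bu\ot_R\Delta_S(F)$ is a quasi-isomorphism.

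For part~(b), $b_s$ is by construction already a quasi-isomorphism of complexes of $R$\+modules, so the only issue is whether $-\ot_RP$ preserves it even though $P$ need not be flat. Since $T_s^\bu$ has free terms, $T_s^\bu\ot_RP=T_s^\bu\ot_R^\boL P$. Since $B_s^\bu$ is a finite complex of $S$\+torsion $R$\+modules and $P$ is a projective object of $R\modl_{S\ctra}$, Theorem~\ref{ext-tor-bounded-torsion-thm}(c) gives $\Tor^R_n(B_s^j,P)=0$ for every term $B_s^j$ and every $n>0$, so $B_s^\bu\ot_RP$ likewise computes $B_s^\bu\ot_R^\boL P$. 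As $b_s\ot_R^\boL P$ is an isomorphism in $\sD(R\modl)$, it follows that $b_s\ot_RP\:T_s^\bu\ot_RP\rarrow B_s^\bu\ot_RP$ is a quasi-isomorphism.

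The only steps that are not purely formal manipulation with derived tensor products are the identification of $\ker\delta_{S,F}$ and $\coker\delta_{S,F}$ as $(S^{-1}R)$\+modules in part~(a) --- this is where the hypothesis of bounded $S$\+torsion enters, through Corollary~\ref{flat-bounded-torsion} and Theorem~\ref{delta-lambda-thm} --- and the vanishing $\Tor^R_n(B_s^j,P)=0$ for $n>0$ in part~(b), which rests on Theorem~\ref{ext-tor-bounded-torsion-thm}(c) and ultimately on the full faithfulness of $\sD^\b(R\modl_{S\ctra})\rarrow\sD^\b(R\modl)$. I expect the $H^0$ bookkeeping in part~(a) to be the main point needing care.
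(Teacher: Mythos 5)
Your proposal is correct, and its overall structure coincides with the paper's: in part~(a) one shows that the cone of $\delta_{S,F}$, i.e.\ the two-term complex $F\rarrow\Delta_S(F)$, has $(S^{-1}R)$\+module cohomology, so that tensoring with $T_s^\bu$ kills it; in part~(b) one uses Theorem~\ref{ext-tor-bounded-torsion-thm}(c) to see that both $T_s^\bu\ot_RP$ and $B_s^\bu\ot_RP$ compute the derived tensor product, whence the quasi-isomorphism $b_s$ survives $-\ot_RP$ (the paper gives exactly this argument, plus an alternative reduction to $P=\Delta_S(F)$ via a commutative square and part~(a)). The only real divergence is how you identify $\ker\delta_{S,F}$ as an $(S^{-1}R)$\+module in part~(a): you route through the completion machinery, using Corollary~\ref{flat-bounded-torsion} and Theorem~\ref{delta-lambda-thm}(b\textup{,}c) to get $\ker\delta_{S,F}=\ker\lambda_{S,F}$, whereas the paper stays with the elementary exact sequences (II)--(III) together with Lemma~\ref{special-and-h-reduced-torsion}(b): since a projective module over a ring with bounded $S$\+torsion has no $S$\+h-divisible $S$\+torsion, one has $\Ext_R^0(K^\bu,F)=0$, so (II) gives $\ker\delta_{S,F}=h_S(F)\simeq\Hom_R(S^{-1}R,F)$, an $(S^{-1}R)$\+module, while (III) gives $\coker\delta_{S,F}\simeq\Ext^1_R(S^{-1}R,F)$ exactly as you say. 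Your version is valid under the section's running hypotheses, but it leans on the heavier Theorem~\ref{delta-lambda-thm}, and it uses the full strength of bounded $S$\+torsion, whereas the paper's lighter argument only needs the absence of $S$\+h-divisible $S$\+torsion in $F$ (which is the refinement exploited in the Remark at the end of the previous section). Your side computation of $H^1$ via Lemma~\ref{delta-mod-s} is correct but, as you note, not needed once the triangle argument is in place.
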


\begin{proof}
 Part~(a): according to the short exact sequences~(II\+-III) and
Lemma~\ref{special-and-h-reduced-torsion}(b), for every $R$\+module
$F$ without $S$\+h-divisible $S$\+torsion the cohomology modules
of the two-term complex $F\rarrow\Delta_S(F)$ are $(S^{-1}R)$\+modules.
 Hence the tensor product complex $T_s^\bu\ot_R(F\to\Delta_S(F))$ is
acyclic.

 Part~(b): according to Theorem~\ref{ext-tor-bounded-torsion-thm}(c),
both the complexes $T_s^\bu\ot_R P$ and $B_s^\bu\ot_RP$ compute
the derived tensor product $T_s^\bu\ot^\boL_RP=B_s^\bu\ot^\boL_RP$.
 Alternatively, it suffices to consider the case of $P=\Delta_S(F)$,
where $F$ is a projective $R$\+module (as all the projective objects
in $R\modl_{S\ctra}$ are direct summands of modules of this form).
 We have a commutative diagram
$$
\begin{diagram}
\node{T_s^\bu\ot_R F}\arrow{s}\arrow{e}\node{B_s^\bu\ot_RF}
\arrow{s} \\
\node{T_s^\bu\ot_R\Delta_S(F)}\arrow{e}\node{B_s^\bu\ot_R\Delta_S(F)}
\end{diagram}
$$
 The morphism $T_s^\bu\ot_R F\rarrow B_s^\bu\ot_RF$ is
a quasi-isomorphism since $F$ is a flat $R$\+module.
 The morphism $B_s^\bu\ot_RF\rarrow B_s^\bu\ot_F\Delta_S(F)$ is
an isomorphism of complexes by Lemma~\ref{delta-mod-s}, because
$B_s^\bu$ is a complex of $S$\+torsion $R$\+modules.
 The morphism $T_s^\bu\ot_RF\rarrow T_s^\bu\ot_R\Delta_S(F)$ is
a quasi-isomorphism by part~(a).
 It follows that the fourth morphism in the diagram is also
a quasi-isomorphism.
\end{proof}

 The next lemma is our version of~\cite[Lemma~5.5]{Pmgm}.

\begin{lem} \label{injective-hom-lemma}
\textup{(a)} For every injective $R$\+module $J$ and every element
$s\in S$, the morphism of complexes of $R$\+modules $T_s^\bu\ot_R
\Gamma_S(J)\rarrow T_s^\bu\ot_R J$ induced by the embedding
of $R$\+modules\/ $\gamma_{S,J}\:\Gamma_S(J)\rarrow J$ is
a quasi-isomorphism. \par
\textup{(b)} For every injective object $E\in R\modl_{S\tors}$ and
every element $s\in S$, the morphism of complexes of $R$\+modules
$\Hom_R(B_s^\bu,E)\rarrow\Hom_R(T_s^\bu,E)$ induced by the morphism
$b_s\:T_s^\bu\rarrow B_s^\bu$ is a quasi-isomorphism.
\end{lem}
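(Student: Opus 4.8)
The plan is to prove part~(a) by a distinguished-triangle argument and then deduce part~(b) from it.

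For part~(a), I would use the canonical short exact sequence $0\rarrow\Gamma_S(J)\rarrow J\rarrow J/\Gamma_S(J)\rarrow0$. The quotient $J/\Gamma_S(J)$ is $S$\+torsion-free, and since $J$ is injective and the $S$\+torsion in $R$ is bounded, Lemma~\ref{bounded-torsion-injective-flat}(a) together with the exact sequence~(I) shows that $J/\Gamma_S(J)$ is in fact an $(S^{-1}R)$\+module. The terms of $T_s^\bu$ are free $R$\+modules, so tensoring the sequence with $T_s^\bu$ gives a short exact sequence of complexes, hence a distinguished triangle
$$
 T_s^\bu\ot_R\Gamma_S(J)\lrarrow T_s^\bu\ot_R J\lrarrow
 T_s^\bu\ot_R(J/\Gamma_S(J))\lrarrow(T_s^\bu\ot_R\Gamma_S(J))[1].
$$
The third term is the two-term complex $(J/\Gamma_S(J))\overset{s}{\rarrow}(J/\Gamma_S(J))$, which is acyclic because $s$ acts invertibly on the $(S^{-1}R)$\+module $J/\Gamma_S(J)$; hence the first arrow is a quasi-isomorphism.

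For part~(b), I would first reduce to the case $E=\Gamma_S(J)$ with $J$ an injective $R$\+module, using that every injective object of $R\modl_{S\tors}$ is a direct summand of such a $\Gamma_S(J)$ and that quasi-isomorphisms are stable under direct summands. Since the terms of $B_s^\bu$ are $S$\+torsion, every $R$\+module homomorphism from a term of $B_s^\bu$ into $J$ lands in $\Gamma_S(J)$, so $\gamma_{S,J}$ induces an \emph{isomorphism} of complexes $\Hom_R(B_s^\bu,\Gamma_S(J))\rarrow\Hom_R(B_s^\bu,J)$; moreover $\Hom_R(b_s,J)\:\Hom_R(B_s^\bu,J)\rarrow\Hom_R(T_s^\bu,J)$ is a quasi-isomorphism because $b_s$ is a quasi-isomorphism of bounded complexes and $J$ is injective over $R$; and, via the identification $\Hom_R(T_s^\bu,{-})\simeq T_s^\bu[1]\ot_R{-}$ noted before Lemma~\ref{t-s-nakayama}, part~(a) shows that $\gamma_{S,J}$ induces a quasi-isomorphism $\Hom_R(T_s^\bu,\Gamma_S(J))\rarrow\Hom_R(T_s^\bu,J)$. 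Assembling these into the commutative square
$$
\begin{diagram}
\node{\Hom_R(B_s^\bu,\Gamma_S(J))}\arrow{s}\arrow{e}
\node{\Hom_R(T_s^\bu,\Gamma_S(J))}\arrow{s} \\
\node{\Hom_R(B_s^\bu,J)}\arrow{e}\node{\Hom_R(T_s^\bu,J)}
\end{diagram}
$$
with horizontal arrows induced by $b_s$ and vertical arrows by $\gamma_{S,J}$, we see that three of its arrows are quasi-isomorphisms, hence so is the fourth, which is $\Hom_R(b_s,E)$.

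The step I expect to be the main obstacle is precisely this comparison in part~(b): $E$ is injective in $R\modl_{S\tors}$ but not in $R\modl$, so $\Hom_R({-},E)$ need not send quasi-isomorphisms to quasi-isomorphisms and $b_s$ cannot be plugged into it directly; routing through the ambient injective $R$\+module $J$ with $E=\Gamma_S(J)$ restores functoriality but forces one to bridge $\Hom_R(T_s^\bu,\Gamma_S(J))$ and $\Hom_R(T_s^\bu,J)$, which is exactly what part~(a) supplies. An alternative to the square argument is to observe that each $S$\+torsion term $B_s^n$ satisfies $\Ext^i_R(B_s^n,E)=0$ for $i>0$ by Theorem~\ref{ext-tor-bounded-torsion-thm}(a), so $\Hom_R(B_s^\bu,E)$ computes $\boR\Hom_R(B_s^\bu,E)$; since $T_s^\bu$ is a bounded complex of projectives, $\Hom_R(T_s^\bu,E)$ computes $\boR\Hom_R(T_s^\bu,E)$, and $b_s$ being a quasi-isomorphism then makes $\Hom_R(b_s,E)$ one as well.
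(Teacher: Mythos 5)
Your proposal is correct and follows essentially the same route as the paper: part~(a) via the observation that $J/\Gamma_S(J)$ is an $(S^{-1}R)$\+module (Lemma~\ref{bounded-torsion-injective-flat}(a) plus sequence~(I)), and part~(b) via the reduction to $E=\Gamma_S(J)$ and the same commutative square with the same three quasi-isomorphism/isomorphism justifications. Even your "alternative" argument through Theorem~\ref{ext-tor-bounded-torsion-thm}(a) and $\boR\Hom$ is precisely the first argument the paper gives for part~(b), so nothing is missing.
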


\begin{proof}
 Part~(a): According to Lemma~\ref{bounded-torsion-injective-flat}(a)
and the short exact sequence~(I), for every injective $R$\+module $J$
the quotient module $J/\Gamma_S(J)$ is an $(S^{-1}R)$\+module.
 Hence the complex $T_s^\bu\ot_R J/\Gamma_S(J)$ is acyclic.

 Part~(b): according to Theorem~\ref{ext-tor-bounded-torsion-thm}(a),
both the complexes $\Hom_R(B_s^\bu,E)$ and $\Hom_R(T_s^\bu,E)$ compute
the derived category object $\boR\Hom_R(B_s^\bu,E)=
\boR\Hom_R(T_s^\bu,E)$.
 Alternatively, it suffices to consider the case of $E=\Gamma_S(J)$,
where $J$ is an injective $R$\+module.
 We have a commutative diagram \hfuzz=2.5pt
$$
\begin{diagram}
\node{\Hom_R(B_s^\bu,\Gamma_S(J))}\arrow{s}\arrow{e}
\node{\Hom_R(T_s^\bu,\Gamma_S(J))}\arrow{s} \\
\node{\Hom_R(B_s^\bu,J)}\arrow{e}\node{\Hom_R(T_s^\bu,J)}
\end{diagram}
$$
 The morphism $\Hom_R(B_s^\bu,J)\rarrow\Hom_R(T_s^\bu,J)$ is
a quasi-isomorphism since $J$ is an injective $R$\+module.
 The morphism $\Hom_R(B_s^\bu,\Gamma_S(J))\rarrow\Hom_R(B_s^\bu,J)$
is an isomophism of complexes since $B_s^\bu$ is a complex of
$S$\+torsion $R$\+modules.
 The morphism $\Hom_R(T_s^\bu,\Gamma_S(J))\rarrow
\Hom_R(T_s^\bu,J)$ is a quasi-isomorphism by part~(a).
 It follows that the fourth morphism in the diagram is also
a quasi-isomorphism.
\end{proof}

 The following theorem is our main result.

\begin{thm}
 Let $S$ be a multiplicative subset in a commutative ring~$R$.
 Assume that $\pd_RS^{-1}R\le 1$ and the $S$\+torsion in $R$ is
bounded.
 Then for every derived category symbol\/ $\st=\b$, $+$, $-$,
$\varnothing$, $\abs+$, $\abs-$, or\/~$\abs$ there is an equivalence
of derived categories\/ $\sD^\st(R\modl_{S\tors})\simeq
\sD^\st(R\modl_{S\ctra})$ provided by mutually inverse functors\/
$\boR\Hom_R(B^\bu,{-})$ and $B^\bu\ot_R^\boL{-}$.
\end{thm}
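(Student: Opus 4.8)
The plan is to build the two functors from resolutions adapted to each of the two abelian categories, to verify that the adjunction unit and counit are isomorphisms for the conventional derived category symbols directly (via the $T_s^\bu$-testing of Lemma~\ref{t-s-nakayama}), and then to propagate the result to the absolute derived categories by routing the argument through $\sD^\st(R\modl)$ and exploiting the finite homological dimension of $\boR^*\Gamma_S$ and $\boL_*\Delta_S$.

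\emph{Construction and adjunction.} For a complex $P^\bu$ of projective objects of $R\modl_{S\ctra}$, put $\Psi(P^\bu)=B^\bu\ot_RP^\bu$; since each $B^i$ is $S$\+torsion, so is each $B^i\ot_RP^j$, and by Theorem~\ref{ext-tor-bounded-torsion-thm}(c) the functor $B^i\ot_R{-}$ carries short exact sequences of projective $S$\+contramodules to short exact sequences, so $\Psi$ sends absolutely acyclic complexes of projectives to absolutely acyclic complexes of $S$\+torsion modules. Dually, for a complex $E^\bu$ of injective objects of $R\modl_{S\tors}$, put $\Phi(E^\bu)=\Hom_R(B^\bu,E^\bu)$; each $\Hom_R(B^i,E^j)$ is an $S$\+contramodule by Lemma~\ref{hom-contramodule}, and by Theorem~\ref{ext-tor-bounded-torsion-thm}(a) the functor $\Hom_R(B^i,{-})$ is exact on short exact sequences of injective $S$\+torsion modules. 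Since, moreover, $\boR\Hom_R(B^\bu,{-})$ and $B^\bu\ot_R^\boL{-}$ are of amplitude~$\le1$ (because $B^\bu\simeq K^\bu[-1]$, which, as $\pd_RS^{-1}R\le1$, is quasi-isomorphic to the two-term complex $T^\bu$ of projective $R$\+modules; cf.\ Lemmas~\ref{gamma-derived}--\ref{delta-derived}), these prescriptions define triangulated functors $\Psi\:\sD^\st(R\modl_{S\ctra})\rarrow\sD^\st(R\modl_{S\tors})$ and $\Phi\:\sD^\st(R\modl_{S\tors})\rarrow\sD^\st(R\modl_{S\ctra})$ for every one of the seven symbols~$\st$. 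The termwise tensor--Hom adjunction makes $\Psi$ left adjoint to $\Phi$; write $\eta\:\id\rarrow\Phi\Psi$ and $\varepsilon\:\Psi\Phi\rarrow\id$ for the unit and counit.

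\emph{The conventional symbols.} For $\st=\b$, $+$, $-$, or $\varnothing$ I would show directly that $\eta$ and $\varepsilon$ are isomorphisms. Fix $M^\bu\in\sD^\st(R\modl_{S\tors})$ with an injective resolution $E^\bu$ in $R\modl_{S\tors}$ (which may be taken to be $\Gamma_S$ of an injective $R$\+module resolution, $\boR^{>0}\Gamma_S$ vanishing on $S$\+torsion modules), and $C^\bu\in\sD^\st(R\modl_{S\ctra})$ with a projective resolution $P^\bu$ in $R\modl_{S\ctra}$. Using $B_s^\bu=T_s^\bu\ot_RB^\bu$, the near-self-duality $\Hom_R(T_s^\bu,{-})\simeq T_s^\bu[1]\ot_R{-}$, the flatness of the terms of $T_s^\bu$, Lemma~\ref{projective-tensor-lemma}(b), and Lemma~\ref{injective-hom-lemma}(b), one obtains for every $s\in S$ a natural quasi-isomorphism $T_s^\bu\ot_R\Psi\Phi(M^\bu)\simeq T_s^\bu\ot_RM^\bu$ compatible with $\varepsilon_{M^\bu}$, and likewise $T_s^\bu\ot_R\Phi\Psi(C^\bu)\simeq T_s^\bu\ot_RC^\bu$ compatible with $\eta_{C^\bu}$. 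The cones of $\varepsilon_{M^\bu}$ and $\eta_{C^\bu}$ have $S$\+torsion, resp.\ $S$\+contramodule, cohomology, so Lemma~\ref{t-s-nakayama} (cf.\ Proposition~\ref{compact-generators}) forces these cones to be acyclic, hence zero in the conventional derived categories; thus $\eta$ and $\varepsilon$ are isomorphisms there. (This recovers Corollary~\ref{equivalence-of-derived-categories} with its functors made explicit; alternatively one may identify $\Phi$ and $\Psi$, under the equivalences of Theorem~\ref{bounded-torsion-essential-images}, with the Matlis functors $\Hom_R(T^\bu,{-})$ and $\check C^\bu({-})\sptilde$ of Theorem~\ref{triangulated-matlis}, using that $\Hom_R(B^\bu,\Gamma_S(J))\rarrow\Hom_R(B^\bu,J)$ is a quasi-isomorphism because $J/\Gamma_S(J)$ is an $(S^{-1}R)$\+module, Lemma~\ref{bounded-torsion-injective-flat}(a).)

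\emph{The absolute symbols, and the main obstacle.} For $\st=\abs+$, $\abs-$, or $\abs$ the Matlis equivalence of Theorem~\ref{triangulated-matlis} is unavailable, and---this is the crux---an acyclic complex of $S$\+torsion (or $S$\+contramodule) $R$\+modules need not be absolutely acyclic, so the $T_s^\bu$\+testing does not by itself detect vanishing in $\sD^\abs$. As announced at the start of this section, the remedy is to pass through complexes of arbitrary $R$\+modules. Theorem~\ref{triangulated-fully-faithful} and its opposite version furnish, for all seven symbols, fully faithful embeddings $\iota_t\:\sD^\st(R\modl_{S\tors})\rarrow\sD^\st(R\modl)$ and $\iota_c\:\sD^\st(R\modl_{S\ctra})\rarrow\sD^\st(R\modl)$ with a right adjoint $\boR\Gamma_S$ to $\iota_t$ and a left adjoint $\boL\Delta_S$ to $\iota_c$; using the adaptedness statements of Theorem~\ref{ext-tor-bounded-torsion-thm} one checks $\iota_t\circ\Psi\simeq(B^\bu\ot_R^\boL{-})\circ\iota_c$ and $\iota_c\circ\Phi\simeq\boR\Hom_R(B^\bu,{-})\circ\iota_t$, so that under the embeddings $\varepsilon$ and $\eta$ become the evaluation and coevaluation maps of the adjunction $B^\bu\ot_R^\boL{-}\dashv\boR\Hom_R(B^\bu,{-})$ in $\sD^\st(R\modl)$. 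Because $\boR^*\Gamma_S$ and $\boL_*\Delta_S$ have homological dimension~$\le1$ (Lemmas~\ref{gamma-derived}(b) and~\ref{delta-derived}(b)), the functors $\boR\Gamma_S$, $\boL\Delta_S$, and hence $\Phi$, $\Psi$, are of finite amplitude, commute with the canonical truncations, and restrict compatibly among all seven derived categories; so the isomorphisms $\eta$, $\varepsilon$ established over $\sD^\b$ propagate to $\sD^{\abs+}$, $\sD^{\abs-}$, and $\sD^\abs$ by the way-out/reduction device used in the proofs of Theorem~\ref{triangulated-fully-faithful} and Proposition~\ref{essential-image-identified} (cf.~\cite[Sections~4\+-5]{Pmgm}). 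The step I expect to be genuinely delicate is exactly this last one: lifting the conclusion from conventionally bounded complexes to the absolute derived categories, where the finite homological dimension of $\boR^*\Gamma_S$ and $\boL_*\Delta_S$, together with the detour through $R\modl$, are what make the argument go through.
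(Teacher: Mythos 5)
Your construction of the two functors and your verification for the conventional symbols $\st=\b$, $+$, $-$, $\varnothing$ are essentially the paper's own computation (the $T_s^\bu$\+testing of Lemma~\ref{t-s-nakayama} combined with Lemmas~\ref{projective-tensor-lemma}(b) and~\ref{injective-hom-lemma}(b)). The gap is exactly where you suspect it: the passage to $\st=\abs+$, $\abs-$, $\abs$. Your proposed mechanism---embed into $\sD^\st(R\modl)$ and ``propagate'' the isomorphisms $\eta$, $\varepsilon$ from $\sD^\b$ by finite amplitude, canonical truncations and a way-out argument---does not work for absolute derived categories. Objects and morphisms of $\sD^\abs$ are not controlled by their cohomology; canonical truncations do not interact well with absolute acyclicity (an acyclic cone need not be absolutely acyclic, which is the very phenomenon you acknowledge); Hartshorne's way-out lemma is a statement about cohomological amplitude in conventional derived categories; and Proposition~\ref{essential-image-identified}, which you would need in order to identify the images of $\sD^\st(R\modl_{S\tors})$ and $\sD^\st(R\modl_{S\ctra})$ inside $\sD^\st(R\modl)$, is only available for the conventional symbols. (Also, ``$B^\bu\ot_R^\boL{-}$ on $\sD^{\abs}(R\modl)$'' needs a definition: flat or plain resolutions do not compute absolute derived functors; one must use the finite complex of projectives $T^\bu$.) So the absolute case is asserted rather than proved.

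The paper closes this step differently, and without ever returning to $\sD^\b$: the functors $\boR\Hom_R(B^\bu,{-})$ and $B^\bu\ot_R^\boL{-}$ are constructed on complexes of injective objects of $R\modl_{S\tors}$ and, respectively, via finite resolutions by adjusted objects, using the finite homological dimension as in~\cite[Appendix~B]{Pmgm}; with this construction the verification that the adjunction morphisms are isomorphisms reduces, uniformly for all seven symbols, to the case of a \emph{single} injective object $E\in R\modl_{S\tors}$ or a single projective object $P\in R\modl_{S\ctra}$ (\cite[proofs of Theorems~4.9 and~5.10]{Pmgm})---the point being that the relevant cones are then totalizations of uniformly bounded acyclic complexes, hence absolutely acyclic. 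Those single-object statements are precisely the concluding Proposition of the section, proved by the $T_s^\bu$\+testing argument you already have (using Theorem~\ref{ext-tor-bounded-torsion-thm} and Lemmas~\ref{projective-tensor-lemma}, \ref{injective-hom-lemma}). In short: your computations are the right ingredients, but to cover the absolute symbols you should feed them into the single-object reduction rather than attempt to bootstrap from the bounded conventional case.
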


\begin{proof}
 Notice that the functor $\Hom_R(B^\bu,{-})$ takes complexes of
$S$\+torsion $R$\+modules to complexes of $S$\+contramodule
$R$\+modules (by Lemma~\ref{hom-contramodule}), and the functor
$B^\bu\ot_R{-}$ takes complexes of $S$\+contramodule $R$\+modules
to complexes of $S$\+torsion $R$\+modules (obviously).
 The construction of the two derived functors
$$
 \boR\Hom_R(B^\bu,{-})\:\sD^\st(R\modl_{S\tors})\lrarrow
 \sD^\st(R\modl_{S\ctra})
$$ and
$$
 B^\bu\ot_R^\boL{-}\:\sD^\st(R\modl_{S\ctra})\lrarrow
 \sD^\st(R\modl_{S\tors})
$$
is explained in~\cite[Appendix~B]{Pmgm}; see also~\cite[proofs of
Theorems~4.9 and~5.10]{Pmgm}.
 They form a pair of adjoint triangulated functors between
the derived categories $\sD^\st(R\modl_{S\tors})$ and
$\sD^\st(R\modl_{S\ctra})$ \cite[Appendix~B]{Pmgm}.

 As explained further in~\cite[proofs of Theorems~4.9 and~5.10]{Pmgm},
showing that the adjunction morphisms for this pair of adjoint
functors are isomorphisms in $\sD^\st(R\modl_{S\tors})$ and
$\sD^\st(R\modl_{S\ctra})$ reduces to the cases of a single injective
object $E\in R\modl_{S\tors}$ or a single projective object
$P\in R\modl_{S\ctra}$.
 The next proposition claims that the required morphisms are
quasi-isomorphisms.
\end{proof}

\begin{prop}
\textup{(a)} Let $E$ be an injective object in $R\modl_{S\tors}$ and
let $P^\bu$ be a bounded above complex of projective objects
in $R\modl_{S\ctra}$ endowed with a quasi-isomorphism of complexes
of $S$\+contramodule $R$\+modules $P^\bu\rarrow\Hom_R(B^\bu,E)$.
 Then the natural morphism of complexes of $S$\+torsion $R$\+modules
$B^\bu\ot_RP^\bu\rarrow E$ is a quasi-isomorphism. \par
\textup{(b)} Let $P$ be a projective object in $R\modl_{S\ctra}$ and
let $E^\bu$ be a bounded below complex of injective objects
in $R\modl_{S\tors}$ endowed with a quasi-isomorphism of complexes
of $S$\+torsion $R$\+modules $B^\bu\ot_R P\rarrow E^\bu$.
 Then the natural morphism of complexes of $S$\+contramodule
$R$\+modules $P\rarrow\Hom_R(B^\bu,E^\bu)$ is a quasi-isomorphism.
\end{prop}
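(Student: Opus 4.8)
Both parts will be proved by the same method, the two being covariantly dual; I describe part~(a). Write $\varphi\:B^\bu\ot_RP^\bu\rarrow E$ for the morphism in question; by construction it is the composition of $\id_{B^\bu}\ot\alpha\:B^\bu\ot_RP^\bu\rarrow B^\bu\ot_R\Hom_R(B^\bu,E)$ (where $\alpha\:P^\bu\rarrow\Hom_R(B^\bu,E)$ is the given quasi-isomorphism) with the evaluation morphism $B^\bu\ot_R\Hom_R(B^\bu,E)\rarrow E$. Since $B^\bu$ is a finite complex of $S$\+torsion $R$\+modules and the tensor product of an $S$\+torsion $R$\+module with an arbitrary $R$\+module is $S$\+torsion, the complex $B^\bu\ot_RP^\bu$ has $S$\+torsion cohomology modules, as does $E$, so the cone of $\varphi$ has $S$\+torsion cohomology modules. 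Applying Lemma~\ref{t-s-nakayama}(a) to this cone, I reduce the claim to showing that $\id_{T_s^\bu}\ot\varphi\:T_s^\bu\ot_RB^\bu\ot_RP^\bu\rarrow T_s^\bu\ot_RE$ is a quasi-isomorphism for every $s\in S$.

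To do this, fix $s\in S$, recall that $B_s^\bu=T_s^\bu\ot_RB^\bu$ and that there is a quasi-isomorphism $b_s\:T_s^\bu\rarrow B_s^\bu$, and fit $\id_{T_s^\bu}\ot\varphi$ into a square
$$
\begin{diagram}
\node{T_s^\bu\ot_RP^\bu}\arrow{s}\arrow{e}
\node{T_s^\bu\ot_R\Hom_R(B^\bu,E)}\arrow{s} \\
\node{T_s^\bu\ot_RB^\bu\ot_RP^\bu}\arrow{e}
\node{T_s^\bu\ot_RE}
\end{diagram}
$$
with $\id_{T_s^\bu}\ot\varphi$ as its lower edge and the other three edges quasi-isomorphisms. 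The upper edge is $T_s^\bu\ot_R{-}$ applied to $\alpha$, hence a quasi-isomorphism because $T_s^\bu$ is a bounded complex of finitely generated free $R$\+modules. The left edge is $b_s\ot\id_{P^\bu}$; by Lemma~\ref{projective-tensor-lemma}(b) the morphism $b_s\ot\id$ is a quasi-isomorphism on each term of the bounded above complex $P^\bu$ of projective $S$\+contramodule $R$\+modules, and assembling these termwise quasi-isomorphisms (all the bicomplexes in sight being bounded above) shows $b_s\ot\id_{P^\bu}$ is a quasi-isomorphism. The right edge I obtain from the quasi-isomorphism $\Hom_R(B_s^\bu,E)\rarrow\Hom_R(T_s^\bu,E)$ of Lemma~\ref{injective-hom-lemma}(b), via the tensor--$\Hom$ adjunction $\Hom_R(B_s^\bu,E)=\Hom_R(T_s^\bu\ot_RB^\bu,\>E)\simeq\Hom_R(T_s^\bu,\Hom_R(B^\bu,E))$ together with the natural isomorphisms $T_s^\bu\ot_RX\simeq\Hom_R(T_s^\bu,X)[-1]$ coming from the near self-duality $\Hom_R(T_s^\bu,R)\simeq T_s^\bu[1]$; it too is a quasi-isomorphism.

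The main obstacle will be the commutativity of this square, which I need only check in $\sD^\b(R\modl)$. Unwinding the two adjunctions and the definition of evaluation, the right edge becomes $T_s^\bu\ot_R{-}$ applied to the morphism $\Hom_R(B^\bu,E)\rarrow\Hom_R(R,E)=E$ of restriction along the canonical morphism $R\rarrow B^\bu$ of $\sD^\b(R\modl)$ (the one corresponding to $R\rarrow K^\bu[-1]$, with respect to which $b_s$ represents that morphism tensored with $\id_{T_s^\bu}$); composing with $\id_{T_s^\bu}\ot\alpha$ then gives $T_s^\bu\ot_R{-}$ applied to $P^\bu\overset\alpha\rarrow\Hom_R(B^\bu,E)\rarrow E$, which agrees with $(\id_{T_s^\bu}\ot\varphi)\circ(b_s\ot\id_{P^\bu})$ because evaluation sends $x\ot g$ to $g(x)$. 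So the square commutes and $\id_{T_s^\bu}\ot\varphi$ is a quasi-isomorphism, proving~(a).

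For part~(b) I would argue dually: by Lemma~\ref{hom-contramodule} the complex $\Hom_R(B^\bu,E^\bu)$ has $S$\+contramodule cohomology modules, and so does $P$, whence the cone of $P\rarrow\Hom_R(B^\bu,E^\bu)$ has $S$\+contramodule cohomology; Lemma~\ref{t-s-nakayama}(b) then reduces the claim to checking that $\id_{T_s^\bu}\ot(P\rarrow\Hom_R(B^\bu,E^\bu))$ is a quasi-isomorphism for each $s\in S$, which follows from the analogous square using Lemma~\ref{projective-tensor-lemma}(b) for the single projective object $P$, Lemma~\ref{injective-hom-lemma}(b) applied termwise to the bounded below complex of injectives $E^\bu$, and the flatness of the bounded complex $T_s^\bu$.
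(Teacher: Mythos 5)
This is correct and is essentially the paper's own argument: the same reduction via Lemma~\ref{t-s-nakayama}, the same commutative square, with the top edge handled by the finiteness and projectivity of $T_s^\bu$, the left edge by Lemma~\ref{projective-tensor-lemma}(b) (your explicit termwise-to-total assembly over the bounded above complex $P^\bu$ is a detail the paper leaves implicit), and the right edge by Lemma~\ref{injective-hom-lemma}(b) combined with $\Hom_R(T_s^\bu,R)\simeq T_s^\bu[1]$. The one inaccuracy is in your commutativity discussion: there is no canonical morphism $R\rarrow B^\bu$ in $\sD^\b(R\modl)$ (the canonical morphism goes the other way, $B^\bu\rarrow R$, corresponding to $K^\bu[-1]\rarrow R$), but this is harmless, since taking the right-hand vertical edge to be $(\id\ot\mathrm{ev})\circ(b_s\ot\id)$, as the paper does, makes the square commute by plain naturality of the tensor product, the near self-duality of $T_s^\bu$ being needed only afterwards to identify this edge with a shift of $\Hom_R(b_s,E)$.
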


\begin{proof}
 Part~(a): by Lemma~\ref{t-s-nakayama}(a), it suffices to show that
the morphism of complexes of $S$\+torsion $R$\+modules
$$
 B_s^\bu\ot_RP^\bu=T_s^\bu\ot_R B^\bu\ot_R P^\bu
 \lrarrow T_s^\bu\ot_RE
$$
is a quasi-isomorphism for every element $s\in S$.
 Notice that a morphism of complexes of $S$\+torsion $R$\+modules
is a quasi-isomorphism of complexes in the abelian category
$R\modl_{S\tors}$ if and only if it is a quasi-isomorphism of
complexes of $R$\+modules.
 According to Lemma~\ref{projective-tensor-lemma}(b), the morphism
$$
 T_s^\bu\ot_R P^\bu\lrarrow B_s^\bu\ot_RP^\bu
$$
induced by the morphism $b_s\:T_s^\bu\rarrow B_s^\bu$ is
a quasi-isomorphism of complexes of $R$\+modules.
 Since $T_s^\bu$ is a finite complex of (finitely generated) projective
$R$\+modules, the morphism of complexes of ($S$\+contramodule)
$R$\+modules
$$
 T_s^\bu\ot_RP^\bu\lrarrow T_s^\bu\ot_R\Hom_R(B^\bu,E)
$$
induced by the quasi-isomorphism $P^\bu\rarrow\Hom_R(B^\bu,E)$ is also
a quasi-isomorphism.
 We have a commutative diagram
$$
\begin{diagram}
\node{T_s^\bu\ot_RP^\bu}\arrow{s}\arrow{e}
\node{T_s^\bu\ot_R\Hom_R(B^\bu,E)} \arrow{s} \\
\node{T_s^\bu\ot_RB^\bu\ot_RP^\bu}\arrow{e}\node{T_s^\bu\ot_RE}
\end{diagram}
$$
 Hence it remains to show that the morphism of complexes
of $R$\+modules
$$
 T_s^\bu\ot_R\Hom_R(B^\bu,E)\lrarrow T_s^\bu\ot_R E
$$
induced by the morphism $b_s\:T_s^\bu\rarrow B_s^\bu=T_s^\bu\ot_R B^\bu$
is a quasi-isomorphism.
 Since $T_s^\bu\simeq\Hom_R(T_s^\bu,R)[-1])$, this is equivalent to
saying that the morphism
$$
 \Hom_R(b_s,E)\:\Hom_R(T_s^\bu\ot_RB^\bu,\>E)\lrarrow
 \Hom_R(T_s^\bu,E)
$$
is a quasi-isomorphism, which is the result of
Lemma~\ref{injective-hom-lemma}(b).

 Part~(b): by Lemma~\ref{t-s-nakayama}(b), it suffices to show that
the morphism of complexes of $S$\+contramodule $R$\+modules
$$
 \Hom_R(T_s^\bu,P)\lrarrow\Hom_R(T_s^\bu\ot_RB^\bu,\>E^\bu)=
 \Hom_R(B_s^\bu,E^\bu)
$$
is a quasi-isomorphism for every element $s\in S$.
 Notice that a morphism of complexes of $S$\+contramodule $R$\+modules
is a quasi-isomorphism of complexes in the abelian category
$R\modl_{S\ctra}$ if and only if it is a quasi-isomorphism of
complexes of $R$\+modules.
 According to Lemma~\ref{injective-hom-lemma}(b),
the morphism
$$
 \Hom_R(B_s^\bu,E^\bu)\lrarrow\Hom_R(T_s^\bu,E^\bu)
$$
induced by the morphism $b_s\:T_s^\bu\rarrow B_s^\bu$ is
a quasi-isomorphism of complexes of $R$\+modules.
 Since $T_s^\bu$ is a finite complex of (finitely generated) projective
$R$\+modules, the morphism of complexes of ($S$\+torsion) $R$\+modules
$$
 \Hom_R(T_s^\bu,\>B^\bu\ot_RP)\lrarrow\Hom_R(T_s^\bu,E^\bu)
$$
induced by the quasi-isomorphism $B^\bu\ot_RP\rarrow E^\bu$ is also
a quasi-isomorphism.
 We have a commutative diagram
$$
\begin{diagram}
\node{\Hom_R(T_s^\bu,P)}\arrow{s}\arrow{e}
\node{\Hom_R(T_s^\bu\ot_RB^\bu,\>E^\bu)} \arrow{s} \\
\node{\Hom_R(T_s^\bu,\>B^\bu\ot_RP)}\arrow{e}
\node{\Hom_R(T_s^\bu,E^\bu)}
\end{diagram} 
$$
 Hence it remains to show that the morphism of complexes of $R$\+modules
$$
 \Hom_R(T_s^\bu,P)\lrarrow\Hom_R(T_s^\bu,\>B^\bu\ot_RP)
$$
induced by the morphism $b_s\:T_s^\bu\rarrow B_s^\bu=T_s^\bu\ot_RB^\bu$
is a quasi-isomorphism.
 Since $\Hom_R(T_s^\bu,R)\simeq T_s^\bu[1]$, this is equivalent to saying
that the morphism
$$
 T_s^\bu\ot_RP\lrarrow T_s^\bu\ot_RB^\bu\ot_RP
$$
is a quasi-isomorphism, which is the result of
Lemma~\ref{projective-tensor-lemma}(b).
\end{proof}

\bigskip

\end{document}